\def\date{1 December 2012 Last Revised 22 August 2017}
\newtheorem{proposition}{proposition}[section]
\newtheorem{lemma}[proposition]{Lemma}
\newtheorem{corollary}[proposition]{Corollary}
\newtheorem{theorem}[proposition]{Theorem}
\newtheorem{claim}[proposition]{Claim}
\newtheorem{prop}[proposition]{Proposition}
\newtheorem{lem}[proposition]{Lemma}
\newtheorem{cor}[proposition]{Corollary}
\newtheorem{thm}[proposition]{Theorem}
\newtheorem{subclaim}{Subclaim}
{
\newtheorem{definition}[proposition]{Definition}

}
\newcommand{\mcal}{\mathcal}
\newcommand{\C}{\mcal{C}}
\newcommand{\F}{\mcal{F}}
\newcommand{\R}{\mcal{R}}
\begin{document}
\font\smallrm=cmr8





\baselineskip=12pt
\phantom{a}\vskip .25in
\centerline{{\bf  $3$-List-Coloring Graphs of Girth at least Five on Surfaces}}
\vskip.4in
\centerline{{\bf Luke Postle}
\footnote{\texttt{Partially supported by NSERC under Discovery Grant No. 2014-06162, the Ontario Early Researcher Awards program and the Canada Research Chairs program. Email: lpostle@uwaterloo.ca}}} 
\smallskip
\centerline{Department of Combinatorics and Optimization}
\centerline{University of Waterloo}
\centerline{Waterloo, ON}
\centerline{Canada, N2L 3G1}

\vskip 1in \centerline{\bf ABSTRACT}
\bigskip

{
\parshape=1.0truein 5.5truein
\noindent

Gr\"otzsch proved that every triangle-free planar graph is $3$-colorable. Thomassen proved that every planar graph of girth at least five is $3$-choosable. As for other surfaces, Thomassen proved that there are only finitely many $4$-critical graphs of girth at least five embeddable in any fixed surface. This implies a linear-time algorithm for deciding $3$-colorablity for graphs of girth at least five on any fixed surface. Dvo\v{r}\'ak, Kr\'al' and Thomas strengthened Thomassen's result by proving that the number of vertices in a $4$-critical graph of girth at least five is linear in its genus. They used this result to prove Havel's conjecture that a planar graph whose triangles are pairwise far enough apart is $3$-colorable. As for list-coloring, Dvo\v{r}\'ak proved that a planar graph whose cycles of size at most four are pairwise far enough part is $3$-choosable.

In this article, we generalize these results. First we prove a linear isoperimetric bound for $3$-list-coloring graphs of girth at least five. Many new results then follow from the theory of hyperbolic families of graphs developed by Postle and Thomas. In particular, it follows that there are only finitely many $4$-list-critical graphs of girth at least five on any fixed surface, and that in fact the number of vertices of a $4$-list-critical graph is linear in its genus. This provides independent proofs of the above results while generalizing Dvo\v{r}\'ak's result to graphs on surfaces that have large edge-width and yields a similar result showing that a graph of girth at least five with crossings pairwise far apart is $3$-choosable. Finally, we generalize to surfaces Thomassen's result that every planar graph of girth at least five has exponentially many distinct $3$-list-colorings. Specifically, we show that every graph of girth at least five that has a $3$-list-coloring has $2^{\Omega(n)-O(g)}$ distinct $3$-list-colorings.

}

\vfill \baselineskip 11pt \noindent \date.
\vfil\eject
\baselineskip 18pt

\section{Introduction}
All graphs considered in this paper are simple and finite. Graph coloring is an important area of study in graph theory. Recall that a \emph{coloring} of a graph $G$ is an assignment of colors to vertices such that no two adjacent vertices receive the same color. A \emph{$k$-coloring} is a coloring that uses at most $k$ colors while a graph $G$ is \emph{$k$-colorable} if there exists a $k$-coloring of $G$. 

List coloring, also known as choosability, generalizes the concept of coloring and was introduced by Vizing~\cite{Vizing} and independently by Erd\H{o}s et al.~\cite{Erdos}. 

\begin{definition}
A \emph{list assignment} of $G$ is a function $L$ that assigns to each vertex $v\in V(G)$ a list $L(v)$ of colors. An \emph{$L$-coloring} of $G$ is a function $\phi: V(G)\rightarrow \bigcup_{v\in V(G)} L(v)$ such that $\phi(v)\in L(v)$ for every $v\in V(G)$ and $\phi(u)\ne \phi(v)$ for every pair of adjacent vertices $u,v$ in $G$. We say $G$ is \emph{$L$-colorable} if $G$ has an $L$-coloring. 

A \emph{$k$-list-assignment} is a list-assignment $L$ such that $|L(v)|\ge k$ for all $v\in V(G)$. A graph $G$ is \emph{$k$-list-colorable} or \emph{$k$-choosable} if $G$ is $L$-colorable for every $k$-list-assignment $L$.
\end{definition}

A well-known result of Gr\"otzsch~\cite{Grotzsch} states that every triangle-free planar graph is $3$-colorable. This theorem does not extend to list-coloring as Voigt~\cite{Voigt} constructed a triangle-free planar graph that is not 3-choosable. However, Thomassen~\cite{ThomList} proved that every planar graph of girth at least $5$ is $3$-choosable where \emph{girth} is the length of the smallest cycle.

A natural extension of such results is to graphs on surfaces. For terms related to graphs embedded in surfaces, we refer to \cite{MoharThom}. Since not every graph is $3$-colorable and coloring is a \emph{monotone} property, that is, $\chi(H)\le \chi(G)$ for every $H \subseteq G$, it is natural to consider the minimal non-colorable graphs. Similarly, choosability is a monotone property. Hence the following definitions.

\begin{definition}
A graph $G$ is \emph{$k$-critical} if $G$ is not $(k-1)$-colorable but every proper subgraph of $G$ is. A graph $G$ is \emph{$k$-list-critical} if there exists a $k$-list-assignment $L$ for $V(G)$ such that $G$ is not $L$-colorable but every proper subgraph of $G$ is $L$-colorable.
\end{definition}

Thomassen~\cite{ThomRegular} proved that there are only finitely many $4$-critical graphs of girth at least five embeddable in a fixed surface. Dvo\v{r}\'ak, Kr\'al' and Thomas~\cite{DvoKraTho3} strengthened this result by proving that the number of vertices in a $4$-critical graph of girth at least five is linear in its genus. One of the main results of this paper is to generalize these results to list-coloring as follows.

\begin{theorem}\label{LinearGenus}
There exists a constant $c$ such that if $G$ is a $4$-list-critical of girth at least five embedded in a surface of genus $g$, then $|V(G)|\le cg$.
\end{theorem}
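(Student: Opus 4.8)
The plan is to obtain Theorem~\ref{LinearGenus} from a \emph{linear isoperimetric inequality} for $3$-list-colouring graphs of girth at least five, fed into the theory of hyperbolic families of embedded graphs of Postle and Thomas. The natural framework is that of a \emph{graph with rings}: a graph $G$ together with a set $\mathcal{R}$ of pairwise vertex-disjoint cycles each bounding a face — concretely, $G$ embedded in the sphere with $|\mathcal{R}|$ open discs deleted, the rings being the boundary curves — and a list assignment $L$ with $|L(v)|\ge 3$ for every vertex not on a ring (ring vertices may carry arbitrary lists, in particular singleton lists, i.e.\ they may be precoloured). Call $(G,\mathcal{R},L)$ \emph{critical} if $G$ is not $L$-colourable while $G-e$ is $L$-colourable for every edge $e$ not on a ring. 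The inequality to prove is: there is an absolute constant $c_0$ such that every critical $(G,\mathcal{R},L)$ in which every cycle other than a ring has length at least five satisfies $|V(G)|\le c_0\sum_{R\in\mathcal{R}}|V(R)|$. This is the linear isoperimetric bound promised in the abstract, and it carries essentially all of the work.

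To prove the inequality I would take a counterexample minimising an appropriate measure such as $|V(G)|+|E(G)|$ and run a discharging argument controlled by a potential function adapted to girth five — of the shape $a|V|-b|E|$ with corrections linear in the ring lengths, reflecting the Euler bound $3|E|\le 5|V|-10$ for plane graphs of girth five, so that only a bounded amount of slack is available and criticality must force it to concentrate near the rings. The bulk of the argument is then to show that a minimum counterexample contains none of a catalogue of reducible configurations — short separating cycles or paths enclosing few vertices, low-degree vertices near a ring, small subgraphs one can finish greedily, and so on — each handled by cutting the graph along the configuration, applying minimality and criticality to the pieces, and invoking Thomassen's technique for list-colouring graphs of girth five: delete or identify a bounded number of vertices near a ring, shrink the affected lists, colour the rest by induction, and recombine. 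The recurring difficulty, and what I expect to be the main obstacle, is performing these identifications without ever creating a cycle of length less than five; this is exactly where the girth hypothesis is used, and it is responsible for most of the case analysis. Once all reducible configurations are excluded, a concluding discharging count contradicts minimality.

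Granting the isoperimetric inequality, the passage to arbitrary surfaces is comparatively routine and follows the template of Dvo\v{r}\'ak--Kr\'al'--Thomas and Postle--Thomas. Let $\mathcal{F}$ be the family of critical graphs with rings (girth at least five off the rings, lists of size at least three off the rings) in the sense above. One shows $\mathcal{F}$ is \emph{strongly hyperbolic}: given $G\in\mathcal{F}$ embedded in a surface and a simple closed curve $\xi$, disjoint from the rings, meeting $G$ only transversally and bounding an open disc $\Delta$, a standard tightening lets us assume $\xi$ follows a closed walk of $G$ of length $O(k)$, where $k$ is the number of edges it crosses; cutting along $\xi$ and closing the crossed edges into a new ring $R$ of length $O(k)$ produces a graph $G_\Delta$ drawn in a disc, and using that $G$ is critical one checks that $(G_\Delta,\mathcal{R}\cup\{R\},L')$ is again critical — any $L$-colouring of the part of $G$ outside $\bar{\Delta}$ fails to extend across $\Delta$, while for each non-ring edge $e$ an $L$-colouring of $G-e$ restricts to one of $G_\Delta-e$. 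The inequality then bounds the number of vertices of $G$ inside $\Delta$ by $O(k)$, which is hyperbolicity; the same surgery along two disjoint curves that together bound supplies the extra estimate for strong hyperbolicity. Finally, the main theorem of Postle and Thomas asserts that a strongly hyperbolic family embedded in a surface of Euler genus $g$ has $O\!\left(g+\sum_R|V(R)|\right)$ vertices. Taking $\mathcal{R}=\emptyset$, i.e.\ an ordinary $4$-list-critical graph of girth at least five embedded in a surface of genus $g$, and using that the Euler genus is linear in $g$, we obtain a constant $c$ with $|V(G)|\le cg$, which is Theorem~\ref{LinearGenus}. (The same argument simultaneously recovers Thomassen's finiteness theorem and the Dvo\v{r}\'ak--Kr\'al'--Thomas linear bound for ordinary colouring.)
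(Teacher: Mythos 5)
Your high-level strategy matches the paper's: prove a linear isoperimetric bound for critical canvases, conclude that the family of $4$-list-critical graphs of girth at least five is (strongly) hyperbolic, and then invoke the Postle--Thomas theorem that a strongly hyperbolic family embedded in a surface of Euler genus $g$ has $O(g)$ vertices. Your "graph with rings" framework, the criticality notion, the surgery along curves to obtain a disc/cylinder and a new ring, and the final genus bound are all in line with how the paper proceeds, and your potential function of the shape $a|V|-b|E|$ with corrections reflecting $3|E|\le 5|V|-10$ is precisely the shape of the paper's deficiency $\operatorname{def}(T)=3e(T)-5v(T)+10(c(S)-c(G))$ (adjusted by small multiples of $v(T)$ and the number of edges leaving $S$).

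The genuine divergence is in the scope of the isoperimetric inequality you set out to prove. You propose to prove it directly for an \emph{arbitrary} set of rings, claiming $|V(G)|\le c_0\sum_{R\in\mathcal{R}}|V(R)|$ for critical $(G,\mathcal{R},L)$. The paper does not prove this directly: it proves the bound only when the boundary subgraph $S$ has at most two components (Theorem~\ref{StrongLinear}, instantiated as Corollary~\ref{StrongLinear3} for one cycle and Theorem~\ref{Cylinder} for two facial cycles). That two-component case is exactly what strong hyperbolicity requires, and the much more general multi-ring-plus-genus bound (Corollary~\ref{LinearCriticalBound}) is then a \emph{consequence} of the Postle--Thomas theory, not a direct proof. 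This is not a cosmetic distinction. The paper's structural engine for the discharging argument is Lemma~\ref{Structure3}, which itself depends on Thomassen's Theorem~\ref{CylinderStructure}; that theorem is stated and proved for exactly two distinguished faces, and there is no obvious analogue for arbitrarily many boundary cycles. Your plan to "run a discharging argument" and "exclude reducible configurations" for the multi-ring case would need a substitute for that ingredient, which you do not supply and which, as far as is known, does not exist in clean form. The correct (and intended) move is to prove the two-ring case only, get strong hyperbolicity, and let the machinery of \cite{PostleThomas} do the rest; your own outline already shows you need no more than two rings for the surgery steps, so the extra generality is pure overkill and, more importantly, is the step that would likely fail if you attempted it as written.
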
 

As an immediate corollary of Theorem~\ref{LinearGenus}, we have the following.

\begin{corollary}\label{FinMany}
For every surface $S$, there exist only finitely many $4$-list-critical graphs of girth at least five embeddable in $S$.
\end{corollary}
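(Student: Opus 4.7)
The plan is to deduce this as an essentially immediate consequence of Theorem~\ref{LinearGenus}. Fix a surface $S$ and let $g$ denote its Euler genus. I would first observe that every $4$-list-critical graph $G$ of girth at least five embeddable in $S$ admits an embedding in $S$, and thus Theorem~\ref{LinearGenus} applies to give the uniform bound $|V(G)| \le cg$.

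Since $S$ is fixed, $cg$ is a constant, so every such $G$ has a bounded number of vertices. Because $G$ is simple, it then also has a bounded number of edges (at most $\binom{cg}{2}$). Consequently, as $G$ ranges over all $4$-list-critical graphs of girth at least five embeddable in $S$, it ranges over a subclass of the (finite) set of simple graphs on at most $cg$ vertices (up to isomorphism), which yields the conclusion.

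The only step that requires any care is the passage from ``embeddable in $S$'' to the hypothesis ``embedded in a surface of genus $g$'' used by Theorem~\ref{LinearGenus}: one just notes that any embedding of $G$ in $S$ itself is an embedding in a surface of Euler genus $g$, so the inequality $|V(G)| \le cg$ applies with that value of $g$. Beyond this trivial observation there is no real obstacle; the genuine content of the corollary is packed entirely into Theorem~\ref{LinearGenus}, which is why this result is stated as an immediate corollary.
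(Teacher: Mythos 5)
Your argument is exactly the one the paper intends: the corollary is stated as an immediate consequence of Theorem~\ref{LinearGenus}, and you have spelled out the trivial finiteness step (bounded vertex count implies finitely many isomorphism types of simple graphs) correctly. No further comment is needed.
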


Using a result of Eppstein~\cite{Eppstein} that testing for a fixed subgraph on a fixed surface can be done in linear time, we obtain the following. 

\begin{corollary}\label{LinearAlg}
For a fixed surface $S$, testing if a graph $G$ of girth at least five embedded in $S$ is $3$-choosable can be done in linear time. 
\end{corollary}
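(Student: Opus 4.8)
The plan is to combine Corollary~\ref{FinMany} with the linear-time subgraph-testing algorithm of Eppstein~\cite{Eppstein}. The key observation is that a graph $G$ of girth at least five embedded in $S$ fails to be $3$-choosable if and only if $G$ contains some $4$-list-critical graph of girth at least five as a subgraph. Indeed, if $G$ is not $3$-choosable, then there is a $3$-list-assignment $L$ with no $L$-coloring; take a subgraph $H$ of $G$ minimal with respect to not being $L$-colorable (restricting $L$ to $V(H)$), and $H$ is then $4$-list-critical, of girth at least five, and embeddable in $S$. Conversely, if $G$ contains a $4$-list-critical subgraph $H$ of girth at least five, then $H$ is not $3$-choosable by definition, and hence neither is $G$, since any $3$-list-assignment witnessing non-$L$-colorability of $H$ extends (arbitrarily on $V(G)\setminus V(H)$) to a $3$-list-assignment of $G$ with no $L$-coloring.

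Next I would invoke Corollary~\ref{FinMany}: for the fixed surface $S$, let $\mathcal{H}_S = \{H_1, \dots, H_m\}$ be the finite list of (pairwise non-isomorphic) $4$-list-critical graphs of girth at least five embeddable in $S$. This list is a fixed finite object depending only on $S$, so its size $m$ and the sizes of its members are absolute constants for the purposes of the algorithm. By the argument above, testing whether $G$ is $3$-choosable reduces to testing whether $G$ contains a subgraph isomorphic to some $H_i$; equivalently, $G$ is $3$-choosable if and only if $G$ is $H_i$-subgraph-free for every $i \in \{1, \dots, m\}$. For each fixed $H_i$, Eppstein's algorithm decides in time $O(|V(G)|)$ whether $G$ (which is given embedded in $S$, or whose embedding can be computed in linear time) contains $H_i$ as a subgraph; running this for each of the $m$ patterns and taking the conjunction of the answers gives a total running time of $O(m \cdot |V(G)|) = O(|V(G)|)$, as $m$ is constant. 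This yields the claimed linear-time algorithm.

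The only subtlety — and the place to be slightly careful rather than the real obstacle — is that Corollary~\ref{FinMany} guarantees finiteness but is not obviously constructive: the algorithm as stated needs the explicit list $\mathcal{H}_S$, which must be hard-coded for each surface $S$. This is acceptable because the statement fixes $S$ in advance, so one is permitted a nonuniform family of algorithms (one per surface). If a uniform algorithm is desired, one can instead observe that Theorem~\ref{LinearGenus} bounds $|V(H_i)| \le c g$ where $g = g(S)$, so $\mathcal{H}_S$ can be computed by brute force (enumerate all graphs on at most $cg$ vertices of girth at least five, discard those not embeddable in $S$ or not $4$-list-critical) in time depending only on $S$; this precomputation does not affect the linear dependence on $|V(G)|$. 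Hence the genuinely nontrivial content is entirely in Theorem~\ref{LinearGenus} and Eppstein's theorem, and the corollary follows by assembling these with the elementary critical-subgraph argument above. $\square$
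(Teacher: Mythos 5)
Your proof is correct and follows the same route the paper intends: reduce $3$-choosability to absence of any of the finitely many $4$-list-critical subgraphs of girth at least five embeddable in $S$ (Corollary~\ref{FinMany}), and then invoke Eppstein's linear-time subgraph test for each of the constantly many forbidden patterns. The paper leaves this one-line derivation implicit, and your write-up supplies exactly the missing details, including the valid remark on constructivity via Theorem~\ref{LinearGenus}.
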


Moreover, Postle and Thomas~\cite{PostleThomas} deduced from Theorem~\ref{LinearGenus} that, for a fixed surface $S$, testing if a graph of girth at least five embedded in $S$ can be colored from a given $3$-list-assignment can be done in linear time, which is a theorem of Dvo\v{r}\'ak and Kawarabayashi~\cite{DvoKaw2}.

Another rather immediate corollary of Theorem~\ref{LinearGenus} is that locally planar graphs of girth at least five are $3$-choosable. More precisely, recall that the \emph{edge-width} of an embedded graph is the length of its shortest non-contractible cycle. The corollary then is as follows.

\begin{corollary}\label{LocPlanar}
For every surface $S$, there exists a constant $c(S)$ such that if $G$ is a graph of girth at least five and the edge-width of $G$ is at least $c(S)$, then $G$ is $3$-choosable.
\end{corollary}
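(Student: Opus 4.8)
The plan is to deduce this directly from Theorem~\ref{LinearGenus}, together with Thomassen's theorem that every planar graph of girth at least five is $3$-choosable, via the standard reduction to a critical subgraph. Let $g$ be the genus of $S$, let $c$ be the constant supplied by Theorem~\ref{LinearGenus}, and set $c(S) := cg + 1$. Suppose for contradiction that some graph $G$ of girth at least five is embedded in $S$ with edge-width at least $c(S)$ but is not $3$-choosable, and fix a $3$-list-assignment $L$ for which $G$ is not $L$-colorable. Among all subgraphs of $G$ that are not colorable from the restriction of $L$ to their vertex set, I would choose one, $H$, that is minimal in the subgraph order. Then $H$ is not $L|_{V(H)}$-colorable while every proper subgraph of $H$ is, so $H$ is $4$-list-critical; moreover $H$ is connected (a graph is $L$-colorable if and only if each of its components is), has girth at least five (every cycle of $H$ is a cycle of $G$), and inherits from $G$ an embedding in $S$.

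Applying Theorem~\ref{LinearGenus} to $H$ then gives $|V(H)| \le cg$, so every cycle of $H$ has length at most $cg < c(S)$, which in turn is at most the edge-width of $G$. On the other hand, any non-contractible cycle of $H$ — drawn in $S$ exactly as it is drawn in $G$, so that whether it is null-homotopic depends only on the curve and on $S$ — is also a non-contractible cycle of $G$, and hence has length at least the edge-width of $G$. Thus $H$ has no non-contractible cycle in $S$; being connected and having all of its cycles contractible, $H$ is therefore planar (see \cite{MoharThom}). By Thomassen's theorem $H$ is then $3$-choosable, so $H$ is $L|_{V(H)}$-colorable, contradicting the choice of $H$. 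Hence no such $G$ exists and $c(S) = cg + 1$ works. (Equivalently, once $H$ is known to be planar one may re-embed it in the sphere and apply Theorem~\ref{LinearGenus} with $g = 0$ to obtain $|V(H)| = 0$, the same contradiction, so Thomassen's theorem need not be invoked separately.)

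Essentially all of the content is already packaged into Theorem~\ref{LinearGenus}; the one step needing genuine topological care is the passage from ``$H$ has boundedly many vertices, hence no short non-contractible cycle, hence no non-contractible cycle at all'' to ``$H$ is planar'', which rests on the standard fact that a connected graph all of whose cycles are contractible in its host surface is planar. The remaining ingredients — the reduction to a $4$-list-critical subgraph, monotonicity of girth under subgraphs, and the observation that contractibility of a cycle is unaffected by the rest of the graph — are routine.
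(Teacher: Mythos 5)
Your proof is correct and follows essentially the same route as the paper: reduce to a $4$-list-critical subgraph $H$, bound $|V(H)|$ via Theorem~\ref{LinearGenus} (equivalently Corollary~\ref{FinMany}), note that $H$ therefore has no non-contractible cycle and is hence planar, and finish by Thomassen's theorem (or, as you note parenthetically, by reapplying Theorem~\ref{LinearGenus} with $g=0$). The paper compresses all of this into the single remark that one may take $c(S)$ strictly larger than the maximum order of a $4$-list-critical graph of girth at least five embeddable in $S$; you have merely spelled out the same argument, including the one topological step the paper leaves implicit.
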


Corollary~\ref{LocPlanar} follows from Corollary~\ref{FinMany} by letting $c(S)$ be strictly larger than the maximum number of vertices in a $4$-list-critical graph embeddable in $S$. Hence it follows from Theorem~\ref{LinearGenus} that $c(S) = O(g)$ where $g$ is the genus of $S$. Postle and Thomas~\cite{PostleThomas} improved this further by showing that $c(S) = O(\log g)$ which is best possible since there exists expander graphs with girth $\Omega(\log g)$ and high chromatic number.

For ordinary coloring, Thomassen~\cite{ThomRegular} derived similar consequences about locally planar graphs and algorithms from his theorem that there are only finitely many $4$-critical graphs of girth at least five embeddable in a fixed surface. Indeed, A key approach developed by Thomassen to prove these kinds of results is to consider a subgraph $H$ of a graph $G$. We say a coloring of $H$ \emph{extends} to a coloring of $G$ if the two colorings agree on all vertices of $H$. The key then is to prove that there exists a subgraph $G'$ of $G$, whose size depends only on the size of $H$, such that any coloring of $H$ extends to $G$ if and only if that coloring extends to $G'$. 

In recent years, researchers have realized that proving a linear bound on such subgraphs, namely $|V(G')|=O(|V(H)|)$, is not only desirable in the sense that this is usually best possible but also has many striking consequences. Indeed, a linear bound for such precolored subgraphs for ordinary $3$-coloring is the key to the work of Dvo\v{r}\'ak, Kr\'al' and Thomas~\cite{DvoKraTho} alluded to above (see~\cite{DvoKraTho2} and~\cite{DvoKraTho3}). 

Of central importance is the case when $G$ is a plane graph and $H$ is its outer cycle.  Dvo\v{r}\'ak and Kawarabayashi~\cite{DvoKaw} have proven a linear bound for $3$-list-coloring graphs of girth at least five when $G$ is planar and $H$ has one component. To be more precise, we need the following definition.

\begin{definition}
A graph $G$ is \emph{$C$-critical} (with respect to some list assignment $L$) if for every proper subgraph $H\subset G$ such that $C\subseteq H$, there exists an $L$-coloring of $C$ that extends to an $L$-coloring of $H$, but not to an $L$-coloring of $G$. 
\end{definition}

Dvo\v{r}\'ak and Kawarabayashi~\cite{DvoKaw} proved that if $G$ is a plane graph of girth at least five that is $C$-critical with respect to some $3$-list-assignment $L$ where $C$ is its outer cycle, then $|V(G)|\le \frac{37}{3}|V(C)|$. This linear bound implies that the family of $4$-list-critical graphs of girth at least five is ``hyperbolic", a precise definition of which can be found in Section~\ref{Exp}, but informally says that there exists a constant $K$ such that for every graph in the family, the number of vertices inside a disk is at most $K$ times the number of vertices on the boundary.

The theory of hyperbolic families, developed by Postle and Thomas~\cite{PostleThomas}, then implies a number of striking consequences from this fact, namely Corollary~\ref{LocPlanar} with $c(S)=O(\log g)$ and - following Dvo\v{r}\'ak and Kawarabayashi~\cite{DvoKaw2} - a linear-time algorithm for $3$-list-coloring a graph of girth at least five on a fixed surface. 

The main result of this paper is to prove that the family of $4$-list-critical graphs is in fact ``strongly hyperbolic" (the precise definition of which can also be found in Section~\ref{Exp} but can be thought of as requiring a linear bound not only for disks but annuli). This family being strongly hyperbolic is implied by the following theorem (see~\cite{PostleThomas} for the details of this implication), which an extension of Dvorak and Kawaravayashi's result to two precolored cycles.

\begin{theorem}\label{Cylinder}
Let $G$ be a plane graph of girth at least five, let $L$ be a $3$-list assignment of $G$ and let $C_1\ne C_2$ be facial cycles of $G$. If $G$ is $C_1\cup C_2$-critical with respect to $L$, then $|V(G)|\le 177(|V(C_1)|+|V(C_2)|)$.
\end{theorem}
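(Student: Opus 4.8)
The plan is to reduce the two-cycle (cylinder) case to the one-cycle (disk) case of Dvořák and Kawarabayashi via a cutting argument. First I would observe that since $G$ is $C_1 \cup C_2$-critical, every edge and vertex not on $C_1 \cup C_2$ lies on some path or structure that is ``needed''; in particular $G$ is connected and, after standard reductions, $2$-connected, so we may assume $C_1$ and $C_2$ bound the two faces of a cylinder in which the rest of $G$ is drawn. The key dichotomy is whether there is a short curve separating $C_1$ from $C_2$. If some noncontractible cycle (or more generally a cycle separating $C_1$ from $C_2$) of bounded length exists, I would cut along it, producing two planar instances each with two precolored cycles but of smaller total size, and induct. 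The substantive case is when $C_1$ and $C_2$ are ``far apart'' in the cylinder.

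In that far-apart case, the plan is to cut the cylinder along a shortest path $P$ from $C_1$ to $C_2$ (or along a well-chosen separating structure) to obtain a disk. Cutting along $P$ duplicates $P$ into two copies $P_1, P_2$, and the outer boundary of the resulting disk is a single cycle $C$ consisting of (a traversal of) $C_1$, then $P_1$, then $C_2$, then $P_2$, so $|V(C)| = O(|V(C_1)| + |V(C_2)| + |V(P)|)$. The difficulty is that after cutting one must re-interpret $C_1 \cup C_2$-criticality of $G$ as $C$-criticality (or a critical-like property) of the cut graph $G'$, so that Dvořák--Kawarabayashi's bound $|V(G')| \le \frac{37}{3}|V(C)|$ applies. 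This requires showing that every vertex of $G' - C$ is still ``critical'' for some coloring of $C$ that respects the identification $P_1 \equiv P_2$: one takes an $L$-coloring of $C_1 \cup C_2$ witnessing criticality of a given vertex/edge, extends it greedily along $P$ (using girth at least five and $|L| \ge 3$ to color a path), and checks the extension to $G$ fails iff the corresponding extension to $G'$ fails. One also must bound $|V(P)|$: here I would use the linear isoperimetric bound for $3$-list-coloring graphs of girth at least five promised in the introduction, which forces a shortest $C_1$-to-$C_2$ path in a critical cylinder to have length $O(|V(C_1)| + |V(C_2)|)$, since otherwise a cycle of bounded length far from both $C_i$ would bound a disk with too many interior vertices to be critical.

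More concretely, the steps I would carry out are: (1) standard preliminary reductions — handle $G$ disconnected, cut-vertices, and the case $V(G) = V(C_1) \cup V(C_2)$ directly, reducing to the $2$-connected cylinder case; (2) prove that if there is a cycle of length at most some absolute constant $\ell_0$ separating $C_1$ from $C_2$, cut along it and apply induction on $|V(G)|$, combining the two resulting cylinder bounds; (3) in the remaining case, use the isoperimetric/hyperbolicity input to bound the distance between $C_1$ and $C_2$ (equivalently, bound a shortest connecting path $P$) linearly in $|V(C_1)| + |V(C_2)|$; (4) cut along $P$ to get a planar graph $G'$ with outer cycle $C$, verify $G'$ is $C$-critical (or split off the components that are not and bound them via Dvořák--Kawarabayashi applied to each), and conclude $|V(G)| \le |V(G')| \le \frac{37}{3}|V(C)| = O(|V(C_1)| + |V(C_2)|)$; (5) track constants to land at $177$.

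The main obstacle I anticipate is step (4): after cutting, $C$-criticality of $G'$ is not automatic, because a vertex that was critical in $G$ relative to $C_1 \cup C_2$ need not be critical in $G'$ relative to the enlarged cycle $C$ (the duplicated path $P$ creates colorings of $C$ that do not come from colorings of $C_1 \cup C_2$, and conversely some criticality may be ``lost'' to the path). Handling this cleanly likely requires either (i) choosing $P$ with extremal/minimality properties so that no chords or shortcuts spoil the argument, together with a careful analysis of the $C$-critical components of $G'$ and summing the Dvořák--Kawarabayashi bound over them, or (ii) proving a more robust ``cut lemma'' that directly transfers a linear bound across the cutting operation. Controlling the interaction between the path $P$ and the interior structure — in particular ruling out short separating cycles that would otherwise have been caught in step (2) but reappear after cutting — is where the bulk of the genuine work lies, and is presumably what inflates the constant from $\frac{37}{3}$ to $177$.
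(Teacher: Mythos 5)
Your proposal takes a genuinely different route from the paper's, and it contains a gap that is essentially circular.

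The paper never cuts the cylinder open. It defines a discharging potential $d(T)$ on canvases with at most two boundary components (Definition~\ref{Parameters}) and proves $d(T)\ge 3$ directly by induction on canvas size (Theorem~\ref{StrongLinear}), deriving Theorem~\ref{Cylinder} from it by Euler's formula. The structural input for two boundary components is Thomassen's cylinder theorem (Theorem~\ref{CylinderStructure}, i.e., Theorem~5.1 of~\cite{ThomRegular}), which produces, for far-apart boundary cycles, the same near-boundary configurations (semi-neighboring $3$- and $5$-paths, etc.) that Thomassen's one-cycle theorem produces in the disk case, so one discharging engine handles both the disk and the cylinder. A side effect is that the paper does not rely on the Dvo\v{r}\'ak--Kawarabayashi disk bound at all (indeed it reproves it, albeit with a worse constant), whereas your argument imports it as the engine.

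The substantive problem is your step~(3). You want to bound the length of a shortest $C_1$--$C_2$ path $P$ linearly in $|V(C_1)|+|V(C_2)|$ using the disk isoperimetric bound. This does not follow and, in isolation, cannot: the disk bound only constrains a cycle $D$ of $G$ bounding a disk that excludes both distinguished faces. Any cycle separating $C_1$ from $C_2$ bounds on each side a disk containing one of those faces, so the Dvo\v{r}\'ak--Kawarabayashi bound is silent about both sides. A long thin tube -- an annulus of girth-$5$ cells between two short facial $5$-cycles, with $C_1$--$C_2$ distance $n\gg |V(C_1)|+|V(C_2)|$ -- satisfies the disk bound vacuously, since every contractible cycle in it encloses only $O(1)$ vertices; nothing in the disk bound forbids such a graph from being $C_1\cup C_2$-critical. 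Excluding it is exactly the ``strong hyperbolicity'' content of Theorem~\ref{Cylinder}, so step~(3) assumes what you are trying to prove. The paper instead gets distance control inside the induction via the potential (short paths between boundary components already force $d(T)\ge 3$) and uses Theorem~\ref{CylinderStructure} when they are far apart; that theorem is a genuine two-boundary structure result not implied by the disk bound, and some input of that type is unavoidable.

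Your step~(4) worry (criticality of $G'$ with respect to the cut cycle $C$ is not automatic, because identifying $P_1\equiv P_2$ and allowing free colorings of $P$ are incompatible constraints) is real and would also need careful work, but it is secondary: even granting a clean cut lemma, step~(3) blocks the argument.
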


Hence we obtain the following corollary.

\begin{corollary}\label{StronglyHyper}
The family of $4$-list-critical graphs of girth at least five is strongly hyperbolic.
\end{corollary}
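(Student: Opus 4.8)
The plan is to derive Corollary~\ref{StronglyHyper} from Theorem~\ref{Cylinder} by invoking the machinery of Postle and Thomas~\cite{PostleThomas}, since as the excerpt states, strong hyperbolicity of a family is \emph{implied} by exactly the kind of linear bound on $(C_1\cup C_2)$-critical plane graphs that Theorem~\ref{Cylinder} supplies. Concretely, I would recall the definition of a strongly hyperbolic family from Section~\ref{Exp}: roughly, a family $\mathcal{F}$ of embedded graphs is strongly hyperbolic if it is closed under the appropriate operations (taking subgraphs that remain critical, cutting along cycles, etc.), is hyperbolic with some Cheeger-type constant, and in addition satisfies a \emph{second} isoperimetric inequality controlling the number of vertices trapped between \emph{two} boundary cycles (an annulus/cylinder), not just one (a disk). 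The first step is therefore to verify the bookkeeping: that the family of $4$-list-critical graphs of girth at least five, together with all list assignments, satisfies the structural closure axioms of a (strongly) hyperbolic family in the sense of~\cite{PostleThomas} — this is essentially the same verification already used to show the family is hyperbolic, and it relies on standard facts (a subgraph of a girth-$\ge 5$ graph has girth $\ge 5$, criticality is inherited appropriately, etc.).

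Second, I would establish the disk bound, i.e.\ ordinary hyperbolicity, which is the Dvo\v{r}\'ak--Kawarabayashi estimate $|V(G)|\le \frac{37}{3}|V(C)|$ for plane graphs of girth at least five that are $C$-critical with $C$ the outer cycle, quoted in the excerpt. This gives the Cheeger constant needed for one half of the strong-hyperbolicity definition. Third — and this is the genuinely new ingredient contributed by the present paper — I would plug in Theorem~\ref{Cylinder}: if $G$ is a plane graph of girth at least five that is $(C_1\cup C_2)$-critical with $C_1\ne C_2$ facial cycles, then $|V(G)|\le 177(|V(C_1)|+|V(C_2)|)$. This is precisely the linear annulus bound that the definition of strongly hyperbolic demands. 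One then has to check that the constant in Theorem~\ref{Cylinder} is compatible with whatever normalization~\cite{PostleThomas} uses (e.g.\ that the cylinder constant exceeds or relates appropriately to twice the Cheeger constant, or whatever the precise inequality in the definition is) — but since the definition only asks for \emph{some} linear bound, any explicit constant suffices, and the implication is then immediate by the relevant theorem of~\cite{PostleThomas}.

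The main obstacle, such as it is, is not in this corollary at all but in Theorem~\ref{Cylinder} itself — the corollary is essentially a citation wrapper. So in writing the proof of the corollary the real work is to make sure the hypotheses of the Postle--Thomas implication theorem are met verbatim: in particular that we are quantifying over \emph{all} $3$-list-assignments (which we are, since $4$-list-criticality is defined with a witnessing list assignment), that the embedded-graph family is taken in the correct category (graphs embedded in all surfaces, with the planar/disk and cylinder cases as the base), and that the ``$C_1\ne C_2$'' hypothesis in Theorem~\ref{Cylinder} lines up with the non-degeneracy assumption in the definition of strongly hyperbolic (the case $C_1=C_2$ being the disk case already handled). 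I would therefore phrase the proof as: ``By~\cite{DvoKaw} the family is hyperbolic; by Theorem~\ref{Cylinder} it satisfies the additional cylinder inequality; hence by [the relevant theorem of]~\cite{PostleThomas} it is strongly hyperbolic.'' The only calculation worth spelling out is the comparison of the constants $\tfrac{37}{3}$ and $177$ against the threshold required by the definition, which is routine.
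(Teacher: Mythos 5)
Your argument matches the paper's: the corollary is stated with no proof beyond ``Hence we obtain the following corollary,'' deferring the implication from Theorem~\ref{Cylinder} to the machinery of~\cite{PostleThomas}, exactly as you propose. Your only (immaterial) deviation is invoking~\cite{DvoKaw} for the disk bound needed to first establish ordinary hyperbolicity, whereas the paper's own Corollary~\ref{StrongLinear3} supplies an independent disk bound with constant $89$ — either suffices, since the Postle--Thomas definition only requires \emph{some} Cheeger constant before the cylinder inequality is imposed.
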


Postle and Thomas~\cite{PostleThomas} proved that if $\F$ is a strongly hyperbolic family of graphs, then there exists $c_{\F}$ such that for every graph $G\in \F$, $|V(G)|\le c_{\F}g$ where $g$ is the genus of $G$. That theorem combined with Corollary~\ref{StronglyHyper} implies Theorem~\ref{LinearGenus}. Moreover the proof of Theorem~\ref{Cylinder} does not directly rely on the theorems of Thomassen, or Dvo\v{r}\'ak, Kr\'al' and Thomas, or Dvo\v{r}\'ak and Kawarabayashi and hence provides independent proofs of these results (though in the latter case with a larger constant). 

Thus the majority of this paper is devoted to proving Theorem~\ref{Cylinder}. However, the consequences of Corollary~\ref{StronglyHyper} go beyond even Theorem~\ref{LinearGenus} as explained in full detail in~\cite{PostleThomas}. In the next few subsections, we highlight some of these further applications to related problems about $3$-list-coloring graphs of girth at least five on surfaces.

\subsection{A Linear Bound for Precolored Subgraphs}

Postle and Thomas~\cite{PostleThomas} showed that Theorem~\ref{Cylinder} is equivalent to the strong hyperbolicity of a slightly more general family of graphs as follows. We say $(G,H)$ is a \emph{graph with boundary} if $H$ is a subgraph of the graph $G$. We can then extend the notion of hyperbolicity and strong hyperbolicity to families of graph with boundaries by requiring that the disk (or annuli) not contain any edge or vertex of $H$ in its interior. With this terminology, Theorem~\ref{Cylinder} is equivalent to the following.

\begin{theorem}\label{CylinderBoundary}
The family of graphs with boundary $(G,H)$ where $G$ is a graph of girth at least five such that $G$ is $H$-critical with respect to some $3$-list-assignment is strongly hyperbolic. 
\end{theorem}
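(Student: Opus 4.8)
The plan is to derive Theorem~\ref{CylinderBoundary} from Theorem~\ref{Cylinder} by a reduction that converts a general $H$-critical graph of girth at least five into a graph that is $C_1 \cup C_2$-critical with two facial cycles, together with the machinery from~\cite{PostleThomas} relating these statements to (strong) hyperbolicity. The equivalence is asserted in the excerpt to be due to Postle and Thomas, so the real content is the translation between ``$H$ an arbitrary subgraph'' and ``$H$ a union of two facial cycles'', which proceeds by an isoperimetric/cutting argument.

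First I would recall the precise definition of strong hyperbolicity for families of graphs with boundary from Section~\ref{Exp}: one must exhibit constants so that whenever $(G,H)$ lies in the family and $G$ is drawn in a disk or cylinder (an annulus) with $H$ disjoint from the open disk/annulus, the number of vertices strictly inside is bounded linearly by the number of vertices on the boundary walks. So the task reduces to: given such a drawing, bound the interior. Second, I would observe that the interior of the disk (resp. annulus), together with the one (resp. two) boundary cycles bounding it, is itself a plane graph $G'$ of girth at least five, and that the $H$-criticality of $G$ restricts to criticality of $G'$ with respect to the one or two boundary cycles — here one uses that $H$ does not meet the interior, so any proper subgraph witnessing non-criticality of $G'$ could be lifted back to $G$, contradicting $H$-criticality. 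This is the step where one must be careful that ``facial cycle'' hypotheses of Theorem~\ref{Cylinder} are actually met; if the boundary walk is not a cycle (repeated vertices, chords), one needs the standard device of duplicating vertices or adding a wheel/forcing gadget around the boundary to make it a genuine induced facial cycle while preserving girth five and the criticality and changing $|V(C_i)|$ only by a constant factor. Third, apply Theorem~\ref{Cylinder} (for the annulus case) or the Dvo\v{r}\'ak--Kawarabayashi bound $|V(G)| \le \frac{37}{3}|V(C)|$ (for the disk case) to conclude the required linear isoperimetric inequality, and then invoke the equivalence results of~\cite{PostleThomas} to package this as strong hyperbolicity of the family of graphs with boundary.

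The main obstacle I anticipate is the bookkeeping in the cutting step: ensuring that after restricting to a disk or annulus and possibly modifying the boundary into an honest facial cycle, the resulting graph is \emph{exactly} $C_1 \cup C_2$-critical (not merely a subgraph of a critical graph), that girth five is preserved by the boundary gadget, and that the constants degrade only by fixed multiplicative factors so the final bound $177(|V(C_1)|+|V(C_2)|)$ (or a slightly larger universal constant) still holds. A secondary subtlety is handling the degenerate cases where a boundary cycle is short or where $C_1$ and $C_2$ share vertices, which must be reduced to the hypotheses $C_1 \ne C_2$ of Theorem~\ref{Cylinder}. Since the excerpt explicitly attributes this equivalence to~\cite{PostleThomas}, I expect the paper to simply cite it rather than reprove it, so in practice the ``proof'' here is the remark that Theorem~\ref{CylinderBoundary} and Theorem~\ref{Cylinder} are two formulations of the same fact, with the translation carried out in that reference.
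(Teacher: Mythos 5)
Your proposal is correct and matches the paper's approach: the paper offers no independent proof of Theorem~\ref{CylinderBoundary}, but rather asserts (and cites~\cite{PostleThomas} for) the equivalence between it and Theorem~\ref{Cylinder}, with the substantive work of the paper being the proof of Theorem~\ref{Cylinder} itself. Your closing remark that the paper ``simply cite[s] it rather than reprove it'' is exactly what happens, and your sketch of the cutting/boundary-gadget translation is a fair description of the kind of argument carried out in~\cite{PostleThomas}.
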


Postle and Thomas then showed that if $\F$ is a strongly hyperbolic family with boundary, then there exists $c_{\F}$ such that for every $(G,H)\in \F$, $|V(G)|=c_{\F}(g+|V(H)|)$ where $g$ is the genus of $G$. Combining that theorem with Theorem~\ref{CylinderBoundary} gives the following corollary.

\begin{corollary}\label{LinearCriticalBound}
There exists a constant $c$ such that if $G$ is $H$-critical with respect to some $3$-list-assignment $L$ for some subgraph $H$ of $G$, then $|V(G)|\le c(g+|V(H)|)$ where $g$ is the genus of $G$.
\end{corollary} 

Corollary~\ref{LinearCriticalBound} is a far-reaching generalization of Theorem~\ref{Cylinder} (where $g=0$ and $H$ has at most 2 components), though the constant is much larger than $177$ (around $5,000,000$ if one does the calculations in~\cite{PostleThomas} using 177 and the constant of $37/3$ from Dvo\v{r}\'ak and Kawarabayashi for one cycle). 

\subsection{Precolored Cycles and Crossings Far Apart}

Using Theorem~\ref{CylinderBoundary}, combined with a structure theorem for strongly hyperbolic families of graphs and the fact that every graph of girth at least five embeddable in the plane with at most one crossing is $3$-choosable (which can derived from Thomassen's original proof of $3$-choosability of graphs of girth at least five), Postle and Thomas~\cite{PostleThomas} proved the following.

\begin{theorem}\label{Precolored2}
There exists $D>0$ such that the following holds: Let $G$ be a graph of girth at least five $2$-cell embedded in a surface $S$ of genus $g$ such that the edge-width of $G$ is $\Omega(g)$ and let $L$ be a $3$-list-assignment for $G$. If $X\subset V(G)$ such that $d(u,v)\ge D$ for all $u\ne v\in X$, then every $L$-coloring of $X$ extends to an $L$-coloring of $G$.
\end{theorem}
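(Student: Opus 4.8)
The plan is to deduce Theorem~\ref{Precolored2} from the strong hyperbolicity of $H$-critical girth-five graphs (Theorem~\ref{CylinderBoundary}) together with the Postle--Thomas structure theorem for strongly hyperbolic families. Suppose the theorem fails: there are $G$, $L$, $X$ as in the statement (with $D$ and the constant hidden in $\Omega(g)$ still to be fixed) and an $L$-coloring $\phi$ of $X$ that does not extend to $G$. Since distinct vertices of $X$ are at distance $\ge D\ge 2$, the subgraph $H$ with $V(H)=X$ and $E(H)=\emptyset$ is well defined, with one component per vertex of $X$. Choose a subgraph-minimal $G^{*}\subseteq G$ with $X\subseteq V(G^{*})$ such that $\phi$ does not extend to an $L$-coloring of $G^{*}$. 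Then $\phi$ extends to every proper subgraph of $G^{*}$ that contains $X$, so $\phi$ itself witnesses that $(G^{*},H)$ is $H$-critical with respect to $L$; in particular $G^{*}$ is not a forest, has girth at least five, and $(G^{*},H)$ belongs to the strongly hyperbolic family of Theorem~\ref{CylinderBoundary}. Every non-contractible cycle of $G^{*}$ is one of $G$, so its edge-width is at least that of $G$, hence still $\Omega(g)$, and by the standard reduction we may further assume the embedding of $G^{*}$ is $2$-cell in a surface of genus $g'\le g$.

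Next I would feed $(G^{*},H)$ into the structure theorem for strongly hyperbolic families with boundary. In the regime where the edge-width of $G^{*}$ exceeds the (linear) function of $g'$ that theorem demands, it forces a critical graph to hug its boundary: there is a constant $r$, depending only on the family, such that if in addition the components of $H$ are pairwise at distance greater than $2r$, then $G^{*}$ is contained in the union of pairwise disjoint closed disks $\Delta_{x}$ ($x\in X$) of the embedding, each of radius at most $r$ and each meeting $X$ exactly in $x$. (The auxiliary input that every girth-five graph drawable in the plane with at most one crossing is $3$-choosable is used here, as in~\cite{PostleThomas}, at the step where the structure theorem is applied.) It therefore suffices to take $D>2r+1$ and the constant in $\Omega(g)$ at least the slope required above; then $G^{*}$ is the disjoint union of the plane graphs $G_{x}:=G^{*}[\Delta_{x}]$, each of girth at least five and meeting $X$ in the single vertex $x$ (a piece of $G^{*}$ avoiding $X$ would be $4$-list-critical and planar, contradicting Thomassen's theorem~\cite{ThomList}, so no such extra piece survives).

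Finally I would colour each piece. Fix $x\in X$ and choose the outer face of $G_{x}$ to be incident with $x$. Criticality of $(G^{*},H)$, restricted to the disjoint piece $G_{x}$, says exactly that $\phi(x)$ on $x$ does not extend to an $L$-coloring of $G_{x}$. But $G_{x}$ is a plane graph of girth at least five with a single precoloured vertex $x$ on its outer face and lists of size at least three elsewhere, so Thomassen's theorem~\cite{ThomList} (in the form, obtained along the way to $3$-choosability of girth-five planar graphs, that allows a bounded precoloured subpath of the outer cycle; one may instead invoke the one-crossing version should the structure theorem hand back a near-planar piece) produces an $L$-coloring of $G_{x}$ extending $\phi(x)$. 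Taking the union of these colourings over $x\in X$ extends $\phi$ to $G^{*}$, contradicting the choice of $G^{*}$, and the constants work out with $D$ a fixed multiple of $r$ and the edge-width threshold the linear function of $g$ supplied by the structure theorem.

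I expect the main obstacle to be the precise statement and verification of the structure theorem in the large-edge-width regime: namely, that a girth-five $H$-critical graph with edge-width linear in its genus is genuinely confined to bounded-radius disks around the components of $H$, so that the genus and the non-planar part play no further role, and that the few degenerate, near-planar pieces the cutting argument can create are handled by the one-crossing $3$-choosability fact. The accompanying bookkeeping --- re-embedding $G^{*}$ $2$-cell, and tracking genus and edge-width under passage to a subgraph so that the hypothesis edge-width $=\Omega(g)$ survives --- is routine but must be done with care.
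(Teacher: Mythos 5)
The paper does not contain its own proof of Theorem~\ref{Precolored2}: it states the result as a theorem of Postle and Thomas~\cite{PostleThomas}, deduced from Theorem~\ref{CylinderBoundary} (which this paper establishes via Theorem~\ref{Cylinder}), a structure theorem for strongly hyperbolic families with boundary, and the $3$-choosability of girth-five graphs drawable with one crossing. Your sketch reconstructs exactly that deduction and uses the same three ingredients: a minimal non-extendable subgraph $(G^{*},H)$ with $H$ the one-vertex rings on $X$ is $H$-critical, hence lies in the strongly hyperbolic family by Theorem~\ref{CylinderBoundary}; the structure theorem then confines $G^{*}$ to disjoint bounded-radius disks about the vertices of $X$ once the edge-width is a large enough multiple of the genus; and each piece is coloured by Thomassen's theorem after re-embedding $x$ onto the outer face. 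This is the right outline and matches the approach the paper points to; the load-bearing step (the precise form of the structure theorem, where the one-crossing fact enters, and the bookkeeping on genus and edge-width when passing to $G^{*}$ and re-embedding $2$-cell) lives entirely in~\cite{PostleThomas} and cannot be checked from the present paper, which you yourself correctly flag as the part requiring care.
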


Indeed, Postle and Thomas proved a stronger version of Theorem~\ref{Precolored2} when there are precolored cycles of length at most four far enough apart. 

\begin{theorem}\label{PrecoloredCycles}
There exists $D>0$ such that the following holds: Let $G$ be a graph $2$-cell embedded in a surface $S$ of genus $g$ such that the edge-width of $G$ is $\Omega(g)$ and let $L$ be a $3$-list-assignment for $G$. Let $\C$ be the set of cycles of $G$ of length at most four. If $d(C_i,C_j)\ge D$ for all $C_i\ne C_j \in \C$ and each $C_i\in \C$ is homotopically trivial, then if $\phi$ is an $L$-coloring of the cycles in $\C$, then $\phi$ extends to an $L$-coloring of $G$.
\end{theorem}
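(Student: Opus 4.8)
The plan is to derive Theorem~\ref{PrecoloredCycles} from the strong hyperbolicity established in Corollary~\ref{StronglyHyper}, following the template of Postle and Thomas~\cite{PostleThomas} for extracting ``precolored objects far apart'' statements from strongly hyperbolic families. First I would reduce to the setting of graphs of girth at least five. Each cycle $C_i\in\C$ has length at most four; since $G$ itself need not have girth five, I would first delete the (boundedly many) vertices and edges lying ``near'' the cycles in $\C$ -- more precisely, contract or handle the interiors of the short cycles by hand. The key observation is that since each $C_i$ is homotopically trivial and the $C_i$ are pairwise at distance $\ge D$, each $C_i$ bounds a disk $\Delta_i$ whose other boundary data is controlled, and the disks are pairwise disjoint when $D$ is large relative to the edge-width assumption $\mathrm{ew}(G)=\Omega(g)$. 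Inside each $\Delta_i$ one uses the planar result (Thomassen's theorem, or its one-crossing strengthening quoted above) together with the fact that a $3$-list-coloring of a $4$-cycle extends into any girth-five planar graph it bounds, possibly after identifying opposite vertices. Thus I can pass to an auxiliary graph $G'$ of girth at least five in which each short cycle has been replaced by a single precolored vertex (or short precolored path), and it suffices to extend an $L$-coloring of a set $X'$ of pairwise-far-apart vertices of $G'$.

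Next I would invoke the structural machinery for strongly hyperbolic families. By Corollary~\ref{StronglyHyper} (equivalently Theorem~\ref{CylinderBoundary}), the family of $4$-list-critical graphs of girth at least five -- and of $H$-critical such graphs -- is strongly hyperbolic, which by the Postle--Thomas structure theorem means: any such critical graph embedded in a surface decomposes into a bounded number of ``bounded'' pieces glued along short cycles/curves, each piece either of bounded size or a ``cylinder'' of bounded width around a non-contractible curve. Suppose for contradiction that the desired coloring $\phi$ of $\bigcup\C$ does not extend to $G'$. Then $G'$ (or rather $G'$ together with the precolored set $X'$ as boundary $H$) contains an $H$-critical subgraph $G''$. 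Strong hyperbolicity forces $G''$ to have bounded ``reach'': every vertex of $G''$ lies within bounded distance (depending only on the hyperbolicity constant, which is a function of the $177$ from Theorem~\ref{Cylinder}) of some vertex of $H=X'$, or else $G''$ wraps a non-contractible curve of bounded length. Since $\mathrm{ew}(G)=\Omega(g)$ with a large enough implied constant, the latter is impossible; and since the vertices of $X'$ are pairwise at distance $\ge D$ with $D$ chosen larger than twice the reach bound, the bounded neighborhoods of distinct precolored vertices are disjoint disks. Within each such disk the criticality would localize to a planar $C$-critical graph with one precolored cycle, contradicting Thomassen's planar theorem (respectively Dvořák--Kawarabayashi's linear bound, which shows no such critical graph exists once we are far from the boundary).

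The main obstacle, and the step requiring the most care, is the reduction from general girth-five graphs with short cycles to the clean ``precolored vertices far apart'' setting -- i.e., getting from Theorem~\ref{Precolored2}-type input to something the hyperbolic theory digests directly. One must verify that collapsing the interior of each homotopically trivial $4$-cycle does not create shorter cycles or destroy the girth-five hypothesis globally, and that a list-coloring of the $4$-cycle always extends inside the disk it bounds in $G$ (this uses that $G$ has girth at least five \emph{outside} $\C$, so the disk interior has girth five, and that Thomassen's precolored-cycle extension theorem applies to cycles of length at most five). A secondary technical point is bookkeeping the constant $D$: it must exceed the sum of (i) the diameter bound coming from strong hyperbolicity applied with constant $177$, (ii) the edge-width threshold's implied constant, and (iii) the radii of the disks $\Delta_i$ -- but each of these is an absolute constant independent of $G$, $S$, and $g$, so such a $D$ exists. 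Once the reduction is in place, the contradiction argument via the structure theorem for strongly hyperbolic families is essentially a black-box application of the results of~\cite{PostleThomas}.
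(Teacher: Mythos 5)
The paper does not prove Theorem~\ref{PrecoloredCycles} itself; it states it as a consequence derived by Postle and Thomas in~\cite{PostleThomas} from Theorem~\ref{CylinderBoundary}, the structure theorem for strongly hyperbolic families, and the fact that girth-five plane graphs with at most one crossing are $3$-choosable. Your high-level plan matches that description: establish strong hyperbolicity via Corollary~\ref{StronglyHyper} and then hand off to the Postle--Thomas structural machinery. That part is fine.

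The part that would not survive scrutiny is the intermediate reduction where you ``pass to an auxiliary graph $G'$ of girth at least five in which each short cycle has been replaced by a single precolored vertex (or short precolored path), and it suffices to extend an $L$-coloring of a set $X'$ of pairwise-far-apart vertices.'' This is an attempt to reduce Theorem~\ref{PrecoloredCycles} to the weaker Theorem~\ref{Precolored2}, but the reduction is not sound as stated: a precolored $4$-cycle imposes color constraints on the (up to four disjoint) sets of outside neighbors of its four vertices, and this information is destroyed when the cycle is contracted to a single vertex. Contracting can also create parallel edges or new short cycles if two vertices of $C_i$ share an outside neighbor, so the claim that $G'$ has girth at least five is itself in doubt. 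The framework of~\cite{PostleThomas} avoids this entirely: it works with graphs with rings and with $H$-critical graphs (exactly the setting of Theorem~\ref{CylinderBoundary}), so the precolored cycles $C_i$ are kept as boundary components rather than collapsed. With that formulation, if $\phi$ does not extend one finds an $H$-critical subgraph with $H=\bigcup\C$; strong hyperbolicity forces it to localize to disjoint bounded neighborhoods of the $C_i$ (the non-contractible alternative being excluded by the edge-width hypothesis), and the localized pieces are killed by the planar/one-crossing $3$-choosability input. Your proposal mentions the one-crossing fact only parenthetically, whereas the paper flags it as one of the three essential ingredients; it is what rules out the residual localized critical pieces. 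So: right target machinery, but the ``collapse to vertices'' detour is both unnecessary and incorrect, and the argument should instead apply Theorem~\ref{CylinderBoundary} directly with the short cycles as the boundary $H$.
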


When $S$ is the plane, this was proved by Dvo\v{r}\'ak~\cite{Dvo} and hence Theorem~\ref{PrecoloredCycles} provides an independent proof of his result. His result is actually an analogue of Havel's conjecture for list-coloring. 

Havel's conjecture~\cite{Hav1, Hav2} states that there exists $d>0$ such that if all the triangles in a planar graph $G$ are pairwise distance at least $d$ apart, then $G$ is $3$-colorable. Dvo\v{r}\'ak, Kr\'al' and Thomas~\cite{DvoKraTho5} proved Havel's conjecture (see also~\cite{DvoKraTho}). An essential ingredient of their proof is proving that the family of $4$-critical graphs of girth at least five is strongly hyperbolic, for which Theorem~\ref{Cylinder} provides an independent (and arguably shorter) proof. Dvo\v{r}\'ak's result is a natural analogue of Havel's conjecture for list-coloring as there exist triangle-free planar graphs which are not $3$-choosable. 

Postle and Thomas~\cite{PostleThomas} also deduced the following theorem from Theorem~\ref{CylinderBoundary}.

\begin{theorem}\label{CrossingSurface}
There exists $D>0$ such that the following holds: Let $G$ be a graph of girth at least five drawn in a surface $S$ of genus $g$ with a set of crossings $X$ and $L$ be a $3$-list-assignment for $G$. Let $G_X$ be the graph obtained by adding a vertex $v_x$ at every crossing $x\in X$. If the edge-width of $G_X$ is $\Omega(g)$ and $d(v_x,v_{x'})\ge D$ for all $v_x \ne v_{x'} \in V(G_X)\setminus V(G)$, then $G$ is $L$-colorable.
\end{theorem}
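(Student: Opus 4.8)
The plan is to derive Theorem~\ref{CrossingSurface} from Theorem~\ref{CylinderBoundary} (equivalently Corollary~\ref{StronglyHyper}) together with the structure theory of strongly hyperbolic families, in essentially the same way Theorem~\ref{Precolored2} and Theorem~\ref{PrecoloredCycles} are deduced. First I would preprocess the drawing: given the drawing of $G$ in $S$ with crossing set $X$, form the \emph{planarized} (really, surface-embedded) graph $G_X$ by placing a new degree-four vertex $v_x$ at each crossing $x$, so that $G_X$ is genuinely $2$-cell embedded (after adding vertices and cutting along non-cellular curves if necessary) in a surface of the same genus $g$. The key point is that $G$ is $L$-colorable whenever the ``gadget'' around each $v_x$ can be consistently colored; so I would replace each crossing vertex $v_x$ by a small fixed gadget $R_x$ of girth at least five (a short cycle or a constant-size graph with an appropriate list assignment) whose $L$-colorings encode exactly the pairs of colors that may legitimately appear on the four original edge-ends meeting at $x$. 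Since a girth-five planar graph with a single crossing is $3$-choosable (noted in the excerpt as derivable from Thomassen's proof), such a local gadget exists with bounded size; call the resulting embedded graph $G'$ and its list assignment $L'$, still of girth at least five.

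Next I would invoke the hyperbolicity machinery. The set $\{v_x : x \in X\}$ is pairwise at distance $\ge D$ in $G_X$, hence after the bounded-size gadget replacement the gadgets $R_x$ are pairwise at distance $\ge D - O(1)$ in $G'$; choosing $D$ large enough relative to the universal constants produced by Corollary~\ref{StronglyHyper} and by the structure theorem for strongly hyperbolic families in~\cite{PostleThomas}, the gadgets are ``far enough apart'' in the sense required by that structure theorem. Concretely, I would apply the version of the result giving Theorem~\ref{PrecoloredCycles}: the family of graphs with boundary $(G', H')$ that are $H'$-critical with respect to a $3$-list-assignment is strongly hyperbolic (Theorem~\ref{CylinderBoundary}), and a strongly hyperbolic family embedded with edge-width $\Omega(g)$ and with a collection of homotopically trivial short subgraphs (here, the cycles bounding the gadget regions, which are homotopically trivial because the crossings are local) pairwise at distance $\ge D$ admits an extension of any given precoloring of those subgraphs. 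Applying this to $G'$ with $H'$ the union of the gadget boundaries and with the precoloring dictated by the gadgets, I obtain an $L'$-coloring of $G'$, which restricts to an $L$-coloring of $G - \{$crossed edges$\}$ that extends across each crossing because the gadget was built precisely to record the allowed color pairs; hence $G$ is $L$-colorable.

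Two bookkeeping points need care. First, the edge-width hypothesis: I assume edge-width of $G_X$ is $\Omega(g)$, but I need edge-width of $G'$ to be $\Omega(g)$; since $G'$ differs from $G_X$ only by bounded-size local gadgets, any short non-contractible cycle in $G'$ yields one of comparable length in $G_X$, so the hypothesis transfers with only a constant-factor loss, which can be absorbed into the implied constant. Second, the homotopy condition in Theorem~\ref{PrecoloredCycles} and its generalizations requires the precolored pieces to be homotopically trivial; a crossing is a local phenomenon, so the small disk around each $v_x$ (and the gadget replacing it) is contractible, and this is exactly what lets me treat the gadget boundaries as homotopically trivial precolored cycles. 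I would also need to confirm that introducing the gadgets does not create cycles of length less than five — this constrains the gadget design but is a finite check, and the single-crossing $3$-choosability fact guarantees a valid design exists.

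The main obstacle I expect is the gadget construction: one must exhibit, for a crossing of two edges $uv$ and $st$, a constant-size graph of girth at least five with a $3$-list-assignment whose set of colorings, restricted to the four attachment points, is exactly $\{(\phi(u),\phi(v),\phi(s),\phi(t)) : \phi(u)\ne\phi(v),\ \phi(s)\ne\phi(t)\}$ — or at least a superset tight enough that colorability of the gadgetized graph implies colorability of $G$. Getting the girth condition, the list sizes, and the precise ``realizable pair'' constraint simultaneously is the delicate part, and it is here that the derivation of single-crossing $3$-choosability from Thomassen's argument does the real work. Everything after the gadget is in place is a black-box application of the strongly-hyperbolic structure theorem from~\cite{PostleThomas}, parallel to the proof of Theorem~\ref{PrecoloredCycles}, with $D$ chosen at the end to dominate all accumulated constants.
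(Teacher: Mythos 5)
The paper does not prove Theorem~\ref{CrossingSurface}: it is stated as a consequence of Theorem~\ref{CylinderBoundary} established by Postle and Thomas in~\cite{PostleThomas}, using the structure theorem for strongly hyperbolic families together with the fact that a planar graph of girth at least five with one crossing is $3$-choosable. Your identification of these three ingredients is correct, but the concrete argument you interpolate between them contains a genuine gap.

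The gap is the gadget step. You assert that single-crossing $3$-choosability yields a constant-size girth-five gadget $R_x$ with a $3$-list-assignment whose colorings, restricted to the four attachment points, realize exactly $\{\phi : \phi(u)\ne\phi(v),\ \phi(s)\ne\phi(t)\}$. This is a non-sequitur: single-crossing $3$-choosability says that a certain class of graphs \emph{is} colorable; it provides no mechanism for building a graph whose set of colorings encodes a prescribed crossing constraint. Such a gadget is in fact at odds with the girth and list-size hypotheses. Forbidding $\phi(u)=\phi(v)$ for a non-adjacent pair $u,v$ requires either short cycles or lists of size two somewhere, and the slack created by girth five and $3$-lists --- this is precisely the content of Theorem~\ref{Thom0} and Thomassen's theorem~\cite{ThomList} --- leaves any bounded planar girth-five substructure with $3$-lists colorable even after precoloring $u$ and $v$ with the same color; so the gadget cannot reject the bad colorings. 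In the actual derivation the single-crossing fact is used in the opposite direction: the structure theorem, applied to the planarization $G_X$ with a small neighborhood of each $v_x$ treated as a ring (which absorbs the short contractible cycles that may pass through $v_x$, since $G_X$ itself need not have girth five near crossings), confines any hypothetical $4$-list-critical subgraph to a planar disk containing at most one crossing vertex, and single-crossing $3$-choosability then shows that disk harbours no such critical subgraph, whence $G$ is $L$-colorable. You should replace the gadget construction with this localization argument; the rest of your outline --- transferring the edge-width hypothesis from $G_X$ to the modified graph, and choosing $D$ large relative to the hyperbolicity constants --- can stand once the gadget is removed.
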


For ordinary $3$-coloring, the analogues of Theorems~\ref{Precolored2},~\ref{PrecoloredCycles}, and~\ref{CrossingSurface} may be derived from Dvo\v{r}\'ak, Kr\'al' and Thomas' work~\cite{DvoKraTho5}.

\subsection{Exponentially Many List Colorings}

For ordinary coloring, Thomassen gave a suprisingly short proof that every planar graph of girth at least five has at least $2^{\frac{|V(G)|}{9}}$ distinct $3$-colorings by using the edge-density of planar graphs of girth at last five and the fact that such a graph has at least one $3$-coloring by Gr\"otzsch's Theorem. Furthermore, Thomassen's work easily implies that for every surface $S$, there exists $c_S$ such that if a graph $G$ of girth at least five embedded in $S$ has at least one $3$-coloring, then it has at least $c_S2^{\frac{|V(G)|}{9}}$ distinct $3$-colorings. 

As for list-coloring, Thomassen~\cite{ThomExp} in a deeper result proved that a planar graph $G$ of girth at least five has at least $2^{\frac{|V(G)|}{10000}}$ distinct $L$-colorings for any $3$-list-assignment $L$. Once again using Theorem~\ref{CylinderBoundary} and a structure theorem for strongly hyperbolic families of graphs, Postle and Thomas~\cite{PostleThomas} extended this to all surfaces as follows.

\begin{theorem}\label{ExpSurfacetheorem}
There exists $\epsilon > 0$ such that: For every surface $S$ there exists a constant $c_S>0$ such that following holds: Let $G$ be a graph of girth at least five embedded in $\Sigma$ and $L$ a $3$-list-assignment for $G$. If $G$ has an $L$-coloring, then $G$ has at least $c_S 2^{\epsilon|V(G)|}$ $L$-colorings of $G$.
\end{theorem}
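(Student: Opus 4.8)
The plan is to derive the exponential lower bound on the number of $L$-colorings from the strong hyperbolicity of the family of $4$-list-critical graphs of girth at least five (Corollary~\ref{StronglyHyper}), exactly as Postle and Thomas do in~\cite{PostleThomas}. I would first reduce to the case where $G$ itself contains no $4$-list-critical subgraph with respect to $L$, i.e.\ $G$ is $L$-colorable and we want many colorings. The starting point is the planar bound of Thomassen~\cite{ThomExp}: a plane graph of girth at least five that is $L$-colorable has at least $2^{|V(G)|/10000}$ distinct $L$-colorings. The goal is to bootstrap this to an arbitrary surface $S$ of genus $g$, with the loss absorbed into the surface-dependent constant $c_S$ and a possibly smaller universal exponent $\epsilon$.

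Next I would invoke the structure theorem for strongly hyperbolic families from~\cite{PostleThomas}: since the family of $H$-critical graphs of girth at least five (Theorem~\ref{CylinderBoundary}) is strongly hyperbolic, a graph $G$ of girth at least five $2$-cell embedded in $S$ has a decomposition in which, after cutting along a bounded number (in terms of $g$) of short non-contractible cycles and removing a bounded-size ``core'', what remains is a collection of plane pieces (disks and cylinders) whose boundary lengths are bounded by a function of $g$, and such that colorings of the boundary extend into each piece. Concretely, one fixes an $L$-coloring $\phi_0$ of $G$ (which exists by hypothesis), and one shows that at most $O(g)$ vertices lie ``close'' to the bounded-size cutting curves and core; the remaining $|V(G)| - O(g)$ vertices lie in large plane regions. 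On each such plane region $G_i$ with its already-colored boundary, Thomassen's theorem (in its version for a precolored facial cycle, which follows from~\cite{ThomExp} together with the linear bound for $C$-critical graphs) gives at least $2^{(|V(G_i)| - O(1))/10000}$ extensions of the fixed boundary coloring. Multiplying across the regions and over the independent choices, and using $\sum_i |V(G_i)| \ge |V(G)| - O(g)$, yields at least $c_S \cdot 2^{\epsilon |V(G)|}$ distinct $L$-colorings of $G$ for $\epsilon$ slightly below $1/10000$ and $c_S = 2^{-O(g)}$.

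The technical heart, and the step I expect to be the main obstacle, is establishing that the non-plane part of $G$ — the part destroyed by cutting along non-contractible curves and by the hyperbolic ``core'' — involves only $O(g)$ vertices, and that the plane pieces one is left with genuinely have \emph{bounded} boundary when $G$ has large edge-width, or, when the edge-width is small, that one can still reduce to large-edge-width pieces by repeatedly cutting along short non-contractible cycles (each such cut reduces the Euler genus, so only $O(g)$ cuts occur, each of bounded length by definition). Controlling the interaction between these cuts and the critical-subgraph structure — in particular ensuring that deleting the bounded neighborhood of all cutting curves really does leave $L$-colorable plane chunks to which Thomassen's counting applies, without double-counting colorings — is where the careful bookkeeping lies; this is precisely the content of the structure theorem for strongly hyperbolic families in~\cite{PostleThomas}, so I would quote that machinery rather than redo it, and the remaining work is the multiplicative counting argument sketched above.
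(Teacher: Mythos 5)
Your proposal takes a genuinely different route from the paper's. The paper does \emph{not} apply the structure theorem for the strongly hyperbolic family of $4$-list-critical graphs directly to an $L$-colorable graph $G$. Instead it introduces an auxiliary family — graphs with rings that are ``$(\epsilon,\alpha)$-exponentially-critical'' with respect to a $3$-list-assignment (Definition 5.18 of~\cite{PostleThomas}), a criticality notion tailored to the property of \emph{not} having exponentially many extensions — proves that \emph{this} family is hyperbolic (Theorem~\ref{ExpHyper}), and then cites the Postle--Thomas implication that hyperbolicity of this particular family yields Theorems~\ref{ExpSurfacetheorem} and~\ref{ExpSurface2}. The engine of the hyperbolicity proof is Lemma~\ref{ExpManyDisc}, a quantitative strengthening of Thomassen's planar counting theorem for a precolored connected subgraph $S$ carrying the explicit correction term $265(3|E(S)|-5|V(S)|)$; combined with the linear bound for $C$-critical canvases (Theorem~\ref{StrongLinear} and Corollary~\ref{StrongLinear3}), this is what makes the isoperimetric inequality close in a disk.

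The gap in your sketch is twofold. First, the structure theorem for the strongly hyperbolic family of $4$-list-critical graphs gives information about \emph{critical} members of that family, not a ready-made decomposition of an arbitrary $L$-colorable $G$ into precolorable plane pieces with short boundary; passing from the one to the other, and keeping track of how boundary precolorings of adjacent pieces must agree and how much counting is lost to the cutting curves and core, is exactly the content of the Postle--Thomas implication that the paper chooses to cite (after identifying the exponentially-critical family as the correct object), and you would be re-deriving it from scratch. Second, your parenthetical that a facial-cycle version of Thomassen's counting ``follows from~\cite{ThomExp} together with the linear bound for $C$-critical graphs'' is in fact the key technical step — it is Lemma~\ref{ExpManyDisc} — and it is not immediate: its proof is an induction that needs, for a critical subcanvas $T|G'$, both ${\rm def}(T|G')\ge 1$ and $v(T|G')\le 265\,{\rm def}(T|G')$, which is where Theorem~\ref{StrongLinear} enters in a load-bearing way rather than merely as a bound on $|V(G_2)|$.
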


Indeed, they prove a stronger version of Theorem~\ref{ExpSurfacetheorem} about extending a precoloring of a subset of the vertices as follows. Note if $G$ is a graph and $R\subseteq V(G)$, we let $G[R]$ denote the subgraph of $G$ induced by $R$.

\begin{theorem}\label{ExpSurface2}
There exist constants $\epsilon,\alpha>0$ such that following holds: Let $G$ be a graph of girth at least five embedded in a surface $S$ of genus $g$, $R\subseteq V(G)$ and $L$ a $3$-list-assignment for $G$. If $\phi$ is an $L$-coloring of $G[R]$ such that $\phi$ extends to an $L$-coloring of $G$, then $\phi$ extends to at least $2^{\epsilon(|V(G)|-\alpha(g+|R|))}$ $L$-colorings of $G$.
\end{theorem}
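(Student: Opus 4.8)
The plan is to deduce Theorem~\ref{ExpSurface2} from the strong hyperbolicity of the family of $H$-critical graphs of girth at least five (Theorem~\ref{CylinderBoundary}, equivalently Corollary~\ref{LinearCriticalBound}) together with the structure theory for strongly hyperbolic families developed in~\cite{PostleThomas}. The starting point is the observation that counting extensions of a fixed precoloring $\phi$ of $G[R]$ is governed by the ``canvas'' $(G,G[R],L,\phi)$, and that if $\phi$ extends at all, then one may pass to a subgraph capturing the obstruction. Concretely, I would first reduce to the case where $G$ is \emph{minimal} with respect to the property that $\phi$ extends to $G$ but counting is hard: if $G$ is $G[R]$-critical, the linear bound of Corollary~\ref{LinearCriticalBound} gives $|V(G)|\le c(g+|R|)$, in which case the bound $2^{\epsilon(|V(G)|-\alpha(g+|R|))}$ is trivially at least $1$ once $\alpha\ge c$, and a single extension exists by hypothesis. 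So the interesting case is when $G$ is far from critical, i.e.\ $G$ has many vertices not ``needed'' to witness the criticality structure.

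The second and main step is to exploit the structure theorem for strongly hyperbolic families from~\cite{PostleThomas}: a graph $G$ in (or built over) such a family, once its genus and boundary size are bounded, decomposes after deleting a set of $O(g+|R|)$ vertices into a bounded number of pieces, each of which is essentially a planar piece with small boundary — in fact one can find a large ``planar region'' $G_0$ of $G$, disjoint from $R$ and from the exceptional set, such that $|V(G_0)|\ge |V(G)|-\alpha'(g+|R|)$ for a suitable constant $\alpha'$, and such that colorings of the boundary of $G_0$ that extend to $G_0$ extend to all of $G$. This is precisely the role played by the linear isoperimetric bound: it forces most of the graph to lie in large low-complexity regions. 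Within such a planar region of girth at least five, I would invoke Thomassen's theorem (\cite{ThomExp}, quoted before Theorem~\ref{ExpSurfacetheorem}) that a planar graph of girth at least five has at least $2^{\epsilon_0 |V(\cdot)|}$ distinct $L$-colorings, or more precisely its rooted analogue allowing a precolored boundary face, to get exponentially many $L$-colorings of $G_0$ agreeing with the already-fixed coloring on the rest of $G$. Multiplying across the bounded number of pieces and absorbing the deleted $O(g+|R|)$ vertices into the $\alpha(g+|R|)$ error term yields the claimed $2^{\epsilon(|V(G)|-\alpha(g+|R|))}$ lower bound.

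A technical point to handle carefully is the interface between $G_0$ and the rest of $G$: one needs that \emph{every} $L$-coloring of $\partial G_0$ obtained by restricting the fixed extension of $\phi$ on $G - (V(G_0)\setminus \partial G_0)$ actually extends inside $G_0$, and that the count inside $G_0$ is uniformly exponential regardless of which such boundary coloring is used. This is where Thomassen's precolored-face version is essential, and where one must ensure the boundary faces of the planar pieces have bounded length (again guaranteed by the structure theorem, since critical pieces with small boundary are small by Corollary~\ref{LinearCriticalBound}, so the ``large'' pieces have controlled boundary). One also has to be slightly careful that the pieces are genuinely internally disjoint so the counts multiply, and that girth at least five is preserved in all the subgraphs, which is automatic since girth is monotone under taking subgraphs.

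The main obstacle, I expect, is not any single estimate but the bookkeeping of the decomposition: correctly invoking the structure theorem of~\cite{PostleThomas} to produce the large planar region (or bounded collection of them) disjoint from $R$ with small, precolorable boundary, and verifying that the exceptional vertex set has size $O(g+|R|)$ with a constant that can be folded into $\alpha$. Once the decomposition is in hand, the exponential counting is a direct application of Thomassen's result piece by piece, and the final inequality follows by choosing $\epsilon$ small relative to $\epsilon_0$ and $\alpha$ large relative to the structural constants and the constant $c$ of Corollary~\ref{LinearCriticalBound}.
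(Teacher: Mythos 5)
Your proposal takes a genuinely different route from the paper. The paper does not decompose $G$ and count extensions piece by piece. Instead, following \cite{PostleThomas}, it introduces the notion of an $(\epsilon,\alpha)$-exponentially-critical graph (Definition~5.18 in \cite{PostleThomas}, reproduced in Section~\ref{Exp}), shows that the family of such graphs is hyperbolic (Theorem~\ref{ExpHyper}), and then delegates to the general machinery of \cite{PostleThomas}: hyperbolicity (upgraded there with additional work to strong hyperbolicity) of this auxiliary critical family implies Theorem~\ref{ExpSurface2} directly. So the paper's contribution is one hyperbolicity verification, not a decomposition-and-multiply argument.

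The chief gap in your sketch is the step where you write that you would ``invoke Thomassen's theorem \dots or more precisely its rooted analogue allowing a precolored boundary face.'' Theorem~\ref{ThomExp2} (Thomassen) only handles a precolored path on at most two vertices; the version you actually need, where the precolored subgraph is an entire boundary cycle, is \emph{not} a corollary of Thomassen's theorem and is, in fact, the main technical work of Section~\ref{Exp}. This is precisely Lemma~\ref{ExpManyDisc}, and its proof depends essentially on the linear isoperimetric bound of Theorem~\ref{StrongLinear}: in the induction, when one of the two candidate colors at a boundary neighbor fails to extend, one must bound the size of the resulting $H$-critical obstruction, and Theorem~\ref{StrongLinear} is what furnishes that bound. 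Without Lemma~\ref{ExpManyDisc} (or an equivalent) there is no uniform exponential lower bound for the colorings of a planar piece with a precolored cycle on its boundary, and your piece-by-piece multiplication has nothing to multiply.

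A secondary issue is the initial dichotomy ``if $G$ is $G[R]$-critical \dots otherwise $G$ is far from critical.'' These cases do not partition the problem: a graph may be neither $G[R]$-critical nor close to a disjoint union of a small critical core and a free planar region. The notion that actually makes the minimality argument go through is exponential criticality itself (a graph in which deleting any edge or vertex strictly increases the number of extensions past the threshold for some ring-coloring). That definition is what turns the problem into ``show this critical family is hyperbolic,'' and it is exactly what the paper, following \cite{PostleThomas}, uses. Your high-level intuition — the linear isoperimetric bound forces the bulk of $G$ to live in low-complexity planar regions where Thomassen-style counting is available — is sound and is indeed what lies beneath the machinery, but as written the proposal assumes, rather than proves, the rooted counting lemma that does the real work.
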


To prove Theorem~\ref{ExpSurface2}, they showed that it suffices to prove that the family of graphs of girth at least five with boundary which are `critical' with respect to not having exponentially many extensions is strongly hyperbolic. In fact, they proved with some additional work that it suffices to prove such a family is hyperbolic. The proof of that fact however was intentionally omitted from their paper as it relies on the proof of Theorem~\ref{Cylinder}; we provide the proof of said hyperbolicity in Section~\ref{Exp}, thereby completing the proof of Theorem~\ref{ExpSurface2}.

\subsection{Outline of Proof}\label{Outline}

The above theorems show the value in establishing that the family of $4$-list-critical graphs is strongly hyperbolic. Our main result - Theorem~\ref{Cylinder} - proves this fact. To prove Theorem~\ref{Cylinder}, we will need Thomassen's~\cite{ThomShort} stronger inductive statement as follows.

\begin{theorem}\label{Thom0}
Let $G$ be a plane graph of girth at least $5$ and $C$ the outer cycle of $G$.
Let $P$ be a path in $G$ of length at most $5$, such that $V(P)\subseteq V(C)$. Let
$L$ be an assignment of lists to the vertices of $G$ such that $|L(v)|$ = 3 for all
$v \in V(G)\setminus V(C)$, $|L(v)|\ge 2$ for all $v\in V(C) \setminus V(P)$, and $|L(v)| = 1$ for all $v\in V (P)$. Further suppose that no vertex $v$ with $|L(v)| = 2$ is adjacent to a vertex $u$ such that $|L(u)|\le 2$. If
there exists an $L$-coloring of the subgraph induced by $V(P)$, then there exists an $L$-coloring of $G$.
\end{theorem}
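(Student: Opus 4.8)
The plan is to prove Theorem~\ref{Thom0} by induction on $|V(G)|$, following Thomassen's classical discharging-free approach for $3$-list-coloring graphs of girth at least five. First I would dispose of the trivial base cases: if $G = C$ is a cycle, or more generally if $G$ has no vertex outside $C$, then the hypothesis that no $2$-list vertex is adjacent to another vertex of list size at most $2$ forces a very rigid structure, and one colors $C$ greedily starting from the precolored path $P$. More generally, the first reductions are the standard ones: if $G$ is disconnected or has a cut-vertex, color the pieces separately and glue (being careful to put the component/block containing $P$ first, and noting that a cut-vertex shared between blocks can be assigned a color that is then treated as a list of size one in the other block, which is legitimate since the ``no two small lists adjacent'' condition need not be preserved but the single-vertex path can always be extended by induction after a short argument). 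If $C$ has a chord, we split $G$ along the chord into two smaller plane graphs, color the side containing $P$ first, and then the chord's endpoints become precolored in the other side; here girth at least five guarantees the two endpoints of the chord are non-adjacent except through the chord itself, so the resulting instances still satisfy the hypotheses after possibly enlarging $P$ to include the chord.

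The heart of the argument is the case where $G$ is $2$-connected with no chords, so $C$ is an induced cycle and every vertex not on $C$ has all three colors available. The main idea is to identify a vertex $v$ on $C$ not in $V(P)$ — ideally adjacent to an endpoint of $P$ or otherwise ``deep'' on $C$ — delete it, and push its constraints onto its neighbors. Concretely, pick $v \in V(C) \setminus V(P)$ adjacent on $C$ to a vertex of $P$ or to an already-handled part of $C$; color $v$ with a color $c \in L(v)$ chosen to avoid the colors forced on its at most one or two already-colored $C$-neighbors, then delete $v$ and remove $c$ from the lists of all remaining neighbors of $v$. The subtlety, and this is where Thomassen's proof is delicate, is that $v$ may have neighbors off $C$ whose lists drop from $3$ to $2$, and these new $2$-list vertices might be adjacent to each other or to old $2$-list vertices of $C$, violating the inductive hypothesis. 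To handle this we cannot always delete a single vertex; instead the proof case-splits on the structure of the neighborhood of the chosen vertex and sometimes deletes a short path of two or three vertices of $C$, or identifies two vertices, choosing colors so that after the operation the ``no adjacent small lists'' invariant is restored and the girth-five condition is preserved (identification is safe precisely because the identified vertices are far apart in a girth-five graph). In the most intricate sub-case, where the outer cycle is short and densely connected to the interior, one argues directly that a separating structure (a short cycle or path through the interior) exists and recurses on both sides.

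The step I expect to be the main obstacle is exactly this bookkeeping on list sizes: maintaining simultaneously that (i) interior vertices keep lists of size $3$, (ii) the new boundary has all vertices with lists of size at least $2$ except on a path $P'$ of length at most $5$ with lists of size $1$, and (iii) no size-$2$ vertex is adjacent to a size-$\le 2$ vertex — all while the deleted vertex had uncontrolled adjacency into the interior. The resolution is Thomassen's observation that it suffices to choose the color of the deleted vertex (or the colors on the deleted short path) cleverly: because each interior neighbor has $3$ colors, removing one still leaves $2$, and the ``no two adjacent small lists'' condition among interior vertices can be arranged by a careful choice, or else forces a short cycle which contradicts girth five. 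One also has to be careful that the path $P'$ on the new outer cycle does not grow beyond length $5$ — this is why the precolored path is allowed length up to $5$ rather than, say, $3$: the two endpoints of a chord or the two neighbors of a deleted vertex can extend the precolored path by a bounded amount, and the constant $5$ is exactly what survives all the reductions. I would organize the write-up by stating each reduction as a claim, verifying the hypotheses of the inductive statement are inherited in each smaller instance, and then presenting the final $2$-connected chordless case as the core, mirroring the structure of Thomassen's original proof in~\cite{ThomShort}.
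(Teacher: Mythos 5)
The paper does not prove Theorem~\ref{Thom0}; it is quoted from Thomassen~\cite{ThomShort} and used as a black box (see the sentence immediately preceding the statement and its use in Lemma~\ref{Structure}), so there is no internal proof here to compare against. Your outline does track the structure of Thomassen's argument in~\cite{ThomShort}: induction on $|V(G)|$, reduction of disconnected graphs, cut-vertices and chords, and then a color-and-delete step along the boundary in the $2$-connected chordless case. As written, though, it is a plan rather than a proof, and the essential content is missing precisely where you flag it: the case analysis in the $2$-connected chordless case that guarantees the reduced lists never create two adjacent vertices with lists of size at most $2$, while keeping the precolored path to length at most $5$. In Thomassen's paper this is done by a concrete, short but not obvious, analysis of the neighborhood of a boundary vertex adjacent to an end of $P$; phrases like ``choose cleverly'', ``delete a short path or identify two vertices'', or ``a separating structure exists and recurse'' are stand-ins for those cases rather than arguments, and nothing in your text enumerates or verifies them. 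Until that is spelled out, the proposal does not establish the theorem.

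One smaller point: your treatment of the cut-vertex reduction is more tentative than it needs to be, and the hedge ``the condition need not be preserved but can be fixed by a short argument'' misdiagnoses the situation. If the cut-vertex $c$ lies on $C$, then $C - c$ is a path lying in a single block, so every other block attached at $c$ meets $C$ only in $c$; all its other vertices are interior and have $3$-lists, so the ``no two adjacent small lists'' hypothesis is vacuous there with $P' = \{c\}$ once $c$ has been colored. If $c$ is not on $C$, the pieces not containing $C$ have only $3$-lists and the hypothesis is again automatic. There is nothing to repair, and stating this cleanly would make the reduction go through without appeal to an unspecified ``short argument.''
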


To prove Theorem~\ref{Cylinder}, we then consider the structures which arise from Theorem~\ref{Thom0}. This is also the idea behind Dvo\v{r}\'ak and Kawarabayashi's proof. However, they used a stronger version of Theorem~\ref{Thom0} (which they also proved) to yield a shorter list of structures and from these derived an inductive formula on the size of a $C$-critical graph which decreases if there are long faces (an idea also used by Dvo\v{r}\'ak, Kr\'al' and Thomas in~\cite{DvoKraTho2} and~\cite{DvoKraTho3}). 

For two cycles, we are not able to use Dvo\v{r}\'ak and Kawarabayashi's stronger version of Theorem~\ref{Thom0}. Instead, we rely only on one additional result of Thomassen~\cite{ThomRegular}, which is the key to his proof that there are finitely many $4$-critical graphs of girth at least five embeddable in a fixed surface. That result (stated in this paper as Theorem~\ref{CylinderStructure}) says that two cycles that are far apart (distance at least three) and whose vertices of lists of size two form an independent set has a $3$-list-coloring (technically Thomassen's proof is done in terms of ordinary coloring but it is easily adapted to work for list-coloring).

Thus our proof for two cycles must only use the structures arising from Theorem~\ref{Thom0}. Hence we also provide an independent proof of Dvo\v{r}\'ak and Karawabayashi's result. To accomplish this, we also prove a general inductive formula on the size of a $C$-critical graph which decreases if there are long faces; crucially though, the formula also decreases if there are many edges from vertices in $C$ to vertices not in $C$. That subtlety is enough to allow us to use the weaker list of structures arising from Theorem~\ref{Thom0}.

\subsection{Outline of Paper}
In Section~\ref{Canvas}, we provide some necessary preliminaries and list the structures arising in $C$-critical graphs (see Lemma~\ref{Structure3}). In Section~\ref{Linear}, we develop our general inductive formula (see Definition~\ref{Parameters} and Theorem~\ref{StrongLinear}) on the size of a $C$-critical graph and then show how it implies Theorem~\ref{Cylinder}. In Section~\ref{Proof}, we prove said general formula. Finally, in Section~\ref{Exp}, we provide as promised the proof that the family of exponentially critical graphs is hyperbolic thereby completing the proof of Theorem~\ref{ExpSurface2} (see Theorem~\ref{ExpHyper}).

\section{Critical Canvases}\label{Canvas}

In this section, we develop the necessary preliminaries and provide a key structural lemma (Lemma~\ref{Structure3}). Let us first define the graphs we will be working with as follows.

\begin{definition}[Canvas]
We say that $(G, S, L)$ is a \emph{canvas} if $G$ is a plane graph of girth at least five, $S$ is a subgraph of $G$, $L$ is a list assignment for the vertices of $G$ such that $|L(v)|\ge 3$ for all $v\in V(G)\setminus V(S)$ and there exists an $L$-coloring of $S$. We call $S$ the \emph{boundary} of the canvas. We say a canvas $(G,S,L)$ is \emph{critical} if $G$ is $S$-critical with respect to the list assignment $L$.
\end{definition}

We need the following lemma about subgraphs of critical graphs. The lemma is standard and can be found in~\cite{PostleThomas2} but we include its proof for completeness. Note that if $G$ is a graph and $A,B$ are subgraphs of $G$, we let $A\cap B$ denote the graph where $V(A\cap B)=V(A)\cap V(B)$ and $E(A\cap B)=E(A)\cap E(B)$.

\begin{lemma} \label{SComponent}
Let $S$ be a subgraph of a graph $G$ such that $G$ is $S$-critical graph with respect to a list assignment $L$. Let $A,B\subseteq G$ such that $A\cup B=G, S\subseteq A$ and $B\ne A\cap B$. Then $B$ is $A\cap B$-critical.
\end{lemma}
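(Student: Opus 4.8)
The plan is to exploit minimality of $G$ as an $S$-critical graph together with the fact that $B$ is a \emph{proper} subgraph of $G$ (which is where the hypothesis $B \ne A \cap B$ comes in: it guarantees $B$ contains some edge or vertex not in $A$, hence not the whole of $A\cup B = G$ forces $B\subsetneq G$ only if $A$ contributes something new; more carefully, $A\cup B = G$ and $B\ne A\cap B$ together give $A\not\subseteq B$, so $B\subsetneq G$). First I would fix the $L$-coloring of $S$ witnessing that $G$ is not $L$-colorable but every proper subgraph is; call it $\phi_0$, an $L$-coloring of $S$ that does not extend to $G$. To show $B$ is $(A\cap B)$-critical, I must show that for every proper subgraph $H$ of $B$ with $A\cap B \subseteq H$, there is an $L$-coloring of $A\cap B$ that extends to $H$ but not to $B$.

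The key step is the following gluing observation: given such an $H\subsetneq B$ with $A \cap B \subseteq H$, consider the subgraph $A \cup H$ of $G$. Since $H\subsetneq B$, we have $A\cup H \subsetneq A\cup B = G$, so by criticality of $G$ the graph $A\cup H$ \emph{is} $L$-colorable — but wait, I actually need the restricted statement about extending colorings of $S$. The cleaner route: $G$ is $S$-critical means there is an $L$-coloring $\psi$ of $S$ that extends to every proper subgraph containing $S$ but not to $G$ itself. Since $S\subseteq A\subseteq A\cup H\subsetneq G$, the coloring $\psi$ extends to an $L$-coloring $\psi'$ of $A\cup H$. Now restrict $\psi'$ to $A\cap B$ — I claim this restriction extends to $H$ (obvious, since $H\subseteq A\cup H$) but does not extend to $B$. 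For the latter: if it did extend to some $L$-coloring $\chi$ of $B$, then $\psi'$ on $A$ and $\chi$ on $B$ agree on $A\cap B$, so they glue to an $L$-coloring of $A\cup B = G$ extending $\psi$, contradicting that $\psi$ does not extend to $G$. The only subtlety is checking that $\psi'|_A$ and $\chi$ genuinely agree on all of $A\cap B$ — this holds because both restrict to $\psi'|_{A\cap B}$ by construction — and that gluing two $L$-colorings agreeing on the intersection yields an $L$-coloring of the union, which is immediate since every edge of $G = A\cup B$ lies in $E(A)$ or $E(B)$.

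The main obstacle — really the only place care is needed — is bookkeeping around which coloring is being extended and making sure the chain of subgraph inclusions $S\subseteq A\subseteq A\cup H\subsetneq G$ and $A\cap B\subseteq H\subsetneq B$ is used correctly, together with verifying $A\cup H$ is a \emph{proper} subgraph of $G$ (this needs $H\subsetneq B$ plus $A\cup B = G$, and is where one must be slightly careful that no edges of $B\setminus H$ sneak into $A$; but if an edge $e\in E(B)\setminus E(H)$ were in $E(A)$, then $e\in E(A)\cap E(B) = E(A\cap B)\subseteq E(H)$, contradiction, and similarly for vertices). I would also state at the outset the trivial direction: every proper subgraph $H'$ of $B$ with $A\cap B\subseteq H'$ — we need colorings of $A\cap B$ extending to $H'$; but actually this is part of the definition to verify, handled uniformly by the argument above. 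So the proof is short: fix the critical witness $\psi$ for $(G,S,L)$, extend it to $A\cup H$, restrict to $A\cap B$, and use the gluing argument to show non-extendability to $B$.
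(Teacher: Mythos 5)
Your argument is correct in substance, but you misstate $S$-criticality: the quantifiers are in the wrong order. The definition reads ``for every proper subgraph $H'$ of $G$ containing $S$, there exists (depending on $H'$) an $L$-coloring of $S$ that extends to $H'$ but not to $G$,'' whereas you claim a single $\psi$ that works simultaneously for every proper subgraph. The correct, weaker statement is all you actually use: applied to the proper subgraph $A\cup H$ it yields a coloring $\psi$ of $S$ extending to $A\cup H$ but not to $G$, and the rest of your argument runs unchanged. (Your opening remark that $B\ne A\cap B$ forces $B\subsetneq G$ is also not right --- $B=G$ is permitted here --- but this is harmless since your proof never uses $B\subsetneq G$.)

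With that slip corrected, your proof takes a genuinely different route from the paper's. The paper argues by contradiction: assuming $B$ is not $(A\cap B)$-critical, it first observes that every isolated vertex of $B$ lies in $A\cap B$, uses this to reduce the failing subgraph to $B\setminus e$ for a single edge $e\in E(B)\setminus E(A\cap B)$, applies $S$-criticality of $G$ to the one-edge-smaller graph $G\setminus e$, and then glues. You instead verify the definition of $(A\cap B)$-criticality directly: for an arbitrary proper subgraph $H$ of $B$ containing $A\cap B$, you form $A\cup H$, check carefully that it is a proper subgraph of $G$, apply $S$-criticality of $G$ to $A\cup H$, restrict to $A\cap B$, and deduce non-extendability to $B$ via the same gluing step. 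Both proofs hinge on the gluing observation; yours dispenses with the isolated-vertex preliminary and the single-edge reduction, and is arguably more direct since it establishes $(A\cap B)$-criticality constructively rather than by contradiction, at the cost of carrying an arbitrary $H$ through the argument rather than a single deleted edge.
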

\begin{proof}
Since $G$ is $S$-critical, every isolated vertex of $G$ is in $S$, and
thus every isolated vertex of $B$ is in $A\cap B$. Suppose for a contradiction
that $B$ is not $A\cap B$-critical. Then, there exists an edge $e \in E(B) \setminus E(A\cap B)$ such
that every $L$-coloring of $A\cap B$ that extends to $B \setminus e$ also extends to $B$. 

Note that $e \not\in E(S)$. Since $G$ is $S$-critical, then there exists an $L$-coloring $\phi$ of $S$ that
extends to an $L$-coloring $\phi$ of $G \setminus e$, but does not extend to an $L$-coloring
of $G$. However, by the choice of $e$, the restriction of $\phi$ to $A\cap B$ extends to an
$L$-coloring $\phi'$ of $B$. Let $\phi''$ be the coloring that matches $\phi'$ on $V(B)$ and $\phi$
on $V(G) \setminus V(B)$. Observe that $\phi''$ is an $L$-coloring of $G$ extending $\phi$, which
is a contradiction.
\end{proof}

Lemma~\ref{SComponent} has two useful corollaries. To state it, we need the following definitions.

\begin{definition}
Let $T=(G,S,L)$ be a canvas and $H\subseteq G$ such that $S\subseteq H$. We define the \emph{subcanvas} of $T$ induced by $H$ to be $(H,S,L)$, which we denote $T|H$. Similarly, we define the \emph{supercanvas} of $T$ induced by $H$ to be $(G,H,L)$, which we denote $T/H$.
\end{definition}

First let us note a useful fact about canvases whose proof we omit.

\begin{prop}\label{CriticalSubgraph}
Let $T=(G,S,L)$ be a canvas. If there exists a proper $L$-coloring of $S$ that does not extend to $G$, then $T$ contains a critical subcanvas.
\end{prop}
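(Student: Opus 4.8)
\textbf{Proof proposal for Proposition~\ref{CriticalSubgraph}.}

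The plan is to take a minimal (with respect to edge deletion) subgraph of $G$ containing $S$ that still fails to have the desired extension property, and show it is a critical subcanvas. More precisely, suppose $\phi_0$ is a proper $L$-coloring of $S$ that does not extend to an $L$-coloring of $G$. First I would consider the family of subgraphs $H$ with $S\subseteq H\subseteq G$ such that $\phi_0$ does not extend to an $L$-coloring of $H$; this family is nonempty since $G$ itself belongs to it. Choose $H$ in this family with $E(H)$ minimal (equivalently, take $H$ to be edge-minimal, noting we may as well delete any isolated vertices not in $S$, or simply keep $V(H)=V(G)$ throughout since deleting vertices only helps — it is cleanest to fix $V(H)=V(G)$ and minimize edges).

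The key step is then to verify that $(H,S,L)$ is a critical subcanvas, i.e.\ that $H$ is $S$-critical with respect to $L$. First, $(H,S,L)$ is a canvas: $G$ has girth at least five so $H$ does too, $S\subseteq H\subseteq G$, the list-size condition $|L(v)|\ge 3$ for $v\notin V(S)$ is inherited, and $S$ has an $L$-coloring (namely $\phi_0$). For criticality, let $e\in E(H)\setminus E(S)$ be arbitrary; I must show there is an $L$-coloring of $S$ that extends to $H\setminus e$ but not to $H$. By minimality of $E(H)$, the graph $H\setminus e$ is not in our family, so $\phi_0$ \emph{does} extend to an $L$-coloring of $H\setminus e$; and by choice of $H$, $\phi_0$ does not extend to $H$. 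Thus $\phi_0$ itself witnesses criticality at the edge $e$. Hence $H$ is $S$-critical, and $T|H=(H,S,L)$ is the desired critical subcanvas.

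I expect the only real subtlety — and the main thing to be careful about — is matching the precise definition of ``$S$-critical'' used in the paper: one needs, for \emph{every} proper subgraph $H'$ with $S\subseteq H'\subsetneq H$, an $L$-coloring of $S$ extending to $H'$ but not to $H$. The argument above gives this directly when $H'=H\setminus e$ for a single edge $e\notin E(S)$, which is exactly the formulation in the definition of $C$-critical (quantifying over proper subgraphs containing $C$ is equivalent to quantifying over single edge deletions outside $S$, since any proper subgraph is contained in some $H\setminus e$ and the extension property is monotone). One should also dispatch the degenerate possibility that $H=S$: in that case $\phi_0$ would be an $L$-coloring of $S=H$, contradicting that $\phi_0$ does not extend to $H$, so in fact $E(H)\setminus E(S)\neq\emptyset$ and the argument is nonvacuous. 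With these checks in place the proof is complete; no computation is involved beyond this bookkeeping, which is why the statement is offered without proof in the paper.
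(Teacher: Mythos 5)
Your overall plan is right: take a subgraph $H$ with $S\subseteq H\subseteq G$ that is minimal subject to $\phi_0$ not extending, and verify $S$-criticality by testing single edge deletions. The paper in fact omits the proof, so there is nothing to compare against; I will just assess correctness. There is, however, a genuine gap in the version you declare ``cleanest,'' namely fixing $V(H)=V(G)$ and minimizing only $|E(H)|$.

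The problem is that the resulting edge-minimal $H$ may contain isolated vertices outside $V(S)$. (For instance, if $G$ has a degree-one vertex $v\notin V(S)$, then deleting its unique incident edge cannot destroy non-extendability, so the edge-minimal $H$ leaves $v$ isolated.) Such an $H$ is simply \emph{not} $S$-critical: taking $H'=H-v$ gives a proper subgraph with $S\subseteq H'$ and $E(H')=E(H)$, and since $|L(v)|\ge 3$, any $L$-coloring of $S$ extends to $H'$ if and only if it extends to $H$ --- so no coloring of $S$ witnesses criticality at $H'$. Your reduction ``any proper subgraph $H'$ is contained in some $H\setminus e$'' is exactly where this fails: an $H'$ with $E(H')=E(H)$ but $V(H')\subsetneq V(H)$ is contained in no $H\setminus e$. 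So the sentence that quantifying over proper subgraphs is equivalent to quantifying over edge deletions is false unless $H$ has no isolated vertices outside $S$.

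The fix is the alternative you mention in passing and then set aside: after choosing an edge-minimal $H$, delete every isolated vertex not in $V(S)$ (this preserves non-extendability of $\phi_0$ and does not change $E(H)$). Once $H$ has no isolated vertices outside $S$, any proper subgraph $H'\subsetneq H$ with $S\subseteq H'$ must satisfy $E(H')\subsetneq E(H)$ (a missing vertex would force a missing incident edge), so $H'\subseteq H\setminus e$ for some $e\in E(H)\setminus E(S)$, and the rest of your argument --- $\phi_0$ extends to $H\setminus e$ by minimality, hence to $H'$ by monotonicity, but not to $H$ --- goes through. Your checks that $(H,S,L)$ is a canvas and that $H\ne S$ are fine. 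So the argument is correct once you commit to pruning the isolated vertices rather than fixing $V(H)=V(G)$.
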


Here then is the first corollary of Lemma~\ref{SComponent}.

\begin{cor} \label{Subcanvas}
Let $T=(G,S,L)$ be a critical canvas. If $H$ is a subgraph of $G$ such that $S$ is a proper subgraph of $H$ and every edge in $E(G)\setminus E(H)$ has no end in $V(H)\setminus V(S)$, then the subcanvas induced by $H$, that is $T|H$, is a critical canvas.
\end{cor}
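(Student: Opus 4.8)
The plan is to derive Corollary~\ref{Subcanvas} as a direct application of Lemma~\ref{SComponent}. First I would set $A := H$ and $B := G$, but that is clearly wrong since we need $A \cup B = G$ with $B \ne A\cap B$; instead I would take $A := H$ and let $B$ be the subgraph of $G$ consisting of all edges \emph{not} in $E(H)$ together with all of their endpoints (equivalently, $E(B) = E(G)\setminus E(H)$ and $V(B)$ is the set of ends of these edges, which we may also enlarge to include $V(S)$ to be safe). Then $A \cup B = G$ since every edge of $G$ lies in $H$ or in $B$, and $S \subseteq A$ by hypothesis. By the hypothesis that every edge in $E(G)\setminus E(H)$ has no end in $V(H)\setminus V(S)$, we get $V(A)\cap V(B) \subseteq V(H)$, and more precisely every vertex of $V(A)\cap V(B)$ lies in $V(S)$ or is an end of an $H$-edge but also of a non-$H$-edge; in any case $E(A\cap B) = E(A)\cap E(B) = \emptyset$, so $A\cap B$ is an edgeless graph on $V(A)\cap V(B)$.

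Next I would check the hypothesis $B \ne A\cap B$ of Lemma~\ref{SComponent}. Since $S$ is a \emph{proper} subgraph of $H$, there is either an edge of $H$ not in $S$ or a vertex of $H$ not in $S$. Hmm — actually what we need is that $B$ has an edge (so that $B \ne A\cap B$, as $A\cap B$ is edgeless), i.e. $E(G)\setminus E(H)\ne\emptyset$. If $E(G) = E(H)$ then $G = H$ up to isolated vertices; but $G$ is $S$-critical so every isolated vertex of $G$ lies in $S$, forcing $G \subseteq H$ after accounting for the vertex set, and since $S \subsetneq H \subseteq G$ we would still be fine: in that degenerate case $T|H = T|G = T$ is critical trivially. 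So I would handle the case $E(G) = E(H)$ separately (it is immediate) and otherwise apply Lemma~\ref{SComponent} to conclude that $B$ is $A\cap B$-critical, i.e. $G[\,\cdot\,]$ restricted appropriately... — no: Lemma~\ref{SComponent} gives that $B$ is $(A\cap B)$-critical with respect to $L$. That is not yet what we want; we want $H = A$ to be $S$-critical.

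So the real work is the other direction, and here is where I would be more careful about which subgraph plays the role of $B$ in Lemma~\ref{SComponent}. To show $T|H = (H,S,L)$ is critical, I must show $H$ is $S$-critical: that $H$ has an $L$-coloring of $S$ not extending to $H$, and that for every $e \in E(H)\setminus E(S)$ every $L$-coloring of $S$ extending to $H\setminus e$ extends to $H$. For criticality of $H$ itself, the cleaner route is to apply Lemma~\ref{SComponent} with the roles reversed: take $A := B_0$ (the non-$H$ part defined above, enlarged to contain $S$) and $B := H$; then $A \cup B = G$, $S \subseteq A$ (by the enlargement), $A\cap B$ is edgeless on $V(A)\cap V(B) \subseteq V(S)$ by the no-end hypothesis, and $B = H \ne A\cap B$ since $S \subsetneq H$ means $H$ has a vertex or edge outside $A\cap B$. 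Lemma~\ref{SComponent} then yields that $H$ is $(A\cap B)$-critical. Since $A\cap B$ is an edgeless subgraph of $S$ on a vertex subset of $V(S)$, and $S$ has an $L$-coloring (canvas property) which restricts to $A\cap B$, a short argument upgrades "$H$ is $(A\cap B)$-critical" to "$H$ is $S$-critical": every edge of $H$ outside $S$ is outside $A\cap B$, so the critical-edge condition transfers, and one checks the coloring-nonextension condition using that any $L$-coloring of $A\cap B$ extends to $S$ within $G$ (indeed $A \supseteq S$, so colorings of $S$ are colorings of a subgraph of $A$). The main obstacle — really the only subtlety — is exactly this last upgrade: passing from criticality relative to the edgeless graph $A\cap B$ to criticality relative to $S$, which requires noting that adding the edges of $S$ (which are never "critical" edges and which every relevant coloring already respects) changes nothing; I would spell this out carefully, plus dispose of the degenerate $E(G)=E(H)$ case at the start.
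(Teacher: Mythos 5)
Your final move --- applying Lemma~\ref{SComponent} with $B=H$ and $A$ the complementary part --- is exactly the paper's approach; the paper simply takes $A = G\setminus(V(H)\setminus V(S))$, which makes $A\cup B=G$ and $A\cap B=S$ immediate. But your version of $A$ (``the non-$H$ part enlarged to contain $S$'') harbors a contradiction that makes the rest of your argument go sideways. If the enlargement only adds the \emph{vertices} $V(S)$, then $E(A)=E(G)\setminus E(H)$ is disjoint from $E(S)\subseteq E(H)$, so $S\not\subseteq A$ whenever $S$ has an edge, and Lemma~\ref{SComponent} does not apply. If instead the enlargement adds all of $S$ (vertices \emph{and} edges), then $E(A)=(E(G)\setminus E(H))\cup E(S)$, so $E(A)\cap E(B)=E(S)$ and $A\cap B$ is not edgeless --- in fact, using the no-end hypothesis to pin down $V(A)\cap V(B)=V(S)$, one gets $A\cap B=S$ on the nose. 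In that correct reading, Lemma~\ref{SComponent} already yields that $H$ is $S$-critical, and the entire ``upgrade from $(A\cap B)$-critical to $S$-critical'' discussion --- which you flag as the main subtlety --- is unnecessary, and is predicated on the false claim that $A\cap B$ is edgeless. So the idea is right, but you should fix the definition of $A$ to genuinely contain $S$, recompute $A\cap B$ (it is $S$, not an edgeless graph), and drop the upgrade step; at that point the proof collapses to the paper's one-line application of Lemma~\ref{SComponent}.
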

\begin{proof}
This follows from Lemma~\ref{SComponent} with $B=H$ and $A=G\setminus (V(H)\setminus V(S))$ since $A\cap B=S$ and $A\cup B=G$.
\end{proof}

Finally here is the second corollary of Lemma~\ref{SComponent}.

\begin{cor}\label{Supercanvas}
Let $T=(G,S,L)$ be a critical canvas. If $H$ is a proper subgraph of $G$ containing $S$, then the supercanvas induced by $H$, that is $T/H$, is critical.
\end{cor}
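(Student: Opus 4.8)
The statement to prove is Corollary~\ref{Supercanvas}: if $T=(G,S,L)$ is a critical canvas and $H$ is a proper subgraph of $G$ containing $S$, then the supercanvas $T/H = (G,H,L)$ is critical, i.e. $G$ is $H$-critical with respect to $L$.

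\medskip

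\noindent\textbf{Proof proposal.} The plan is to apply Lemma~\ref{SComponent} with a suitable decomposition $G = A\cup B$. The natural choice is to take $B = G$ itself and $A = H$. Then $A\cup B = G$ trivially, and $A\cap B = H$ since $H\subseteq G$; moreover $S\subseteq A = H$ by hypothesis, so the containment requirement $S\subseteq A$ of Lemma~\ref{SComponent} is satisfied. Finally, since $H$ is a \emph{proper} subgraph of $G$, we have $B = G \ne H = A\cap B$, so the hypothesis $B\ne A\cap B$ holds. Lemma~\ref{SComponent} then yields that $B = G$ is $A\cap B = H$-critical with respect to $L$, which is precisely the assertion that $T/H = (G,H,L)$ is a critical canvas (one should also note that $(G,H,L)$ is indeed a canvas: $G$ has girth at least five, $H\subseteq G$, lists have size at least $3$ off $V(H)\subseteq$ the relevant vertex set since $V(S)\subseteq V(H)$, and an $L$-coloring of $H$ exists because $G$ is $S$-critical hence $L$-colorable, so its restriction to $H$ works).

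\medskip

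\noindent There is essentially no obstacle here; the only thing to be careful about is checking that the degenerate-looking application of Lemma~\ref{SComponent} with $B=G$ is legitimate, i.e. that the lemma's proof does not secretly use $A\ne G$ or that $A$ omits some vertices of $G$ — inspecting the statement of Lemma~\ref{SComponent}, the only hypotheses are $A\cup B = G$, $S\subseteq A$, and $B\ne A\cap B$, all of which we have verified. A secondary point is to confirm that the conclusion "$B$ is $A\cap B$-critical" unpacks to "$G$ is $H$-critical," which is immediate from the definitions. Thus the corollary follows in a single line from Lemma~\ref{SComponent}.
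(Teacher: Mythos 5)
Your proposal is correct and is exactly the paper's argument: apply Lemma~\ref{SComponent} with $B=G$ and $A=H$. The additional sanity checks you include (that $(G,H,L)$ is a canvas, and that the lemma tolerates $B=G$) are fine but not strictly needed.
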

\begin{proof}
This follows from Lemma~\ref{SComponent} with $B=G$ and $A=H$.
\end{proof}

\subsection{Critical Canvases with One Boundary Component}

We now prove a structure theorem for critical canvases. For that, we need the following structures.

\begin{definition}
Let $T=(G,S,L)$ be a canvas. Let $k\ge 1$ and let $P=p_1p_2\ldots p_{k+1}$ be a path in $G\setminus V(S)$. We say $P$ is a

\begin{itemize}
\item a \emph{neighboring $k$-path} of $S$ if $p_i\in N(S)$ for all $i$ with $1\le i \le k+1$,
\item a \emph{semi-neighboring $3$-path} of $S$ if $k=3$ and $p_1,p_2,p_4\in N(S)$,
\item a \emph{semi-neighboring $5$-path} of $S$ if $k=5$ and $p_1,p_2,p_5,p_6\in N(S)$.
\end{itemize}

Finally let $v\in N(S)\setminus S$ and suppose that $v$ has three neighbors $u_1,u_2,u_3$ in $N(S)\setminus S$. Then we say that $G[\{v,u_1,u_2,u_3\}]$ is a \emph{neighboring claw} of $S$.

\end{definition}

We can first derive a set of simple structures from Theorem~\ref{Thom0} for critical canvases whose boundary has one component as follows.

\begin{lemma}\label{Structure}
If $T=(G,S,L)$ is a critical canvas such that $S$ is connected, then there exists one of the following:

\begin{enumerate}
\item an edge not in $S$ with both ends in $V(S)$, or,
\item a vertex not in $V(S)$ with at least two neighbors in $V(S)$, or,
\item a neighboring $1$-path of $S$.
\end{enumerate}
\end{lemma}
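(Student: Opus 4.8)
The plan is to derive Lemma~\ref{Structure} from Thomassen's Theorem~\ref{Thom0} by contradiction: assume none of the three listed structures exists, and use this assumption to build a list assignment on $G$ satisfying the hypotheses of Theorem~\ref{Thom0}, which will then guarantee that every relevant coloring of $S$ extends to $G$, contradicting criticality of the canvas.

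First I would set up the reduction. Since $(G,S,L)$ is critical, there is an $L$-coloring $\phi_0$ of $S$ that does not extend to $G$. The goal is to massage the situation into the exact shape of Theorem~\ref{Thom0}, whose boundary object is a single cycle $C$ with a distinguished short path $P$ precolored and the other boundary vertices given lists of size $2$, subject to the condition that no list-of-size-$2$ vertex is adjacent to a vertex of list size at most $2$. The natural move is to take $C$ to be (a cycle containing) the boundary walk of the face of $G$ in which $S$ sits; because $S$ is connected we may assume (adding edges/vertices in the outer face if needed, or arguing $G$ is already drawn with $S$ on the outer walk) that $S$ lies on the outer face. Then define new lists $L'$: keep $|L'(v)|=|L(v)|\ge 3$ for $v\notin V(C)$, set $|L'(v)|=1$ (the color $\phi_0(v)$) for $v\in V(S)$, and for the remaining vertices $v\in V(C)\setminus V(S)$ we want to assign lists of size $2$ — these are available because such $v$ had $|L(v)|\ge 3$ originally, so we can delete one color. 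The precolored path $P$ is obtained from $S$; here we need $S$ to have a spanning structure that is a path of length at most $5$, which is not automatic, so in the general version one typically first reduces to the case where $S$ is itself a short path (or handles the general connected $S$ via the stronger statements proved later) — for the purposes of this lemma I would restrict attention to the step that matters: ensuring the adjacency condition of Theorem~\ref{Thom0}.

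The heart of the argument, and the main obstacle, is verifying the clause ``no vertex $v$ with $|L'(v)|=2$ is adjacent to a vertex $u$ with $|L'(u)|\le 2$.'' The vertices with $|L'|\le 2$ after our construction are precisely the vertices of $V(C)\setminus V(S)$ (list size $2$) together with $V(S)$ (list size $1$). So a violation of the clause is exactly: two adjacent vertices of $V(C)\setminus V(S)$, or a vertex of $V(C)\setminus V(S)$ adjacent to a vertex of $V(S)$. The second of these is forbidden by our assumption that structure (2) fails — more precisely, a vertex of $N(S)\setminus S$ with a neighbor in $V(S)$ is exactly a vertex ``not in $V(S)$ with at least one neighbor in $V(S)$'', and if such a vertex also lies on $C$ and $C$ is chosen minimally this forces either structure (1) (a chord of $S$) or structure (2). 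The first violation — two adjacent vertices on the boundary not in $S$ — is precisely a \emph{neighboring $1$-path} if both of those vertices lie in $N(S)$; here one uses connectivity of $S$ and the fact that they lie on the common face-boundary with $S$ to argue they (or nearby vertices) belong to $N(S)$, again invoking that (1) and (2) fail to rule out the degenerate configurations. So assuming all three structures are absent, the adjacency hypothesis of Theorem~\ref{Thom0} holds for $L'$.

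Finally I would close the loop: since $|L'(v)|=1$ on $V(P)$ and these single colors are consistent (they come from the proper coloring $\phi_0$ of $S$, and $S\supseteq$ the precolored path), there is an $L'$-coloring of $G[V(P)]$; Theorem~\ref{Thom0} then yields an $L'$-coloring of $G$, which restricts to an $L$-coloring of $G$ extending $\phi_0$ on $S$ — contradiction. Hence one of the three structures must exist. The step I expect to require the most care is the geometric/combinatorial bookkeeping in the previous paragraph: choosing $C$ correctly, ensuring all ``small-list'' vertices other than $S$ really do lie on $C$ and are mutually non-adjacent and non-adjacent to $S$ unless a forbidden structure appears, and handling the boundary cases where $S$ is very small (a single vertex or edge) so that the ``path $P$ of length at most $5$'' requirement of Theorem~\ref{Thom0} is met.
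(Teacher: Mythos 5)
Your proposal and the paper's proof both aim to invoke Theorem~\ref{Thom0}, but you do so in fundamentally different ways, and yours has a genuine gap that the paper's approach avoids. You try to keep $S$ inside the graph and fit it (or a spanning sub-path of it) into the role of the precolored path $P$ of Theorem~\ref{Thom0}. As you yourself note, this requires $S$ to be a path of length at most $5$, which is false in general — $S$ is an arbitrary connected subgraph, and in the intended applications $S$ is a (long) facial cycle. You gesture at ``reducing to the case where $S$ is a short path'' or ``handling general $S$ via stronger statements proved later,'' but you give no actual reduction, and it is not clear one exists along these lines: there is no reason a connected $S$ of arbitrary size can be replaced by a $\le 5$-path while preserving criticality and the failure of structures (1)--(3). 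A second, smaller confusion: you propose to create the size-$2$ lists on $V(C)\setminus V(S)$ by arbitrarily discarding a color, but that is not what drives the argument; the size-$2$ lists must arise from colors actually excluded by a precolored neighbor, otherwise the final $L'$-coloring need not restrict to an $L$-coloring compatible with $\phi_0$.

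The paper sidesteps both issues by \emph{deleting} $V(S)$ entirely and absorbing the coloring $\phi$ of $S$ into the lists: for $v\notin V(S)$ set $L'(v)=L(v)\setminus\{\phi(u):u\in N(v)\cap V(S)\}$. Since outcome (2) fails, each such $v$ has at most one neighbor in $S$, so $|L'(v)|\ge 2$; since outcome (3) fails, no two adjacent vertices of $G\setminus V(S)$ both have size-$2$ lists. One then applies Theorem~\ref{Thom0} to $G\setminus V(S)$ with $P=\emptyset$ (so the length-$\le 5$ requirement is vacuous) to get an $L'$-coloring of $G\setminus V(S)$, and this together with $\phi$ is an $L$-coloring of $G$ extending $\phi$, contradicting criticality. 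That is the missing idea: do not try to realize $S$ as the precolored path — remove it and encode its coloring in the residual lists, so that Theorem~\ref{Thom0} can be applied with an empty $P$.
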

\begin{proof}
Suppose not. Let $\phi$ be an $L$-coloring of $S$ that does not extend to $G$. Let $L'(v)=L(v)\setminus \{\phi(u)| u\in N(v)\cap V(S)\}$. Since 1 does not hold, $V(G)\setminus V(S)\ne\emptyset$. Since 2 does not hold, $|L'(v)|\ge 2$ for all $v\in V(G)\setminus V(S)$. Since 3 does not hold, there does not exist an edge $uv \in E(G)$ with $u,v \in V(G)\setminus V(S)$ with $|L'(u)|=|L'(v)|=2$. But then by Theorem~\ref{Thom0} applied to $G\setminus V(S)$ and $L'$ (with $P=\emptyset$), there exists an $L'$-coloring $\phi'$ of $G$. But now $\phi''=\phi\cup\phi'$ is an $L$-coloring of $G$, a contradiction.
\end{proof}

Lemma~\ref{Structure} is not useful for our proof however, since a neighboring $1$-path is unhelpful for reductions. Nevertheless, by coloring neighboring $1$-paths and using a second application of Theorem~\ref{Thom0} (or rather Theorem~\ref{Structure}), we can deduce a stronger outcome than Lemma~\ref{Structure}(3) as follows.

\begin{lemma}\label{StructureB}
If $T=(G,S,L)$ is a critical canvas such that $S$ is connected, then there exists one of the following:

\begin{enumerate}
\item an edge not in $S$ with both ends in $V(S)$, or,
\item a vertex not in $V(S)$ with at least two neighbors in $V(S)$, or,
\item a neighboring $2$-path of $S$, or
\item a semi-neighboring $3$-path of $S$, or,
\item a semi-neighboring $5$-path of $S$.
\end{enumerate}
\end{lemma}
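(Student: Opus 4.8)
The plan is to upgrade Lemma~\ref{Structure} by ruling out its weak outcome (3)---a bare neighboring $1$-path---and squeezing out one of the stronger configurations (3)--(5). I would argue by contradiction: suppose $T=(G,S,L)$ is a critical canvas with $S$ connected and none of outcomes (1)--(5) of Lemma~\ref{StructureB} holds. Since (1) and (2) fail, Lemma~\ref{Structure} forces a neighboring $1$-path $uv$ of $S$ (i.e.\ $u,v\notin V(S)$, $uv\in E(G)$, and both $u,v\in N(S)$). The idea is to ``absorb'' this edge into the boundary and apply Theorem~\ref{Thom0} (or Lemma~\ref{Structure}) a second time to the reduced canvas.

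Concretely, let $\phi$ be an $L$-coloring of $S$ not extending to $G$, and set $L'(w)=L(w)\setminus\{\phi(x):x\in N(w)\cap V(S)\}$ for $w\in V(G)\setminus V(S)$; then $|L'(w)|\ge 2$ everywhere off $S$ since (2) fails, and there is no $L'$-size-$2$ edge inside $G\setminus V(S)$ since (3) of Lemma~\ref{Structure}, i.e.\ the neighboring $2$-path outcome here (4)$'$... wait---more carefully, I would work in $G'=G\setminus V(S)$ with list assignment $L'$. Because the failure of our five outcomes says a lot about $N(S)$, I can choose the neighboring $1$-path $uv$ and $2$-color it from $L'$ (possible since $|L'(u)|,|L'(v)|\ge 2$ and $u\ne v$): pick $\phi'(u)\in L'(u)$, $\phi'(v)\in L'(v)\setminus\{\phi'(u)\}$. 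Now let $S'=S\cup\{u,v\}$ with the edge $uv$, extend $\phi$ by $\phi'$ on $\{u,v\}$, and consider the canvas $T'=(G,S',L)$. By Corollary~\ref{Supercanvas}, if $\phi\cup\phi'$ does not extend to $G$ then $T'$ (or a critical subcanvas of it, via Proposition~\ref{CriticalSubgraph}) is critical; if $\phi\cup\phi'$ does extend, we contradict non-extendability of $\phi$. So we get a critical canvas with a strictly larger connected boundary $S'$, and apply Lemma~\ref{Structure} to \emph{it}.

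Lemma~\ref{Structure} applied to $T'$ yields: an edge outside $S'$ with both ends in $V(S')$, or a vertex outside $V(S')$ with $\ge 2$ neighbors in $V(S')$, or a neighboring $1$-path of $S'$. Now I translate each back to a configuration of $S$. A new vertex $z$ with two neighbors in $V(S')=V(S)\cup\{u,v\}$: if both neighbors are in $V(S)$ we get outcome (2); if one is $u$ or $v$ and one is in $V(S)$, then $z\in N(S)$ is adjacent to $u\in N(S)$, giving a neighboring $2$-path $z\text{-}u\text{-}v$ (or a longer chain)---outcome (3); if both neighbors are among $\{u,v\}$, then $z,u,v$ with $z$ adjacent to both and $uv$ an edge would be a triangle, impossible by girth $\ge 5$. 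Similarly an edge $xy\notin E(S')$ with $x,y\in V(S')$: if $x,y\in V(S)$ it's (1); if say $x\in V(S)$, $y\in\{u,v\}$, then $x\in N(S)$... hmm, $x$ is already in $S$, so $xu$ is an edge with $u\in N(S)$, but that just says $u$ has a neighbor in $S$, which we knew; actually this case gives a vertex ($u$) with a neighbor in $S$ plus the path to $v$, again a neighboring $2$-path. And a neighboring $1$-path $pq$ of $S'$ that touches $\{u,v\}$ becomes, together with $uv$, a neighboring $3$-path or a semi-neighboring $3$- or $5$-path of $S$ depending on how many of its vertices lie in $N(S)$ versus only $N(u)\cup N(v)$. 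The main obstacle---and the part needing genuine care---is this bookkeeping: a path neighboring $S'$ can have endpoints adjacent to $u$ or $v$ but not to $S$ itself, so concatenating with $uv$ produces a path of length up to $5$ whose interior vertices may fail to be in $N(S)$; showing that in every case one of the precise patterns ``neighboring $2$-path / semi-neighboring $3$-path / semi-neighboring $5$-path'' is realized (and handling the short cases where the new path shares a vertex with $\{u,v\}$) is the crux. Girth $\ge 5$ is used repeatedly to forbid short cycles that would otherwise collapse these configurations, and one must also check that the reduction terminates or that, since $G$ is finite and we can always pick a critical subcanvas, a single round of this argument suffices.
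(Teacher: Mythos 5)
Your high-level idea (absorb a neighboring $1$-path into the boundary, reapply Lemma~\ref{Structure}, and translate the result back into a configuration for $S$) is the same idea the paper uses, but your implementation has a gap that you yourself half-identify and do not close. If you absorb only the single path $uv$ and set $S'=S\cup\{u,v\}$, then when Lemma~\ref{Structure} applied to the new canvas returns a neighboring $1$-path $pq$ of $S'$, it is entirely possible that both $p$ and $q$ are adjacent only to $V(S)$ and to neither $u$ nor $v$. In that case $pq$ is just \emph{another} neighboring $1$-path of $S$, unrelated to $uv$, and you have produced none of outcomes (1)--(5): the argument has simply run in a circle. Your remark that ``a single round of this argument suffices'' is precisely the step that fails, and no amount of girth bookkeeping on paths touching $\{u,v\}$ rescues the case where the new path avoids $\{u,v\}$ altogether.

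The paper's proof avoids this by absorbing \emph{all} of the first neighborhood at once. Because outcome (3) fails, every component of $N(S)\setminus V(S)$ has size at most two; let $R$ be their union (so $R=N(S)\setminus V(S)$) and set $H=G[V(S)\cup V(R)]$. Then any neighboring $1$-path $pq$ of $H$ necessarily has $p,q\notin N(S)$, so their $H$-neighbors lie in $R$, and concatenating with the (size-$\le 2$) components of $R$ these vertices sit in yields a semi-neighboring $3$- or $5$-path of $S$ directly. Similarly, a vertex $z$ with two neighbors in $V(H)$ forces a semi-neighboring $3$-path rather than just another $1$-path. That global absorption is the missing ingredient. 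Two smaller points: you do not need to pick an explicit $L'$-coloring of $\{u,v\}$ and argue about its extendability; since $T$ is a critical canvas and $H$ is a proper subgraph containing $S$ (properness follows because $u,v$ have degree at most two in $H$ but at least three in $G$), Corollary~\ref{Supercanvas} gives criticality of $T/H$ immediately. And you should check that the outcome-(1) case for $T/H$ is vacuous because $H$ is induced, rather than folding it into the other cases.
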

\begin{proof}
Suppose not. Since 3 does not hold, the components of $N(S)\setminus S$ have size at most two.  Let $R$ be the union of all components of $N(S)\setminus S$ of size at most two. 
As 1 and 2 do not hold, it follows from Lemma~\ref{Structure} applied to $T$ that there exists a neighboring $1$-path of $S$ and hence $R\ne \emptyset$. Let $H=G[V(S)\cup V(R)]$. As the vertices in $R$ have degree at most two in $H$, it follows that $H$ is a proper subgraph of $G$. By Corollary~\ref{Supercanvas}, $T/H$ is a critical canvas. Note that $H$ is connected and hence we can apply Lemma~\ref{Structure} to $T/H$.

Note that Lemma~\ref{Structure}(1) does not hold for $T/H$ as $H$ is an induced subgraph of $G$. So let us suppose that Lemma~\ref{Structure}(2) holds for $T/H$, that is there a vertex $v$ not in $V(H)$ with at least two neighbors in $V(H)$. Let $u,w$ be neighbors of $v$ in $V(H)$. As 2 does not hold for $T$, at least one of $u$ or $w$ must be in $R$. Suppose without loss of generality that $u$ is in $R$. Let $u'$ be the unique neighbor of $u$ in $R$. As $G$ has girth at least five, $u'\ne w$. As 3 does not hold for $T$, it follows that $w\in R$. But now $u'uvw$ is a semi-neighboring $3$-path of $S$ in $T$, that is 4 holds, a contradiction.

So we may assume that Lemma~\ref{Structure}(3) holds for $T/H$, that is there is a neighboring $1$-path $P=p_1p_2$ of $H$ in $T/H$. Let $u_1$ be a neighbor of $p_1$ in $V(H)$ and let $u_2$ be a neighbor of $p_2$ in $V(H)$. Since $p_1,p_2\notin V(R)$, at least one of $u_1$ or $u_2$ is not in $S$. Suppose without loss of generality that $u_1$ is not in $S$. Hence $u_1\in R$. Let $u_1'$ be the unique neighbor of $u_1$ in $R$. If $u_2\in V(S)$, then $u_1'u_1p_1p_2$ is a semi-neighboring $3$-path of $S$ in $T$ and 4 holds, a contradiction. So we may assume $u_2\in R$. Let $u_2'$ be the unique neighbor of $u_2$ in $R$. Note that as $G$ has girth at least five, $u_1\ne u_2$ and $u_1$ is not adjacent to $u_2$. Hence $u_1,u_2,u_1',u_2'$ are all distinct. But then $u_1'u_1p_1p_2u_2u_2'$ is a semi-neighboring $5$-path of $S$ in $T$ and 5 holds, a contradiction.
\end{proof}

Unfortunately, in our proof neighboring $2$-paths are also not strong enough for reductions. Yet neighboring $2$-paths just barely fail in this regard. To that end, we make the following definition.

\begin{definition}
Let $T=(G,S,L)$ be a canvas. Let $P=p_1p_2p_3$ be a neighboring $2$-path of $S$ such that for each $i\in\{1,2,3\}$, $p_i$ has a unique neighbor $u_i$ in $S$. Let $H=G\cup P \cup \{p_1u_1,p_2u_2,p_3u_3\}$. We say $T/H$ is obtained from $T$ by \emph{relaxing} $P$ and that $T/H$ is a \emph{relaxation} of $T$. We define $T$ to be a $0$-relaxation of itself. For $k\ge 1$, we say a supercanvas $T'$ of $T$ is a \emph{$k$-relaxation} of $T$ if $T'$ is a relaxation of a $(k-1)$-relaxation of $T$.
\end{definition}

Now by first coloring neighboring $2$-paths and applying Lemma~\ref{StructureB} a second time, we can upgrade the outcomes of Lemma~\ref{StructureB} (in particular outcome 3) at the cost of finding outcomes 4 or 5 in a $k$-relaxation for some $k\le 2$ as follows.

\begin{lemma}\label{Structure2}
If $T=(G,S,L)$ is a critical canvas such that $S$ is connected, then there exists one of the following:

\begin{enumerate}
\item an edge not in $S$ with both ends in $S$, or,
\item a vertex not in $V(S)$ with at least two neighbors in $V(S)$, or,
\item a neighboring claw of $S$, or
\item a $k$-relaxation $T'=(G,S',L)$ of $T$ with $k\le 2$ and a semi-neighboring $3$-path of $S'$, or,
\item a $k$-relaxation $T'=(G,S',L)$ of $T$ with $k\le 2$ and a semi-neighboring $5$-path of $S'$.
\end{enumerate}
\end{lemma}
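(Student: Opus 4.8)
The plan is to argue by contradiction, iterating Lemma~\ref{StructureB}: at each stage we relax the neighboring $2$-path it may produce (at most twice overall) and convert every other outcome into one of the conclusions (1)--(5). Suppose none of (1)--(5) holds. I first record the standard fact that in a critical canvas every vertex $v\notin V(S)$ has $\deg_G(v)\ge 3$ --- otherwise, deleting an edge at $v$ leaves a proper subgraph containing $S$ whose every $L$-coloring extends to $G$ (recolour $v$, which sees at most two colours), contradicting criticality --- so relaxing a neighboring $2$-path always produces a proper, and visibly connected, boundary, and Corollary~\ref{Supercanvas} keeps us among critical canvases. Since (1),(2) fail, Lemma~\ref{StructureB} applied to $T$ gives one of its conclusions (3),(4),(5); its (4),(5) are exactly (4),(5) of the present lemma with $k=0$, so we obtain a neighboring $2$-path $P^{(0)}=p_1p_2p_3$ of $S$. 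Since (2) fails each $p_i$ has a unique neighbor $u_i\in V(S)$, so we may relax $P^{(0)}$ to obtain a critical canvas $T_1=(G,S_1,L)$, a $1$-relaxation of $T$. (It is convenient to note that failure of (3),(4),(5) forces $G[N(S)\setminus S]$ to be a disjoint union of paths on at most three vertices: no neighboring claw bounds its maximum degree by two, and no neighboring $3$-path --- a special semi-neighboring $3$-path --- bounds each component by three vertices; thus $P^{(0)}$ is one of possibly several $P_3$-components.)

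Next apply Lemma~\ref{StructureB} to $T_1$. An edge of $V(S_1)$ outside $S_1$ is, according to its ends, conclusion (1) for $T$, or forces some $p_i$ to have two neighbors in $V(S)$ (conclusion (2)), or closes a triangle or $4$-cycle against girth $\ge 5$. A vertex $v\notin V(S_1)$ with two neighbors in $V(S_1)$ has, by girth $\ge 5$, at most one neighbor among $\{p_1,p_2,p_3\}$ and, since (2) fails, at most one in $V(S)$; hence it has exactly one neighbor $w\in V(S)$ and exactly one neighbor $p_i$, and then either $p_i=p_2$ and $p_2,v,p_1,p_3$ form a neighboring claw of $S$ (conclusion (3)), or $p_i\in\{p_1,p_3\}$ and $v$ together with $P^{(0)}$ is a neighboring (so semi-neighboring) $3$-path of $S$ (conclusion (4), $k=0$). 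A semi-neighboring $3$- or $5$-path of $S_1$ is conclusion (4) or (5) with $k=1$. Each case contradicts our assumption, so Lemma~\ref{StructureB} must instead give a neighboring $2$-path $Q=q_1q_2q_3$ of $S_1$; by the same reasoning each $q_j$ has a unique neighbor $a_j\in V(S_1)$, and we relax $Q$ to obtain a critical canvas $T_2=(G,S_2,L)$, a $2$-relaxation of $T$.

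Finally apply Lemma~\ref{StructureB} to $T_2$, where $V(S_2)=V(S)\cup\{p_1,p_2,p_3\}\cup\{q_1,q_2,q_3\}$. A semi-neighboring $3$- or $5$-path of $S_2$ is conclusion (4) or (5) with $k=2$. An edge of $V(S_2)$ outside $S_2$ is analysed as before, reducing to (1), (2), a short cycle, or a vertex with two neighbors in $V(S_1)$ (treated in the previous paragraph). A vertex $z\notin V(S_2)$ with two neighbors in $V(S_2)$ has at most one neighbor in each of $\{p_i\}$, $\{q_j\}$ (girth $\ge 5$) and in $V(S)$ (since (2) fails); taking two of these attached vertices, one walks out of $P^{(0)}$ or $Q$ through $z$ and, crucially using that the internal vertices of a semi-neighboring path need not lie in the relevant neighbourhood, reaches a semi-neighboring $3$- or $5$-path of $S$ or $S_1$, i.e. conclusion (4) or (5) with $k\le 2$.

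The step I expect to be the main obstacle is the final case: Lemma~\ref{StructureB} applied to $T_2$ gives a neighboring $2$-path $Q'=r_1r_2r_3$ of $S_2$, where a naive approach would relax a third time and violate the bound $k\le 2$. I would resolve it as follows. By girth $\ge 5$, two consecutive vertices of $Q'$ cannot both attach to $\{p_1,p_2,p_3\}$, nor both to $\{q_1,q_2,q_3\}$ (such a pair closes a triangle or $4$-cycle with the corresponding relaxed path), so $Q'$ attaches to $V(S)$, or to a vertex of $\{p_i\}$ and a vertex of $\{q_j\}$ simultaneously; in each of these cases one concatenates $Q'$ with $P^{(0)}$ and/or $Q$ along the attaching edges to obtain a semi-neighboring $3$- or $5$-path of $S$ or $S_1$ (again only the outer one or two vertices on each end need lie in the neighbourhood). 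The genuinely delicate sub-case is when $Q'$ attaches only to $V(S)$ --- that is, $Q'$ is an ``independent'' $P_3$-component of $G[N(S)\setminus S]$ unrelated to $P^{(0)}$ or $Q$; this is exactly the scenario the bound $k\le 2$ must survive, and I would rule it out by additionally choosing $(G,S,L)$ with $|V(G)\setminus V(S)|$ minimum and arguing, via Corollary~\ref{Supercanvas} and a colour-extension argument in the spirit of the proof of Lemma~\ref{Structure} (which applies Theorem~\ref{Thom0}), that such a configuration cannot occur in a minimal counterexample.
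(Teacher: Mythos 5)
Your approach (iterate Lemma~\ref{StructureB}, relaxing one neighboring $2$-path at a time, for up to two rounds) is genuinely different from the paper's, and it has a real gap that you yourself flag at the end: after two relaxations, Lemma~\ref{StructureB} applied to $T_2$ may produce a third neighboring $2$-path $Q'$ that is an independent $P_3$-component of $G[N(S)\setminus S]$, disjoint from $P^{(0)}$ and $Q$ and attaching only to $V(S)$. Nothing in your argument rules this out, and such a graph is easy to draw (take $S$ a long cycle with many far-apart pendant neighboring $2$-paths). Your proposed repair --- take $|V(G)\setminus V(S)|$ minimum and argue ``in the spirit of Lemma~\ref{Structure}'' --- is not an argument: removing one independent $P_3$-component does not obviously yield a smaller \emph{critical} canvas to which minimality applies, and a colour-extension argument would have to show that no independent component can exist at all, which is false.

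The paper sidesteps this by not relaxing one path at a time. It sets $R$ to be the union of \emph{all} size-$3$ components of $N(S)\setminus S$ at once, puts $H=G[V(S)\cup V(R)]$, and applies Lemma~\ref{StructureB} to the single supercanvas $T/H$. Because every size-$3$ component is already inside $H$, any neighboring $2$-path $P$ of $H$ produced by Lemma~\ref{StructureB} necessarily has some vertex adjacent to $R$ (it cannot be a fresh independent $P_3$-component of $N(S)\setminus S$, since those all live in $R$), and girth $5$ then kills it. The outcomes (4) and (5) for $H$ are converted into a $\le 2$-relaxation of $T$ because the semi-neighboring $3$- or $5$-path in question touches at most two of the relaxed $P(u)$'s; the other components of $R$ are irrelevant and never relaxed. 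That ``take them all at once, then only relax the ones you actually touch'' step is the missing idea in your proof. If you want to salvage your iterative framing, you would need to track \emph{all} independent $P_3$-components simultaneously, which amounts to rediscovering the paper's $H$.
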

\begin{proof}
Suppose not. Note that since $G$ has no semi-neighboring $3$-path of $S$ then $G$ has no neighboring $k$-path of $S$ for any $k\ge 3$. Let $R$ be the union of all components of $N(S)\setminus S$ of size exactly three. By Lemma~\ref{StructureB}, we may assume that $R\ne\emptyset$ as otherwise one of 1, 2, 4 or 5 holds, a contradiction. 

Let $H=G[V(C)\cup V(R)]$. As there is no neighboring claw or neighboring $3$-path of $S$, every vertex in $V(H)\setminus V(S)$ is in a unique neighboring $2$-path of $S$; let $P(u)$ denote said path for each $u\in V(H)\setminus V(S)$. Further note that if $Q$ is a neighboring $1$-path of $H$, then either $Q$ is a neighboring $1$-path of $S$ or the neighbors of $Q$ in $H$ are contained in a unique neighboring $2$-path of $C$, as otherwise there exists a semi-neighboring $3$-path or semi-neighboring $5$-path of $C$, that is 4 or 5 holds, a contradiction. In the latter case, let $P(Q)$ denote this unique neighboring $2$-path of $S$.

Note that $H$ is a proper subgraph of $G$ as there exist vertices in $V(H)\setminus V(S)$ which degree two in $H$ but degree at least three in $G$. By Lemma~\ref{Supercanvas}, $T/H$ is critical. 
Apply Lemma~\ref{StructureB} to $T/H$. Note that Lemma~\ref{StructureB}(1) does not hold for $T/H$ as $H$ is an induced subgraph of $G$ since 1 and 2 do not hold for $T$ and $R$ is a set of components of $N(S)\setminus S$. 

So suppose Lemma~\ref{StructureB}(1) holds, that is there exists a vertex $v\in V(G)\setminus V(H)$ with two neighbors $u_1,u_2$ in $V(H)$. Since 2 does not hold for $T$ and $R$ is a set of components of $N(S)\setminus S$, it follows that $u_1,u_2\not\in V(S)$ and hence $u_1,u_2\in V(R)$. As $G$ has girth at least five, $P(u_1)\ne P(u_2)$. But then there exists a semi-neighboring $3$-path of $S$ contained in $P(u_1)\cup P(u_2) \cup \{v\}$ and hence 4 holds, a contradiction.

Next suppose Lemma~\ref{StructureB}(3) holds, that is there exists a neighboring $2$-path $P=p_1p_2p_3$ of $H$. Given that $V(P)\cap V(R)=\emptyset$, there exists $i\in \{1,2,3\}$ such that $p_i\in N(R)$. We may suppose without loss of generality that $i\in\{1,2\}$. But then the neighbors of $p_1p_2$ are contained in a unique neighboring $2$-path of $C$, $P(p_1p_2)$ as noted above. As $G$ has girth at least five, the neighbor of $p_1$ and the neighbor of $p_2$ must be the ends of $P(p_1p_2)$. Yet now the neighbors of $p_2p_3$ are contained in a unique neighboring $2$-path of $C$, $P(p_2p_3)$ and we find that $P(p_1p_2)=P(p_2p_3)$. As $G$ has girth at least five, the neighbor of $p_3$ in $P(p_1p_2)$, call it $x$, is distinct from the neighbor of $p_2$ in $P(p_1p_2)$. But then $xp_1p_2p_3$ is a $4$-cycle, contradicting that $G$ has girth at least five.

Next suppose Lemma~\ref{StructureB}(4) holds, that is there exists a semi-neighboring $3$-path $P=p_1p_2p_3p_4$ of $H$, where $p_1,p_2,p_4\in N(V(H))\setminus V(H)$. As $p_1p_2$ is a neighboring $1$-path of $H$, the neighbors of $p_1p_2$ are either contained in a unique neighboring $2$-path of $S$, $P(p_1p_2)$, or $p_1p_2$ is a neighboring $1$-path of $S$. Let $y$ be the neighbor of $p_4$ in $V(H)$. Now either $y\in V(S)$ or $y\in V(R)$. In all cases, $P$ is a semi-neighboring $3$-path of a $\le 2$-relaxation $T'$ of $T$, where either $T'=T$, or $T'$ is obtained from $T$ by relaxing $P(p_1p_2)$, or by relaxing $P(y)$ or by relaxing both. Hence 4 holds, a contradiction.

Next suppose Lemma~\ref{StructureB}(5) holds, that is there exists a semi-neighboring $5$-path $P=p_1p_2p_3p_4p_5p_6$ of $H$, where $p_1,p_2,p_5,p_6\in N(V(H))\setminus V(H)$. As $p_1p_2$ is a neighboring $1$-path of $H$, the neighbors of $p_1p_2$ are either contained in a unique neighboring $2$-path of $S$, $P(p_1p_2)$, or $p_1p_2$ is a neighboring $1$-path of $S$. Similarly, as $p_5p_6$ is a neighboring $1$-path of $H$, the neighbors of $p_5p_6$ are either contained in a unique neighboring $2$-path of $S$, $P(p_5p_6)$, or $p_5p_6$ is a neighboring $1$-path of $S$. In all cases, $P$ is a semi-neighboring $5$-path of a $\le 2$-relaxation $T'$ of $T$, where either $T'=T$, or $T'$ is obtained from $T$ by relaxing $P(p_1p_2)$, or by relaxing $P(p_5p_6)$ or by relaxing both. Hence 5 holds, a contradiction.

\end{proof}

\subsection{Critical Canvases with Two Boundary Components}

We will need a similar structural lemma for critical canvases whose boundary has two components. This can be done using the following theorem of Thomassen~\cite{ThomRegular}.

\begin{thm}[Theorem 5.1 in~\cite{ThomRegular}]\label{CylinderStructure}
Let $G$ be a connected plane graph of girth at least five, $C_1,C_2$ the boundaries of distinct faces of $G$, and $L$ a $3$-list-assignment for $G$ such that $|L(v)|\ge 2$ for all $v\in V(G)$ and $|L(v)|\ge 3$ for all $v\in V(G)\setminus (V(C_1)\cup V(C_2))$. Then there is an $L$-coloring of $G$ unless one of the following holds:
\begin{enumerate}
\item[(iii)] $G$ has a path $u_1u_2u_3$ each vertex of which has precisely two available colors, or
\item[(iv)] $G$ has a path $u_1u_2u_3u_4$ such that each of $u_1, u_2, u_4$ has precisely two available
colors, or
\item[(v)] $G$ has a path $u_1u_2u_3u_4u_5u_6$ such that each of $u_1, u_2, u_5, u_6$ has precisely two
available colors, or
\item[(vi)] $G$ has a path $w_1w_2$ or $w_1xw_2$ such that $w_i$ is in $C_i$ and $|L(w_i)|=2$ for $i=1,2$, or
\item[(vii)] $G$ has a path $w_1w_2w_3w_4w_5$ such that $w_1, w_2$ are in one of $C_1,C_2$, $w_5$ is in the other,
and $|L(w_1)|=|L(w_2)|=|L(w_5)|=2$, or
\item[(viii)] $G$ has a path $w_1w_2w_3w_4w_5w_6w_7$ such that $w_1, w_2$ are in one of $C_1,C_2$, $w_6,w_7$ are in the other,
and $|L(w_1)|=|L(w_2)|=|L(w_6)|=|L(w_7)|=2$.
\end{enumerate}
\end{thm}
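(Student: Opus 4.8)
This is Thomassen's Theorem~5.1 of~\cite{ThomRegular}, stated there for ordinary $3$-coloring; since that proof only ever uses the phrase ``this vertex has at least/exactly $k$ available colors'', the plan is to recall his inductive argument and to check that each step is insensitive to the replacement of colors by lists. The engine of the induction is the one-boundary-component version of the statement --- a plane graph of girth at least five whose outer cycle $C$ has lists of size at least two and all of whose other vertices have lists of size at least three is $L$-colorable unless it contains one of the internal paths (iii), (iv), (v) --- which is a close relative of Theorem~\ref{Thom0} and is proved by the same short reduction.

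So suppose the theorem fails, and let $G$ be a counterexample minimizing $|V(G)|+|E(G)|$, drawn in the sphere with the open discs bounded by $C_1$ and $C_2$ deleted, that is, in a cylinder with the $C_i$ as its two boundary circles. First come the routine minimality reductions. $G$ is connected: a component meeting neither $C_i$ has all lists of size at least three and is $L$-colorable by Thomassen's planar girth-five theorem~\cite{ThomList}, while a component meeting exactly one $C_i$ is a disc instance settled by the one-cycle version above. Likewise $G$ has no cut-vertex and no $2$-cut separating $C_1$ from $C_2$ in a nontrivial way (cut, color the pieces, glue, using girth at least five to see the pieces are legal smaller instances); each $C_i$ is induced and chordless; no vertex off $C_1\cup C_2$ has two neighbors on the same $C_i$; and the base case $V(G)=V(C_1)\cup V(C_2)$ is disposed of directly.

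The heart of the matter is a dichotomy according to how close $C_1$ and $C_2$ are. Run a breadth-first layering outward from $C_1$. If there is a short path of small-list vertices joining $C_1$ to $C_2$, the reductions force its endpoints to have lists of size two and we land in one of (vi), (vii), (viii), the three conclusions that describe exactly such short bridges. Otherwise $C_1$ and $C_2$ are ``far apart'', and then in the first one or two layers around $C_1$ one locates either a contractible cycle enclosing $C_1$ (with few vertices between it and $C_1$) or a non-contractible cycle, respectively path, separating the cylinder; in every case we cut along it. Cutting splits $G$ into a strictly smaller cylinder or disc instance and, on the inner side, a precolored-cycle instance handled by the one-cycle version; the colors used on the cut curve shrink the lists of its neighbors by at most one, and since any two of those neighbors are non-adjacent (girth at least five) the reduced instances still satisfy the hypotheses --- unless the reduction has created two, and then three, consecutive size-two vertices, which is precisely (iii), (iv), or (v). When no separating structure is available near $C_1$, that cycle is itself reducible: pick an appropriate vertex or short subpath of $C_1$, $L$-color it, delete it, update lists, and apply minimality, again tracking the propagation of small lists through the six prescribed path shapes.

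I expect the main obstacle to be the bookkeeping of small lists: one must show that the size-two vertices produced by a single coloring or cutting step can only chain together along paths of the six listed types, which requires an exhaustive and slightly delicate finite case analysis of how such a chain can close into a cycle or run from $C_1$ to $C_2$, together with the discipline that every cut is made along an induced path or cycle so that identifications never create a cycle of length at most four and thus never violate the girth hypothesis on the smaller instances. The thin configurations --- $C_1$ and $C_2$ sharing vertices, or almost no vertices lying strictly between them --- and the base case need separate careful handling but bring in no new ideas, and the translation of every argument from available colors to lists is immediate, which is why the list version holds with the very same list of exceptions.
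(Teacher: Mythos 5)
Your high-level plan --- defer to Thomassen's Theorem~5.1 and adapt it to lists --- is exactly what the paper does (the paper gives no independent proof, only a remark on adaptation). However, your central claim that the translation is painless is wrong, and the paper says so explicitly. You write that Thomassen's proof ``only ever uses the phrase `this vertex has at least/exactly $k$ available colors','' so ``the translation of every argument from available colors to lists is immediate.'' But the paper points out the opposite: Thomassen's proof ``at times uses the fact that list of vertices are the same so as to identify vertices or color them with the same color; when the lists are not the same, this is not always possible, but in such cases the proof can be modified to avoid these assumptions.'' Vertex identifications and arguments that nonadjacent vertices can be given the same color are precisely the kind of step that does \emph{not} survive the passage to lists unchanged, and your proposal asserts there are none. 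You have not identified where these steps occur, let alone how to replace them, so the main content of the adaptation is missing.

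A second gap: the paper notes that condition~(viii) was erroneously omitted from Thomassen's published statement of Theorem~5.1 and ``is necessary for the first inductive argument.'' Your plan is to ``recall his inductive argument'' and conclude the theorem ``with the very same list of exceptions,'' but recalling the published argument verbatim would not produce the corrected statement with~(viii); one has to re-examine the induction, notice that it breaks without~(viii), and add it. A faithful adaptation therefore requires both repairing the same-list/identification steps for the list setting and repairing the omission in the exception list, neither of which your proposal addresses.
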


The proof of Theorem~\ref{CylinderStructure} is done for ordinary coloring but it can be easily modified to give the same result for list-coloring. More precisely, the proof at times uses the fact that list of vertices are the same so as to identify vertices or color them with the same color; when the lists are not the same, this is not always possible, but in such cases the proof can be modified to avoid these assumptions. Further note that condition $(viii)$ was erroneously omitted from the statement of Theorem 5.1 in~\cite{ThomRegular} (and is necessary for the first inductive argument). 

Using Theorem~\ref{CylinderStructure}, we can now generalize Lemma~\ref{StructureB} to critical canvases whose boundary has two components as long as the distance between those components is at least $7$.

\begin{lemma}\label{StructureC}
If $T=(G,S,L)$ is a critical canvas such that $S$ has at most two components and if two then they are distance at least $7$, then there exists one of the following:

\begin{enumerate}
\item an edge not in $S$ with both ends in $V(S)$, or,
\item a vertex not in $V(S)$ with at least two neighbors in $V(S)$, or,
\item a neighboring $2$-path of $S$, or
\item a semi-neighboring $3$-path of $S$, or,
\item a semi-neighboring $5$-path of $S$.
\end{enumerate}
\end{lemma}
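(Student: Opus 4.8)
The plan is to mirror the proof of Lemma~\ref{StructureB} line by line, with Thomassen's two-cycle extension result (Theorem~\ref{CylinderStructure}) in place of Theorem~\ref{Thom0}, and to use the distance hypothesis to kill the extra outcomes (vi)--(viii) of Theorem~\ref{CylinderStructure}. The first step is to prove the two-component analogue of Lemma~\ref{Structure}: if $T=(G,S,L)$ is a critical canvas in which $S$ has at most two components, and if two then they are at distance at least $5$, then $G$ has an edge not in $S$ with both ends in $V(S)$, or a vertex not in $V(S)$ with at least two neighbours in $V(S)$, or a neighbouring $1$-path of $S$. The $7$ in Lemma~\ref{StructureC} is exactly $5+2$: the extra $2$ is absorbed below when we pass from $S$ to a subgraph obtained by adjoining one layer of vertices.

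To prove this analogue, assume all three outcomes fail, fix an $L$-colouring $\phi$ of $S$ not extending to $G$, and put $L'(v)=L(v)\setminus\{\phi(u):u\in N(v)\cap V(S)\}$ for $v\notin V(S)$. Since the second outcome fails, each such $v$ has at most one neighbour in $V(S)$, so $|L'(v)|\ge 2$ with equality only when $v\in N(S)\setminus S$; since the third outcome fails, no two such vertices are adjacent, so the vertices of $G':=G\setminus V(S)$ with $|L'(v)|=2$ form an independent set. An $L'$-colouring of $G'$ would, with $\phi$, extend $\phi$ to $G$, contrary to assumption, so it suffices to $L'$-colour $G'$. Contracting each component $S_i$ of $S$ to a point shows that all vertices of $G'$ with a neighbour in $S_i$ lie on the boundary of a single face $F_i$ of $G'$. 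After the usual reductions (forest components of $G'$ are greedily $L'$-colourable; low connectivity is handled so that $F_1,F_2$ have cycle boundaries) we apply Theorem~\ref{CylinderStructure} to each component of $G'$, choosing $C_1,C_2$ among the boundaries of $F_1$, $F_2$, and faces containing no $2$-list vertex. Outcomes (iii), (iv), (v), (vii), (viii) and outcome (vi) of the form $w_1w_2$ each contain two adjacent vertices of list size $2$, i.e.\ a neighbouring $1$-path, contradicting our assumption. Outcome (vi) of the form $w_1xw_2$ with $|L'(w_1)|=|L'(w_2)|=2$ gives $w_1,w_2\in N(S)$, and then either yields a neighbouring $1$-path, or a path of length at most $4$ in $G$ between $S_1$ and $S_2$ (contradicting the assumption or the distance hypothesis), except in the subcase where $w_1,w_2$ are adjacent to the same component of $S$ and $x\notin N(S)$, which is disposed of by a closer look at $F_1,F_2$ (see below). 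Hence $G'$ is $L'$-colourable, a contradiction; this proves the analogue.

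Granting the analogue, Lemma~\ref{StructureC} follows exactly as Lemma~\ref{StructureB} did. Assume outcomes $1$--$5$ fail. Since $3$ fails, every component of $N(S)\setminus S$ has at most two vertices, so their union $R$ equals $N(S)\setminus S$; since $1,2$ fail, the analogue applied to $T$ yields a neighbouring $1$-path, so $R\ne\emptyset$. Let $H=G[V(S)\cup V(R)]$; each vertex of $R$ has degree at most $2$ in $H$ but, by criticality, at least $3$ in $G$, so $H$ is a proper subgraph and $T/H$ is critical by Corollary~\ref{Supercanvas}. The components of $H$ arise from those of $S$ by adjoining vertices at distance $1$, hence are at distance at least $7-2=5$, so the analogue applies to $T/H$; its outcome $(1)$ fails since $H$ is an induced subgraph of $G$, and running the case analysis from the proof of Lemma~\ref{StructureB} on its outcomes $(2)$ and $(3)$ produces a semi-neighbouring $3$-path or $5$-path of $S$, contradicting the failure of outcomes $4$ and $5$.

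The main obstacle is the planar bookkeeping behind the appeal to Theorem~\ref{CylinderStructure}: checking that the low-list vertices of $G\setminus V(S)$ lie on at most two facial cycles, handling components of $G\setminus V(S)$ that meet one, both, or neither of the two ``holes'' left by $S_1$ and $S_2$, and in particular disposing of the subcase of outcome (vi) noted above, where the two low-list endpoints of the length-two path are adjacent to the same component of $S$ but sit on the boundaries of the two distinct distinguished faces. That subcase is where planarity and the distance bound are genuinely combined, and I expect it to require the most care.
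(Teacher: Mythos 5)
The paper proves Lemma~\ref{StructureC} by applying Theorem~\ref{CylinderStructure} \emph{once and directly}: after reducing to the case that $G$ and $G\setminus V(S)$ are connected, it builds a modified plane graph $G'$ from $G\setminus V(S)$ by inserting paths of four new degree-two vertices between cyclically consecutive elements of $N(S_1)\setminus V(S_1)$ inside the face containing $S_1$, and likewise for $S_2$. This modification is the crux: it produces two facial cycles $C_1',C_2'$ that are vertex-disjoint, and every original vertex on $C_i'$ with a list of size $\le 2$ is necessarily a neighbour of $S_i$. Consequently outcomes~(vi)--(viii) of Theorem~\ref{CylinderStructure} always yield a short $S_1$--$S_2$ path (contradicting the distance hypothesis), and outcomes~(iii), (iv), (v) immediately give a neighbouring $2$-path, a semi-neighbouring $3$-path, and a semi-neighbouring $5$-path, respectively. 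No intermediate ``$1$-path analogue'' is proved and no iteration is performed inside this lemma; the iteration step (boosting to relaxations) is deferred to Lemma~\ref{Structure3}.

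Your plan is architecturally different: you first want a two-component analogue of Lemma~\ref{Structure} whose only structural conclusion is a neighbouring $1$-path, and then re-run the iteration of Lemma~\ref{StructureB}. This is lossy --- Theorem~\ref{CylinderStructure}(iii)--(v) already hand you the $2$-path/semi-$3$-path/semi-$5$-path directly, so discarding that information only to rebuild it by iteration is unnecessary. More importantly, your route has a genuine gap exactly where you flag it. In your base analogue you must dispatch Theorem~\ref{CylinderStructure}(vi) in the form $w_1xw_2$ where $w_1,w_2$ lie in $N(S_j)$ for the \emph{same} $j$ and $x\notin N(S)$. Under your set-up (you take $G'=G\setminus V(S)$ and try to select $C_1,C_2$ among ``$F_1,F_2$ and faces containing no $2$-list vertex'' without any modification) this subcase can actually occur when $F_1$ and $F_2$ share boundary vertices, since $w_i\in C_i$ then does not imply $w_i\in N(S_i)$. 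It is not ruled out by girth (the resulting cycle through $S_1$ has length $\ge 5$), it is not a chord, not a vertex with two neighbours in $S$, and not a neighbouring $1$-path, so your hypothesis gives no contradiction. You write that it is ``disposed of by a closer look at $F_1,F_2$,'' but no argument is given, and I do not see one that avoids essentially re-inventing the paper's trick. The paper's degree-two-path insertion is precisely what makes $C_1'$ and $C_2'$ vertex-disjoint (the inserted vertices get fresh $3$-lists, so any $w_i$ with $|L'(w_i)|=2$ is an original vertex of $G$ lying in $N(S_i)$), which eliminates this subcase entirely. You should adopt that modification --- at which point you might as well also drop the $1$-path analogue and the extra iteration, since Theorem~\ref{CylinderStructure} then yields outcomes $3$--$5$ of the lemma in one step.
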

\begin{proof}
Suppose not and let $T$ be a counterexample such that $|V(G)|$ is minimized. By Lemma~\ref{StructureB}, we may assume that $S$ has at least two components $S_1$ and $S_2$. 

First suppose that $G$ is not connected. As $G$ is critical, by Theorem~\ref{Thom0}, $G$ has two components $G_1,G_2$ such that $S_i\subseteq G_i$ for $i\in\{1,2\}$. Since $T$ is critical, for at least one $i\in\{1,2\}$, $S_i\ne G_i$. Let us assume without loss of generality that $S_1\ne G_1$. By Lemma~\ref{SComponent} applied to $G$ with $B=G_1$ and $A=S_1$, we find that $G_1$ is $S_1$-critical. Let $T_1=(G_1,S_1,L)$. Now $T_1$ is a critical canvas. As $S_1$ is connected, Lemma~\ref{StructureB} applied to $T_1$ implies that one of 1-5 holds, a contradiction. So we may assume that $G$ is connected.

Next suppose that $G\setminus V(S)$ is not connected. Let $H$ be a component of $G\setminus V(S)$. By Lemma~\ref{Subcanvas}, $T'=T|(G[V(H)\cup V(S)])$ is a critical canvas and yet $|V(G[V(H)\cup V(S)])|$ is smaller than $|V(G)|$. Hence one of 1-5 holds for $T'$ and hence for $T$, a contradiction. So we may assume that $G\setminus V(S)$ is connected.

As 1 does not hold, $S$ is an induced subgraph of $G$. Let $\phi$ be an $L$-coloring of $S$ that does not extend to $G$. Let $L'(v)=L(v)\setminus \{\phi(u)| u\in N(v)\cap V(C)\}$. As $\phi$ does not extend to $G$, there does not exist an $L'$-coloring of $G\setminus V(S)$. As 2 does not hold, $|L'(v)|\ge 2$ for all $v\in V(G')$. Let $C_1,C_2$ be the boundaries of the two faces in $G'$ which contain vertices of $S$ in their interior. Note that $|L'(v)|\ge 3$ for all $v\in V(G')\setminus (V(C_1)\cup V(C_2))$. 

Note that as 1 and 2 not hold, $N(V(S_1))\cap N(V(S_2)) = \emptyset$. Moreover $N(V(S_1))\setminus V(S_1)\ne \emptyset$ as $G$ is connected. Similarly $N(V(S_2))\setminus V(S_2) \ne \emptyset$. Let $G'$ be obtained from $G\setminus V(S)$ by adding a path of four new degree two vertices inside $C_1$ between consecutive (in the cyclic order) pairs of vertices in $N(V(S_1))\setminus V(S_1)$ and similarly in $C_2$ for pairs of vertices in $N(V(S_2))\setminus V(S_2)$ (if there is only one such vertex, then we add a path to itself). For each vertex $v$ in $V(G')\setminus (V(G)\setminus V(S))$, let $L'(v)$ be a list of three arbitrary colors.

Note that $G'$ is a connected plane graph of girth at least five and that $|L'(v)|\ge 2$ for all $v\in V(G')$. Moreover, there exists two faces $C_1',C_2'$ of $G'$ such that every vertex in $G'$ with $|L(v')|\ge 2$ is on the boundary of $C_1'$ or $C_2'$. Moreover, no vertex in $G'$ is on the boundary of both $C_1'$ and $C_2'$ by construction. Finally note that there does not exist an $L'$-coloring of $G'$ as otherwise there exists an $L'$-coloring of $G\setminus V(S)$, a contradiction. 

By Theorem~\ref{CylinderStructure}, as there is no $L'$-coloring of $G'$, one of Theorem~\ref{CylinderStructure}(iii)-(viii) holds. As the distance between $S_1$ and $S_2$ is at least 7, none of (vi)-(viii) holds as otherwise there exists a path from a neighbor of a vertex in $S_1$ to a neighbor of a vertex in $S_2$ of length at most 4 and hence a path from $S_1$ to $S_2$ of length 6. If (iii) holds, then there exists a neighboring $2$-path of $S$ and 3 holds, a contradiction. If (iv) holds, then there exists a semi-neighboring $3$-path of $S$ and 4 holds, a contradiction. If (v) holds, then there exists a semi-neighboring $5$-path of $S$, a contradiction.
\end{proof}

Finally we generalize Lemma~\ref{Structure2} to canvases whose boundary has two components as long as the distance between those components is at least $9$.

\begin{lemma}\label{Structure3}
If $T=(G,S,L)$ is a critical canvas such that $S$ has at most two components and if two then the distance between the components is at least $9$, then there exists a $k$-relaxation $T'=(G,S',L)$ of $T$ with $k\le 2$ such that there exists one of the following:

\begin{enumerate}
\item an edge not in $S$ with both ends in $S$, or,
\item a vertex not in $S$ with at least two neighbors in $S$, or,
\item a neighboring claw of $S$, or
\item a semi-neighboring $3$-path of $S'$, or,
\item a semi-neighboring $5$-path of $S'$.
\end{enumerate}
\end{lemma}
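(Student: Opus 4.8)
The plan is to imitate the proof of Lemma~\ref{Structure2} almost line by line, with Lemma~\ref{StructureC} playing the role that Lemma~\ref{StructureB} played there, and to check that the separation hypothesis survives the one construction used in that argument. So suppose the conclusion fails. If $S$ is connected, Lemma~\ref{Structure2} already produces one of outcomes 1--5, so we may assume $S$ has exactly two components $S_1,S_2$ with $d(S_1,S_2)\ge 9$. Since outcome 4 fails in particular for the $0$-relaxation $T'=T$, the graph $G$ has no semi-neighboring $3$-path of $S$ and hence no neighboring $k$-path of $S$ for any $k\ge 3$; since outcome 3 also fails, $G$ has no neighboring claw of $S$; and since $G$ has girth at least five, it follows that every component of $N(S)\setminus S$ has at most three vertices and that the components of size exactly three are precisely the neighboring $2$-paths of $S$.

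Let $R$ be the union of the components of $N(S)\setminus S$ of size exactly three, and apply Lemma~\ref{StructureC} to $T$ itself (legitimate since $d(S_1,S_2)\ge 9\ge 7$): outcomes 1,2,4,5 of that lemma are, or immediately give via $T'=T$, outcomes 1,2,4,5 here, so outcome 3 of Lemma~\ref{StructureC} holds, that is, $R\ne\emptyset$. Put $H=G[V(S)\cup V(R)]$. Because $d(S_1,S_2)\ge 9$, no neighboring $2$-path of $S$ can have neighbors in both $S_1$ and $S_2$, so each such path lies entirely in $N(S_1)$ or in $N(S_2)$; consequently $H$ has exactly two components $H_1\supseteq S_1$ and $H_2\supseteq S_2$, every vertex of $V(H_i)\setminus V(S_i)$ is adjacent to $S_i$, and therefore $d(H_1,H_2)\ge d(S_1,S_2)-2\ge 7$. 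As in Lemma~\ref{Structure2}, the vertices of $R$ have degree two in $H$ but degree at least three in $G$, so $H$ is a proper induced subgraph of $G$, and $T/H$ is a critical canvas by Corollary~\ref{Supercanvas}. Exactly as in Lemma~\ref{Structure2} we also record the two auxiliary facts: every vertex of $V(H)\setminus V(S)$ lies in a unique neighboring $2$-path $P(u)$ of $S$, and every neighboring $1$-path $Q$ of $H$ is either a neighboring $1$-path of $S$ or has all its neighbors in $H$ inside a single neighboring $2$-path $P(Q)$ of $S$ --- otherwise concatenating $Q$ with the $2$-paths attached at its ends exhibits a semi-neighboring $3$- or $5$-path of $S$, contradicting that outcome 4 or 5 fails.

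Now apply Lemma~\ref{StructureC} to $T/H$, using $d(H_1,H_2)\ge 7$, and treat its five outcomes exactly as in the proof of Lemma~\ref{Structure2}. Outcome 1 is impossible since $H$ is induced in $G$. In outcome 2 a vertex $v\notin V(H)$ has two neighbors in $V(H)$; since outcome 2 fails for $T$ and $R$ is a union of components of $N(S)\setminus S$, both neighbors lie in $V(R)$, in distinct $2$-paths by girth five, and routing $v$ between those $2$-paths yields a semi-neighboring $3$-path of $S$, that is, outcome 4 with $T'=T$. In outcome 3 a neighboring $2$-path $P$ of $H$ must meet $N(R)$ (else $P$ would be a neighboring $2$-path of $S$, hence inside $R$), and the girth-five cycle analysis of Lemma~\ref{Structure2} --- tracing the three vertices of $P$ into the (forced to coincide) $2$-paths $P(\cdot)$ --- produces a $4$-cycle. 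In outcome 4 (resp.\ 5) we obtain a semi-neighboring $3$-path (resp.\ $5$-path) $P$ of $H$; for each anchored end-pair of $P$ that fails to be a neighboring $1$-path of $S$, and for the terminal vertex of $P$ (in the $3$-path case) whose $H$-neighbor lies in $R$, we relax the associated $2$-path $P(\cdot)$ --- at most two relaxations altogether, each legitimate because outcome 2 fails for $T$ --- so that in the resulting $\le 2$-relaxation $T'=(G,S',L)$ the path $P$ is a semi-neighboring $3$-path (resp.\ $5$-path) of $S'$; that is outcome 4 (resp.\ 5). Every case contradicts our assumption.

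The only genuinely new point, beyond transcribing Lemma~\ref{Structure2}, is the distance bookkeeping: passing from $S$ to $H$ absorbs the neighboring $2$-paths of $S$, each of whose vertices sits at distance one from the relevant $S_i$, so the separation of the two boundary components drops by at most two --- which is precisely why the hypothesis here is $d\ge 9$ rather than the $d\ge 7$ needed to invoke Lemma~\ref{StructureC} on $T/H$. One must also check that $H$ really does split into exactly two components, one a connected thickening of each $S_i$; granting that, the girth-five cycle arguments in outcomes 2 and 3, and the $\le 2$-relaxation bookkeeping in outcomes 4 and 5, are identical to the connected case and present no further difficulty.
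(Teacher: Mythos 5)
Your proof is correct and follows essentially the same route as the paper's: define $R$ as the union of size-three components of $N(S)\setminus S$, pass to $H=G[V(S)\cup V(R)]$, and invoke Lemma~\ref{StructureC} on $T/H$ in place of Lemma~\ref{StructureB}, using the drop of at most $2$ in the separation between boundary components to guarantee the distance-$7$ hypothesis. The paper states the distance bookkeeping in one terse sentence and defers the case analysis with ``the proof now proceeds identically as Lemma~\ref{Structure2}''; you spell both out explicitly (including why each $2$-path is confined to the neighborhood of a single $S_i$ and why $H$ splits into exactly two components), but the underlying argument is the same.
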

\begin{proof}
Suppose not. Note that since $G$ has no semi-neighboring $3$-path of $S$ then $G$ has no neighboring $k$-path of $S$ for any $k\ge 3$. Let $R$ be the union of all components of $N(S)\setminus S$ of size exactly three. By Lemma~\ref{StructureC}, we may assume that $R\ne\emptyset$ as otherwise one of 1, 2, 4 or 5 holds, a contradiction. 

Let $H=G[V(C)\cup V(R)]$. As there is no neighboring claw or neighboring $3$-path of $S$, every vertex in $V(H)\setminus V(S)$ is in a unique neighboring $2$-path of $S$; let $P(u)$ denote said path for each $u\in V(H)\setminus V(S)$. Note that $H$ is a proper subgraph of $G$ as there exist vertices in $V(H)\setminus V(S)$ which degree two in $H$ but degree at least three in $G$. By Lemma~\ref{Supercanvas}, $T/H$ is critical. The proof now proceeds identically as Lemma~\ref{Structure2} except that we apply Lemma~\ref{StructureC} instead of Lemma~\ref{StructureB}, which is permissible since if $H$ has two components then they are at distance at least 7 because if $S$ had two components then they were at distance at least 9 by assumption.
\end{proof}

\section{Linear Bound for Two Cycles}\label{Linear}

In this section, we develop the parameters necessary to state our general formula (Theorem~\ref{StrongLinear}), state said formula and derive Theorem~\ref{Cylinder} from it. As for the proof of Theorem~\ref{StrongLinear}, it comprises the entirety of Section~\ref{Proof}. First a few definitions.

\subsection{Deficiency}

First we need the following key parameter which essentially tracks how many edges the graph of a canvas is below the maximum imposed by Euler's formula on a planar graph of girth five; hence the parameter is larger if the graph has many faces of length more than five.

\begin{definition}
Let $T=(G,S,L)$ be a canvas. We let $v(T):= |V(G)\setminus V(S)|$ and $e(T) := |E(G)\setminus E(S)|$. We define the \emph{deficiency} of the canvas $T$ as

$${\rm def}(T):=3e(T) - 5v(T) + 10(c(S)-c(G)),$$

\noindent where $c(S)$ is the number of components of $S$ and $c(G)$ is the number of components of $G$.
\end{definition}

We now prove the following very useful lemma, which says that the deficiency of a canvas equals the sum of the deficiencies of a subcanvas and its supercanvas.

\begin{lem} \label{defsum}
If $T=(G,C,L)$ is a canvas and $H$ is a subgraph of $G$ containing $S$, then

$${\rm def}(T) = {\rm def}(T|H) + {\rm def}(T/H).$$
\end{lem}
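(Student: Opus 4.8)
The plan is to unwind all four quantities appearing in $\mathrm{def}$ — namely $e(\cdot)$, $v(\cdot)$, $c(S)$ (the boundary-component count), and $c(G)$ (the component count of the ambient graph) — for the three canvases $T=(G,S,L)$, $T|H=(H,S,L)$ and $T/H=(G,H,L)$, and then check that the linear combination $3e-5v+10(c(S)-c(G))$ telescopes. First I would record the easy additivity facts. Since $S\subseteq H\subseteq G$, the vertex set splits as $V(G)\setminus V(S) = (V(H)\setminus V(S)) \,\dot\cup\, (V(G)\setminus V(H))$, so $v(T) = v(T|H) + v(T/H)$; the identical disjointness for edge sets gives $e(T)=e(T|H)+e(T/H)$. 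Thus $3e(T)-5v(T)$ already splits exactly as the sum of the corresponding expressions for $T|H$ and $T/H$.

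It remains to handle the correction term. Writing it out, what we must verify is
$$
c(S) - c(G) \;=\; \bigl(c(S) - c(H)\bigr) \;+\; \bigl(c(H) - c(G)\bigr),
$$
where the three parenthesized differences are the boundary-minus-graph component counts of $T$, $T|H$, $T/H$ respectively. But this is a pure cancellation: the $c(H)$ terms cancel, leaving $c(S)-c(G)$ on both sides. So the identity for $\mathrm{def}$ follows immediately by adding the $3e-5v$ part to $10$ times this.

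The only genuine point to be careful about — and the one I would spend a sentence on rather than waving past — is that the ``$c(S)$'' slot in the definition of $\mathrm{def}$ refers to the number of components of the \emph{boundary} of the canvas in question: for $T$ it is $c(S)$, but for the supercanvas $T/H=(G,H,L)$ the boundary is $H$, so its deficiency is $3e(T/H)-5v(T/H)+10(c(H)-c(G))$, and for the subcanvas $T|H=(H,S,L)$ the boundary is still $S$ but the ambient graph is $H$, giving $3e(T|H)-5v(T|H)+10(c(S)-c(H))$. Once this bookkeeping is spelled out there is no obstacle at all; the lemma is a one-line telescoping once the (trivial) additivity of $v$, $e$ is noted. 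I would also remark that $H$ containing $S$ is exactly what makes all three triples legitimate canvases (or at least makes the formal expressions well-defined), so no further hypothesis is needed.

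\begin{proof}
Since $S\subseteq H\subseteq G$, we have the disjoint unions $V(G)\setminus V(S) = (V(H)\setminus V(S))\cup(V(G)\setminus V(H))$ and $E(G)\setminus E(S) = (E(H)\setminus E(S))\cup(E(G)\setminus E(H))$. Hence $v(T) = v(T|H)+v(T/H)$ and $e(T)=e(T|H)+e(T/H)$, so
$$
3e(T)-5v(T) = \bigl(3e(T|H)-5v(T|H)\bigr) + \bigl(3e(T/H)-5v(T/H)\bigr).
$$
For the correction terms, note the boundary of $T$ is $S$, the boundary of $T|H=(H,S,L)$ is $S$ and the boundary of $T/H=(G,H,L)$ is $H$. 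Therefore
$$
10\bigl(c(S)-c(H)\bigr) + 10\bigl(c(H)-c(G)\bigr) = 10\bigl(c(S)-c(G)\bigr),
$$
the $c(H)$ terms cancelling. Adding the two displayed identities yields $\mathrm{def}(T|H)+\mathrm{def}(T/H) = \mathrm{def}(T)$, as claimed.
\end{proof}
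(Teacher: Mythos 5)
Your proof is correct and follows essentially the same approach as the paper: disjoint-union decomposition of $V(G)\setminus V(S)$ and $E(G)\setminus E(S)$ across $H$ gives additivity of $v$ and $e$, and the component-count correction telescopes via $c(S)-c(G)=(c(S)-c(H))+(c(H)-c(G))$. Your version is merely more explicit about which graph plays the role of boundary versus ambient graph in each of the three canvases, which is a reasonable clarification but not a different argument.
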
  

\begin{proof}
Every edge of $E(G)\setminus E(S)$ is in exactly one of $E(H)\setminus E(S)$ or $E(G)\setminus E(H)$. Similarly, every vertex of $V(G)\setminus V(S)$ is in exactly one of $V(H)\setminus V(S)$ or $V(G)\setminus V(H)$. Lastly $c(S)-c(G) = c(S)-c(H) + c(H)-c(G)$. Combining these facts gives the desired formula.
\end{proof}

\subsection{More Complicated Parameters}

Next we will need more complicated parameters. As alluded to in Section~\ref{Outline}, we need to track the number of edges not in $S$ with an end in $S$. Also to show that the number of vertices is bounded, we will need to add a small additional weight to the vertices above and beyond what is already counted in deficiency. Thus we will need two small weights, $\alpha$ for tracking the edges out of $S$, and $\epsilon$ for the vertices. 

We will prove our general formula for critical canvases (Theorem~\ref{StrongLinear}) assuming a number of inequalities on $\alpha,\epsilon$. Then to prove Theorem~\ref{Cylinder}, we deduce the appropriate $\alpha$ and $\epsilon$ for the formula to hold. So for the benefit of the reader, we shall assume these are fixed but unspecified constants except in deriving Theorem~\ref{Cylinder} when it is needed to specify them. Strangely, while the formula holds for any small enough $\epsilon$, the value of $\alpha$ is more tightly controlled and needs to be slightly between $1/3$and $2/5$ (in fact any value in $(1/3,2/5)$ is acceptable if $\epsilon$ is made small enough).  

\begin{definition}\label{Parameters}
Let $T=(G,S,L)$ be a canvas. Fix $\epsilon,\alpha > 0$. We define 

$$q(T):= \sum_{v\in V(S)} {\rm deg}_{G-E(S)}(v),$$
$$s(T):=\epsilon v(T) + \alpha q(T),$$ 
$$d(T):={\rm def}(T)-s(T).$$
\end{definition}

Thus $q(T)$ equals the number of edges not in $S$ with an end in $S$ (where an edge with both ends in $S$ is counted twice). We now prove that these new parameters satisfy natural relations for subcanvases and supercanvases as in Lemma~\ref{defsum}.

\begin{prop}\label{surplussum}
Let $T=(G,S,L)$ be a canvas and $H$ a subgraph of $G$ containing $S$. The following statements hold:

\begin{itemize}
\item $v(T)=v(T|H)+v(T/H)$,
\item $q(T)\le q(T|H)+q(T/H)$,
\item $s(T)\le s(T|H)+s(T/H)$,
\item $d(T)\ge d(T|H)+d(T/H)$.
\end{itemize}
\end{prop}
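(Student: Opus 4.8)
The plan is to verify each of the four claimed relations in turn, leaning on Lemma~\ref{defsum} for the deficiency term and on elementary counting for the new pieces $v(T)$ and $q(T)$. Throughout, write $S$ for the boundary of $T$, and recall $T|H=(H,S,L)$ while $T/H=(G,H,L)$.

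First I would handle $v(T)=v(T|H)+v(T/H)$. This is immediate: $V(G)\setminus V(S)$ is partitioned into $V(H)\setminus V(S)$ and $V(G)\setminus V(H)$, which are exactly the vertex sets counted by $v(T|H)$ and $v(T/H)$ respectively. Next, for $q(T)\le q(T|H)+q(T/H)$: by definition $q(T)=\sum_{v\in V(S)}\deg_{G-E(S)}(v)$, and since $S\subseteq H\subseteq G$ we have $E(G-E(S)) = (E(H)\setminus E(S)) \cup (E(G)\setminus E(H))$, a disjoint union. An edge of $E(H)\setminus E(S)$ with an end in $V(S)$ is counted in $q(T|H)=\sum_{v\in V(S)}\deg_{H-E(S)}(v)$, and an edge of $E(G)\setminus E(H)$ with an end in $V(S)$ is counted in $q(T/H)=\sum_{v\in V(H)}\deg_{G-E(H)}(v)$ because $V(S)\subseteq V(H)$. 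Hence every contribution to $q(T)$ appears in at least one of the two right-hand sums; the inequality (rather than equality) is because $q(T/H)$ may additionally count edges of $E(G)\setminus E(H)$ incident to vertices of $V(H)\setminus V(S)$, so the right side can be strictly larger.

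Then $s(T)\le s(T|H)+s(T/H)$ follows by combining the previous two: $s(T)=\epsilon v(T)+\alpha q(T) \le \epsilon(v(T|H)+v(T/H)) + \alpha(q(T|H)+q(T/H)) = s(T|H)+s(T/H)$, using $\epsilon,\alpha>0$. Finally, for $d(T)\ge d(T|H)+d(T/H)$: by Lemma~\ref{defsum} we have $\mathrm{def}(T)=\mathrm{def}(T|H)+\mathrm{def}(T/H)$ exactly, and subtracting the inequality $s(T)\le s(T|H)+s(T/H)$ gives $d(T)=\mathrm{def}(T)-s(T) \ge \mathrm{def}(T|H)+\mathrm{def}(T/H) - s(T|H) - s(T/H) = d(T|H)+d(T/H)$.

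The only step with any subtlety is the bookkeeping for $q$: one must be careful that the edge sets $E(H)\setminus E(S)$ and $E(G)\setminus E(H)$ genuinely partition $E(G-E(S))$ (which uses $S\subseteq H$), and that the vertex set over which $q(T/H)$ sums is $V(H)\supseteq V(S)$ so that no boundary contribution is lost when passing to the supercanvas. Everything else is a direct substitution, so I expect no real obstacle beyond this routine inclusion-checking.
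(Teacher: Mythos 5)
Your proposal is correct and follows essentially the same route as the paper: partition $V(G)\setminus V(S)$ and $E(G)\setminus E(S)$ using $S\subseteq H$, note that $V(S)\subseteq V(H)$ causes $q(T/H)$ to potentially overcount relative to what is needed, and then combine with Lemma~\ref{defsum} for the deficiency term. The paper phrases the $q$ step as a degree-sum identity $\sum_{v\in V(S)}\deg_{G-E(S)}(v)=\sum_{v\in V(S)}\deg_{G-E(H)}(v)+\sum_{v\in V(S)}\deg_{H-E(S)}(v)$ before enlarging the first sum to $V(H)$, but this is the same bookkeeping you describe in terms of edge partitions.
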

\begin{proof}
The first statement follows as every vertex of $V(G)\setminus V(S)$ is in exactly one of $V(H)\setminus V(S)$ or $V(G)\setminus V(H)$. To prove the second statement, note that 

$$q(T) = \sum_{v\in V(S)} {\rm deg}_{G-E(S)}(v) = \sum_{v\in V(S)} {\rm deg}_{G-E(H)}(v) + \sum_{v\in V(S)} {\rm deg}_{H-E(S)}(v).$$

\noindent But then as $V(S)\subseteq V(H)$,

$$q(T) \le  \sum_{v\in V(H)} {\rm deg}_{G-E(H)}(v) + \sum_{v\in V(S)} {\rm deg}_{H-E(S)}(v) = q(T/H) + q(T|H),$$ 

\noindent which proves the second statement. The third statement follows from the first two. The fourth statement follows from the third and Lemma~\ref{defsum}. 
\end{proof} 

We can improve upon Proposition~\ref{surplussum} by noting an improved bound on $q(T)$. First, a definition.

\begin{definition}
If $T=(G,S,L)$ is a canvas and $H$ is a subgraph of $G$ containing $S$, then we let 

$$q_T(H,S):=\sum_{v\in V(H)\setminus V(S)}{\rm deg}_{G-E(H)}(v)$$

and

$$d_T(T|H) := d(T|H) + \alpha q_T(H,S).$$
\end{definition}

Note that if $T$ is critical, then the vertices in $V(H)\setminus V(S)$ have degree at least three. Hence, $q(H,S)$ is at least the number of vertices in $V(H)\setminus V(S)$ of degree two in $H$ plus twice the number of vertices of degree one in $H$.

\begin{prop}\label{surplussum2}
If $T=(G,S,L)$ is a canvas and $H$ is a subgraph of $G$ containing $S$, then

$$q(T) = q(T/H) + q(T|H) - q_T(H,S),$$

and hence

$$d(T)  \ge d_T(T|H) + d(T/H).$$
\end{prop}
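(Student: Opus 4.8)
The plan is to prove this proposition by a direct counting argument that refines Proposition~\ref{surplussum}, essentially by being careful about a term that was thrown away with an inequality there. The key observation is that in the proof of Proposition~\ref{surplussum}, the only inequality used was the step
$$\sum_{v\in V(S)} {\rm deg}_{G-E(H)}(v) \le \sum_{v\in V(H)} {\rm deg}_{G-E(H)}(v),$$
and the gap between the two sides is exactly $\sum_{v\in V(H)\setminus V(S)} {\rm deg}_{G-E(H)}(v) = q_T(H,S)$. So the first step is to revisit the identity
$$q(T) = \sum_{v\in V(S)} {\rm deg}_{G-E(H)}(v) + \sum_{v\in V(S)} {\rm deg}_{H-E(S)}(v),$$
which holds because every edge of $G-E(S)$ incident with a vertex $v\in V(S)$ is either in $E(H)$ (contributing to the second sum) or not (contributing to the first), and these are disjoint cases.

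Next I would rewrite the first sum on the right. Since $V(S)\subseteq V(H)$, we have
$$\sum_{v\in V(S)} {\rm deg}_{G-E(H)}(v) = \sum_{v\in V(H)} {\rm deg}_{G-E(H)}(v) - \sum_{v\in V(H)\setminus V(S)} {\rm deg}_{G-E(H)}(v) = q(T/H) - q_T(H,S),$$
using the definitions of $q(T/H)$ (as $\sum_{v\in V(H)}{\rm deg}_{G-E(H)}(v)$) and $q_T(H,S)$. Meanwhile the second sum is exactly $q(T|H)$ by definition of the subcanvas $T|H = (H,S,L)$. Combining these two facts gives the claimed identity $q(T) = q(T/H) + q(T|H) - q_T(H,S)$.

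For the second conclusion, I would start from Lemma~\ref{defsum}, which gives ${\rm def}(T) = {\rm def}(T|H) + {\rm def}(T/H)$, and from $s(T) = \epsilon v(T) + \alpha q(T)$ together with $v(T) = v(T|H) + v(T/H)$ (the first bullet of Proposition~\ref{surplussum}) and the identity just proved. Then
$$s(T) = \epsilon v(T|H) + \epsilon v(T/H) + \alpha q(T|H) + \alpha q(T/H) - \alpha q_T(H,S) = s(T|H) + s(T/H) - \alpha q_T(H,S).$$
Subtracting from the deficiency identity yields
$$d(T) = {\rm def}(T) - s(T) = d(T|H) + d(T/H) + \alpha q_T(H,S) = d_T(T|H) + d(T/H),$$
where the last equality is just the definition $d_T(T|H) = d(T|H) + \alpha q_T(H,S)$. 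This is actually an equality, so the displayed inequality $d(T) \ge d_T(T|H) + d(T/H)$ holds trivially; I would state it as an equality if the surrounding text permits, but in any case the $\ge$ is immediate.

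I do not expect any genuine obstacle here: the proposition is a bookkeeping identity and the only subtlety is keeping track of which edges and vertices are counted where. The one point requiring a moment's care is the very first identity — that the edges of $G-E(S)$ at a vertex $v\in V(S)$ split cleanly into those in $E(H)$ and those not, with no double-counting and nothing omitted — which is clear since $E(S)\subseteq E(H)\subseteq E(G)$. Everything after that is substitution of definitions.
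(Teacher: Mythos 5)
Your proof is correct and follows essentially the same route as the paper: you start from the identity $q(T) = \sum_{v\in V(S)}\deg_{G-E(H)}(v) + \sum_{v\in V(S)}\deg_{H-E(S)}(v)$ and then split the first sum over $V(H)$ versus $V(H)\setminus V(S)$, which is exactly the paper's computation. Your added observation that the displayed inequality $d(T)\ge d_T(T|H)+d(T/H)$ is in fact an equality is correct and a mild sharpening of the statement, though not needed elsewhere in the argument.
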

\begin{proof}
As in the proof of Proposition~\ref{surplussum}, we have that

$$q(T) = \sum_{v\in V(S)} {\rm deg}_{G-E(H)}(v) + \sum_{v\in V(S)} {\rm deg}_{H-E(S)}(v).$$

But then 

$$q(T) =  q(T/H)+q(T|H) - \sum_{v\in V(H)\setminus V(S)}{\rm deg}_{G-E(H)}(v) = q(T/H) + q(T|H) - q_T(H,S).$$ 

\end{proof}

\subsection{Small Canvases and a General Formula}

The next proposition determines $d$ for small canvases.
\begin{prop} \label{d0}
Let $T=(G,S,L)$ be a canvas.
\begin{enumerate}
\item[(i)] If $G=S$, then $d(T)= 0$. 
\item[(ii)] If $v(T)=0$, then $d(T)\ge (3-2\alpha)e(T)$.
\item[(iii)] If $v(T)=1$, $c(G)=c(S)$ and $v\in V(G\setminus S)$, then $d(T)\ge (3-2\alpha)e(T)-5+\deg(v)\alpha-\epsilon$.
\end{enumerate}
\end{prop}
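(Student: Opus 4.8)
The plan is to compute $d(T)$ directly from its definition $d(T) = {\rm def}(T) - s(T) = 3e(T) - 5v(T) + 10(c(S)-c(G)) - \epsilon v(T) - \alpha q(T)$ in each of the three cases, using the fact that $q(T) = \sum_{v\in V(S)}{\rm deg}_{G-E(S)}(v)$ counts edges of $E(G)\setminus E(S)$ with an end in $S$, with edges having both ends in $S$ counted twice.

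\textbf{Case (i).} If $G = S$, then $e(T) = v(T) = 0$, every component of $G$ is a component of $S$ so $c(S) = c(G)$, and $q(T) = 0$ since $G - E(S)$ has no edges. Hence every term vanishes and $d(T) = 0$.

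\textbf{Case (ii).} If $v(T) = 0$, then $V(G) = V(S)$, so every edge of $E(G)\setminus E(S)$ has both ends in $V(S)$ and therefore contributes $2$ to $q(T)$; thus $q(T) = 2e(T)$. Since $V(G) = V(S)$ we also have $c(G) \le c(S)$, so $10(c(S) - c(G)) \ge 0$. Therefore $d(T) = 3e(T) - 0 + 10(c(S)-c(G)) - 0 - 2\alpha e(T) \ge (3-2\alpha)e(T)$, as claimed.

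\textbf{Case (iii).} Now $v(T) = 1$ with $V(G)\setminus V(S) = \{v\}$, and $c(G) = c(S)$. Every edge of $E(G)\setminus E(S)$ is either incident to $v$ or has both ends in $V(S)$. The $\deg(v)$ edges at $v$ each contribute at most $1$ to $q(T)$ (they have at most one end in $S$, since the other end is $v \notin S$), and the edges with both ends in $S$ contribute $2$; writing $e(T) = \deg(v) + m$ where $m$ is the number of edges of $E(G)\setminus E(S)$ with both ends in $V(S)$, we get $q(T) \le \deg(v) + 2m \le \deg(v) + 2(e(T) - \deg(v)) = 2e(T) - \deg(v)$. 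Then
\begin{align*}
d(T) &= 3e(T) - 5 + 0 - \epsilon - \alpha q(T) \\
&\ge 3e(T) - 5 - \epsilon - \alpha(2e(T) - \deg(v)) \\
&= (3 - 2\alpha)e(T) - 5 + \alpha\deg(v) - \epsilon,
\end{align*}
which is exactly the desired bound. The only mild subtlety — and the one place to be careful — is the direction of the inequality $q(T) \le 2e(T) - \deg(v)$: one must check that each edge at $v$ is counted with multiplicity exactly one in $q(T)$ (using $v \notin V(S)$) and that edges internal to $S$ are the only ones counted twice, so that replacing the true count by the upper bound $2e(T) - \deg(v)$ only decreases $-\alpha q(T)$, preserving the "$\ge$". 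No genuine obstacle arises; the argument is a bookkeeping exercise in which edges are counted with which multiplicity in $q(T)$ versus $e(T)$.
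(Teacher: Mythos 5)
Your proof is correct and follows essentially the same line as the paper's: compute each of $e(T)$, $v(T)$, $q(T)$, $c(S)-c(G)$ directly in each case and plug into $d(T)={\rm def}(T)-s(T)$. One small remark on case (iii): in fact $q(T) = 2e(T)-\deg(v)$ exactly (each edge at $v$ goes to $V(S)$ since $v$ is the unique vertex outside $V(S)$ and the graph is simple, so it contributes exactly one to $q(T)$), and the paper uses this equality; your "$\le$" version is still valid and points in the right direction for the bound, so the conclusion is unaffected.
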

\begin{proof}
If $G=S$, then $v(T)=q(T)=s(T)=0$. Moreover, $c(G)=c(S)$ and hence ${\rm def}(T) = 0$. So $d(T)= 0-0=0$ as desired. This proves (i).

If $v(T)=0$, then $q(T)=|E(G)\setminus E(S)|$. Thus $s(T)=2\alpha|E(G)\setminus E(S)|=2\alpha e(T)$. As $v(T)=0$, $c(G)\le c(S)$ and hence ${\rm def}(T)\ge 3e(T)$. So $d(T)\ge (3-2\alpha)e(T)$ as desired. This proves (ii).

Let $v\in V(G)\setminus V(S)$. As $v(T)=1$, $q(T)=2|E(G)\setminus E(S)|-\deg(v)=2e(T)-\deg(v)$. Thus $s(T)=\alpha(2e(T)-\deg(v))+\epsilon$.  Yet as $c(G)=c(S)$, ${\rm def}(T)\ge 3e(T)-5v(T) = 3e(T)-5$. Combining, $d(T)\ge(3-2\alpha)e(T)-5+\deg(v)\alpha-\epsilon$ as desired. This proves (iii).
\end{proof}

In particular, Proposition~\ref{d0} says that if $G$ consists of $S$ and one edge that is not in $S$ and yet has both ends in $S$, then $d(T)\ge 3-2\alpha$. Similarly if $G$ consists of $S$ and one vertex not in $S$ of degree three, then $d(T)\ge 4-3\alpha-\epsilon$. These two critical canvases are special for our proof in that they have the smallest value of $d$. To that end, we make the following definitions.

\begin{definition}
Let $T=(G,S,L)$ be a canvas. We say $T$ is a \emph{chord} if $G$ consists of exactly $S$ and one edge not in $S$ with both ends in $S$. We say $T$ is a \emph{tripod} if $G$ consists of exactly $S$ and one vertex not in $S$ of degree three. We say $T$ is \emph{singular} if $T$ is a chord or a tripod and \emph{non-singular} otherwise. We say $T$ is \emph{normal} if no subcanvas of $T$ is singular.
\end{definition}

We are now ready to state our generalization of the linear bound for two cycles. It asserts that the only exceptions are the two cases listed above.

\begin{thm}\label{StrongLinear}
Let $\epsilon, \alpha > 0$ satisfy the following where $\epsilon,\alpha$ are as in the definition of $s$ and $d$:
\begin{enumerate}
\item $9\epsilon \le \alpha$,
\item $2.5\alpha+5.5\epsilon\le 1$,
\item $11\epsilon + 1\le 3\alpha$,
\end{enumerate}

\noindent If $T=(G,S,L)$ is a non-singular critical canvas with $c(S)\le 2$, then $d(T)\ge 3$.
\end{thm}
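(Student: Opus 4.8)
The plan is to prove Theorem~\ref{StrongLinear} by induction on $|V(G)|$, using the structural dichotomy of Lemma~\ref{Structure3} (together with its one-component precursors Lemma~\ref{Structure2}, Lemma~\ref{StructureB}) to locate a small ``reducible'' subcanvas $H$, split the deficiency via Lemma~\ref{defsum} and Proposition~\ref{surplussum2} as $d(T)\ge d_T(T|H)+d(T/H)$, and then bound each piece from below. The base cases are exactly the canvases with $v(T)\le 1$, handled by Proposition~\ref{d0}: since $T$ is non-singular, if $G=S$ there is nothing to prove (but then $T$ is not critical), if $v(T)=0$ a non-singular critical canvas must contain at least two chords or a longer structure, and if $v(T)=1$ the single vertex has degree $\ge 4$ (degree-$3$ would make it a tripod), so one checks $d(T)\ge 3$ directly from the stated inequalities on $\alpha,\epsilon$. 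The inductive step is where all the work lies.

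For the inductive step I would first reduce to the case that $S$ is connected or has two components at distance $\ge 9$: if $S$ has a component with distance less than $9$ to another, or $G$ (or $G\setminus V(S)$) is disconnected, one peels off a smaller critical subcanvas as in the proof of Lemma~\ref{StructureC}/Lemma~\ref{Structure3} and applies induction. Once Lemma~\ref{Structure3} applies, pass to a $k$-relaxation $T'=(G,S',L)$ with $k\le 2$; since relaxing replaces a neighboring $2$-path by adding it to the boundary together with three edges, one must track how $d$ changes under a single relaxation — this is a bounded local computation showing $d(T)\ge d(T')-(\text{small constant in }\alpha,\epsilon)$ per relaxation, and crucially the $\alpha q(T)$ term is what makes relaxation cheap. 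Then in $T'$ one of the five outcomes holds: a chord (outcome 1), a vertex with $\ge 2$ neighbors in $S'$ (outcome 2), a neighboring claw (outcome 3), a semi-neighboring $3$-path (outcome 4), or a semi-neighboring $5$-path (outcome 5). In each case let $H$ be the subgraph consisting of $S'$ plus that structure plus all its edges to $S'$; then $T'|H$ is a small explicit critical canvas whose $d_{T'}$-value one computes by hand (using that $q_{T'}(H,S')$ counts the ``dangling'' degree of the new vertices, each $\ge 1$ since $G$ is critical and these vertices have degree $\ge 3$), while $T'/H$ is a smaller non-singular critical canvas to which the induction hypothesis gives $d(T'/H)\ge 3$ — provided $T'/H$ is itself non-singular, which requires a separate argument (if $T'/H$ had a singular subcanvas one absorbs it into $H$ and recomputes, or shows it contradicts criticality/girth).

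The delicate accounting is making the contributions of $T'|H$ nonnegative, or more precisely making $d_{T'}(T'|H)+d(T'/H)\ge 3$ after subtracting the relaxation losses. Here outcome~2 and outcome~3 are the tightest: a single extra vertex of degree exactly $3$ with two edges into $S'$ gives $d_{T'}(T'|H)$ around $4-3\alpha-\epsilon+\alpha$, which with $\alpha<2/5$ comfortably exceeds the relaxation cost, while outcomes 4 and 5 involve paths of $4$ or $6$ vertices each of degree $\ge 3$ in $G$, contributing roughly $(3-2\alpha)(\text{path edges})+\alpha(\text{dangling edges})-5\cdot(\text{internal vertices})$, and the point of the hypotheses $9\epsilon\le\alpha$, $2.5\alpha+5.5\epsilon\le 1$, $11\epsilon+1\le 3\alpha$ is exactly to keep these quantities on the right side of $0$ — so I would simply verify each case reduces to one of these three numerical inequalities. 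The main obstacle I anticipate is bookkeeping the $k\le 2$ relaxations uniformly: one has to confirm that relaxing (which moves a $2$-path and three incident edges into the boundary) never decreases $d$ by more than a controlled amount and never destroys non-singularity of $T'/H$, and that the structures found by Lemma~\ref{Structure3} in the relaxed canvas genuinely correspond to subgraphs of the original $G$ whose contribution to $d(T)$ can be isolated — in other words, reconciling ``structure lives in $T'$'' with ``deficiency is computed in $T$'' is the technical heart, and I expect the author handles it by a careful single definition of the relevant $H$ inside $G$ and one master inequality (presumably Theorem~\ref{StrongLinear}'s proof in Section~\ref{Proof} does exactly this, case by case over the outcomes of Lemma~\ref{Structure3}).
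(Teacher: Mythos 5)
Your proposal captures the overall framework correctly — use Lemma~\ref{Structure3} to find a structure in a $\le 2$-relaxation, split $d$ via Proposition~\ref{surplussum2} as $d(T)\ge d_T(T|H)+d(T/H)$, track the $\alpha q_T(H,S)$ boost from the dangling degrees, and reconcile the relaxed canvas with the original via a bounded per-relaxation cost. The paper does implement exactly this, and it resolves the ``reconcile structure in $T'$ with deficiency in $T$'' issue you flag by defining a canvas $T$ to be \emph{close} to the minimal counterexample $T_0$ (meaning $d(T_0)\ge d(T)-6\epsilon$), proving all structural exclusions for every critical canvas close to $T_0$, and then showing that $\le 2$-relaxations of $T_0$ are close (Corollaries~\ref{Relax1} and~\ref{Relax2}).

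However, there is a genuine gap in your plan for outcomes (4) and (5). You write that each case ``reduces to one of these three numerical inequalities,'' i.e.\ that $d_{T'}(T'|H)$ plus the inductive $d(T'/H)\ge 3$ already gives $d(T')\ge 3$. This fails for the semi-neighboring paths. For a semi-neighboring $3$-path one finds (Subclaim~\ref{DFirst}) that $d_T(T|H)\ge -3+\alpha+6\epsilon$, which is \emph{negative} — roughly $-2.6$ once one uses $\alpha<2/5$ forced by inequality (2) — so $d_T(T|H)+3 < 3$, and induction alone is not nearly enough. The paper's fix is a \emph{coloring} argument, not a numerical one: it fixes a precoloring $\phi$ of $S$ that doesn't extend to $G$, defines the residual lists $A(\cdot)$, and uses the slack in the semi-neighboring path (the middle vertex $p_3$ has $|A(p_3)|=3$ while $A(p_1)=A(p_2)\subsetneq A(p_3)$ by Claim~\ref{ColorSubset}) to choose an $L'$-coloring of $H$ that demonstrably fails to extend to a proper $G'\subsetneq G$. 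That failure produces a proper $H$-critical subgraph of $G$, and Claim~\ref{ProperCrit} (applied through Claim~\ref{Reduction}) then gives the much stronger bound $d(T/H)\ge 6-\alpha$ rather than $\ge 3$. Without this extra $\approx 3-\alpha$, the casework doesn't close. The degree constraints you would need to even state $d_T(T|H)$ correctly (Claims~\ref{NotBothDeg3} and~\ref{NotAllDeg3}) are themselves proved by the same style of coloring argument, so they cannot be assumed ``free.'' A secondary issue: induction on $|V(G)|$ alone is too coarse; the paper's ``smaller'' order is lexicographic on $(v(T),e(T),\sum|L(v)|)$ precisely so that $T/H$ with $H=S\cup\{e\}$ (the chord case) counts as smaller even though $v(T/H)=v(T)$.
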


The proof of Theorem~\ref{StrongLinear} is given in Section~\ref{Proof}. 

\subsection{Deriving the Main Theorem}

We proceed to derive Theorem~\ref{Cylinder} from Theorem~\ref{StrongLinear} as follows. First we determine the appropriate $\epsilon, \alpha$ and $\gamma$.

\begin{theorem}\label{StrongLinear2}
If $G$ is a planar graph of girth at least five and $S$ is a subgraph of $G$ such that $G$ is $S$-critical for some $3$-list-assignment $L$ and $S$ has at most two components, then

$$\frac{1}{88}|V(G)\setminus V(S)| + \frac{3}{8}|E(S,G\setminus S)| \le 3|E(G)|-5|V(G)| + 5|V(S)|-3|E(S)| + 10(c(S)-c(G)).$$

Furthermore,

$$|V(G)| \le 177|V(S)|+528 \le 393|V(S)|.$$
\end{theorem}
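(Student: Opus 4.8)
The plan is to derive Theorem~\ref{StrongLinear2} from Theorem~\ref{StrongLinear} by choosing suitable constants $\epsilon,\alpha$ and then unwinding the definitions. First I would pick concrete values: take $\alpha = 3/8$ and $\epsilon = 1/88$ (or some similarly small value), and verify the three numerical hypotheses of Theorem~\ref{StrongLinear} — namely $9\epsilon \le \alpha$, $2.5\alpha + 5.5\epsilon \le 1$, and $11\epsilon + 1 \le 3\alpha$. With $\alpha = 3/8$ we have $3\alpha = 9/8 = 1.125$, so $11\epsilon \le 1/8$, i.e. $\epsilon \le 1/88$, which dictates the choice $\epsilon = 1/88$; one then checks $9/88 \le 3/8$ and $2.5\cdot(3/8) + 5.5/88 = 15/16 + 1/16 = 1 \le 1$. (These may need tiny perturbation to get strict inequalities, or the statement of Theorem~\ref{StrongLinear} allows equality — I would match whichever is written.)

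Next I would handle the case split forced by the word ``non-singular.'' If $T=(G,S,L)$ is itself a critical canvas with $c(S)\le 2$ and $T$ is normal (no singular subcanvas), then in particular $T$ is non-singular, so Theorem~\ref{StrongLinear} gives $d(T)\ge 3 > 0$. Expanding $d(T) = \mathrm{def}(T) - s(T) = 3e(T) - 5v(T) + 10(c(S)-c(G)) - \epsilon v(T) - \alpha q(T)$ and recalling $q(T) = 2|E(S,G\setminus S)|$ only when... — more precisely $q(T) = \sum_{v\in V(S)}\deg_{G-E(S)}(v)$ counts edges from $S$ to $G\setminus S$ once and edges with both ends in $S$ (not in $S$) twice — and rewriting $e(T) = |E(G)| - |E(S)|$, $v(T) = |V(G)| - |V(S)|$, the inequality $d(T)\ge 0$ rearranges exactly into the displayed isoperimetric bound with $\epsilon = 1/88$, $\alpha = 3/8$, after using that for a critical canvas with no chord there are no edges with both ends in $V(S)$ outside $E(S)$, so $q(T) = |E(S,G\setminus S)|$. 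If $T$ is not normal, I would argue that a singular subcanvas is a chord or tripod, peel it off using Lemma~\ref{defsum}/Proposition~\ref{surplussum} together with Proposition~\ref{d0} (which gives $d\ge 3-2\alpha > 0$ for a chord and $d \ge 4 - 3\alpha - \epsilon > 0$ for a tripod), and induct: $d(T) = d(T|H) + d(T/H) \ge 0 + 0$, noting the supercanvas $T/H$ obtained by absorbing the singular piece into the boundary is again critical (Corollary~\ref{Supercanvas}) with $c$ of the boundary unchanged or reduced, so the induction is on $|V(G)|$.

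For the ``Furthermore'' clause I would feed the isoperimetric inequality into Euler's formula. A plane graph of girth at least five satisfies $|E(G)| \le \tfrac{5}{3}(|V(G)|-2) + \tfrac{5}{3}(c(G)-1)$-type bounds, or more usefully $3|E(G)| - 5|V(G)| \le -10 + (\text{correction terms in } c(G))$; combined with the left side being nonnegative this forces $5|V(S)| - 3|E(S)| + 10 c(S)$ and the constant terms to dominate $\tfrac{1}{88}|V(G)\setminus V(S)|$, giving $|V(G)\setminus V(S)| = O(|V(S)|)$ and hence $|V(G)| \le 177|V(S)| + 528$. Here I would be careful with the face-count/Euler bookkeeping: girth five gives $2|E(G)| \ge 5 f$ where $f$ counts faces (with the usual care about which faces/components), and Euler's formula $|V| - |E| + f = 1 + c(G)$ yields $|E(G)| \le \tfrac{5}{3}|V(G)| - \tfrac{5}{3}(1+c(G))$, hence $3|E(G)| - 5|V(G)| \le -5(1+c(G))$. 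Plugging in: $\tfrac{1}{88}|V(G)\setminus V(S)| \le -5(1+c(G)) + 5|V(S)| - 3|E(S)| + 10(c(S)-c(G))$; dropping the nonpositive $-3|E(S)|$ and bounding $c(S)\le 2$, $c(G)\ge 1$ crudely gives $|V(G)\setminus V(S)| \le 88(5|V(S)| + 15) = 440|V(S)| + 1320$, so $|V(G)| \le 441|V(S)| + 1320$ — weaker than claimed, so to hit the constant $177$ I must instead use the left side's $\tfrac{3}{8}|E(S,G\setminus S)|$ term and a sharper Euler estimate for $S$-critical graphs (every vertex of $V(G)\setminus V(S)$ has degree $\ge 3$, contributing to $|E(S,G\setminus S)|$ or to $e(T)$).

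\medskip
\noindent The main obstacle I anticipate is precisely this last optimization: getting from the (relatively soft) nonnegativity of $d(T)$ to the sharp numerical constant $177$ (and then the cruder $393$) requires carefully combining the isoperimetric inequality with the girth-five Euler bound \emph{and} exploiting that $S$-criticality forces minimum degree conditions on $V(G)\setminus V(S)$, so that the $\tfrac{3}{8}|E(S,G\setminus S)|$ term and the $3|E(G)|$ term reinforce rather than waste each other. The constant $88 = 1/\epsilon$ and the coefficient $3/8 = \alpha$ are clearly engineered to make this balance come out, so the real work is the arithmetic of that balance rather than any new idea; everything structural is already supplied by Theorem~\ref{StrongLinear}, Lemma~\ref{defsum}, and Proposition~\ref{d0}.
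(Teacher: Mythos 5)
Your opening moves are correct and match the paper: set $\alpha=3/8$, $\epsilon=1/88$, verify the three numerical hypotheses (you correctly note that equality is permitted in the second one), and unfold $d(T)\ge 0$. One simplification: you worry about whether $q(T)=|E(S,G\setminus S)|$ and invoke chord-freeness, but this is unnecessary. The displayed inequality only needs $q(T)\ge|E(S,G\setminus S)|$, which is true by definition of $q$ (a chord simply contributes $2$ rather than $1$), so the inequality follows directly from $d(T)\ge 0$ with no case distinction.

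Your handling of the singular/non-singular dichotomy is also more elaborate than required. There is no need to talk about normality, peel off singular subcanvases, or induct. Theorem~\ref{StrongLinear} has hypothesis ``non-singular,'' not ``normal.'' If $T$ itself is non-singular, apply Theorem~\ref{StrongLinear} to get $d(T)\ge 3$. If $T$ is singular, Proposition~\ref{d0} directly gives $d(T)\ge 3-2\alpha>0$ (chord) or $d(T)\ge 4-3\alpha-\epsilon>0$ (tripod); and if $G=S$, $d(T)=0$. In every case $d(T)\ge 0$ and the isoperimetric inequality follows. This is exactly what the paper does.

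The real gap is in your ``Furthermore'' analysis. You arrive at a constant around $441$, then hypothesize that hitting $177$ requires the $\tfrac{3}{8}|E(S,G\setminus S)|$ term, a sharper $S$-critical Euler estimate, and minimum-degree bookkeeping. None of that is needed, and your gap is self-inflicted: you discarded the $-3|E(S)|$ term. The paper retains it and combines it with the fact that a graph with at most two components satisfies $|E(S)|\ge |V(S)|-2$, so $5|V(S)|-3|E(S)|\le 2|V(S)|+6$. Together with $3|E(G)|-5|V(G)|\le -10$ and $10(c(S)-c(G))\le 10$, the right side of the isoperimetric inequality is at most $2|V(S)|+6$, and comparing with the $\tfrac{1}{88}(|V(G)|-|V(S)|)$ lower bound on the left gives $|V(G)|\le 177|V(S)|+528$ immediately. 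So the computation is softer than you feared; the decisive step you were missing is simply to not throw away $-3|E(S)|$. (As an aside: the paper's final inequality $177|V(S)|+528\le 393|V(S)|$ actually needs $|V(S)|\ge 3$ rather than the stated $|V(S)|\ge 1$, but this minor slip in the paper is orthogonal to your gap and harmless in the downstream applications where $|V(S)|\ge 5$.)
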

\begin{proof}
Let $T=(G,S,L)$ and note that $T$ is a critical canvas. Let $\epsilon = 1/88$ and $\alpha=33\epsilon=3/8$. Note that inequality (1) of Theorem~\ref{StrongLinear} clearly holds. Moreover, $2.5 \alpha + 5.5 \epsilon = (82.5+5.5)\epsilon \le 1$ and hence inequality (2) holds.Moreover, $3\alpha= 99\epsilon \ge 11\epsilon+1$ and so inequality (3) holds. Thus $\epsilon$ and $\alpha$ satisfies the hypotheses of Theorem~\ref{StrongLinear}. The first formula now follow from Theorem~\ref{StrongLinear} and Proposition~\ref{d0} which give that $d(T)\ge 0$.

For the second formula, we note that by Euler's formula since $G$ is planar and has girth at least five that $3|E(G)|\le 5|V(G)|-10$. Meanwhile, $c(G)\ge 1$ and $c(S)\le 2$, so $c(S)-c(G)\le 1$. Finally note that $|E(S)|\ge |V(S)|-2$ since $S$ has at most two components. Thus the right side of the first formula is at most $-10 + 2|V(S)|+6 + 10 = 2|V(S)|+6$.  The left side is at least $\frac{1}{88}(|V(G)|-|V(S)|)$. The second formula now follows, where the last inequality holds as $|V(S)|\ge 1$ by Theorem~\ref{Thom0}.
\end{proof}

Theorem~\ref{Cylinder} would now be an immediate corollary of Theorem~\ref{StrongLinear2} with constant 393 in place of 99. However, we can do better. For example, in the case that $S$ is a facial cycle, we can derive a stronger bound as follows.

\begin{corollary}\label{StrongLinear3}
If $G$ is a plane graph of girth at least five and $C$ is a cycle of $G$ such that $G$ is $C$-critical for some $3$-list-assignment $L$, then

$$\frac{1}{88}|V(G\setminus C)| + \frac{3}{8}|E(C,G\setminus C)| + \sum_{f\in \F(G)} (|f|-5) \le 2|V(C)|-10.$$

Furthermore, if $C$ is facial, then $|V(G)| \le 89|V(C)|$.
\end{corollary}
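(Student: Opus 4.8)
The plan is to derive Corollary~\ref{StrongLinear3} from Theorem~\ref{StrongLinear2} (specialized to the case $S=C$ a single cycle) by unpacking the deficiency inequality and rewriting the right-hand side using Euler's formula in its sharp form for a plane graph. First I would set $T=(G,C,L)$, observe that $C$ is connected so $c(S)=1$ and that $c(G)=1$ (since $G$ is $C$-critical, $G$ has no isolated vertices outside $C$, and any component of $G$ disjoint from $C$ would contradict criticality), hence $c(S)-c(G)=0$. I would also use $|E(C)|=|V(C)|$ since $C$ is a cycle. Substituting these into the first displayed inequality of Theorem~\ref{StrongLinear2} collapses the right-hand side to $3|E(G)|-5|V(G)|+5|V(C)|-3|V(C)| = 3|E(G)|-5|V(G)|+2|V(C)|$.

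The key step is then to replace the ``global'' Euler bound $3|E(G)|\le 5|V(G)|-10$ with the ``local'' version that keeps track of the face lengths. For a connected plane graph of girth at least five, Euler's formula $|V(G)|-|E(G)|+|F(G)|=2$ together with $\sum_{f\in F(G)}|f| = 2|E(G)|$ gives $3|E(G)| - 5|V(G)| = -10 + 5|F(G)| - 2|E(G)| = -10 - \sum_{f\in F(G)}(|f|-5)$. Plugging this into the collapsed right-hand side yields exactly $2|V(C)| - 10 - \sum_{f\in F(G)}(|f|-5)$, so after moving that sum to the left the first displayed inequality of the Corollary follows. (One should note the face-length sum is well-defined and nonnegative term-by-term since girth at least five forces $|f|\ge 5$ for every face; this is also why the stronger bound is ``free'' — we simply did not throw those terms away as in the proof of Theorem~\ref{StrongLinear2}.)

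For the ``furthermore'' clause, suppose $C$ is facial, say $C$ bounds a face $f_0$. The plan is to extract an extra $|V(C)| - 5$ (or rather $|f_0|-5 = |V(C)|-5$, valid when $|V(C)|\ge 5$, which holds by girth) from the face-sum on the left, so that the first inequality gives $\frac{1}{88}|V(G\setminus C)| + (|V(C)|-5) \le 2|V(C)|-10$, i.e. $\frac{1}{88}|V(G\setminus C)| \le |V(C)|-5 \le |V(C)|$. Hence $|V(G\setminus C)|\le 88|V(C)|$ and $|V(G)| = |V(C)| + |V(G\setminus C)| \le 89|V(C)|$, as claimed. A small wrinkle is the degenerate possibility that $G=C$ itself (or that $|V(C)|<5$), but girth at least five rules out small $C$, and if $G=C$ then $G$ is trivially not $C$-critical, so these cases do not arise.

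The main obstacle I anticipate is purely bookkeeping rather than conceptual: one must be careful that the components count $c(S)-c(G)$ really is $0$ and not merely $\le 1$ (so the ``$-10$'' is exact, not an inequality in the wrong direction), and that the term $\frac{3}{8}|E(C,G\setminus C)|$ is simply retained on the left throughout — it plays no role in the final vertex bound and is only carried along because Theorem~\ref{StrongLinear2} provides it. The only genuinely substantive input is the sharp Euler identity $3|E(G)|-5|V(G)| = -10 - \sum_f(|f|-5)$ for connected plane graphs, together with the observation that facial-ness of $C$ lets us isolate one more good term; everything else is substitution.
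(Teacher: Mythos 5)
Your proof is correct and follows essentially the same route as the paper: apply Theorem~\ref{StrongLinear2} with $S=C$, use $|E(C)|=|V(C)|$, rewrite $3|E(G)|-5|V(G)|$ via Euler's formula as $-10-\sum_{f}(|f|-5)$, and for the facial case isolate $|f_0|-5=|V(C)|-5\ge 0$ to get $\frac{1}{88}|V(G\setminus C)|\le|V(C)|-5$. Two minor notes: the paper gets by with the weaker observation $c(G)\ge 1$, $c(C)\le 1$ (so $10(c(C)-c(G))\le 0$) rather than establishing $c(G)=1$ exactly; and your side-remark that $G=C$ cannot be $C$-critical is actually false (the defining condition is vacuously satisfied when $G=C$), but this is immaterial since in that case the inequality holds with equality, so nothing breaks.
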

\begin{proof}
By Euler's formula, $3|E(G)|-5|V(G)| = -10 - \sum_{f\in \F(G)} (|f|-5)$. Since $C$ is a cycle, $|V(C)|=|E(C)|$. Moreover, $c(G)\ge 1$ and $c(C)\le 1$. The first formula now follows from Theorem~\ref{StrongLinear2} with $C$ in place of $S$ as the right side of Theorem~\ref{StrongLinear2} is at most $2|V(C)|-10 - \sum_{f\in \F(G)} (|f|-5)$.

As for the second formula, if $C$ is facial, then as $G$ has girth at least five, we find that $\sum_{f\in F(G)}(|f|-5) \ge |C|-5$. Thus $\frac{1}{88}|V(G\setminus C)| \le |V(C)|-5$. Hence $|V(G)|\le 89|V(C)|$ as desired.
\end{proof}

We are now ready to prove Theorem~\ref{Cylinder}.

{\bf Proof of Theorem~\ref{Cylinder}.}
By Euler's formula, $3|E(G)|-5|V(G)| = -10 - \sum_{f\in \F(G)} (|f|-5)$. Since $C_1$ and $C_2$ are cycles, $|V(C_1)|=|E(C_1)|$ and $|V(C_2)|=|E(C_2)|$. Let $S=C_1\cup C_2$. Note that $c(S)\le 2$ while $c(G)\ge 1$. In addition, as $S=C_1\cup C_2$, $|V(S)|\le |E(S)|$ since if $S$ is connected, $S$ contains a cycle. Note that $G$ is $S$-critical by assumption. Since $C_1,C_2$ are facial cycles, $\sum_{f\in \F(G)} (|f|-5) \ge |V(C_1)|+|V(C_2)|-10$. Thus by Theorem~\ref{StrongLinear2}, $\frac{1}{88}|V(G\setminus S)| \le |V(S)|+10(c(S)-c(G))$. 

If $S$ is connected, then this is at most $|V(C_1)|+|V(C_2)|$ and we find that $|V(G)|\le 89 (|V(C_1)|+|V(C_2)|)$ as desired. So we may assume that $S$ is not connected, that is $C_1$ and $C_2$ are disjoint. As $G$ has girth at least five, $|V(C_1)|,|V(C_2)|\ge 5$. Thus $|V(S)|+10\le 2(|V(C_1)|+|V(C_2)|)$. Hence we find that $|V(G)|\le 177(|V(C_1)|+|V(C_2)|)$ as desired. \qed

\section{Proof of Theorem~\ref{StrongLinear}}\label{Proof}

We say a canvas $T_1=(G_1,S_1,L_1)$ is \emph{smaller} than a canvas $T_2=(G_2,S_2,L_2)$ if either

\begin{itemize}
\item $v(T_1) < v(T_2)$, or 
\item $v(T_1)=v(T_2)$ and $e(T_1) < e(T_2)$, or 
\item $v(T_1)=v(T_2)$, $e(T_1)=e(T_2)$ and $\sum_{v\in V(G_1)\setminus V(S_1)} |L(v)| < \sum_{v\in V(G_2)\setminus V(S_2)} |L(v)|$.
\end{itemize}

Let $T_0=(G_0,S_0,L_0)$ be a counterexample to Theorem~\ref{StrongLinear} such that every canvas $T$ smaller than $T_0$ that satisfies the assumptions of Theorem~\ref{StrongLinear} have $d(T)\ge 4-\gamma$.

We say a canvas $T$ is \emph{close} to $T_0$ if $T=T_0$ or $T$ is smaller than $T_0$ and $d(T_0)\ge d(T)-6\epsilon$.

Note that $3 > 4-3\alpha-\epsilon > 3-2\alpha$ by inequalities (1) and (2). Thus every critical canvas $T$ smaller than $T_0$ satisfies $d(T)\ge 3-2\alpha$. Finally it is useful to note one more inequality:

$$4. 2\alpha + 10\epsilon \le 1,$$

which follows from inequalities (1) and (2) since $1\ge 2.5\alpha + 5.5\epsilon \ge 2\alpha + 4.5\epsilon + 5.5\epsilon = 2\alpha + 10\epsilon$.

\subsection{Properties of Close Canvases}

For the remainder of the proof of Theorem~\ref{StrongLinear}, let $T=(G,S,L)$ be a critical canvas close to $T_0$ such that $c(S)\le 2$. We proceed to establish many properties of such a $T$. In particular that $T$ has none of the following: an edge not in $S$ with both ends in $S$, a vertex not in $S$ with at least two neighbors in $S$, a neighboring claw of $S$, a semi-neighboring $3$-path of $S$ or a semi-neighboring $5$-path of $S$. Finally we will also show that a $\le 2$-relaxation of $T_0$ is close to $T_0$. Hence if we can apply Lemma~\ref{Structure3} to $T_0$, the proof will be complete; our next claim shows that and more.

\begin{claim}\label{Dist9}
If $S$ has two components $S_1$ and $S_2$, then $d(S_1,S_2)\le 9$.
\end{claim}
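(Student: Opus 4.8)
The plan is to argue by contradiction using the minimality of $T_0$. Suppose $S$ has two components $S_1, S_2$ with $d(S_1, S_2) \ge 10$. The idea is to split $G$ along a shortest path $Q$ between $S_1$ and $S_2$ (or along some minimal cut separating the two boundary faces) so as to produce two smaller critical subcanvases, each containing only one boundary component, apply the inductive hypothesis (which gives $d \ge 4 - \gamma$, or at least Theorem~\ref{StrongLinear} itself since these canvases satisfy its hypotheses with $c(S) \le 1$) to each piece, and then combine the deficiency bounds via Proposition~\ref{surplussum2} (or Lemma~\ref{defsum}) to contradict $d(T_0) < 3$. The superadditivity $d(T) \ge d_T(T|H) + d(T/H)$ is exactly the tool that lets summands add up, and the extra slack coming from the $\alpha q_T(H,S)$ term should absorb any boundary effects introduced by the cutting.

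Concretely, I would first use the structural lemmas already established: since $T$ is a critical canvas close to $T_0$ with $c(S) \le 2$, by the properties being developed in this subsection $T$ has no edge with both ends in $S$ outside $S$, no vertex outside $S$ with two neighbors in $S$, and none of the semi-neighboring path or claw configurations. Then, because $S$ has two components far apart, I would locate a contractible separating structure — in the planar setting, the two components bound two distinct faces, and I would take a shortest curve/path in $G$ from $N(S_1)$ to $N(S_2)$, which has length roughly $d(S_1,S_2) - 2 \ge 8$. Cutting $G$ along this path yields a plane graph $G'$ in which $S_1$ and $S_2$ lie on a common boundary, or alternatively one can split off a disk containing $S_1$ only. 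The key point is to choose the cut so that both resulting canvases are still critical (using Corollary~\ref{Subcanvas} or Lemma~\ref{SComponent}) and each has a connected boundary of size not much larger than $|V(S_i)|$.

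Once the split is made, let $H$ be the subgraph cut off on the $S_1$ side, so that $T|H$ contains $S_1$ and $T/H$ is the supercanvas. By Corollary~\ref{Supercanvas} and the subcanvas corollaries, both $T|H$ and $T/H$ are critical canvases, each with boundary having at most one more component than before but still bounded component count; by minimality each has $d \ge 4 - \gamma$ (or $\ge 3$), unless it is singular, in which case Proposition~\ref{d0} gives $d \ge 3 - 2\alpha$, and the singular cases must be ruled out or handled by noting they force forbidden structures (a chord is outcome 1, a tripod near the boundary is outcome 2 or a neighboring claw). Applying Proposition~\ref{surplussum2}, $d(T) \ge d_T(T|H) + d(T/H) \ge (3 - 2\alpha) + (3 - 2\alpha)$, and using inequality $2.5\alpha + 5.5\epsilon \le 1$ (hence $4\alpha < 2$... wait, need $6 - 4\alpha \ge 3$, i.e. $\alpha \le 3/4$, which holds) this already exceeds $3$ when both pieces are nontrivial, contradicting $d(T_0) < 3$.

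The main obstacle I anticipate is the cutting step: ensuring that after splitting along the path $Q$, the two pieces are genuinely critical canvases of the required form, that the new boundary vertices do not introduce short cycles violating girth five, and that the component count and boundary sizes are controlled so that the inductive hypothesis and Proposition~\ref{d0} apply cleanly. In particular, one must verify that the shortest path between the components does not itself create a neighboring claw or semi-neighboring path in a relaxation — this is presumably why the distance bound in the hypotheses of Lemma~\ref{Structure3} is $9$ rather than $7$, and why the claim asserts $d(S_1, S_2) \le 9$ exactly: if the distance were $\ge 10$ one could carry out the cut with room to spare, whereas at distance $\le 9$ the structural lemmas of Section~\ref{Canvas} apply directly to $T_0$ itself and the proof of Theorem~\ref{StrongLinear} proceeds by reduction. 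So the real content is showing that "too far apart" contradicts criticality, which reduces, via a planarity/cut argument, to summing two independent deficiency lower bounds.
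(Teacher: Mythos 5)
The claim as printed appears to carry a sign typo: it should read $d(S_1,S_2)\ge 9$, and the paper's proof supposes $d(S_1,S_2)\le 8$ (note the line ``Since $|E(P)|\le 8$'' and the subsequent text ``So we may assume by Claim~\ref{Dist9} that \dots\ $d(S_1,S_2)\ge 9$''). You negated the printed $\le 9$ to get $\ge 10$, so your argument runs in the opposite direction from the paper's. Because of this, the mechanism is also opposite: rather than \emph{cutting} $G$ into two pieces, the paper \emph{merges} the two components of $S$ by adding a shortest path $P$ from $S_1$ to $S_2$ to the boundary and passing to the supercanvas $T/(S\cup P)$. Since $S\cup P$ is connected while $S$ is not, the term $10(c(S)-c(G))$ in the definition of deficiency contributes an extra $+10$, which comfortably pays for the $|V(P)|-2$ internal path vertices precisely when $|E(P)|\le 8$; combined with $d(T/(S\cup P))\ge 2$ (or $\ge 3$ when $|E(P)|\ge 3$) from minimality of $T_0$, this forces $d(T_0)\ge 3$. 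The whole graph $G$ is retained throughout; nothing is split into two subcanvases.

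Beyond the reversed direction, the cutting plan you sketch has independent difficulties. Cutting a plane graph along a path from $S_1$ to $S_2$ yields a \emph{single} region with a merged boundary, not two pieces each containing one ring; a separating cut disjoint from $S$ need not exist, and even if it did you would have to re-establish criticality of both halves and control the length of the newly created boundary so that Proposition~\ref{d0} and the inductive hypothesis still apply — none of which is automatic. You also have the role of the distance hypothesis inverted: Lemma~\ref{Structure3} requires the components to be at distance \emph{at least} $9$, so the far-apart case is the one the structural lemma handles directly, while the close case ($\le 8$) is exactly what this claim rules out. Finally, your sketch omits the case distinction on $|E(P)|\le 2$ versus $|E(P)|\ge 3$, which the paper needs to handle the possibility that $T/(S\cup P)$ is singular.
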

\begin{proof}
Suppose not. Let $P$ be a shortest path from $S_1$ to $S_2$. By Corollary~\ref{Supercanvas}, $T/(S\cup P)$ is critical. As $T/(S\cup P)$ is smaller than $T_0$, $T/(S\cup P)$ is not a counterexample to Theorem~\ref{StrongLinear}. Hence, $d(T/(S\cup P))\ge 2$. Furthermore if $|E(P)|\ge 3$, then $d(T/(S\cup P))\ge 3$ since in that case $|V(G) \setminus V(S\cup P)|\ge 2$. 

Note that $e(T)-e(T/(S\cup P)) = |E(P)|$ and $v(T) - V(T/(S\cup P)) = |V(P)|-2$. Yet $S\cup P$ is connected where as $S$ is not. Finally, note that $q(T/(S\cup P))-q(T) \ge |V(P)|-4$ as the $|V(P)|-2$ internal vertices of $P$ have degree 3 in $G$ and so will count for at least one in $q(T/(S\cup P))$ while the first and last edges of $P$ which were counted in $q(T)$ will no longer count for $q(T/(S\cup P))$. Combining these observations we have 

$$d(T) \ge 10 + d(T/(S\cup P)) +3|E(P)|-(5+\epsilon)(|V(P)|-2) + (|V(P)|-4)\alpha.$$ 

\noindent Since $|E(P)|=|V(P)|-1$, 

$$d(T) \ge d(T/(S\cup P)) + 15 + \epsilon - 3\alpha -(2+\epsilon - \alpha)|E(P)|.$$ 

If $|E(P)|\le 2$, then as $d(T/(S\cup P))\ge 2$, we have that $d(T) \ge 13-\epsilon - \alpha $, but this at least $3+6\epsilon$ by inequality (2); hence $d(T_0)\ge 3$ as $T$ is close to $T_0$, a contradiction. So we may assume that $|E(P)|\ge 3$ and hence $d(T/(S\cup P))\ge 3$. Thus 

$$d(T) \ge 18 + \epsilon - 3\alpha - (2+\epsilon-\alpha)|E(P)|.$$ 

\noindent Note that $2+\epsilon-\alpha$ is positive by inequality (2) and hence the right side is minimized when $|E(P)|$ is maximized. Since $|E(P)|\le 8$, it follows that 

$$d(T) \ge 2-7\epsilon + 5\alpha.$$

\noindent Yet $5\alpha \ge 3\alpha + 2\alpha \ge (1+11\epsilon) + 18\epsilon = 1 = 29 \epsilon$ by inequalities (1) and (3). Thus we find that $d(T)\ge 3 + 29\epsilon$ and hence $d(T_0)\ge 3$, a contradiction.  
\end{proof}

So we may assume by Claim~\ref{Dist9} that either $c(S)=1$ or that if $S$ has two components $S_1,S_2$, then $d(S_1,S_2)\ge 9$. Hence by applying Lemma~\ref{Structure3} to $T_0$, there exists one of Lemma~\ref{Structure3}(1)-(5). We shall proceed to show that the existence of each of these yields a contradiction as described above. However first we will need some further claims about subcanvases of $T$.

\subsection{Chords and Neighbors of $S$}

\begin{claim}\label{D1}
If $T|H=(H,S,L)$ is a proper subcanvas of $T$ such that $S$ is a proper subgraph of $H$, then $d_T(T|H) < 1+\epsilon$. Further, if $|V(G)\setminus V(H)|\ge 2$, then $d_T(T|H) < 6\epsilon$. 
\end{claim}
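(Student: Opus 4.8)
The plan is to argue by contradiction using the minimality of $T_0$ together with the additivity/subadditivity relations of Proposition~\ref{surplussum2} and Lemma~\ref{defsum}. Suppose $T|H=(H,S,L)$ is a proper subcanvas with $S$ properly contained in $H$, and suppose $d_T(T|H)\ge 1+\epsilon$. Since $H$ is a proper subgraph of $G$ containing $S$, the supercanvas $T/H=(G,H,L)$ is critical by Corollary~\ref{Supercanvas}. Crucially, $T/H$ is smaller than $T_0$ (it has strictly fewer non-boundary vertices since $H$ properly contains $S$), and it has $c(H)\le c(S)\le 2$ components in its boundary — wait, one must check $c(H)\le 2$; this holds because $S\subseteq H\subseteq G$ and adding vertices/edges only merges components, so $c(H)\le c(S)\le 2$. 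Hence $T/H$ satisfies the hypotheses of Theorem~\ref{StrongLinear}, so either it is singular or $d(T/H)\ge 3$; but $T$ being normal (no subcanvas singular — which I would record as a standing assumption or derive, since $T$ is close to $T_0$ and $T_0$ is normal by the reduction in Section~\ref{Proof}) rules out $T/H$ being a chord or tripod. Actually the cleaner statement is: every critical canvas smaller than $T_0$ that is non-singular has $d\ge 3$, and every critical canvas smaller than $T_0$ has $d\ge 3-2\alpha$ — I would invoke whichever is available; if $T/H$ could be singular we still get $d(T/H)\ge 3-2\alpha$, which by inequality combinations is enough below.

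The key computation is then via Proposition~\ref{surplussum2}: $d(T)\ge d_T(T|H)+d(T/H)$. If $d(T/H)\ge 3$ and $d_T(T|H)\ge 1+\epsilon$, we get $d(T)\ge 4+\epsilon$, whence (since $T$ is close to $T_0$, i.e.\ $d(T_0)\ge d(T)-6\epsilon$) we obtain $d(T_0)\ge 4+\epsilon-6\epsilon=4-5\epsilon\ge 3$, contradicting that $T_0$ is a counterexample. To handle the possibility that $T/H$ is singular, note a singular canvas has $d\ge 3-2\alpha\ge 4-3\alpha-\epsilon - (1-\alpha+\epsilon)$; more directly, by Proposition~\ref{d0} a chord gives $d\ge 3-2\alpha$ and a tripod gives $d\ge 4-3\alpha-\epsilon$, and using $d_T(T|H)\ge 1+\epsilon$ plus inequality (3) ($11\epsilon+1\le 3\alpha$, so $3-2\alpha\ge 1+\epsilon$ fails — so I actually need $T/H$ non-singular). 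The right way: $H$ proper in $G$ with $S$ proper in $H$ means $v(T|H)\ge 1$ and $v(T/H)\ge 1$, and I would argue that if $T/H$ were singular then $v(T/H)\le 1$ forces $V(G)\setminus V(H)$ tiny, and one re-routes by instead passing to a larger $H'$; alternatively just absorb the singular supercanvas into $H$ and apply the claim to that larger $H$ (induction on $|V(G)\setminus V(H)|$). For the second assertion, when $|V(G)\setminus V(H)|\ge 2$ the supercanvas $T/H$ has $v(T/H)\ge 2$, so even if singular-ish it satisfies $d(T/H)\ge$ something larger; more to the point, being non-singular and smaller than $T_0$ it has $d(T/H)\ge 3$, but with $v(T/H)\ge 2$ one can in fact push to $d(T/H)\ge 4-\gamma$ or argue $d(T/H)\ge 3+(\text{slack})$; then $d_T(T|H)\ge 6\epsilon$ combined with $d(T/H)\ge 3$ gives $d(T)\ge 3+6\epsilon$, so $d(T_0)\ge 3$, a contradiction.

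The main obstacle I anticipate is correctly handling the borderline cases where the supercanvas $T/H$ is itself singular (a chord or a tripod), since then $d(T/H)$ is only $3-2\alpha$ or $4-3\alpha-\epsilon$ rather than $3$, and $3-2\alpha<1+\epsilon$ is not excluded by the inequalities — so the naive split $d(T)\ge d_T(T|H)+d(T/H)$ is too lossy. The fix is to enlarge $H$ to $H'=H$ together with the one chord-edge or tripod-vertex realizing the singular supercanvas: this increases $d_T(T|H)$ by at least $(3-2\alpha)$ or by $(4-3\alpha-\epsilon) - \alpha\cdot(\text{correction from } q_T)$ while keeping $H'$ a proper subgraph (by a counting argument using criticality: the new supercanvas $T/H'$ is still nonempty), reducing to the non-singular case; iterating this absorption terminates since $|V(G)\setminus V(H)|$ strictly decreases. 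A secondary subtlety is verifying $c(H)\le 2$ throughout the absorption, which is automatic since taking subgraphs-containing-$S$ and then supergraphs only ever merges components, never creates new boundary components beyond those of $S$. Once the non-singular reduction is in place, the arithmetic is exactly the two displayed inequalities above, using only that $T$ is close to $T_0$ and inequalities (1)--(3) of Theorem~\ref{StrongLinear} to see $4-5\epsilon\ge 3$ and $3+6\epsilon\ge 3$.
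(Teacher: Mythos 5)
The core decomposition $d(T)\ge d_T(T|H)+d(T/H)$ via Proposition~\ref{surplussum2} and Corollary~\ref{Supercanvas} is correct and is exactly what the paper does, but your handling of the singular case for $T/H$ contains an error that sends you into an unnecessary detour. You assert that ``$3-2\alpha\ge 1+\epsilon$ fails, so I actually need $T/H$ non-singular.'' In fact the relevant comparison is not $3-2\alpha$ versus $1+\epsilon$; it is whether
$(1+\epsilon)+(3-2\alpha)\ge 3+6\epsilon$, i.e.\ $1-2\alpha\ge 5\epsilon$, which follows immediately from the derived inequality (4) $2\alpha+10\epsilon\le 1$. (Even the comparison you state does hold: $3-2\alpha\ge 1+\epsilon$ is $2\ge 2\alpha+\epsilon$, which is far weaker than inequality (4).) So for the first assertion you do not need to rule out $T/H$ being a chord or a tripod at all: every critical canvas smaller than $T_0$ satisfies $d\ge 3-2\alpha$ (this was noted in the text setting up the proof), and that is already enough. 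This eliminates the entire ``absorb the singular supercanvas into $H$ and iterate'' machinery, which is both unnecessary and would require some care (you would need to re-verify the subcanvas hypotheses for the enlarged $H'$). Moreover, your alternative suggestion to invoke normality of $T$ is circular: normality of $T$ is proved \emph{from} Claim~\ref{D1} via Corollaries~\ref{Chord} and~\ref{2Neighbors}, so it cannot be used here; and in any case normality (no \emph{sub}canvas singular) says nothing about whether the \emph{super}canvas $T/H$ is singular.

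Your handling of the second assertion is correct and matches the paper: $|V(G)\setminus V(H)|\ge 2$ forces $v(T/H)\ge 2$, so $T/H$ is neither a chord (which has $0$ interior vertices) nor a tripod (which has $1$), hence non-singular, hence $d(T/H)\ge 3$, and then $d_T(T|H)\ge 6\epsilon$ would give $d(T)\ge 3+6\epsilon$ and $d(T_0)\ge 3$, a contradiction. In summary: the approach is right and the second half is fine, but the first half should just use the universal bound $d(T/H)\ge 3-2\alpha$ and inequality (4); drop the absorption argument.
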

\begin{proof}
Suppose not. By Proposition~\ref{surplussum2}, $d(T)\ge d_T(T|H) + d(T/H)$. By Corollary~\ref{Supercanvas}, $T/H$ is critical. As $T/H$ is smaller than $T_0$, $d(T/H) \ge 3-2\alpha$. Thus if $d_T(T|H)\ge 1+\epsilon$, then $d(T)\ge 4-2\alpha \ge 3+6\epsilon$ by inequality (4); hence $d(T_0)\ge 3$, a contradiction. Further, if $|V(G)\setminus V(H)|\ge 2$, then $T/H$ is non-singular and hence $d(T/H) \ge 3$. So if $d_T(T|H)\ge 6\epsilon$, then $d(T)\ge 3+6\epsilon$ and hence $d(T_0)\ge 3$, a contradiction. 
\end{proof}

Claim~\ref{D1} has the following useful corollaries, namely that Lemma~\ref{Structure3}(1) and Lemma~\ref{Structure3}(2) do not exist in $T$.

\begin{cor}\label{Chord}
There does not exist an edge in $E(G)\setminus E(S)$ with both ends in $S$.
\end{cor}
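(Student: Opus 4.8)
The plan is to derive Corollary~\ref{Chord} directly from Claim~\ref{D1} by choosing an appropriately small subcanvas whenever a bad chord exists, and showing that the resulting deficiency bound is violated. Concretely, suppose for contradiction that there is an edge $e\in E(G)\setminus E(S)$ with both ends in $S$. Let $H$ be the subgraph of $G$ with $V(H)=V(S)$ and $E(H)=E(S)\cup\{e\}$; this is the ``chord'' canvas sitting inside $T$. Since $T$ is non-singular and $S$ is a proper subgraph of $H$, I would first check $T|H=(H,S,L)$ is a genuine proper subcanvas of $T$: it cannot equal $T$ because $T_0$ (and hence $T$, being close to it) is non-singular, so $G\ne H$. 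Then Claim~\ref{D1} applies to $T|H$.

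The key computation is to evaluate $d_T(T|H)$ for this $H$ and observe it is too large. Since $v(T|H)=0$ and $e(T|H)=1$, Proposition~\ref{d0}(ii) gives $d(T|H)\ge (3-2\alpha)e(T|H)=3-2\alpha$. Also $q_T(H,S)=0$ because $V(H)\setminus V(S)=\emptyset$, so $d_T(T|H)=d(T|H)\ge 3-2\alpha$. Now I would split on whether $|V(G)\setminus V(H)|\ge 2$. If $G$ has at least two vertices outside $H$ (which, since $V(H)=V(S)$, means $v(T)\ge 2$), then Claim~\ref{D1} forces $d_T(T|H)<6\epsilon$, contradicting $3-2\alpha\ge 6\epsilon$ — this last inequality follows from inequality (4), namely $2\alpha+10\epsilon\le 1$, which gives $3-2\alpha\ge 2+10\epsilon>6\epsilon$. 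If instead $|V(G)\setminus V(H)|\le 1$, i.e.\ $v(T)\le 1$, I would use the weaker bound from Claim~\ref{D1} that $d_T(T|H)<1+\epsilon$, and derive a contradiction with $3-2\alpha\ge 1+\epsilon$, which again follows from inequality (4) (indeed $3-2\alpha\ge 2+10\epsilon>1+\epsilon$). Either way we reach a contradiction, so no such edge exists.

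The only subtlety, and the step I expect to need the most care, is confirming that $T|H$ is eligible for Claim~\ref{D1} — that is, that $H$ is a \emph{proper} subgraph of $G$ containing $S$ with $S$ a proper subgraph of $H$. The inclusion $S\subsetneq H$ is immediate since $e\notin E(S)$. Properness of $H$ in $G$ is exactly the statement that $T$ is non-singular: if $H=G$ then $T$ would be a chord, hence singular, contradicting that $T$ is close to $T_0$ and $T_0$ is normal (no subcanvas singular). Here one should recall that being ``close to $T_0$'' includes $T$ being a subcanvas-stable object in the sense used throughout this subsection; in particular $T$ inherits non-singularity, so $T|H\ne T$. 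With that observation in place, Claim~\ref{D1} applies verbatim and the contradiction is as above.

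Thus I would write the proof as: assume such an edge $e$ exists; form $H=S\cup e$; note $T|H$ is a proper subcanvas with $S\subsetneq H$ since $T$ is non-singular; apply Proposition~\ref{d0}(ii) to get $d_T(T|H)\ge 3-2\alpha$; apply Claim~\ref{D1} to get either $d_T(T|H)<1+\epsilon$ or (if $v(T)\ge 2$) $d_T(T|H)<6\epsilon$; and in both cases invoke inequality (4) to contradict $3-2\alpha\ge 1+\epsilon>6\epsilon$. I do not anticipate any genuinely hard obstacle here — this corollary is a short bookkeeping consequence of the machinery already set up, and the main thing to get right is the case analysis on $|V(G)\setminus V(H)|$ and the matching inequality on $\alpha,\epsilon$.
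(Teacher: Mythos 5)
Your proposal follows the same route as the paper: form $H=S\cup e$, use Proposition~\ref{d0}(ii) to get $d(T|H)\ge 3-2\alpha$, note $q_T(H,S)=0$ so $d_T(T|H)=d(T|H)$, and contradict Claim~\ref{D1} using inequality~(4). The arithmetic is correct. Two remarks on details.

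First, the case split on $|V(G)\setminus V(H)|$ is unnecessary. The first conclusion of Claim~\ref{D1}, namely $d_T(T|H)<1+\epsilon$, applies unconditionally once $T|H$ is a proper subcanvas with $S\subsetneq H$, and the single inequality $3-2\alpha\ge 1+\epsilon$ (which you correctly derive from inequality~(4)) already finishes the argument. The ``furthermore'' clause of Claim~\ref{D1} is not needed here; this is precisely what the paper does.

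Second, and more substantively, your justification that $T$ is not a chord is not correct as written. You assert that ``$T_0$ is normal (no subcanvas singular)'' and that ``close to $T_0$'' makes $T$ ``inherit non-singularity.'' Neither statement is supported by the definitions: the hypothesis of Theorem~\ref{StrongLinear} only gives that $T_0$ is \emph{non-singular}, not \emph{normal} (normality of $T$ is in fact a \emph{consequence} of Corollaries~\ref{Chord} and~\ref{2Neighbors}, so appealing to it here would be circular), and ``$T$ is close to $T_0$'' is a comparison in the well-ordering plus a bound $d(T_0)\ge d(T)-6\epsilon$; it does not say $T$ is a subcanvas of $T_0$, so nothing is ``inherited.'' There is no notion of ``subcanvas-stable object'' in the paper. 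The paper itself simply asserts ``$T$ is not a chord'' without further explanation; the real reason this is licit is that the corollary is only ever invoked for the specific canvases used later in the argument -- $T_0$ itself (non-singular by hypothesis) and $\le 2$-relaxations $T_0'$ of $T_0$ that are shown to carry a semi-neighboring $3$- or $5$-path, which forces $v(T_0')>0$ so $T_0'$ cannot be a chord. That contextual justification, not the one you offered, is what underwrites the claim.
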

\begin{proof}
Suppose not. Let $H$ be the subgraph consisting of the union of $S$ and an edge in $E(G)\setminus E(S)$ with both ends in $S$. As $T$ is not a chord, $T|H$ is a proper subcanvas. Yet $d(T|H)\ge 3-2\alpha$ and hence $d(T|H)\ge 1+\epsilon$ by inequality (4). Since $d_T(T|H)\ge d(T|H)$, we find that $d_T(T|H)\ge 1+\epsilon$, contradicting Claim~\ref{D1}. 
\end{proof}

\begin{cor}\label{2Neighbors}
There does not exist a vertex $v\in V(G)\setminus V(S)$ with at least two neighbors in $S$.
\end{cor}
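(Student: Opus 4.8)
The plan is to mimic the proof of Corollary~\ref{Chord}, using Claim~\ref{D1} applied to a carefully chosen subcanvas that records the offending vertex. Suppose for contradiction that $v\in V(G)\setminus V(S)$ has two neighbors $x,y\in V(S)$, and let $H=G[V(S)\cup\{v\}]$ be the subcanvas obtained by adjoining $v$ together with all edges of $G$ between $v$ and $V(S)$ (there are at least two such edges by hypothesis, and possibly three since $\deg_G(v)\ge 3$ as $T$ is critical). Since $T$ is non-singular, in particular $T$ is not a tripod, so $H$ is a proper subgraph of $G$ and $T|H$ is a proper subcanvas with $S$ a proper subgraph of $H$; hence Claim~\ref{D1} applies and gives $d_T(T|H)<1+\epsilon$. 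The task is then to show $d_T(T|H)\ge 1+\epsilon$, which is a contradiction.

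To estimate $d_T(T|H)$ from below, I would compute the pieces directly from the definitions. Here $v(T|H)=1$; write $j=\deg_G(v)$ and let $j_S$ be the number of edges of $G$ from $v$ to $V(S)$ (so $j_S\ge 2$), and $j_R=j-j_S$ the number of edges from $v$ to $V(G)\setminus V(H)$. Then $e(T|H)=j_S$ (none of these edges lies in $E(S)$ since they all have $v$ as an endpoint), and $c(S)-c(H)\ge 0$ with equality unless $v$ joins two components of $S$, in which case it is $1$; either way this term is nonnegative. So $\mathrm{def}(T|H)=3j_S-5+10(c(S)-c(H))\ge 3j_S-5$. For the subtracted term, $q(T|H)=\sum_{w\in V(S)}\deg_{H-E(S)}(w)=j_S$ (each of the $j_S$ edges from $v$ to $S$ contributes exactly once, counted at its $S$-endpoint), so $s(T|H)=\epsilon\cdot 1+\alpha j_S$. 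Thus $d(T|H)=\mathrm{def}(T|H)-s(T|H)\ge 3j_S-5-\epsilon-\alpha j_S=(3-\alpha)j_S-5-\epsilon$. Finally $q_T(H,S)=\deg_{G-E(H)}(v)=j_R$, so $d_T(T|H)=d(T|H)+\alpha j_R\ge (3-\alpha)j_S-5-\epsilon+\alpha j_R$. Since $j_R\ge 0$ and, by criticality, $j=j_S+j_R\ge 3$: if $j_S\ge 3$ then $d_T(T|H)\ge 3(3-\alpha)-5-\epsilon=4-3\alpha-\epsilon\ge 1+\epsilon$ by inequality~(2) (indeed $4-3\alpha-\epsilon\ge 3$); if $j_S=2$ then $j_R\ge 1$, so $d_T(T|H)\ge 2(3-\alpha)-5-\epsilon+\alpha=1-\alpha-\epsilon+\alpha=1-\epsilon$, which is not quite enough, so here I would instead use that $T/H$ is non-singular when $|V(G)\setminus V(H)|\ge 2$ — and in fact, more simply, I should observe that when $j_S=2$ the vertex $v$ has a neighbor $u\notin V(H)$, and I can enlarge $H$ to $H'=G[V(S)\cup\{v,u\}]$ or argue via the ``$6\epsilon$'' clause of Claim~\ref{D1}.

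Let me restructure the $j_S=2$ case to close the gap cleanly. When $j_S=2$, pick a neighbor $u$ of $v$ with $u\notin V(S)$ (it exists since $j\ge 3$) and set $H=G[V(S)\cup\{v,u\}]$; unless this exhausts $G$ (impossible since $v$ would then have degree $3$ with all three neighbors in $V(S)\cup\{u\}$, forcing, after checking girth, a configuration ruled out because $T$ is non-singular and $G$ has girth at least five — a $4$-cycle would appear), $|V(G)\setminus V(H)|\ge 1$; if $|V(G)\setminus V(H)|\ge 2$ we get $d_T(T|H)<6\epsilon$ from Claim~\ref{D1} and derive a contradiction with the lower bound, and if $|V(G)\setminus V(H)|\le 1$ the canvas $T$ is so small that Proposition~\ref{d0} and a direct check give $d(T)\ge 3$, contradicting that $T$ is a counterexample. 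The main obstacle is precisely this boundary bookkeeping in the $j_S=2$ case: the naive single-vertex subcanvas $H=G[V(S)\cup\{v\}]$ gives only $d_T(T|H)\ge 1-\epsilon$, just shy of the $1+\epsilon$ needed, so one must either push one more vertex into $H$ to exploit the sharper $6\epsilon$ bound of Claim~\ref{D1}, or handle a handful of tiny exceptional graphs by hand via Proposition~\ref{d0}; everything else is a routine substitution into the definitions of $\mathrm{def}$, $q$, $s$, $d$ and $d_T$, using $\deg_G(v)\ge 3$ from criticality and the girth-five condition to rule out multi-edges and short cycles among the edges from $v$ to $V(S)$.
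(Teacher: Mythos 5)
Your overall plan---apply Claim~\ref{D1} to a subcanvas formed by adjoining $v$---is the right idea and matches the paper's strategy, but two of your choices create gaps, and the paper's proof avoids both by a cleaner choice of $H$.

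First, a key difference: the paper does \emph{not} take $H = G[V(S)\cup\{v\}]$ with all of $v$'s edges to $S$; it takes $H = S\cup\{u_1v,u_2v\}$, keeping exactly two of those edges. This makes $T|H$ a proper subcanvas automatically (since $\deg_H(v)=2<3\le\deg_G(v)$, with no need to invoke non-singularity of $T$), yields $v(T|H)=1$, $e(T|H)=2$, $q(T|H)=2$, hence $d(T|H)\ge 1-2\alpha-\epsilon\ge 9\epsilon>6\epsilon$ by inequality~(4), and so the $6\epsilon$ clause of Claim~\ref{D1} immediately forces $|V(G)\setminus V(H)|\le 1$, i.e.\ $v(T)\le 2$. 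The case $v(T)=2$ is then closed by the direct computation $d(T)\ge 3(5)-(5+\epsilon)(2)-4\alpha=5-2\epsilon-4\alpha\ge 3+6\epsilon$. One uniform argument, no case split on $j_S$.

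Second, your restructured $j_S=2$ case has a real gap. You enlarge to $H'=G[V(S)\cup\{v,u\}]$ and then ``derive a contradiction with the lower bound'' via the $6\epsilon$ clause---but you never recompute $d_T(T|H')$ for the enlarged graph, and in fact it drops. When $u\notin N(S)$, adding $u$ to $H$ adds one vertex and only the one edge $vu$, so $d(T|H')=d(T|H)-2-\epsilon$, while $q_T(H',S)$ gains only $\alpha\cdot\deg_{G-E(H')}(u)$; the net change to $d_T$ is $-2-\epsilon+\alpha(\deg_G(u)-2)$, which for $\deg_G(u)=3$ is $-2-\epsilon+\alpha<0$. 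Concretely, $d_T(T|H')$ can be as small as $-1-2\epsilon$, so neither clause of Claim~\ref{D1} gives a contradiction. Your parenthetical handling of the ``exhausts $G$'' subcase is also off: with $V(G)=V(S)\cup\{v,u\}$ and $j_S=2$ there is no forced $4$-cycle (that is exactly the $v(T)=2$ situation the paper closes by direct computation, not by a girth argument), and ``Proposition~\ref{d0} and a direct check'' for $v(T)\le 3$ hides several distinct configurations.

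A few smaller points. You do mention the $6\epsilon$ clause as an alternative in the $j_S=2$ case, and it in fact works with the \emph{unenlarged} $H$: your own lower bound (after correcting the arithmetic) is $d_T(T|H)\ge 1-\alpha-\epsilon$, not $1-\epsilon$ (the chain $2(3-\alpha)-5-\epsilon+\alpha=1-2\alpha-\epsilon+\alpha=1-\alpha-\epsilon$), and $1-\alpha-\epsilon\ge 6\epsilon$ holds easily; so you already had everything needed without enlarging $H$. Finally, the parenthetical ``indeed $4-3\alpha-\epsilon\ge 3$'' is false---the paper states explicitly that $3>4-3\alpha-\epsilon>3-2\alpha$---though the weaker inequality $4-3\alpha-\epsilon\ge 1+\epsilon$, which is all you actually use there, is correct.
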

\begin{proof}
Suppose not. That is, there exists a vertex $v\in V(G)\setminus V(S)$ with two neighbors $u_1,u_2\in V(S)$. Let $H=S\cup \{u_1v, u_2v\}$. Note that $T|H$ is a proper subcanvas of $T$ as $v$ has degree two in $H$ but degree at least three in $G$. Yet $d(T|H)\ge 1-2\alpha-\epsilon > 6\epsilon$ by inequality (4). Thus by Claim~\ref{D1}, $|V(G)\setminus V(H)|\le 1$. Hence $v(T)=2$. But then $d(T)\ge 3(5) - (5+\epsilon)(2) - 4\alpha = 5-2\epsilon-4\alpha$ which is at least $3+6\epsilon$ by inequality (4); hence $d(T_0)\ge 3$, a contradiction.
\end{proof}

Hence by Corollary~\ref{2Neighbors}, $|N(v)\cap V(S)|=1$ for all $v\in N(S)\setminus V(S)$. Moreover, by Claims~\ref{Chord} and~\ref{2Neighbors}, $T$ is normal. Another useful corollary of Claim~\ref{D1} is the following claim which bounds the length of neighboring paths.

\begin{claim}\label{5Path}
There does not exist a neighboring $3$-path of $S$.
\end{claim}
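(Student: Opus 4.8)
The plan is to rule out a neighboring $3$-path of $S$ by exhibiting a small proper subcanvas $T|H$ whose relative deficiency $d_T(T|H)$ is too large, contradicting Claim~\ref{D1}. Suppose for contradiction that $P=p_1p_2p_3p_4$ is a neighboring $3$-path of $S$, so each $p_i\in N(S)\setminus V(S)$. By Corollary~\ref{2Neighbors} each $p_i$ has exactly one neighbor $u_i$ in $S$, and since $G$ has girth at least five the $u_i$ are distinct and there are no other edges among $\{p_1,\dots,p_4\}$ beyond the path edges. I would set $H=S\cup P\cup\{p_1u_1,p_2u_2,p_3u_3,p_4u_4\}$, so that $V(H)\setminus V(S)=\{p_1,p_2,p_3,p_4\}$ and $E(H)\setminus E(S)$ consists of the three path edges plus the four pendant edges to $S$, giving $v(T|H)=4$ and $e(T|H)=7$.

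Next I would compute $d_T(T|H)$. We have $q(T|H)=\sum_{v\in V(S)}\deg_{H-E(S)}(v)=4$ (the four edges $p_iu_i$), so $s(T|H)=4\epsilon+4\alpha$. Since $H$ is connected if $S$ is (and in the disconnected case the path joins the two components so $c(S)-c(H)$ contributes a $+10$ only if the $u_i$ lie in different components — I would handle the connected/within-one-component case, which is the relevant one after Claim~\ref{Dist9}, as the main case), ${\rm def}(T|H)=3\cdot7-5\cdot4+0=1$. Thus $d(T|H)=1-4\epsilon-4\alpha$. For $q_T(H,S)$, each $p_i$ has degree at least three in $G$ but degree two in $H$ (one path edge or two, plus the pendant edge: $p_1,p_4$ have degree two in $H$, $p_2,p_3$ have degree three in $H$), so $\deg_{G-E(H)}(p_i)\ge 1$ for $i\in\{1,4\}$ and $\ge 0$ for $i\in\{2,3\}$; hence $q_T(H,S)\ge 2$, giving $d_T(T|H)=d(T|H)+\alpha q_T(H,S)\ge 1-4\epsilon-4\alpha+2\alpha = 1-4\epsilon-2\alpha$. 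Now I need $1-4\epsilon-2\alpha\ge 1+\epsilon$? That is false. So this crude bound is not enough — the real work is to use that $T|H$ is a \emph{proper} subcanvas and that $|V(G)\setminus V(H)|\ge 2$ is likely, which only requires $d_T(T|H)\ge 6\epsilon$, i.e. $1-4\epsilon-2\alpha\ge 6\epsilon$, equivalently $2\alpha+10\epsilon\le 1$, which is exactly the derived inequality (4). So the argument works once I check $|V(G)\setminus V(H)|\ge 2$.

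The main obstacle is therefore verifying $|V(G)\setminus V(H)|\ge 2$, i.e. that $H$ is not almost all of $G$. I would argue as follows: if $|V(G)\setminus V(H)|\le 1$, then $v(T)\le 5$ and $G$ is $S$ plus at most five vertices; in that case I can bound $d(T)$ directly from ${\rm def}(T)\ge 3e(T)-5v(T)$ together with $q(T)\le 2e(T)$, and using that $T$ has no chord (Corollary~\ref{Chord}) and no vertex with two neighbors in $S$ (Corollary~\ref{2Neighbors}) to see $e(T)\ge \tfrac{3}{2}v(T)$ roughly, concluding $d(T)\ge 3+6\epsilon$ via inequalities (1)--(4), contradicting closeness of $T$ to $T_0$. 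Alternatively — and more cleanly — if $|V(G)\setminus V(H)|=1$ the single extra vertex $w$ has degree three, and I can enlarge $H$ by $w$ and its edges to get $H'=G$ with $v(T|H')=5$, $e(T|H')\le 10$, reaching an explicit contradiction with $d(T_0)\ge 3$; if $|V(G)\setminus V(H)|=0$ then $G=H$ is not critical since $p_1$ has degree two. Once this case analysis is dispatched, Claim~\ref{D1}'s second clause ($d_T(T|H)<6\epsilon$) combined with $d_T(T|H)\ge 1-4\epsilon-2\alpha\ge 6\epsilon$ gives the contradiction, completing the proof.
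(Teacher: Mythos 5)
Your proposal matches the paper's proof: you choose the same subgraph $H=G[V(P)\cup V(S)]$, compute $d_T(T|H)\ge 1-4\epsilon-2\alpha\ge 6\epsilon$ via inequality (4), invoke Claim~\ref{D1} to deduce $|V(G)\setminus V(H)|\le 1$, and then dispatch the residual case (the paper, like your ``cleaner'' branch, identifies the unique extra vertex $p_5$, shows $G$ is $S$ plus a $5$-cycle of degree-$3$ vertices, and computes $d(T)\ge 5-5\alpha-5\epsilon\ge 3+6\epsilon$, a contradiction). One small slip: you write $e(T|H')\le 10$, but the inequality should go the other way (in fact $e(T|H')=10$ exactly), which is the direction needed for the deficiency bound to give a contradiction; your vaguer ``option~1'' for the residual case would not go through as sketched since $e(T)\ge\tfrac32 v(T)$ alone leaves ${\rm def}(T)$ negative.
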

\begin{proof}
Suppose not. Let $P=p_1p_2p_3p_4$ be a neighboring $3$-path of $S$. Let $H$ be the subgraph of $G$ induced by $V(P)\cup V(S)$. Note that $v(T|H) = 4$, $q(T|H)=4$ and $e(T|H) = 7$ as each vertex of $P$ has a unique neighbor in $S$ by Claim~\ref{2Neighbors}. Thus $d(T|H)\ge 3(7)-(5+\epsilon)(4)-4\alpha = 1-4\epsilon-4\alpha$. Yet $q_T(H,S)=\sum_{v\in V(H)\setminus V(S)}({\rm deg}_{G-H}(v)\ge 2$ given that $\deg{H}(p_1)=\deg_{H}(p_4)= 2$. Thus $d_T(T|H) \ge 1-2\alpha-4\epsilon$. This is at least $6\epsilon$ by inequality (4).

Thus by Claim~\ref{D1}, $|V(G)\setminus V(H)|\le 1$. As $p_1\not\sim p_4$ since $G$ has girth at least five, it follows that $|V(G)\setminus V(H)|=1$. Let $p_5\in V(G)\setminus V(H)$. It follows that $p_5$ is adjacent to $p_1$ and $p_4$ and exactly one vertex of $S$. It follows from Corollaries~\ref{Chord} and \ref{2Neighbors} that $G$ consists of $S$ and a $5$-cycle $p_1p_2p_3p_4p_5$ of vertices of degree three. Thus $d(T)\ge 3(10)-(5+\epsilon)5 -5\alpha = 5-5\alpha-5\epsilon$, which is at least $3+6\epsilon$ by inequality (2), a contradiction.
\end{proof}

The next claim shows that the components of $G[N(S)\setminus V(S)]$ have maximum degree two and hence are paths or cycles, that is Lemma~\ref{Structure3}(3) does not exist in $T$.
 
\begin{claim}\label{NoTree}
There does not exists a neighboring claw of $S$.
\end{claim}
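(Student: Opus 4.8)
The plan is to mimic the strategy already used for Corollaries~\ref{Chord} and~\ref{2Neighbors} and for Claim~\ref{5Path}: suppose a neighboring claw $G[\{v,u_1,u_2,u_3\}]$ exists (so $v\in N(S)\setminus V(S)$ has three neighbors $u_1,u_2,u_3\in N(S)\setminus V(S)$), build a small subcanvas $T|H$ containing it, bound $d_T(T|H)$ from below, and use Claim~\ref{D1} to force $|V(G)\setminus V(H)|$ to be tiny. First I would take $H=G[V(S)\cup\{v,u_1,u_2,u_3\}]$. By Corollary~\ref{2Neighbors} each of $v,u_1,u_2,u_3$ has exactly one neighbor in $S$, and since $G$ has girth at least five the $u_i$ are pairwise non-adjacent and each $u_i$ is the unique neighbor of $v$ among $N(S)\setminus V(S)$; so within $H$ we have $v(T|H)=4$, the edges of $H$ not in $S$ are the three claw edges $vu_i$ plus the four edges joining $v,u_1,u_2,u_3$ to $S$, giving $e(T|H)=7$ and $q(T|H)=4$. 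Hence $\mathrm{def}(T|H)\ge 3(7)-5(4)+10(c(S)-c(H))\ge 1$ (note $H$ is connected when $S$ is, and has the same number of components otherwise), so $d(T|H)=\mathrm{def}(T|H)-\epsilon v(T|H)-\alpha q(T|H)\ge 1-4\epsilon-4\alpha$. Moreover each $u_i$ has degree $2$ in $H$ but degree at least $3$ in $G$ (as $T$ is critical), so $q_T(H,S)\ge 3$, giving $d_T(T|H)\ge 1-\epsilon v(T|H)+\alpha(q_T(H,S)-q(T|H))\ge 1-4\epsilon-\alpha$.

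Next I would check that $1-4\epsilon-\alpha\ge 6\epsilon$, i.e.\ $\alpha+10\epsilon\le 1$, which is exactly the auxiliary inequality (4) derived in the proof. Thus $d_T(T|H)\ge 6\epsilon$, so by Claim~\ref{D1} we must have $|V(G)\setminus V(H)|\le 1$; in fact if $T|H$ were a proper subcanvas with $|V(G)\setminus V(H)|\ge 1$ then $d_T(T|H)<1+\epsilon$ forces nothing extra here, the point is the second part of Claim~\ref{D1} kills the case $|V(G)\setminus V(H)|\ge 2$. So either $G=H$, or $G=H$ plus one extra vertex $w$. In the first case $G$ is exactly $S$ together with the claw on $\{v,u_1,u_2,u_3\}$ with all four edges to $S$: then $v(T)=4$, $e(T)=7$, so $d(T)\ge 3(7)-(5+\epsilon)(4)-4\alpha=1-4\epsilon-4\alpha$, and I'd need this to be $\ge 3+6\epsilon$ — which, by the way, is false, so this case needs a separate small argument: since the $u_i$ have degree $2$ in $G$, criticality of $T$ is violated (a degree-$2$ vertex $u_i\notin S$ can be greedily colored after extending any coloring, contradicting $S$-criticality), so this case simply cannot occur. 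In the second case, the extra vertex $w$ together with the three degree-deficient vertices $u_1,u_2,u_3$ must patch things up; but $w$ has at most one neighbor in $S$ (Corollary~\ref{2Neighbors}) and cannot be adjacent to $v$ (girth five, since $v\sim u_i$), so $w$ is adjacent to at least two of the $u_i$, creating a cycle $u_iwu_j v u_i$ of length $4$ — contradicting girth at least five. Hence no neighboring claw exists.

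The genuinely delicate step, and the one I'd flag as the main obstacle, is the bookkeeping in the case $G=H$: the naive deficiency bound $1-4\epsilon-4\alpha$ is \emph{smaller} than $3$, so unlike in Claim~\ref{5Path} (where the closed $5$-cycle gives $5-5\alpha-5\epsilon\ge 3+6\epsilon$) we cannot close the argument by a deficiency inequality alone. We must instead invoke criticality directly to rule out degree-$2$ vertices outside $S$ — equivalently, to observe that $T$ normal (already established via Corollaries~\ref{Chord},~\ref{2Neighbors}) plus the girth condition leaves no room for the claw to be ``completed'' into a critical configuration of bounded size. One should double-check that the relevant case analysis of the extra vertex $w$ is exhaustive: $w$ has degree at least $3$, at most one edge to $S$, no edge to $v$, so at least two edges go to $\{u_1,u_2,u_3\}$, and that is where the $4$-cycle contradiction bites; if instead $w$ could be adjacent to only one $u_i$ and two vertices of $S\cup\{v\}$, the edge to $v$ is forbidden and the two edges to $S$ are forbidden by Corollary~\ref{2Neighbors}, so this subcase is impossible too. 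Assembling these observations yields the claim; I expect the write-up to be short, essentially a single paragraph mirroring the proof of Claim~\ref{5Path} with the final contradiction coming from girth rather than from a deficiency estimate.
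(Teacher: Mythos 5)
Your proposal chooses the same subgraph $H=G[V(S)\cup\{v,u_1,u_2,u_3\}]$ as the paper, computes the same bound $d_T(T|H)\ge 1-4\epsilon-\alpha\ge 6\epsilon$, and invokes Claim~\ref{D1}, so the core is identical. The difference is how the argument is closed. The paper establishes \emph{before} the deficiency computation that $|V(G)\setminus V(H)|\ge 2$: each $u_i$ has degree $2$ in $H$ but degree at least $3$ in $G$ (criticality), so each has a neighbor outside $H$; and two of the $u_i$ cannot share such a neighbor $w$, since $u_i\,v\,u_j\,w$ would be a $4$-cycle. This immediately lands you in the second part of Claim~\ref{D1} and the proof ends in one stroke. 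You instead derive $|V(G)\setminus V(H)|\le 1$ from Claim~\ref{D1} and then rule out the two residual cases by hand; this is logically fine but strictly longer. One small imprecision in the residual-case analysis: you assert that $w$ ``cannot be adjacent to $v$ (girth five, since $v\sim u_i$),'' but adjacency of $w$ to $v$ alone does not violate girth; you need to combine it either with adjacency to some $u_i$ (giving a triangle) or with the degree bound (if $w\sim v$ and $w\not\sim u_i$ for all $i$, then $w$ has degree at most $2$, violating criticality). You effectively patch this in your final ``double-check'' paragraph, so the argument is sound, but a cleaner phrasing is available by noting $|V(G)\setminus V(H)|\ge 2$ directly as the paper does.
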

\begin{proof}
Suppose not. Let $v\in N(S)\setminus V(S)$ and $v_1,v_2,v_3 \in N(v)\cap (N(S)\setminus V(S))$. Let $H=G[V(S)\cup \{v,v_1,v_2,v_3\}]$. It follows from Claim~\ref{2Neighbors} and the fact that $G$ has girth at least five that ${\rm deg}_{H}(v_i)=2$ for all $i\in\{1,2,3\}$. Hence $T|H$ is a proper subcanvas of $T$. Moreover since $G$ has girth at least five, $|V(G)\setminus V(H)|\ge 2$. 

Note that $v(T|H) = 4$ and $e(T|H)=7$ since each of $\{v,v_1,v_2,v_3\}$ has a unique neighbor in $S$ by Corollary~\ref{2Neighbors}. Moreover, $q(T|H)=4$ and hence $d(T|H)=3(7)-(5+\epsilon)(4)-4\alpha = 1 -4\epsilon-4\alpha$. Yet $q_T(H,S) = \sum_{v\in V(H)\setminus V(S)}({\rm deg}_{G-H}(v)\ge 3$ since $\deg_{H}(v_i)=2$ for all $i\in\{1,2,3\}$. Thus $d_T(T|H) \ge 1-4\epsilon-\alpha$ which is at least $6\epsilon$ by inequality (4), contradicting Claim~\ref{D1}.
\end{proof}

Thus one of Lemma~\ref{Structure3}(4) or Lemma~\ref{Structure3}(5) exists in $T_0$. That is, there exists a $k$-relaxation $T_0'=(G_0,S_0',L_0)$ of $T_0$ with $k\le 2$ such that there exists a semi-neighboring $3$-path of $S_0'$, or a semi-neighboring $5$-path of $S_0'$. As mentioned before, we will prove that such a $T_0'$ is close to $T_0$. We shall also prove that $T$ does not have a semi-neighboring $3$-path of $S$ or a semi-neighboring $5$-path of $S$. Combined these facts will complete the proof, but before we can do that we will need improved bounds for subcanvases of canvases smaller than or equal to $T_0$, which the next subsection provides.

\subsection{Proper Critical Subgraphs}

Here is a very useful claim.

\begin{claim}\label{ProperCrit}
Suppose $T_1=(G_1,S_1,L_1)$ is a normal critical canvas such that $c(S_1)\le 2$ and either $T_1=T_0$ or $T_1$ is smaller than $T_0$. If $G_1$ contains a proper subgraph $H_1$ that is $S_1$-critical with respect to some $3$-list-assignment $L_1'$, then $d(T_1)\ge 6-\alpha$. Furthermore, if $|E(G_1)\setminus E(H_1)|\ge 2$, then $d(T_1)\ge 6$.
\end{claim}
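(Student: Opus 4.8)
\textbf{Proof proposal for Claim~\ref{ProperCrit}.}

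The plan is to exploit the subgraph/supercanvas decomposition together with the minimality of $T_0$ and the deficiency additivity from Proposition~\ref{surplussum2}. Set $H := H_1$ and consider the subcanvas $T_1|H$ and the supercanvas $T_1/H$. Since $H$ is $S_1$-critical with respect to $L_1'$, the canvas $(H,S_1,L_1')$ is a critical canvas, and because $T_1$ is normal no subcanvas of $T_1$ is singular; in particular $(H,S_1,L_1')$ is a non-singular critical canvas with $c(S_1)\le 2$. As $H$ is a proper subgraph of $G_1$ and $T_1$ is normal with $T_1=T_0$ or $T_1$ smaller than $T_0$, this canvas is either smaller than $T_0$ or is not a counterexample to Theorem~\ref{StrongLinear}; in either case $d((H,S_1,L_1'))\ge 3$ by the minimality hypothesis defining $T_0$ (recall that a canvas smaller than $T_0$ satisfying the assumptions of Theorem~\ref{StrongLinear} has $d\ge 4-\gamma\ge 3$, and $T_0$ itself is assumed to satisfy $d(T_0)\ge 3$ once we reach a contradiction—here one should argue directly that $(H,S_1,L_1')$ cannot be the counterexample since it is strictly smaller). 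The first key point is that the value of $d$ for $(H,S_1,L_1')$ depends only on the underlying graph and boundary, not on the list assignment (deficiency, $v$, $q$, $s$ are all list-independent), so $d(T_1|H)=d((H,S_1,L_1'))\ge 3$.

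Next I would handle the supercanvas $T_1/H=(G_1,H,L_1)$. By Corollary~\ref{Supercanvas} (applied with the critical canvas $T_1$ and the proper subgraph $H\supseteq S_1$), $T_1/H$ is a critical canvas. Now I estimate $d(T_1/H)$ from below using Proposition~\ref{d0}-type bounds: $G_1$ contains at least one edge not in $H$ (else $H=G_1$, contradicting properness), so $e(T_1/H)\ge 1$, and $T_1/H$ is critical so its vertices outside $H$ have degree at least three. A short computation, analogous to Proposition~\ref{d0}(ii)--(iii), gives $d(T_1/H)\ge 3-2\alpha$ in general, and in fact, since every vertex of $V(G_1)\setminus V(H)$ incident to the ``extra'' edges contributes to $q$, one gets a stronger bound when there are at least two extra edges. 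The cleanest route: if $|E(G_1)\setminus E(H_1)|\ge 2$ then $T_1/H$ has $e(T_1/H)\ge 2$, so $v(T_1/H)\le ?$ need not hold, but $\mathrm{def}(T_1/H)\ge 3e-5v$ and the weights are small, yielding $d(T_1/H)\ge 3$ after using inequalities (1)–(4); when $|E(G_1)\setminus E(H_1)|=1$, $T_1/H$ is either a chord (hence $d\ge 3-2\alpha$) or has that single extra edge as a pendant/path edge, again $d(T_1/H)\ge 3-2\alpha$.

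Finally, combine via Proposition~\ref{surplussum2}: $d(T_1)\ge d_{T_1}(T_1|H)+d(T_1/H)\ge d(T_1|H)+d(T_1/H)$. Plugging in $d(T_1|H)\ge 3$ and $d(T_1/H)\ge 3-2\alpha$ gives $d(T_1)\ge 6-2\alpha$; to sharpen this to $6-\alpha$ I would use $d_{T_1}(T_1|H)=d(T_1|H)+\alpha q_{T_1}(H,S_1)$ and observe $q_{T_1}(H,S_1)\ge 1$ (the single extra edge has an end in $V(H)\setminus V(S_1)$, since it is not a chord of $S_1$ by normality of $T_1$—here one must rule out the degenerate case that the extra edge joins two boundary vertices, which cannot happen for the supercanvas construction... actually one verifies $q_{T_1}(H,S_1)\ge 1$ directly from $|E(G_1)\setminus E(H)|\ge 1$ and the fact that $H$ is an \emph{induced}-type extension). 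This upgrades the bound to $d(T_1)\ge 3+(3-2\alpha)+\alpha=6-\alpha$. For the ``furthermore'' clause, when $|E(G_1)\setminus E(H_1)|\ge 2$ one has $q_{T_1}(H,S_1)\ge 2$ and $d(T_1/H)\ge 3-2\alpha$ still, but now $d(T_1)\ge 3+(3-2\alpha)+2\alpha=6$. \textbf{The main obstacle} will be pinning down the exact lower bounds on $d(T_1/H)$ and on $q_{T_1}(H,S_1)$ in the edge-count-one versus edge-count-two cases, particularly ruling out that the extra edge(s) sit entirely on the boundary $H$ in a way that makes $q_{T_1}(H,S_1)$ too small; this requires carefully using that $T_1$ is normal (no singular subcanvas) and that the critical canvas $T_1/H$ has minimum degree at least three outside $H$.
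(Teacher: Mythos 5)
Your high-level plan matches the paper's: decompose via $T_1|H_1$ and $T_1/H_1$, use minimality of $T_0$ and list-independence of $d$ to get $d(T_1|H_1)\ge 3$, invoke Corollary~\ref{Supercanvas} and Proposition~\ref{surplussum2}, and use normality to extract an $\alpha q_{T_1}(H_1,S_1)$ boost. You also correctly flag the ``main obstacle.'' But the uniform bounds you try to push through are in fact false, and the obstacle you name is exactly where the paper does a three-way case analysis that you do not carry out.

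Concretely: (i) Your claim that $d(T_1/H_1)\ge 3$ whenever $e(T_1/H_1)\ge 2$ fails when $T_1/H_1$ is a tripod, since there $e(T_1/H_1)=3$ but $d(T_1/H_1)=4-3\alpha-\epsilon<3$ (by inequality (3), $3\alpha>1$). (ii) Your claim that $q_{T_1}(H_1,S_1)\ge 1$ in all cases is false: take $H_1$ an induced subgraph where every vertex of $V(H_1)\setminus V(S_1)$ has all its $G_1$-neighbors inside $H_1$ (this is compatible with normality when $T_1/H_1$ is non-singular). (iii) Your claim that $q_{T_1}(H_1,S_1)\ge 2$ whenever $|E(G_1)\setminus E(H_1)|\ge 2$ also fails for a tripod whose apex has exactly one neighbor in $V(H_1)\setminus V(S_1)$ (normality only forbids three neighbors in $V(S_1)$, not two). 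The paper instead splits on whether $T_1/H_1$ is non-singular (then $d(T_1/H_1)\ge 3$ alone gives $d(T_1)\ge 6$, no $q$-boost needed), a chord (then $d(T_1/H_1)=3-2\alpha$, and normality of $T_1$ forces the chord's end outside $S_1$, so $q_{T_1}(H_1,S_1)\ge 1$ and $d(T_1)\ge 6-\alpha$; also $|E(G_1)\setminus E(H_1)|=1$ so this is consistent with the ``furthermore'' clause), or a tripod (then the \emph{stronger} bound $d(T_1/H_1)\ge 4-3\alpha-\epsilon$ together with $q_{T_1}(H_1,S_1)\ge 1$ from normality yields $d(T_1)\ge 7-2\alpha-\epsilon\ge 6$ via inequality (2)). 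Notice that in the non-singular case $q_{T_1}(H_1,S_1)$ is not needed at all, and in the tripod case it is the improved $d(T_1/H_1)$ bound from Proposition~\ref{d0}(iii), not a $q\ge 2$ estimate, that rescues the $\ge 6$ conclusion when $|E(G_1)\setminus E(H_1)|\ge 2$. Without that case split your argument gives only $d(T_1)\ge 6-2\alpha$ in the worst case.
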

\begin{proof}
Suppose not. Let $T_1'= (H_1,S_1,L_1')$. By assumption, $T_1'$ is critical. Since $T_1$ is normal, $T_1'$ is non-singular. As $T_1'$ is smaller than $T$, $d(T_1')\ge 3$. Note that $d(T_1|H_1) = d(T_1')$ and hence $d(T_1|H_1)\ge 3$. As $H_1$ is proper, $T_1/H_1$ is critical by Corollary~\ref{Supercanvas}. By Proposition~\ref{surplussum2}, $d(T_1)\ge d_{T_1}(T_1|H_1)+d(T_1/H_1)$.

If $T_1/H_1$ is non-singular, then since $T_1/H_1$ is smaller than $T_0$, we find that $d(T_1/H_1)\ge 3$ and hence $d(T_1)\ge 6$, a contradiction. 

So we may suppose that $T_1/H_1$ is singular, that is $T_1/H_1$ is either a chord or tripod. First suppose $T_1/H_1$ is a tripod. Then $d(T_1/H_1)\ge 4-3\alpha-\epsilon$. Yet as there does not exist a vertex not in $S_1$ with at least three neighbors in $S_1$ as $T_1$ is normal, at least one of the edges in $E(G_1)\setminus E(H_1)$ does not have an end in $S_1$. Thus $q_{T_1}(H_1,S_1)\ge 1$. It follows that $d(T_1)\ge 7-2\alpha-\epsilon$, which is at least $6$ by inequality (2), a contradiction.

So we may suppose that $T_1/H_1$ is a chord. Hence $|E(G_1)\setminus E(H_1)|=1$ and so $d(T_1/H_1) = 3-2\alpha$. However, as $T_1$ is normal, $q_{T_1}(H_1,S_1)\ge 1$. Hence $d(T_1)\ge 3 + 3-2\alpha + \alpha = 6-\alpha$, a contradiction. 
\end{proof}

\begin{cor}\label{NoProperCrit}
There does not exist a proper subgraph of $G$ that is $S$-critical with respect to some list assignment $L'$.
\end{cor}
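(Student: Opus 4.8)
The plan is to derive Corollary~\ref{NoProperCrit} as a direct consequence of Claim~\ref{ProperCrit}, applied with $T_1 = T$. First I would recall that all the hypotheses of Claim~\ref{ProperCrit} are already in hand for $T$: we have assumed throughout this subsection that $T = (G,S,L)$ is a critical canvas close to $T_0$ with $c(S) \le 2$, so either $T = T_0$ or $T$ is smaller than $T_0$; and by Corollaries~\ref{Chord} and~\ref{2Neighbors} (as remarked just after Claim~\ref{5Path}) the canvas $T$ is normal. Thus Claim~\ref{ProperCrit} applies verbatim to $T_1 := T$.

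Next, suppose for contradiction that there does exist a proper subgraph $H$ of $G$ that is $S$-critical with respect to some $3$-list-assignment $L'$. Then Claim~\ref{ProperCrit} gives $d(T) \ge 6 - \alpha$. Since $\alpha \le 1$ (which follows from inequality (2), as $2.5\alpha \le 2.5\alpha + 5.5\epsilon \le 1$), we get $d(T) \ge 5$, and in particular $d(T) \ge 3 + 6\epsilon$ (using $6\epsilon \le \alpha \le 1 \le 2$ from inequality (1)). Since $T$ is close to $T_0$, this yields $d(T_0) \ge d(T) - 6\epsilon \ge 3$, contradicting the assumption that $T_0$ is a counterexample to Theorem~\ref{StrongLinear} (for which $d(T_0) < 3$).

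There is essentially no obstacle here: the entire content was packaged into Claim~\ref{ProperCrit}, and this corollary is just the specialization $T_1 = T$ together with the observation that the resulting bound on $d(T)$ comfortably exceeds $3 + 6\epsilon$, which forces the desired contradiction via closeness to $T_0$. The only point requiring a moment's care is confirming that $T$ meets the hypotheses of Claim~\ref{ProperCrit} — normality being the one that is not definitional — but this has already been established.

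\begin{proof}
Suppose for a contradiction that $G$ contains a proper subgraph $H$ that is $S$-critical with respect to some list assignment $L'$. Recall that $T$ is a critical canvas close to $T_0$ with $c(S)\le 2$, so either $T=T_0$ or $T$ is smaller than $T_0$, and by Corollaries~\ref{Chord} and~\ref{2Neighbors}, $T$ is normal. Hence Claim~\ref{ProperCrit} applies to $T_1=T$ and gives $d(T)\ge 6-\alpha$. By inequality (2), $2.5\alpha \le 2.5\alpha + 5.5\epsilon \le 1$, so $\alpha \le 1$ and thus $d(T)\ge 5 \ge 3+6\epsilon$, the last inequality holding since $6\epsilon \le \alpha \le 1$ by inequality (1). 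As $T$ is close to $T_0$, it follows that $d(T_0)\ge d(T)-6\epsilon \ge 3$, contradicting the fact that $T_0$ is a counterexample to Theorem~\ref{StrongLinear}.
\end{proof}
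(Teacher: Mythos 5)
Your proof is correct and matches the paper's proof essentially verbatim: both apply Claim~\ref{ProperCrit} with $T_1 = T$ (after noting normality via Corollaries~\ref{Chord} and~\ref{2Neighbors}) to get $d(T)\ge 6-\alpha\ge 3+6\epsilon$, and then conclude $d(T_0)\ge 3$ by closeness, contradicting the choice of $T_0$. The only cosmetic difference is the intermediate arithmetic used to verify $6-\alpha\ge 3+6\epsilon$, which is fine.
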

\begin{proof}
Suppose not. Let $H$ be a proper subgraph of $G$ that is $S$-critical with respect to $L'$. By Corollaries~\ref{Chord} and~\ref{2Neighbors}, $T$ is normal. By Claim~\ref{ProperCrit} with $T_1=T$, $G_1=G$, $H_1=H$ and $L_1=L'$, we find that $d(T)\ge 6-\alpha$ which is at least $3+6\epsilon$ by inequality (2). As $T$ is close to $T_0$, we have that $d(T_0)\ge 3$, a contradiction.
\end{proof}

There exists a proper coloring $\phi$ of $S$ that does not extend to $G$ as $G$ is $S$-critical. Our next claim proves that $G$ is critical with respect to any such coloring. Actually we can prove more, but we need the following definition: we say a $3$-list-assignment $L'$ is \emph{nice} for $T$ if $L'(v)\subseteq L(v)$ for all $v\in V(G)$, $L'(v)=L(v)$ for all $v\in V(S)$ and $|L'(v)|=3$ for all $v\in V(G)\setminus V(S)$.

\begin{cor}\label{Precolored}
Let $L'$ be a nice $3$-list-assignment for $T$. If $\phi$ is an $L$-coloring of $S$ that does not extend to an $L$-coloring of $G$, then $\phi$ extends to an $L'$-coloring of every proper subgraph of $G$ containing $S$.
\end{cor}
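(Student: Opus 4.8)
The plan is to argue by contradiction using the criticality machinery already in place. Suppose the statement fails, so there is an $L$-coloring $\phi$ of $S$ that does not extend to $G$, but there is a proper subgraph $H$ of $G$ with $S\subseteq H$ such that $\phi$ does \emph{not} extend to an $L'$-coloring of $H$. Among all such $H$ choose one minimal with respect to subgraph inclusion; since $\phi$ extends to $S$ itself (indeed $L'$ agrees with $L$ on $S$), we have $S\subsetneq H$. I would then observe that $H$ is $S$-critical with respect to $L'$: by minimality, for every edge $e\in E(H)\setminus E(S)$ the restriction of $\phi$ does extend to $H\setminus e$ (and moreover every proper subgraph of $H$ containing $S$ is $L'$-colorable extending $\phi$), while $\phi$ does not extend to $H$; one also checks there are no isolated vertices of $H$ outside $S$, which holds because such a vertex could simply be colored, contradicting minimality. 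Hence $H$ is a proper subgraph of $G$ that is $S$-critical with respect to the $3$-list-assignment $L'$.

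This is exactly the configuration ruled out by Corollary~\ref{NoProperCrit}: there is no proper subgraph of $G$ that is $S$-critical with respect to any list assignment $L'$. This contradiction completes the proof.

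The only point requiring a little care is the passage from ``$\phi$ does not extend to $H$'' to ``$H$ is $S$-critical with respect to $L'$'', i.e.\ verifying the precise definition of $S$-criticality (for \emph{every} proper subgraph $H'$ with $S\subseteq H'$, some $L'$-coloring of $S$ extends to $H'$ but not to $H$). Here we use the $\phi$ we already have as the witnessing coloring of $S$: minimality of $H$ gives that $\phi$ extends to every proper subgraph of $H$ containing $S$, and by hypothesis $\phi$ does not extend to $H$; so the single coloring $\phi$ simultaneously witnesses criticality across all such $H'$. This is slightly stronger than, but certainly implies, the definition of $S$-critical. I expect this bookkeeping to be the main (and only) obstacle; everything else is an immediate appeal to Corollary~\ref{NoProperCrit}. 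Note also that $L'$ being nice is not actually needed for this argument beyond being a $3$-list-assignment with $L'(v)=L(v)$ on $S$ (so that $\phi$ is an $L'$-coloring of $S$); the statement is phrased for nice $L'$ because that is the form in which it will be applied.
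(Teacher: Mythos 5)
Your proof is correct and follows essentially the same path as the paper: both arguments produce a proper subgraph of $G$ that is $S$-critical with respect to $L'$ and then contradict Corollary~\ref{NoProperCrit}. The paper gets the critical subgraph by appealing to Proposition~\ref{CriticalSubgraph} (applied to the canvas $(H,S,L')$), while you unfold that appeal into an explicit minimality argument, which is exactly how Proposition~\ref{CriticalSubgraph} is proved anyway, so the two are the same argument presented at different levels of detail.
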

\begin{proof}
Suppose not. That is, there exists a proper subgraph $H$ of $G$ such that $H$ contains $S$ and $\phi$ does not extend to an $L'$-coloring $H$. But then $H$ contains a subgraph $H'$ that is $S$-critical with respect to $L'$. As $H$ is a proper subgraph of $G$, $H'$ is also a proper subgraph of $G$, contradicting Corollary~\ref{NoProperCrit}.
\end{proof}

For the rest of the proof of Theorem~\ref{StrongLinear}, we fix an $L$-coloring $\phi$ of $S$ which does not extend to $G$ and we fix a nice $3$-list-assignment $L'$ for $T$. Note that $\phi$ does not extend to an $L'$-coloring of $G$ (and hence an $L$-coloring of $G$) as $T$ is critical. For $v\not\in V(S)$, we let 

$$A(v):=L'(v)\setminus \{\phi(u): u\in N(v)\cap V(S)\}.$$ 

Note that for every edge $e=uv$ with $u,v\not\in V(S)$, $L'(u)\cap L'(v)\ne \emptyset$ by Corollary~\ref{Precolored}. Similarly, for every vertex $v\not\in V(S)$, $|A(v)|=|L'(v)|-|N(v)\cap V(S)|=3-|N(v)\cap V(S)|$. This follows since by Corollary~\ref{Precolored}, $\phi(u)\in L'(v)$ for all $u\in N(v)\cap V(S)$ and $\phi(u)\ne \phi(w)$ for all $u\neq w \in N(v)\cap V(S)$.  

Here is the application of Claim~\ref{ProperCrit} that we repeatedly use for reductions.

\begin{claim}\label{Reduction}
Let $H$ be a subgraph of $G$ containing $S$ such that $T/H$ is normal. If there exists a proper subgraph $G'$ of $G$ such that $H\subseteq G'$ and there exists an $L'$-coloring of $H$ that does not extend to an $L'$-coloring of $G'$, then $d_T(T|H) < -3 + \alpha + 6\epsilon$. Further if $|E(G)\setminus E(G')|\ge 2$, then $d_T(T|H) < -3 + 6\epsilon$.
\end{claim}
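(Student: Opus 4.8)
The plan is to reduce Claim~\ref{Reduction} to Claim~\ref{ProperCrit} by building, from the hypothesized non-extending coloring of $H$, an auxiliary critical canvas sitting above $H$. First I would take the $L'$-coloring $\psi$ of $H$ that does not extend to $G'$. Since $G'$ is a proper subgraph of $G$ containing $H$, the graph $G'$ contains a subgraph $G''$ with $H\subseteq G''\subseteq G'$ such that $G''$ is $H$-critical with respect to $L'$ (here one uses the standard fact, as in Proposition~\ref{CriticalSubgraph}, that a non-extending coloring forces a critical subcanvas). Thus $T'' := (G'',H,L')$ is a critical canvas with boundary $H$. Now apply Claim~\ref{ProperCrit}, but with the roles shifted: set $T_1 := T/H = (G,H,L)$ — which is normal by hypothesis, critical by Corollary~\ref{Supercanvas}, has $c(H)\le 2$ (since $H\supseteq S$ and $c(S)\le 2$, and one should check $H$ still has at most two components; this holds because the reductions only attach paths and claws to $S$, so I would include this as a hypothesis or a quick remark), and is smaller than or equal to $T_0$. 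The proper subgraph $G''$ of $G$ is $H$-critical with respect to $L'$, so Claim~\ref{ProperCrit} applies and yields $d(T/H)\ge 6-\alpha$; moreover if $|E(G)\setminus E(G')|\ge 2$ then also $|E(G)\setminus E(G'')|\ge 2$, so the ``furthermore'' clause of Claim~\ref{ProperCrit} gives $d(T/H)\ge 6$.

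Next I would combine this with the subcanvas--supercanvas inequality. By Proposition~\ref{surplussum2}, $d(T)\ge d_T(T|H) + d(T/H)$, so
$$d_T(T|H) \le d(T) - d(T/H) \le d(T) - (6-\alpha).$$
Since $T$ is close to $T_0$, we have $d(T_0)\ge d(T) - 6\epsilon$, and since $T_0$ is a counterexample, $d(T_0) < 3$; hence $d(T) < 3 + 6\epsilon$. Substituting, $d_T(T|H) < 3+6\epsilon - 6 + \alpha = -3 + \alpha + 6\epsilon$, which is the first conclusion. For the second conclusion, when $|E(G)\setminus E(G')|\ge 2$ we instead use $d(T/H)\ge 6$, giving $d_T(T|H) < 3+6\epsilon - 6 = -3 + 6\epsilon$, as desired.

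One subtlety to handle carefully is the case $G''=H$, i.e. when the non-extending happens ``at $H$ itself'' with no extra edges — but this cannot occur, since $\psi$ is by assumption an $L'$-coloring \emph{of} $H$, so it trivially extends to $H$; thus $G''\ne H$ and $G''$ is a genuine proper $H$-critical subgraph, which is exactly what Claim~\ref{ProperCrit} requires. A second point: Claim~\ref{ProperCrit} is stated for a canvas $T_1$ with a proper $S_1$-critical subgraph $H_1$; applying it with $T_1 = T/H$, $S_1 = H$, $H_1 = G''$ requires $H_1$ to be a \emph{proper} subgraph of $G$, which holds because $G''\subseteq G'\subsetneq G$. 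The main obstacle I anticipate is bookkeeping around the normality and component-count hypotheses for $T/H$: one must be sure that in each later application the subgraph $H$ obtained by relaxing and attaching neighboring structures to $S$ keeps $T/H$ normal (which is exactly why the hypothesis ``$T/H$ is normal'' is placed in the statement) and keeps $c(H)\le 2$. With those in hand, the argument is just the two-line arithmetic above layered on top of Claim~\ref{ProperCrit} and Proposition~\ref{surplussum2}.
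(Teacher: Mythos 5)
Your proof is correct and follows the paper's argument essentially verbatim: extract an $H$-critical subgraph $G''\subseteq G'$, apply Claim~\ref{ProperCrit} to $T_1=T/H$ with $H_1=G''$, and combine via Proposition~\ref{surplussum2} with $d(T)<3+6\epsilon$. Your remark that one should verify $c(H)\le 2$ before invoking Claim~\ref{ProperCrit} is a fair observation — the paper leaves this implicit, relying on the fact that in every application $H$ is built by attaching vertices of $N(S)$ to $S$, so $c(H)\le c(S)\le 2$.
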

\begin{proof}
Suppose not. As there exists an $L'$-coloring of $H$ that does not extend to an $L'$-coloring of $G'$, there exists a subgraph $G''$ of $G'$ that is $H$-critical with respect to $L'$. Apply Claim~\ref{ProperCrit} with $T_1 = (G,H,L) = T/H$, $H_1=G''$ and $L_1'=L'$. Thus $d(T/H) \ge 6-\alpha$ and furthermore if $|E(G)\setminus E(G')|\ge 2$, then $d(T/H)\ge 6$. 

By Proposition~\ref{surplussum2}, $d(T)\ge d_T(T|H)+d(T/H)$. As $d(T) < 3+6\epsilon$, we find that $d_T(T|H) < 3 +6\epsilon - d(T/H)$. Hence $d(T/H) < 3 + 6\epsilon - (6-\alpha) = -3 + \alpha + 6\epsilon$. Furthermore if $|E(G)\setminus E(G')|\ge 2$, then $d_T(T|H) < 3 + 6\epsilon - (6) = -3 + 6\epsilon$, in either case a contradiction to the bounds above.
\end{proof}

In what remains of the the proof of Theorem~\ref{StrongLinear}, we will invoke Claim~\ref{Reduction} to appropriately chosen $G'$ and $H$ to show that $T$ does not have a semi-neighboring $3$-path or semi-neighboring $5$-path. Before we do that, we need a preliminary claim whose proof also relies on Claim~\ref{Reduction}. For the coloring arguments, it is useful to note the following claim.

\begin{claim}\label{ColorSubset}
If $v\in N(S)$ and $u\in N(v)\setminus V(S)$, then $A(v)\subseteq A(u)$.
\end{claim}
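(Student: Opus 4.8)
\textbf{Proof proposal for Claim~\ref{ColorSubset}.}

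The plan is to argue by contradiction: suppose there exist $v \in N(S)$ and $u \in N(v) \setminus V(S)$ with $A(v) \not\subseteq A(u)$, and pick a color $c \in A(v) \setminus A(u)$. The intuition is that $c$ is already ``free'' for $v$ with respect to the precoloring $\phi$ of $S$, but somehow forbidden for $u$; yet $u \notin V(S)$, so the only way a color can be removed from $A(u)$ is via a neighbor of $u$ in $S$. Since $G$ has girth at least five and $v \in N(S)$ with $u$ adjacent to $v$, one checks that $u$ and $v$ cannot share a neighbor in $S$ (that would create a short cycle unless $uv$ itself were a chord-type configuration, which Corollary~\ref{2Neighbors} forbids, since $u$ would then have two neighbors in $S$). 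So first I would establish, using girth and Corollary~\ref{2Neighbors}, that $N(u) \cap V(S)$ and $N(v) \cap V(S)$ are such that the color $c$ being in $L'(u)$ but not $A(u)$ forces $u$ to have a neighbor $w \in V(S)$ with $\phi(w) = c$.

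Next I would build a reduction. Let $L''$ be the $3$-list-assignment obtained from $L'$ by recoloring, or rather by constructing a modified canvas where we exploit that $c$ is available at $v$. The cleanest route: modify $\phi$ on $S$ is not allowed (it is fixed), so instead consider the subgraph $H = S \cup \{uv\} \cup (\text{edges from } u, v \text{ to } S)$, or more precisely take $H$ to be $G[V(S) \cup \{u, v\}]$ together with a suitable structure, and apply Claim~\ref{Reduction}. The key point is that if $c \in A(v) \setminus A(u)$, then in any $L'$-coloring of the relevant small subgraph we could color $v$ with $c$, which would then be consistent, and this would let a coloring of a small piece extend further than criticality permits — giving $d_T(T|H) < -3 + \alpha + 6\epsilon$, contradicting the lower bounds on $d$ for subcanvases (e.g.\ Proposition~\ref{d0}, which gives $d(T|H) \ge (3 - 2\alpha)e(T|H) - 5 + \cdots$, well above $-3 + 6\epsilon$ for the small canvas in question). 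Concretely I expect $H$ to have $v(T|H) \le 2$ and a handful of edges, so $d_T(T|H)$ is comfortably positive.

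Actually, the slicker argument — and the one I would ultimately write — avoids Claim~\ref{Reduction} entirely and is purely combinatorial on colorings. Suppose $c \in A(v) \setminus A(u)$. Since $|A(u)| = 3 - |N(u) \cap V(S)|$ and $c \in L'(u) \setminus A(u)$, the color $c$ was removed from $L'(u)$ because some neighbor $w$ of $u$ in $S$ has $\phi(w) = c$. Now I claim this is impossible: consider the canvas $T' = T/(G - \text{nothing})$ — no. The real mechanism: recall $\phi$ does not extend to $G$, but by Corollary~\ref{Precolored}, $\phi$ extends to an $L'$-coloring of every \emph{proper} subgraph of $G$ containing $S$. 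So $\phi$ extends to $G - e$ for the edge $e = uv$ (which is not in $S$ since $u \notin V(S)$), giving an $L'$-coloring $\psi$ of $G - uv$ with $\psi(w) = \phi(w)$ for all $w \in V(S)$. Then $\psi(u) \in A(u)$ and $\psi(v) \in A(v)$. Since $\psi$ does not extend to $G$ (it would give an $L$-coloring of $G$ extending $\phi$), we need $\psi(u) = \psi(v)$; but then changing... hmm, this shows $A(u) \cap A(v) \ne \emptyset$ is not quite what's needed either. The correct statement to prove $A(v) \subseteq A(u)$: for each $c \in A(v)$, I must show $c \in A(u)$. The main obstacle — and where I'd focus the write-up — is pinning down exactly why a color free at the $S$-neighbor $v$ cannot be blocked at the deeper vertex $u$: this should follow because any neighbor of $u$ in $S$ is at distance two from $v$, and by girth-five such a vertex cannot also be adjacent to $v$; combined with $|N(u) \cap V(S)| \le 1$ (Corollary~\ref{2Neighbors}) and $|N(v) \cap V(S)| = 1$, a short case analysis on the at most one $S$-neighbor of each shows the forbidden color sets are compatible in the required direction. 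I expect this girth/degree bookkeeping, not any coloring extension argument, to be the crux.
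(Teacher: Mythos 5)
Your paragraph describing the ``slicker argument --- the one I would ultimately write'' does not work, and you half-acknowledge this yourself: applying Corollary~\ref{Precolored} to $G - uv$ and noting that the extension must have $\psi(u)=\psi(v)$ shows only that $A(u)\cap A(v)\ne\emptyset$, not that $A(v)\subseteq A(u)$. Your final guess that the crux is ``girth/degree bookkeeping, not any coloring extension argument'' is exactly backwards. There is no purely structural reason why $A(v)\subseteq A(u)$; the lists $L'(v)$ and $L'(u)$ are a priori unrelated sets of three colors, and the inclusion can only be forced by the criticality of $T$. The paper's proof is a short coloring-extension argument through Claim~\ref{Reduction}, not bookkeeping.

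The correct argument, which your middle paragraph gestures at before you abandon it, is: take $H=G[V(S)\cup\{v\}]$ (just $v$, not $u$), extend $\phi$ to $H$ by choosing $\phi(v)=c\in A(v)\setminus A(u)$, and set $G'=G\setminus\{uv\}$. If this $\phi$ extended to an $L'$-coloring of $G'$, that coloring would automatically be proper on $G$ as well, because $c\notin A(u)$ means $u$ can never receive the color $c$ under any $L'$-coloring consistent with $\phi$ on $S$; this contradicts criticality of $T$. Hence $\phi$ does not extend to $G'$, and since one checks $T/H$ is normal (Claims~\ref{Chord} and \ref{2Neighbors}), Claim~\ref{Reduction} gives $d_T(T|H)<-3+\alpha+6\epsilon$. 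But $v(T|H)=1$, $e(T|H)=1$, $q(T|H)=1$, and $\deg_{G-H}(v)\ge2$, so $d_T(T|H)\ge 3-(5+\epsilon)-\alpha+2\alpha=-2-\epsilon+\alpha$, which exceeds $-3+\alpha+6\epsilon$ by the hypotheses on $\epsilon,\alpha$. This is the contradiction. So the missing idea in your write-up is precisely the step you talked yourself out of: recognizing that coloring $v$ with a color outside $A(u)$ makes the deleted edge $uv$ automatically satisfied, and then invoking the deficiency machinery of Claim~\ref{Reduction}.
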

\begin{proof}
Suppose not. Let $H=G[V(C)\cup v]$. Note that $d_T(T|H) \ge 3-(5+\epsilon) - \alpha + 2\alpha = -2-\epsilon+\alpha$ which is at least $-3+\alpha+6\epsilon$ as $7\epsilon \le 1$ by inequalities (1) and (2). Furthermore, it follows from Claims~\ref{Chord} and \ref{2Neighbors} that there does not exists an edge in $E(G)\setminus E(H)$ with both ends in $V(H)$ or a vertex in $V(G)\setminus V(H)$ with three neighbors in $V(H)$ and hence $T/H$ is normal.

Let $\phi(v)\in A(v)\setminus A(u)$. Now $\phi$ is an $L'$-coloring of $H$. Let $G'=G\setminus \{vu\}$. Thus $G'$ is a proper subgraph of $G$ such that $H\subseteq G'$. Yet $\phi$ does not extend to an $L'$-coloring of $G'$ as otherwise $\phi$ extends to an $L'$-coloring of $G$ (and hence an $L$-coloring of $G$), contradicting that $T$ is critical. So we may assume that $\phi$ does not extend to an $L'$-coloring of $G'$. Thus by Claim~\ref{Reduction}, $d_T(T|H) < -3+\alpha+6\epsilon$, a contradiction.
\end{proof}

\subsection{Neighboring Paths}

By Claim~\ref{5Path} only neighboring $1$-paths and neighboring $2$-paths of $S$ may exist. We cannot directly obtain a contradiction by their existence. Hence we also need more information on the degrees of vertices in neighboring $1$-paths which the following claim provides.

\begin{claim}\label{NotBothDeg3}
If $P=p_1p_2$ is a neighboring $1$-path of $S$ such that $P$ is a component of $N(S)\setminus S$ of size two, then either ${\rm deg}(p_1)\ge 4$ or ${\rm deg}(p_2)\ge 4$.
\end{claim}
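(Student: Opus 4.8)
The plan is to argue by contradiction: suppose $P=p_1p_2$ is a neighboring $1$-path that is a component of $N(S)\setminus S$ of size two with $\deg(p_1)=\deg(p_2)=3$. Since $P$ is a full component of $N(S)\setminus S$, each $p_i$ has exactly one neighbor $u_i\in V(S)$ (using Corollary~\ref{2Neighbors}), and each $p_i$ has exactly one further neighbor outside $V(S)\cup V(P)$; call them $q_1$ (the third neighbor of $p_1$) and $q_2$ (the third neighbor of $p_2$). Since $P$ is a component of $N(S)\setminus S$, neither $q_1$ nor $q_2$ lies in $N(S)$, so $q_1,q_2\notin V(S)$ and are not adjacent to $S$; by girth at least five $q_1\neq q_2$ and $q_1,q_2\notin\{u_1,u_2,p_1,p_2\}$. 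Let $H=G[V(S)\cup\{p_1,p_2\}]$ (just $S$ plus the path $p_1p_2$ plus the two edges $u_1p_1,u_2p_2$).

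Next I would compute $d_T(T|H)$ and check $T/H$ is normal. Here $v(T|H)=2$, $e(T|H)=3$, $q(T|H)=2$, so $\mathrm{def}(T|H)=9-10=-1$ and $d(T|H)=-1-2\epsilon-2\alpha$. Each of $p_1,p_2$ has degree $2$ in $H$ but degree $3$ in $G$, so $q_T(H,S)\ge 2$, giving $d_T(T|H)\ge -1-2\epsilon-2\alpha+2\alpha=-1-2\epsilon$, which is at least $-3+\alpha+6\epsilon$ by the inequalities (since $8\epsilon+\alpha\le 2$ follows from (1),(2)). Normality of $T/H$ follows exactly as in Claim~\ref{ColorSubset}: Corollaries~\ref{Chord} and~\ref{2Neighbors} rule out a chord of $H$ or a vertex outside $V(H)$ with three neighbors in $V(H)$ (the latter because such a vertex would have at most two neighbors among $\{p_1,p_2\}$ by girth, hence a neighbor in $S$, contradicting Corollary~\ref{2Neighbors}).

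The heart of the argument is the coloring step: I want to produce an $L'$-coloring of $H$ (equivalently, an assignment of $\phi(p_1)\in A(p_1)$, $\phi(p_2)\in A(p_2)$ with $\phi(p_1)\ne\phi(p_2)$) that does not extend to $G'$, where $G'$ is $G$ minus the edges $p_1q_1$ and $p_2q_2$ (so $|E(G)\setminus E(G')|=2$, which will give the stronger bound $d_T(T|H)<-3+6\epsilon$, a contradiction). By Claim~\ref{ColorSubset}, $A(p_i)\subseteq A(q_i)$ for $i=1,2$, and since $\deg(p_i)=3$ with exactly one neighbor in $S$, $|A(p_i)|=2$. The key point: for any choice of $\phi(p_1)\in A(p_1)$ and $\phi(p_2)\in A(p_2)$ with $\phi(p_1)\ne\phi(p_2)$ (such a choice exists since $|A(p_1)|=|A(p_2)|=2$), removing colors $\phi(p_1)$ from $q_1$ and $\phi(p_2)$ from $q_2$ still leaves $q_1,q_2$ with lists of size $\ge 2$ — indeed $A(p_i)\subseteq A(q_i)$ and $q_1\ne q_2$ ensures each loses at most one color — and leaves every other vertex of $G\setminus(V(S)\cup V(P))$ with its full list of size $3$. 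If every such coloring of $H$ extended to $G'$, then in particular, by deleting $p_1,p_2$ and passing the removed colors onto $q_1,q_2$, we would get an $L'$-coloring of $G\setminus\{p_1,p_2\}$ from the list assignment $A$ restricted appropriately; but $G$ itself has no $L'$-coloring extending $\phi$ by criticality, and reinserting $p_1,p_2$ greedily (each having $2$ available colors after its unique $S$-neighbor is colored, and needing to avoid at most one more color — that of its neighbor $q_i$ — while also avoiding $\phi(p_{3-i})$) is where a genuine obstruction could arise. The main obstacle is therefore verifying that one can always make the two independent greedy choices for $p_1,p_2$ compatibly; this reduces to a small case analysis on whether $A(p_1)=A(p_2)$ or not, combined with the freedom in the extension to $G'$, and is exactly the kind of two-vertex degree-of-freedom count that makes such reductions work. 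Once the coloring $\phi$ of $H$ failing to extend to $G'$ is in hand, Claim~\ref{Reduction} (applied with this $G'$, noting $|E(G)\setminus E(G')|=2$) yields $d_T(T|H)<-3+6\epsilon$, contradicting the lower bound $d_T(T|H)\ge -1-2\epsilon$ established above.
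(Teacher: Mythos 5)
Your framework (computing $d_T(T|H)$, checking normality, invoking Claim~\ref{Reduction}) is sound, and the arithmetic $d_T(T|H)\ge -1-2\epsilon$ for $H=G[V(S)\cup\{p_1,p_2\}]$ is correct, but the coloring step --- which you yourself flag as the ``main obstacle'' --- does not go through, and this is a genuine gap rather than a routine verification. The problem is that $A(p_1)=A(p_2)$ \emph{always} holds here by Claim~\ref{ColorSubset} (since $p_1\sim p_2$ and both lie in $N(S)$), so your proposed ``case analysis on whether $A(p_1)=A(p_2)$ or not'' has only one case, and it is the bad one. Concretely, with $A(p_1)=A(p_2)=\{a,b\}$, suppose the $H$-coloring $(\phi(p_1),\phi(p_2))=(a,b)$ extends to an $L'$-coloring $\psi$ of your $G'=G\setminus\{p_1q_1,p_2q_2\}$ with $\psi(q_1)=\psi(q_2)=a$. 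To upgrade to a coloring of $G$ you must recolor $p_1$ avoiding $a$ and $p_2$ avoiding $a$, which forces $p_1=p_2=b$, a conflict. The same deadlock can occur for the other coloring $(b,a)$ with $\psi(q_1)=\psi(q_2)=b$, so you cannot conclude that some $\phi$ fails to extend to $G'$: the ``two independent greedy choices'' are not in fact independent.

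The paper escapes this by putting one of the outer neighbors --- say $q_1$ (which the paper calls $u_1$), chosen without loss of generality to have the larger degree --- \emph{into} $H$, i.e.\ taking $H=G[V(S)\cup\{q_1,p_1,p_2\}]$, and then picking $\phi(q_1)\in A(q_1)\setminus A(p_1)$ (possible since $|A(q_1)|=3>2=|A(p_1)|$). This makes $p_1$'s constraint from $q_1$ vacuous under any recoloring, which is exactly what lets the greedy chain ``recolor $p_2$ to avoid $q_2$, then recolor $p_1$ to avoid $p_2$'' succeed unconditionally. The price is that $d_T(T|H)$ is now only $\ge -3+\alpha-3\epsilon$ (weaker than your $-1-2\epsilon$), so the paper must split on $\deg(q_1)$: if $\deg(q_1)\ge 4$ the bound improves to $-3+\alpha+6\epsilon$ and $G'=G\setminus\{p_2q_2\}$ suffices; if $\deg(q_1)=\deg(q_2)=3$, one instead deletes the whole vertex $q_2$ so that $|E(G)\setminus E(G')|\ge 2$ and the weaker threshold $-3+6\epsilon$ applies. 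To repair your proof you would need to import this idea of pre-coloring one of $q_1,q_2$ adversarially outside $A(p_1)$; without it, the reduction simply does not close.
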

\begin{proof}
Suppose not. Hence ${\rm deg}(p_1)={\rm deg}(p_2)=3$. By Claim~\ref{ColorSubset}, $A(p_1)=A(p_2)$. Let $u_1,u_2$ be such that $N(p_1)\setminus (V(S)\cup \{p_2\})=\{u_1\}$ and $N(p_2)\setminus (V(S)\cup \{p_1\})=\{u_2\}$. Note that $u_1,u_2\not\in V(S)$ by Claim~\ref{2Neighbors}. Furthermore, $u_1\ne u_2$ as $G$ has girth at least five. As $L'$ is nice, $|L'(u_1)|=|L'(u_2)|=3$. Yet, $u_1, u_2\not\in N(S)$ by assumption and hence $|A(u_1)|=|A(u_2)|=3$. We may assume without loss of generality that ${\rm deg(u_1)} \ge {\rm deg}(u_2)$.

Let $H=G[V(C)\cup \{u_1,p_1,p_2\}]$. Note that $e(T|H) = e(T)-4$, $v(T|H)=v(T)-3$ and $q(T|H)=2$. Yet $q_T(H,S) = {\rm deg}(u_1)$ as ${\rm deg}_{G-H}(u_1) = {\rm deg}(u_1)-1$ and ${\rm deg}_{G-H}(p_2)=1$. Combining we find that
$$d_T(T|H) = 3(4)-(5+\epsilon)(3) - 2\alpha + {\rm deg}(u_1)\alpha = -3 + ({\rm deg}(u_1)-2)\alpha-3\epsilon,$$ 
which is at least $-3+\alpha - 3\epsilon$ as $\deg(u_1)\ge 3$. By inequality (1), $-3+\alpha-3\epsilon \ge -3 + 6\epsilon$. Moreover, this is at least $-3+\alpha+6\epsilon$ if ${\rm deg}(u_1)\ge 4$ as $\alpha\ge 9\epsilon$ by inequality (1). Furthermore, it follows from Corollaries~\ref{Chord} and \ref{2Neighbors} and the fact that $G$ has girth at least five that there does not exist an edge in $E(G)\setminus E(H)$ with both ends in $V(H)$ or a vertex in $V(G)\setminus V(H)$ with three neighbors in $V(H)$. Hence $T/H$ is normal.

Let $\phi(u_1)\in A(u_1)\setminus A(p_1)$. Then let $\phi(p_2)\in A(p_2)$ and $\phi(p_1)\in A(p_1)\setminus \{\phi(p_2)\}$. 

First suppose ${\rm deg}(u_1)\ge 4$. Let $G'=G\setminus \{p_2u_2\}$. Now if $\phi$ extends to an $L'$-coloring of $G'$, then there exists an $L'$-coloring of $G$ by recoloring $p_2$ different from $u_2$ and then recoloring $p_1$ different from $p_2$, contradicting that $T$ is critical. Thus by Claim~\ref{Reduction}, $d_T(T|H) < -3+\alpha+6\epsilon$, a contradiction to the bound given above.

So we may suppose that ${\rm deg}(u_1)=3$. Thus ${\rm deg}(u_2)=3$ since we assumed that $\deg(u_1)\ge \deg(u_2)$. Let $G'=G\setminus \{u_2\}$. Now if $\phi$ extends to an $L'$-coloring of $G'$, then there exists an $L'$-coloring of $G$ by coloring $u_2$ and then recoloring $p_2,p_1$ in that order, contradicting that $T$ is critical. So we may assume that $\phi$ does not extend to an $L'$-coloring of $G'$. Yet $|E(G)\setminus E(G'')|\ge 2$. Thus by Claim~\ref{Reduction}, $d_T(T|H) < -3+6\epsilon$, a contradiction to the bound given above.
\end{proof}

We also need more information about the degrees of vertices in neighboring $2$-paths as follows.

\begin{claim}\label{NotAllDeg3}
If $P=p_1p_2p_3$ is a neighboring $2$-path of $S$, then ${\rm deg}(p_1)+{\rm deg}(p_2)+\deg(p_3)\ge 10$.
\end{claim}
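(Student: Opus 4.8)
The plan is to argue by contradiction in the usual way: suppose $P=p_1p_2p_3$ is a neighboring $2$-path of $S$ with $\deg(p_1)+\deg(p_2)+\deg(p_3)\le 9$, which since $T$ is critical and each $p_i$ has exactly one neighbor in $S$ (by Corollary~\ref{2Neighbors}) forces $\deg(p_1)=\deg(p_2)=\deg(p_3)=3$. Thus each $p_i$ has, besides its unique neighbor $u_i\in V(S)$ and its neighbors along $P$, exactly one ``outside'' neighbor: $p_2$ has a single neighbor $w\notin V(S)\cup\{p_1,p_3\}$, and $p_1,p_3$ each have one neighbor $x_1,x_3$ respectively outside $V(S)\cup V(P)$. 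Girth at least five gives that $x_1,w,x_3$ are distinct and that none of them coincides with or is adjacent to the wrong $p_i$; also by Claim~\ref{2Neighbors} none of $w,x_1,x_3$ lies in $V(S)$.

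Next I would set $H=G[V(S)\cup V(P)]$ and compute $d_T(T|H)$. Here $v(T|H)=3$, $e(T|H)=5$ (the two path edges plus the three edges to $S$), and $q(T|H)=3$, so $\mathrm{def}(T|H)=3(5)-(5)(3)=0$ and $d(T|H)=-3\epsilon-3\alpha$; meanwhile $q_T(H,S)=\deg_{G-H}(p_1)+\deg_{G-H}(p_2)+\deg_{G-H}(p_3)=1+1+1=3$, so $d_T(T|H)=-3\epsilon-3\alpha+3\alpha=-3\epsilon$. Crucially $-3\epsilon\ge -3+6\epsilon$ (equivalently $9\epsilon\le 3$, which follows from inequalities (1) and (2) since $9\epsilon\le\alpha\le 2/5$), and also $-3\epsilon\ge -3+\alpha+6\epsilon$ would require $\alpha+9\epsilon\le 3$, which again holds. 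So the value of $d_T(T|H)$ is above both thresholds in Claim~\ref{Reduction}, and I need to verify $T/H$ is normal, which follows from Corollaries~\ref{Chord} and \ref{2Neighbors} together with girth at least five (no chord of $V(H)$, no vertex outside with three neighbors in $V(H)$).

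Then I would run the coloring/reduction argument. Using Claim~\ref{ColorSubset}, $A(p_1)\subseteq A(w)$ is false to exploit directly, so instead I would proceed analogously to Claims~\ref{NotBothDeg3}: since $|A(p_i)|=2$ for each $i$ (as $\deg(p_i)=3$ with one neighbor in $S$) and $A(p_1)=A(p_3)$ if $p_1,p_3$ share a common-structure — more carefully, I would color $H$ by choosing $\phi(p_2)\in A(p_2)$, then $\phi(p_1)\in A(p_1)\setminus\{\phi(p_2)\}$ and $\phi(p_3)\in A(p_3)\setminus\{\phi(p_2)\}$, which is possible since each $A(p_i)$ has size two. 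The goal is to pick $G'$ a proper subgraph of $G$ containing $H$ with $|E(G)\setminus E(G')|\ge 2$ (to get the stronger bound $-3+6\epsilon$) or at least $|E(G)\setminus E(G')|\ge 1$, such that $\phi$ does not extend to $G'$ (else, by recoloring $p_1,p_2,p_3$ appropriately along the path after extending, one obtains an $L'$-coloring of $G$, contradicting criticality). Concretely I expect to delete the two outer edges $p_1x_1$ and $p_3x_3$ (so $G'=G\setminus\{p_1x_1,p_3x_3\}$, giving $|E(G)\setminus E(G')|=2$): if $\phi$ extended to $G'$ one could recolor $p_1$ avoiding $p_2$ and $x_1$, and $p_3$ avoiding $p_2$ and $x_3$, each having two available colors and two forbidden colors but the two forbidden colors need not be distinct — this is exactly the subtlety that may force a more careful case analysis (possibly first re-examining whether $A(p_1)\setminus\{\phi(p_2),\phi(x_1)\}$ is nonempty, which needs $|A(p_1)|=2$ and $\{\phi(p_2),\phi(x_1)\}$ not to be two distinct colors both in $A(p_1)$).

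The main obstacle I anticipate is precisely this recoloring step: ensuring a proper subgraph $G'$ and an $L'$-coloring of $H$ such that non-extendability to $G'$ genuinely follows from criticality of $G$. If deleting both outer edges of $p_1$ and $p_3$ does not cleanly work (because $\phi(p_2)$ might block the recoloring), I would fall back on deleting a single vertex among $x_1,x_3,w$ — whichever has degree three — as in the proof of Claim~\ref{NotBothDeg3}, coloring it last and recoloring the path $p_1p_2p_3$ in order; this still gives $|E(G)\setminus E(G')|\ge 2$ when the deleted vertex has degree three, yielding the $-3+6\epsilon$ bound and the desired contradiction with $d_T(T|H)=-3\epsilon$. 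In the worst case where $w,x_1,x_3$ all have degree $\ge 4$, I would instead use Claim~\ref{ColorSubset} more aggressively: $A(p_i)\subseteq A(w)$ and $A(p_i)\subseteq A(x_i)$, so choosing $\phi(w)\in A(w)\setminus A(p_2)$ (nonempty since $|A(w)|=3>2=|A(p_2)|$) and propagating gives room, after which deleting one edge at $w$ suffices for the weaker bound $-3+\alpha+6\epsilon$, still a contradiction. Either way the arithmetic is controlled entirely by inequalities (1)--(4), so the only real work is bookkeeping the available colors.
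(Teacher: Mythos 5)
Your high-level strategy (reduce to all degrees three, compute $d_T(T|H)$, invoke Claim~\ref{Reduction} via a coloring that fails to extend to a proper subgraph) matches the paper, but there is a structural mistake early on that invalidates the bookkeeping and, more seriously, the recoloring step does not go through as you set it up.

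\textbf{The phantom vertex $w$.} Since $P=p_1p_2p_3$ is a neighboring $2$-path, the middle vertex $p_2$ has a neighbor in $S$ (by definition of neighboring path) in addition to $p_1$ and $p_3$. Once $\deg(p_2)=3$, those are \emph{all} of $p_2$'s neighbors; there is no ``outside'' neighbor $w\notin V(S)\cup\{p_1,p_3\}$. Consequently, with $H=G[V(S)\cup V(P)]$ you have $\deg_{G-H}(p_2)=0$ and $q_T(H,S)=2$, not $3$ (which doesn't break the arithmetic, but signals the misreading), and your entire third fallback --- choosing $\phi(w)\in A(w)\setminus A(p_2)$ and deleting an edge at $w$ --- is vacuous because $w$ does not exist.

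\textbf{The recoloring does not terminate.} You correctly sense the subtlety: with $G'=G\setminus\{p_1u_1,p_3u_3\}$ the two forbidden colors at $p_1$ (namely $\phi(p_2)$ and $\psi(u_1)$) may be two distinct colors of the size-two set $A(p_1)$. Your second fallback (delete a degree-$3$ vertex among the outside neighbors) has the same problem: after coloring that vertex and recoloring along the path, you eventually reach a $p_i$ with $|A(p_i)|=2$ that must simultaneously avoid a path-neighbor's color and an outside neighbor's color. There is no ``anchor'' to stop the chain. The paper's resolution is to \emph{enlarge} $H$ to $G[V(S)\cup\{u_1,p_1,p_2,p_3\}]$, use Claim~\ref{ColorSubset} (which you in fact cite) to pick $\phi(u_1)\in A(u_1)\setminus A(p_1)$, and delete only the single edge $p_3u_3$. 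Then recoloring $p_3\to p_2\to p_1$ terminates because $u_1$'s fixed color lies outside $A(p_1)$, so $p_1$ faces only the one constraint from $p_2$. With this $H$ one computes $d_T(T|H)=-2-4\epsilon+(\deg(u_1)-3)\alpha\ge -2-4\epsilon$, which by inequality (4) is at least $-3+\alpha+6\epsilon$, and the weaker threshold of Claim~\ref{Reduction} (a single deleted edge) already gives the contradiction. In short, your plan has the right skeleton and even identifies the key difficulty, but the proposed fixes either invoke a nonexistent vertex or fail to supply the terminating anchor, so the gap is genuine.
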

\begin{proof}
Suppose not. As $G$ is $S$-critical, ${\rm deg}(p_i)\ge 3$ for all $i\in\{1,2,3\}$. It now follows that ${\rm deg}(p_1)={\rm deg}(p_2) = {\rm deg}(p_3) = 3$. Let $u_1$ be the unique neighbor of $p_1$ in $V(G)\setminus (V(S)\cup \{p_2\})$. Similarly let $u_3$ be the unique neighbor of $p_3$ in $V(G)\setminus (V(S)\cup \{p_2\})$. Note that $u_1,u_3\not\in V(S)$ by Claim~\ref{2Neighbors}. Furthermore, $u_1\ne u_3$ as $G$ has girth at least five. As $L'$ is nice, $|L'(u_1)|=|L'(u_3)|=3$. Yet, $u_1, u_3\not\in N(S)$ by Claim~\ref{5Path} and hence $|A(u_1)|=|A(u_3)|=3$.

Let $H=G[V(C)\cup \{u_1,p_1,p_2,p_3\}]$. Note that $e(T|H) = e(T)-6$, $v(T|H)=v(T)-4$ and $q(T|H)=3$. Yet $q_T(H,S) = {\rm deg}(u_1)$ as ${\rm deg}_{G-H}(u_1) = {\rm deg}(u_1)-1$ and ${\rm deg}_{G-H}(p_3)=1$. Combining we find that
$$d_T(T|H) = 3(6)-(5+\epsilon)(4) - 3\alpha + {\rm deg}(u_1)\alpha = -2 + ({\rm deg}(u_1)-3)\alpha-4\epsilon,$$ 
which is at least $-2 - 4\epsilon$ as $\deg(u_1)\ge 3$. By inequality (4), $-2-4\epsilon \ge -3 \alpha + 6\epsilon$. Moreover, it follows from Corollaries~\ref{Chord} and \ref{2Neighbors} and the fact that $G$ has girth at least five that there does not exist an edge in $E(G)\setminus E(H)$ with both ends in $V(H)$ or a vertex in $V(G)\setminus V(H)$ with three neighbors in $V(H)$. Hence $T/H$ is normal.

Let $\phi(u_1)\in A(u_1)\setminus A(p_1)$. Then let $\phi(p_3)\in A(p_3)$, $\phi(p_2)\in A(p_2)\setminus \{\phi(p_3)\}$ and $\phi(p_1)\in A(p_1)\setminus \{\phi(p_2)\}$. 

Let $G'=G\setminus \{p_3u_3\}$. Now if $\phi$ extends to an $L'$-coloring of $G'$, then there exists an $L'$-coloring of $G$ by recoloring $p_3$ different from $u_3$ and then recoloring $p_2,p_1$ in that order if necessary, contradicting that $T$ is critical. So we may assume that $\phi$ does not extend to an $L'$-coloring of $G'$. Thus by Claim~\ref{Reduction}, $d_T(T|H) < -3+\alpha+6\epsilon$, a contradiction to the bound given above.
\end{proof}

Claim~\ref{NotAllDeg3} is enough to show that a relaxation of $T$ has a value of $d$ close to that of $T$ as follows.

\begin{cor}\label{Relax1}
If $T'$ is a relaxation of $T$, then $d(T)\ge d(T')-3\epsilon$.
\end{cor}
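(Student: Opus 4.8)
The plan is to unwind the definition of relaxation and reduce to Proposition~\ref{surplussum2}. By definition, $T' = T/H$ where $P = p_1p_2p_3$ is a neighboring $2$-path of $S$, $u_i$ is the (unique, by Corollary~\ref{2Neighbors}) neighbor of $p_i$ in $S$, and $H$ is the subgraph of $G$ with $V(H) = V(S)\cup\{p_1,p_2,p_3\}$ and $E(H) = E(S)\cup\{p_1p_2,\,p_2p_3,\,p_1u_1,\,p_2u_2,\,p_3u_3\}$. Proposition~\ref{surplussum2} gives $d(T) \ge d_T(T|H) + d(T/H) = d_T(T|H) + d(T')$, so it suffices to prove $d_T(T|H) \ge -3\epsilon$.

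First I would compute $d(T|H)$ directly. Here $v(T|H) = 3$, $e(T|H) = 5$, and $q(T|H) = 3$ (the edges $p_1u_1, p_2u_2, p_3u_3$; note $u_1,u_2,u_3$ are pairwise distinct since $G$ has girth at least five). Since $S \subseteq H$, every component of $S$ lies inside a component of $H$, so $c(H) \le c(S)$ and $c(S) - c(H) \ge 0$; hence $\mathrm{def}(T|H) = 3\cdot 5 - 5\cdot 3 + 10(c(S)-c(H)) \ge 0$, and therefore $d(T|H) = \mathrm{def}(T|H) - \epsilon v(T|H) - \alpha q(T|H) \ge -3\alpha - 3\epsilon$.

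Next I would bound $q_T(H,S)$ from below using Claim~\ref{NotAllDeg3}. Since $\deg_H(p_1) = \deg_H(p_3) = 2$ and $\deg_H(p_2) = 3$, we have $q_T(H,S) = \sum_{i=1}^{3}\bigl(\deg_G(p_i) - \deg_H(p_i)\bigr) = \bigl(\deg(p_1)+\deg(p_2)+\deg(p_3)\bigr) - 7$. By Claim~\ref{NotAllDeg3} this degree sum is at least $10$, so $q_T(H,S) \ge 3$. Consequently $d_T(T|H) = d(T|H) + \alpha\, q_T(H,S) \ge (-3\alpha - 3\epsilon) + 3\alpha = -3\epsilon$, which combined with the reduction of the first paragraph yields $d(T) \ge d(T') - 3\epsilon$.

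The computations here are routine; the single point that requires care is that the $\alpha\, q_T(H,S)$ term must \emph{exactly} absorb the $-3\alpha$ contributed by the three connecting edges $p_iu_i$ in $d(T|H)$. This is precisely why the sharp inequality $\deg(p_1)+\deg(p_2)+\deg(p_3) \ge 10$ of Claim~\ref{NotAllDeg3} is needed: the trivial bound $\ge 9$ would only give $q_T(H,S) \ge 2$ and leave an unwanted $-\alpha$ deficit. So the substantive work has already been carried out in Claim~\ref{NotAllDeg3}, and this corollary is just its bookkeeping consequence.
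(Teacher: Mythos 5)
Your proof is correct and follows essentially the same route as the paper: decompose $d(T) \ge d_T(T|H) + d(T/H)$ via Proposition~\ref{surplussum2}, compute $v(T|H)=3$, $e(T|H)=5$, $q(T|H)=3$ to get $d(T|H) \ge -3\alpha-3\epsilon$, and invoke Claim~\ref{NotAllDeg3} to obtain $q_T(H,S)\ge 3$ so that $d_T(T|H)\ge -3\epsilon$. Your closing observation that the full strength of the degree bound $\ge 10$ is what makes the $3\alpha$ cancellation exact (and that $\ge 9$ would not suffice) is a fair and accurate account of why the corollary is tight.
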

\begin{proof}
By the definition of relaxation, there exists a neighboring $2$-path $P=p_1p_2p_3$ of $S$ such that for each $i\in\{1,2,3\}$ there is a unique neighbor of $p_i$ in $S$, call it $u_i$, such that if $H=G\cup P \cup \{p_1u_1,p_2u_2,p_3u_3\}$, then $T'=T/H$. 

Note that $v(T|H)=3$, $e(T|H)=5$ and $q(T|H)=3$. Hence $d(T|H) = 3(5)-(5+\epsilon)3 - 3\alpha = -3\epsilon - 3\alpha$. By Claim~\ref{NotAllDeg3}, ${\rm deg}_G(p_1)+{\rm deg}_G(p_2)+\deg_G(p_3)\ge 10$. Yet ${\rm deg}_H(p_1)+{\rm deg}_H(p_2)+\deg_H(p_3)=7$. Thus $q_T(H,S) \ge 3$. So $d_T(T|H) = d(T|H) +\alpha q_T(H,S) \ge -3\epsilon$. By Proposition~\ref{surplussum2}, 
$$d(T)\ge d(T/H) + d_T(T|H) \ge d(T')-3\epsilon,$$
as desired.
\end{proof}

From Corollary~\ref{Relax1}, we have that a $\le 2$-relaxation of $T_0$ is close to $T_0$ as follows.

\begin{cor}\label{Relax2}
If $T_1$ is a $k$-relaxation of $T_0$ with $k\le 2$, then $T_1$ is close to $T_0$.
\end{cor}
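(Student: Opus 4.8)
The plan is to reduce this to an immediate induction using Corollary~\ref{Relax1}. Recall that a $k$-relaxation $T_1$ of $T_0$ is, by definition, obtained by a chain $T_0 = T^{(0)}, T^{(1)}, \dots, T^{(k)} = T_1$ where each $T^{(j)}$ is a relaxation of $T^{(j-1)}$ and $k \le 2$. So the natural approach is to apply Corollary~\ref{Relax1} once for each step of the chain and sum up the losses.

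First I would observe that each $T^{(j-1)}$ in the chain is itself a supercanvas of $T_0$ obtained by adding vertices and edges inside a boundary component, so it is of the form $(G_0, S^{(j-1)}, L_0)$ and remains critical by repeated application of Corollary~\ref{Supercanvas} (each relaxation is a supercanvas). Hence Corollary~\ref{Relax1} applies to $T^{(j-1)}$ with $T' = T^{(j)}$, giving $d(T^{(j-1)}) \ge d(T^{(j)}) - 3\epsilon$ for each $j \in \{1,\dots,k\}$. One subtlety: Corollary~\ref{Relax1} as stated in the excerpt is phrased for the fixed canvas $T$ close to $T_0$; but its proof only uses Claim~\ref{NotAllDeg3} and Proposition~\ref{surplussum2}, and Claim~\ref{NotAllDeg3} was proved for the canvas $T$ — I would note that the same argument applies to any relaxation $T^{(j-1)}$ of $T_0$ that is close to $T_0$, which is exactly what we are inductively establishing, so there is no circularity once the induction is set up correctly (we prove $T^{(j)}$ close from $T^{(j-1)}$ close, starting from $T^{(0)} = T_0$ which is close to itself by definition).

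Then I would chain the inequalities: $d(T_0) = d(T^{(0)}) \ge d(T^{(1)}) - 3\epsilon \ge \dots \ge d(T^{(k)}) - 3k\epsilon = d(T_1) - 3k\epsilon \ge d(T_1) - 6\epsilon$, using $k \le 2$. This is precisely the statement that $T_1$ is smaller than $T_0$ (it has more vertices? no — wait, relaxations add vertices, so $T_1$ is actually \emph{larger}; but "close" is defined only via the deficiency inequality $d(T_0) \ge d(T_1) - 6\epsilon$ together with either $T_1 = T_0$ or $T_1$ smaller than $T_0$). Here I need to double check the definition of "close": it requires $T = T_0$ or ($T$ smaller than $T_0$ and $d(T_0) \ge d(T) - 6\epsilon$). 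Since a proper relaxation has more vertices than $T_0$, it is not "smaller" in the stated ordering, so I expect the intended reading is that the relevant notion for relaxations is just the deficiency bound $d(T_0) \ge d(T_1) - 6\epsilon$ (which is how "close" is used in the subsequent arguments — e.g. "hence $d(T_0) \ge 3$, a contradiction" whenever $d(T_1) \ge 3 + 6\epsilon$). The main obstacle is thus not mathematical depth but getting this definitional point right and making sure the induction on the length of the relaxation chain is clean; the arithmetic ($3k\epsilon \le 6\epsilon$ for $k \le 2$) is trivial. I would write:

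\begin{proof}
By definition there is a chain $T_0 = T^{(0)}, T^{(1)}, \ldots, T^{(k)} = T_1$ with $k \le 2$ in which each $T^{(j)}$ is a relaxation of $T^{(j-1)}$. Each $T^{(j)}$ is a supercanvas of $T_0$ obtained by adding vertices and edges in a boundary face, and hence is critical by Corollary~\ref{Supercanvas}; in particular Claim~\ref{NotAllDeg3} applies to each $T^{(j-1)}$, so the proof of Corollary~\ref{Relax1} gives $d(T^{(j-1)}) \ge d(T^{(j)}) - 3\epsilon$ for every $j \in \{1, \ldots, k\}$. Summing these inequalities along the chain yields $d(T_0) \ge d(T_1) - 3k\epsilon \ge d(T_1) - 6\epsilon$, as $k \le 2$. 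Thus $T_1$ is close to $T_0$.
\end{proof}
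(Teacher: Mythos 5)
Your proof takes essentially the same route as the paper's: iterating Corollary~\ref{Relax1} along the relaxation chain (the paper phrases it as an explicit induction on $k$ with the three cases $k=0,1,2$, but the content is identical). Your discussion correctly identifies the inductive subtlety that each $T^{(j-1)}$ must be known to be close to $T_0$ before Corollary~\ref{Relax1} can be invoked on it, and your chain establishes exactly the needed bound $d(T_0)\ge d(T^{(j)})-3j\epsilon \ge d(T^{(j)})-6\epsilon$ at each step, so the argument is sound.

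One factual error in your discussion deserves correction, though it does not damage the final proof. You wrote that a proper relaxation ``has more vertices than $T_0$'' and hence ``is not smaller in the stated ordering.'' This is backwards. A relaxation $T' = T/H$ keeps the underlying graph $G$ fixed and enlarges the boundary $S$ (by adjoining the path $P$ and three edges into $S$), so $V(G)\setminus V(S')$ shrinks. Since the paper's ordering is on $v(T)=|V(G)\setminus V(S)|$ (not on $|V(G)|$), a proper relaxation is in fact strictly smaller than $T$, and the ``smaller'' clause in the definition of close is satisfied automatically. Your inference that only the deficiency inequality matters was therefore reached for the wrong reason, although the deficiency inequality is indeed the only thing that needs to be proved since the smallness condition holds trivially (as the paper also treats tacitly).
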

\begin{proof}
We proceed by induction on $k$. If $k=0$, then $T_1=T_0$ and hence is close to $T_0$ as desired. If $k=1$, then as $T_0$ is close to $T_0$, it follows from Corollary~\ref{Relax1} that $d(T_0)\ge d(T_1)-3\epsilon$ and hence $T_1$ is close to $T_0$ as desired. So we may assume that $k=2$. But then $T_1$ is a $1$-relaxation $T_1'$ of $T_0$. By induction, $T_1'$ is close to $T_0$. So by Corollary~\ref{Relax1}, $d(T_1')\ge d(T_1)-3\epsilon$. Yet $d(T_0)\ge d(T_1')-3\epsilon$ by Corollary~\ref{Relax1} and hence $d(T_0)\ge d(T_1)-6\epsilon$ and so $T_1$ is close to $T_0$ as desired.
\end{proof}

\subsection{Semi-Neighboring Paths}

 We now proceed to show that there does not exist a semi-neighboring $3$-path or semi neighoring $5$-path of $S$.

\begin{claim}\label{Semi3}
There does not exist a semi-neighboring $3$-path of $S$.
\end{claim}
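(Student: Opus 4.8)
The plan is to suppose for a contradiction that $P = p_1p_2p_3p_4$ is a semi-neighboring $3$-path of $S$, so that $p_1,p_2,p_4 \in N(S)\setminus V(S)$ while $p_3 \notin N(S)$ (that $p_3 \notin N(S)$ follows from Claim~\ref{5Path}, since $p_1p_2p_3p_4$ would otherwise contain or extend to a neighboring $3$-path). By Corollary~\ref{2Neighbors}, each of $p_1,p_2,p_4$ has a unique neighbor in $S$; call these $w_1, w_2, w_4$. Since $G$ is $S$-critical, every $p_i$ has degree at least three, so $p_1$ and $p_4$ each have at least one further neighbor outside $V(S) \cup V(P)$. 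Let $u_1$ be the neighbor of $p_1$ in $V(G) \setminus (V(S) \cup \{p_2\})$ with the role of the ``extra'' vertex to be absorbed into $H$, and let $u_4$ be an analogous neighbor of $p_4$. By Corollary~\ref{2Neighbors} and girth at least five, $u_1, u_4 \notin V(S)$ and $u_1 \ne u_4$; moreover, since $L'$ is nice and $u_1$, $u_4$ need not lie in $N(S)$, we have $|A(u_1)| = 3$ and $|A(u_4)| = 3$ (or we track $|N(u_i)\cap V(S)|$ carefully). I would then set $H = G[V(S) \cup \{u_1, p_1, p_2, p_3, p_4\}]$.

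The next step is the deficiency bookkeeping. Inside $H$ we have $v(T|H) = 5$, and the edges not in $S$ are $p_1p_2, p_2p_3, p_3p_4, p_1w_1, p_2w_2, p_4w_4$, plus the edge $p_1u_1$, giving $e(T|H) = 7$, and the edges with an end in $S$ are the three pendant edges $p_iw_i$, so $q(T|H) = 3$. Hence $d(T|H) = 3\cdot 7 - (5+\epsilon)\cdot 5 - 3\alpha = -4 - 5\epsilon - 3\alpha$. For $q_T(H,S)$, the vertex $u_1$ contributes $\deg(u_1) - 1 \ge 2$ and the vertex $p_4$ contributes at least $1$ (its edge to $u_4$ leaves $H$), so $q_T(H,S) \ge \deg(u_1)$, giving $d_T(T|H) \ge -4 - 5\epsilon + (\deg(u_1) - 3)\alpha \ge -4 - 5\epsilon$. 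This is a little worse than the $-3 + 6\epsilon$ threshold of Claim~\ref{Reduction}, so as in Claims~\ref{NotBothDeg3} and~\ref{NotAllDeg3} I expect to need to also exploit the degree of $p_4$: either we are in a case where we can delete a single edge at $p_4$ (needing the $-3 + \alpha + 6\epsilon$ bound) or the structure forces $|E(G) \setminus E(G')| \ge 2$ (needing only $-3 + 6\epsilon$), and in the tight sub-case we will need an extra inequality relating $\alpha$ and the contributions of $u_1$, $u_4$, much as Claim~\ref{NotBothDeg3} splits on $\deg(u_1) \ge 4$ versus $\deg(u_1) = 3$. One should also verify $T/H$ is normal, which follows from Corollaries~\ref{Chord} and~\ref{2Neighbors} together with girth at least five, exactly as in the earlier claims.

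For the coloring argument: color $u_1$ with $\phi(u_1) \in A(u_1) \setminus A(p_1)$ (nonempty by Claim~\ref{ColorSubset}, since $p_1 \in N(S)$ and $u_1 \in N(p_1)\setminus V(S)$ gives $A(p_1) \subseteq A(u_1)$ with equality impossible by the choice in Claim~\ref{ColorSubset}; more precisely $A(p_1) \subsetneq A(u_1)$ since $|A(p_1)| = 2 < 3 = |A(u_1)|$). Then color $p_2, p_1$ and finally $p_3$ so that $p_3$ avoids $p_2$ and has a free color reserved for the extension step, and leave $p_4$ uncolored. Take $G' = G \setminus \{p_4 w'\}$ where $w'$ is a suitable neighbor of $p_4$ outside $H$ (e.g. $u_4$), or $G' = G \setminus \{p_4\}$ in the low-degree sub-case. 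If $\phi$ extended to an $L'$-coloring of $G'$, we could recolor $p_4$ to avoid that deleted neighbor and then, walking back along $p_3, p_2, p_1$, fix up any conflicts (each $p_i$ has at least two available colors after removing the constraint from its $S$-neighbor, and at most one neighbor on $P$ is recolored at each step), obtaining an $L'$-coloring of $G$ and hence an $L$-coloring of $G$, contradicting criticality of $T$. So $\phi$ does not extend to $G'$, and Claim~\ref{Reduction} forces $d_T(T|H) < -3 + \alpha + 6\epsilon$ (or $< -3 + 6\epsilon$), contradicting the bound computed above once the sub-cases on $\deg(u_1)$ (and possibly $\deg(u_4)$) are handled. \textbf{The main obstacle} I anticipate is the tightness of the deficiency count: the bare path contributes $-4 - 5\epsilon - 3\alpha$ to $d(T|H)$, so unlike the neighboring-$2$-path case the argument does not close purely on the ``generic'' branch, and I expect to need a careful case analysis on whether $\deg(u_1) \ge 4$, whether $u_4$ coincides with or is adjacent to vertices already in $H$, and whether one or two edges can be removed to form $G'$ — exactly the kind of bookkeeping carried out in Claims~\ref{NotBothDeg3} and~\ref{NotAllDeg3}, but with one more vertex in play.
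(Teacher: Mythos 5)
Your approach diverges from the paper's at a critical point, and the divergence breaks both the bookkeeping and the coloring argument. The paper sets $H=G[V(S)\cup\{p_1,p_2,p_3\}\cup R]$ where $R=(N(p_1)\cup N(p_2))\cap(N(S)\setminus\{p_1,p_2\})$ — crucially, it does \emph{not} put $p_4$ into $H$, and it does \emph{not} adjoin an arbitrary further neighbor of $p_1$. It then colors $p_3$ with a color in $A(p_3)\setminus A(p_4)$ (nonempty since $|A(p_3)|=3>2=|A(p_4)|$), colors $p_2,p_1,R$, and deletes the single edge $p_3p_4$; because $\phi(p_3)\notin A(p_4)$, any extension of $\phi$ to $G'=G\setminus\{p_3p_4\}$ is automatically a proper $L'$-coloring of $G$, so $\phi$ cannot extend to $G'$ and Claim~\ref{Reduction} applies. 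This is the idea you are missing.

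Your version has two concrete problems. First, you put $p_4\in V(H)$, which makes the deletion illegal: Claim~\ref{Reduction} requires $H\subseteq G'$, so you cannot take $G'=G\setminus\{p_3p_4\}$ (that edge is in $H$ since $H$ is induced) nor $G'=G\setminus p_4$ (that vertex is in $H$); and you cannot ``leave $p_4$ uncolored,'' since $\phi$ must be an $L'$-coloring of all of $H$. Second, even if one fixes the deletion, adjoining $u_1$ (an arbitrary neighbor of $p_1$, not necessarily in $N(S)$) together with $p_4$ pushes the deficiency base down to $d(T|H)=-4-5\epsilon-3\alpha$, and the available gains from $q_T(H,S)$ do not close the gap: in the $R=\emptyset$ case (where $u_1\notin N(S)$ is forced) one gets $q_T(H,S)\ge 5$ (after using Claim~\ref{NotBothDeg3}, and noting you omitted $p_3$'s contribution $\deg_{G-H}(p_3)\ge 1$), giving only $d_T(T|H)\ge -4-5\epsilon+2\alpha$, which is strictly below $-3+6\epsilon$ under the paper's inequality regime (inequality (2) forces $\alpha<2/5$, so $2\alpha<4/5<1$). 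Splitting on $\deg(u_1)\ge 4$ and $\deg(p_4)\ge 4$ rescues only the doubly-high-degree subcase; the subcase $\deg(p_4)=3$ (which cannot be excluded) has no valid $G'$. In short: keep $p_4$ out of $H$, add $R$ rather than a generic $u_1$, use $A(p_3)\setminus A(p_4)$ to make the deleted edge $p_3p_4$ harmless, and in the $|R|=0$ case use Claim~\ref{NotBothDeg3} applied to the size-two component $\{p_1,p_2\}$ of $N(S)\setminus S$ to boost $q_T(H,S)$.
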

\begin{proof}
Suppose not. Let $P=p_1\ldots p_4$ be a semi-neighboring $3$-path of $C_1$. By Claim~\ref{2Neighbors}, $|N(p_i)\cap V(C)| = 1$ for $i\in \{1,2,4\}$ and hence $|A(p_i)|=2$ for $i\in\{1,2,4\}$. As there does not exist a neighboring $3$-path of $C$ by Claim~\ref{5Path}, we find that $p_3\not\in N(C)$ and hence $|A(p_3)|=3$. By Claim~\ref{ColorSubset}, we find that $A(p_1)=A(p_2)\subset A(p_3)$. 

Let $R= (N(p_1)\cup N(p_2))\cap (N(S) \setminus \{p_1,p_2\})$. By Claims~\ref{5Path} and~\ref{NoTree}, $|R|\le 1$. Let $H=G[V(S)\cup \{p_1,p_2,p_3\} \cup R]$.

\begin{subclaim}\label{DFirst}
$d_T(T|H) \ge -3+\alpha+6\epsilon$.
\end{subclaim}
\begin{proof}
First suppose $|R|=1$ and let $x\in R$. Hence $e(T|H)=e(T)-6$, $v(T|H)=v(T)-4$ and $q(T|H)=3$. Note that ${\rm deg}_{G-H}(p_3)\ge 2$. Yet ${\rm deg}_G(x) + {\rm deg}_G(p_1) + {\rm deg}_G(p_2) \ge 10$ by Claim~\ref{NotAllDeg3} while ${\rm deg}_H(x) + {\rm deg}_H(p_1) + {\rm deg}_H(p_2) = 8$. Hence ${\rm deg}_{G-H}(x) + {\rm deg}_{G-H}(p_1) + {\rm deg}_{G-H}(p_2) \ge 2$. Thus $q_T(H,S)\ge 4$. Combining, we find that 

$$d_T(T|H)\ge 3(6)-(5+\epsilon)(4) - 3\alpha + 4\alpha = -2-3\epsilon+\alpha,$$ 

\noindent which is at least $-3+\alpha+6\epsilon$ as  claimed since $9\epsilon\le 1$ by inequalities (1) and (2). 

So we may assume that $|R|=0$. Hence $e(T|H)=e(T)-4$, $v(T|H)=v(T)-3$ and $q(T|H)=2$. Note that ${\rm deg}_{G-H}(p_3)\ge 2$. Yet ${\rm deg}_G(p_1) + {\rm deg}_G(p_2) \ge 7$ by Claim~\ref{NotBothDeg3} while ${\rm deg}_H(p_1) + {\rm deg}_H(p_2) = 5$. Hence ${\rm deg}_{G-H}(p_1) + {\rm deg}_{G-H}(p_2) \ge 2$. Thus $q_T(H,S)\ge 4$. Combining, we find that 

$$d_T(T|H) \ge 3(4)-(5+\epsilon)(3)-2\alpha + 4\alpha= -3 + 2\alpha-3\epsilon,$$

\noindent which is at least $-3+\alpha+6\epsilon$ as claimed since $9\epsilon \le \alpha$ by inequality (1).
\end{proof}

\begin{subclaim}\label{No3First}
$T/H$ is normal.
\end{subclaim}
\begin{proof}
Suppose not. There does not exist an edge with both endpoints in $V(H)$ by Claims~\ref{Chord}, \ref{2Neighbors} and \ref{NoTree} and the fact that $G$ has girth at least five. So we may assume there there exists a vertex $v\in V(G)\setminus V(H)$ with at least three neighbors in $V(H)$. By Claim~\ref{2Neighbors}, $v$ would have to have at least two neighbors in $V(H)\setminus V(S)$. As $G$ has girth at least five, it follows that these two neighbors have to be $p_3$ and a vertex $x$ in $R$. But then $v$ has another neighbor in $V(H)$ and hence $v\in N(S)$. Moreover since $G$ has girth at least five, $x$ is adjacent to $p_1$. Thus $p_2p_1xp_4$ is a neighboring $3$-path of $S$, contradicting Claim~\ref{5Path}.
\end{proof}

Let $\phi(p_3)\in A(p_3)\setminus A(p_4)$. Then extend $\phi$ to an $L'$-coloring of $H$ by coloring $p_2,p_1$ and $R$ (if it exists) in that order. Let $G'=G\setminus \{p_3p_4\}$. Now if $\phi$ extends to an $L'$-coloring of $G'$, then there exists an $L'$-coloring of $G$, contradicting that $T$ is critical. So we may assume that $\phi$ does not extend to an $L'$-coloring of $G'$. Given Subclaim~\ref{No3First}, it now follows from Claim~\ref{Reduction} that $d_T(T|H) < -3+\alpha+6\epsilon$, contradicting Subclaim~\ref{DFirst}.
\end{proof}

\begin{claim}\label{Semi5}
There does not exist a semi-neighboring $5$-path of $S$.
\end{claim}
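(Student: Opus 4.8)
The plan is to follow the template of the proof of Claim~\ref{Semi3}, the extra difficulty being that a semi-neighbouring $5$-path has \emph{two} interior vertices rather than one. Suppose for contradiction that $P=p_1p_2p_3p_4p_5p_6$ is a semi-neighbouring $5$-path of $S$, so $p_1,p_2,p_5,p_6\in N(S)$. By Corollary~\ref{2Neighbors} each $p_i$ with $i\in\{1,2,5,6\}$ has a unique neighbour in $S$, so $|A(p_i)|=2$; by Claim~\ref{ColorSubset}, $A(p_1)=A(p_2)$, $A(p_5)=A(p_6)$, $A(p_2)\subseteq A(p_3)$ and $A(p_5)\subseteq A(p_4)$. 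Since $G$ has no neighbouring $3$-path (Claim~\ref{5Path}), at most one of $p_3,p_4$ lies in $N(S)$, and after reversing $P$ if necessary we may assume $p_3\notin N(S)$, so $|A(p_3)|=3$. Put $R:=N(\{p_1,p_2\})\cap(N(S)\setminus\{p_1,p_2\})$ and let $R'$ be defined analogously for $\{p_5,p_6\}$; by Claims~\ref{5Path} and~\ref{NoTree}, $|R|,|R'|\le 1$. For every subgraph $H$ introduced below, normality of $T/H$ will follow exactly as in Subclaim~\ref{No3First}: Corollaries~\ref{Chord} and~\ref{2Neighbors} together with the girth rule out chords of $H$, and a vertex outside $V(H)$ with three neighbours in $V(H)$ would force a short cycle, a neighbouring $3$-path, or a neighbouring claw.

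First I would dispose of the case $A(p_3)\setminus A(p_4)\neq\emptyset$, which holds in particular whenever $p_4\in N(S)$ (then $|A(p_4)|=2<|A(p_3)|$) and whenever $p_4\notin N(S)$ but $A(p_3)\neq A(p_4)$ (distinct $3$-sets). Take $H=G[V(S)\cup\{p_1,p_2,p_3\}\cup R]$ and $G'=G\setminus\{p_3p_4\}$, and extend $\phi$ to $H$ by colouring $p_3$ from $A(p_3)\setminus A(p_4)$ and then $p_2,p_1,R$ in that order. Since $p_4$ keeps its $S$-neighbour in $G'$ when $p_4\in N(S)$, and trivially when $p_4\notin N(S)$, any extension of this colouring to $G'$ gives $p_4$ a colour in $A(p_4)$, hence different from that of $p_3$, so adding back $p_3p_4$ would $L'$-colour $G$, contradicting criticality. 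Thus the colouring does not extend to $G'$, and Claim~\ref{Reduction} forces $d_T(T|H)<-3+\alpha+6\epsilon$; but $d_T(T|H)$ is computed exactly as in Subclaim~\ref{DFirst} (using Claim~\ref{NotBothDeg3} when $R=\emptyset$ and Claim~\ref{NotAllDeg3} otherwise) and equals $-3+2\alpha-3\epsilon$ or $-2+\alpha-3\epsilon$, which is at least $-3+\alpha+6\epsilon$ by inequalities (1) and (2), a contradiction.

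It remains to treat $p_4\notin N(S)$ with $A(p_3)=A(p_4)=:\{a,b,c\}$; then $A(p_5)$ is a $2$-subset of $\{a,b,c\}$. Here the Claim~\ref{Semi3}-trick is unavailable, and the plan is instead to enlarge $H$ and colour $p_3$ to \emph{match} $p_5$. Choose $\psi(p_2)\in A(p_2)$ and $\psi(p_5)\in A(p_5)$ with $\psi(p_2)\neq\psi(p_5)$ — possible, since from any two $2$-subsets of a $3$-set one can pick distinct elements, one from each — take $H=G[V(S)\cup\{p_1,p_2,p_3,p_5,p_6\}\cup R\cup R']$, and extend $\phi$ to $H$ by $\psi(p_3):=\psi(p_5)$ (legitimate: $\psi(p_5)\in A(p_3)$ and $\psi(p_5)\neq\psi(p_2)$), then colouring the forced $p_1,p_6$ and finally $R,R'$. (A chord $p_1p_5$, $p_1p_6$ or $p_2p_6$ of $P$ forces a slight refinement of the choice of $\psi(p_5),\psi(p_6)$, always possible except when $A(p_2)=A(p_5)$, a degenerate configuration to be settled directly.) If $\deg_G(p_3)\ge 4$ — or, symmetrically, $\deg_G(p_4)\ge 4$, using $H=G[V(S)\cup\{p_1,p_2,p_4,p_5,p_6\}\cup\cdots]$ with $\psi(p_4):=\psi(p_2)$ — set $G'=G\setminus\{p_3p_4\}$: any extension of the colouring to $G'$ gives $p_4$ a colour $\neq\psi(p_5)=\psi(p_3)$ (because $p_4p_5\in E(G')$), so it extends to $G$, a contradiction; and here $d_T(T|H)\ge-4-5\epsilon+4\alpha\ge-3+\alpha+6\epsilon$ by inequality (3), the bound $q_T(H,S)\ge 8$ coming from Claim~\ref{NotBothDeg3} on $\{p_1,p_2\}$ and $\{p_5,p_6\}$ together with $\deg_{G-E(H)}(p_3)=\deg_G(p_3)-1\ge3$, contradicting Claim~\ref{Reduction}. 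If instead $\deg_G(p_3)=\deg_G(p_4)=3$, set $G''=G\setminus\{p_3p_4,p_4p_5\}$: any extension $\psi''$ of the colouring to $G''$ has $\psi''(p_3)=\psi''(p_5)$, while $p_4$ has in $G''$ only its single remaining neighbour $w_4$, so $p_4$ may be recoloured with one of the $\ge 3-2=1$ colours of $L'(p_4)$ avoiding $\psi''(p_3)$ and $\psi''(w_4)$, extending $\psi''$ to $G$, a contradiction; since two edges were deleted, Claim~\ref{Reduction} only demands $d_T(T|H)<-3+6\epsilon$, whereas $d_T(T|H)\ge-4-5\epsilon+3\alpha\ge-3+6\epsilon$ by inequality (3) (now with $q_T(H,S)\ge 7$), again a contradiction. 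This exhausts all cases.

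The hard part is this last regime — both interior vertices deep with identical lists — for two reasons. First, the clean ``colour the near endpoint of the cut edge outside the far endpoint's list'' device of Claim~\ref{Semi3} no longer applies, so one is forced to enlarge the reducible subgraph to contain all four $N(S)$-vertices of $P$ plus one interior vertex as a pendant; this drags ${\rm def}(T|H)$ down to about $-4$, so the reduction barely survives, and only because one can either exploit a high-degree interior vertex or delete the two edges at the low-degree interior vertex (halving the slack Claim~\ref{Reduction} requires) — inequality (3) is precisely the margin needed. Second, the degenerate configurations where $P$ has one of the long chords $p_1p_5,p_1p_6,p_2p_6$ or where $A(p_2)=A(p_5)$ must be checked by hand. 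Everything else is bookkeeping parallel to the proof of Claim~\ref{Semi3}.
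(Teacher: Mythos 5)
Your proposal follows the same underlying strategy as the paper: choose $H$ consisting of $S$ together with the $N(S)$-vertices of $P$ (plus the at-most-one extra $N(S)$-neighbour at each end) and one interior vertex, bound $d_T(T|H)$ via the degree claims, verify normality of $T/H$, and exhibit a colouring of $H$ that cannot extend to a small subgraph $G'$ so that Claim~\ref{Reduction} applies. The case organisation differs slightly — you split on whether $A(p_3)\setminus A(p_4)\ne\emptyset$ and, when it does, reuse the smaller $H$ from Claim~\ref{Semi3}, whereas the paper keeps the larger $H$ throughout, fixes WLOG $\deg(p_3)\ge\deg(p_4)$, and splits on $A(p_3)\cap A(p_5)\ne\emptyset$ (observing that the empty case forces $A(p_3)\setminus A(p_4)\ne\emptyset$ since $A(p_5)\subseteq A(p_4)$); and in the low-degree subcase the paper deletes $p_4$ outright rather than the two edges $p_3p_4,p_4p_5$. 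These are cosmetic differences; the numerology and the role of inequality (3) are identical.

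One small wrinkle you should tidy up: the parenthetical worry about ``long chords'' $p_1p_5$, $p_1p_6$, $p_2p_6$ is vacuous. Any such edge would make $p_2p_1p_5p_6$ (respectively $p_2p_1p_6p_5$ or $p_1p_2p_6p_5$) a neighbouring $3$-path of $S$, contradicting Claim~\ref{5Path}; likewise an edge from $R$ to $\{p_5,p_6\}$ or from $R'$ to $\{p_1,p_2\}$ is excluded, and $p_2p_5$, $p_1p_4$, etc.\ are excluded by girth. So $H$ really is the graph the paper's edge count assumes, no ``slight refinement of $\psi(p_5),\psi(p_6)$'' is ever needed, and the alleged degenerate configuration with $A(p_2)=A(p_5)$ never arises as a problem (that equality is harmless anyway, since one can pick distinct elements from two equal $2$-sets). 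Once this is noticed, your argument closes without the unresolved caveat.
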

\begin{proof}
Suppose not. Then there exists a semi-neighboring $5$-path $P=p_1\ldots p_6$ where $p_1,p_2,p_5,p_6\in N(C)$. We may assume without loss of generality that $\deg(p_3)\ge \deg(p_4)$. By Claim~\ref{ColorSubset}, $A(p_1)=A(p_2)\subset A(p_3)$ and $A(p_6)=A(p_5)\subset A(p_4)$.

Let $U_1=(N(p_1)\cup N(p_2))\cap (N(S)\setminus (V(S)\cup\{p_1,p_2\}))$ and similarly let $U_2=(N(p_5)\cup N(p_6))\cap (N(C)\setminus (V(S)\cup \{p_5,p_6\}))$. If $U_1\ne \emptyset$, then it follows from Claims~\ref{5Path} and~\ref{NoTree} that $|U_1|=1$.  Similarly if $U_2\ne \emptyset$, then $|U_2|=1$. 

Let $S_1=\{p_1,p_2\}\cup U_1$ and let $S_2=\{p_5,p_6\}\cup U_2$. By Claim~\ref{5Path}, $S_1\cap S_2 = \emptyset$ and there does not exist an edge with one end in $S_1$ and the other end in $S_2$. By Claim~\ref{Semi3}, there does not exist a vertex in $V(G)\setminus V(S)$ with a neighbor in both $S_1$ and $S_2$. Similarly by Claim~\ref{Semi3}, $p_3\not\in N(S_2)$ and $p_4\not\in N(S_1)$. Hence as $G$ has girth at least five, the graph induced by $G$ on $S_1\cup S_2\cup\{p_3,p_4\}$ has precisely $|S_1|+|S_2|+1 = 5 + |U_1|+|U_2|$ edges. 

Let $H = V(S) \cup S_1\cup S_2 \cup \{p_3\}$.

\begin{subclaim}\label{D}
$d_T(T|H) \ge -4 -5\epsilon + \deg(p_3)\alpha$.
\end{subclaim}
\begin{proof}
First note that $e(T|H) = 7 + 2|U_1|+2|U_2|$ and $v(T|H)=5+|U_1|+|U_2|$. Hence ${\rm def}(T|H)= 3e(T|H) - 5v(T|H) = 3(7+2|U_1|+2|U_2|)-5(5+|U_1|+|U_2|) = -4 + |U_1|+|U_2|$. Moreover, $q(T|H)=|S_1|+|S_2| = 4+|U_1|+|U_2|$ and hence $s(T|H)=(4+|U_1|+|U_2|)\alpha + (5+|U_1|+|U_2|)\epsilon$. Thus 

$$d(T|H) = -4 - 4\alpha - 5\epsilon + (|U_1|+|U_2|)(1-\alpha -\epsilon).$$ 

Let $B_1 = \sum_{v\in S_1} \deg_{G-H}(v)$. Note that $B_1\ge 1$. Furthermore, if $|U_1|=0$, then $\deg_G(p_1)+\deg_G(p_2)\ge 7$ by Claim~\ref{NotBothDeg3}. Hence if $|U_1|=0$, then $B_1\ge 2$. Thus $B_1\ge 2 - |U_1|$. Similarly let $B_2=\sum_{v\in S_2} \deg_{G-H}(v)$. Note that $B_2\ge 2$. Furthermore, if $|U_2|=0$, then $\deg(p_5)+\deg(p_6)\ge 7$ by Claim~\ref{NotBothDeg3}. Hence if $|U_2|=0$, then $B_2=3$. Thus $B_2\ge 3-|U_2|$. Also note that $\deg_{G-H}(p_3)=\deg_G(p_3)-1$. Hence $q_T(H,S) \ge B_1+B_2+\deg(p_3)-1$. So $d_T(T|H) \ge d(T|H) + (B_1+B_2+\deg(p_3)-1)\alpha$. 

Using the bounds above, we find that 
\begin{align*}
d_T(T|H) &\ge -4 - 4\alpha - 5\epsilon + (|U_1|+|U_2|)(1-\alpha-\epsilon) + (4+\deg(p_3) - |U_1|-|U_2|)\alpha \\
&= -4 - 5\epsilon +\deg(p_3)\alpha + (|U_1|+|U_2|)(1-2\alpha-\epsilon).
\end{align*}

\noindent Since $2\alpha+\epsilon \le 1$ by inequality (2), $d_T(T|H) \ge -4-5\epsilon +\deg(p_3)\alpha$ as claimed.
\end{proof}

\begin{subclaim}\label{No3}
$T/H$ is normal.
\end{subclaim}
\begin{proof}
Suppose not. Since $H$ is an induced subgraph of $G$ there does not exist an edge of $E(G)\setminus E(H)$ with both ends in $V(H)$. So we may assume there exists a vertex $v\in V(G)\setminus V(H)$ with at least three neighbors in $V(H)$. If $v\in N(S)$, then by Claim~\ref{2Neighbors}, $v$ would have to have at least two neighbors in $V(H)\setminus V(S)$. Hence $v$ has at least one neighbor in either $S_1$ or $S_2$, contradicting either Claim~\ref{5Path} or Claim~\ref{NoTree}. So we may assume that $v\not\in N(S)$. But then as $G$ has girth at least five, $v$ has a neighbor in both $S_1$ and $S_2$, contradicting Claim~\ref{Semi3}.
\end{proof}

If $\deg(p_4)\ge 4$, let $G'=G\setminus \{p_3p_4\}$ and if $\deg(p_4)=3$, let $G'=G\setminus p_4$. If $A(p_3)\cap A(p_5)\ne\emptyset$, let $\phi(p_3)= \phi(p_5)\in A(p_3)\cap A(p_5)$. If $A(p_3)\cap A(p_5)=\emptyset$, then $A(p_3)\setminus A(p_4)\ne \emptyset$ and in that case let $\phi(p_3)\in A(p_3)\setminus A(p_4)$ and $\phi(p_5)\in A(p_5)$. In either case, then extend $\phi$ to the rest of $V(H)$ by coloring $p_2,p_1,u_1$ and $p_6,u_2$ in that order. Now if $\phi$ extends to an $L'$-coloring of $G'$, then $\phi$ extends to an $L'$-coloring of $G$, contradicting that $T$ is critical. So we may assume that $\phi$ does not extend to an $L'$-coloring of $G'$. 

Given Subclaim~\ref{No3}, it follows from Claim~\ref{Reduction} that $d_T(T|H) < -3+\alpha+6\epsilon$, and further if $\deg(p_4)=3$, then $d_T(T|H) < -3+6\epsilon$. If $\deg(p_4)\ge 4$, then $d_T(T|H) \ge -4 -5\epsilon + 4\alpha$ by Subclaim~\ref{D}. Since $3\alpha \ge 1 + 11\epsilon$ by inequality (3), $d_T(T|H) \ge - 3 + \alpha+6\epsilon$, a contradiction. So we may assume that $\deg(p_4)=3$ and hence $d_T(T|H)\ge -4 - 5\epsilon +3\alpha$ by Subclaim~\ref{D}. Since $3\alpha \ge 1 + 11\epsilon$ by inequality (3), $d_T(T|H)\ge -3+6\epsilon$, a contradiction.
\end{proof}

We now finish the proof by applying Lemma~\ref{Structure3} to $T_0$. If Lemma~\ref{Structure3}(1) holds, then as $T_0$ is close to itself, this contradicts Claim~\ref{Chord}. If Lemma~\ref{Structure3}(2) holds, then as $T_0$ is close to itself, this contradicts Claim~\ref{2Neighbors}. If Lemma~\ref{Structure3}(3) holds, then as $T_0$ is close to itself, this contradicts Claim~\ref{NoTree}. If Lemma~\ref{Structure3}(4) holds, then there exists a $\le 2$-relaxation $T_0'=(G_0,S_0',L_0)$ of $T_0$ such that there exists a semi-neighboring $3$-path of $S_0'$. But then $T_0'$ is close to $T_0$ by Corollary~\ref{Relax2} and so by Claim~\ref{Semi3}, there does not exist a semi-neighboring $3$-path of $S_0'$, a contradiction. 

So we may assume that Lemma~\ref{Structure3}(5) holds. That is, there exists a $\le 2$-relaxation $T_0'=(G_0,S_0',L_0)$ of $T_0$ such that there exists a semi-neighboring $5$-path of $S_0'$. But then $T_0'$ is close to $T_0$ by Corollary~\ref{Relax2} and so by Claim~\ref{Semi5}, there does not exist a semi-neighboring $5$-path of $S_0'$, a contradiction. This concludes the proof of Theorem~\ref{StrongLinear}.

\section{Exponentially Many Extensions of a Precoloring of a Cycle}\label{Exp}

In this section, we provide as promised the proof that the family of exponentially critical graphs is hyperbolic thereby completing the proof of Theorem~\ref{ExpSurface2} (see Theorem~\ref{ExpHyper}). Before we do that, we need to recall a number of defintions from~\cite{PostleThomas}.

\begin{definition}[Definition 3.1 in~\cite{PostleThomas}]
A \emph{ring} is a cycle or a complete graph on one or two vertices. A \emph{graph with
rings} is a pair $(G,\R)$, where $G$ is a graph and $\R$ is a set of vertex-disjoint rings in $G$. 
\end{definition}

\begin{definition}[Definition 3.2 in~\cite{PostleThomas}]
We say that a graph $G$ with rings $\R$ is \emph{embedded in a surface $\Sigma$} if the
underlying graph $G$ is embedded in $\Sigma$ in such a way that for every ring $R \in \R$ there exists a
component $\Gamma$ of the boundary of $\Sigma$ such that $R$ is embedded in $\Gamma$, no other vertex or edge of $G$ is embedded
in $\Gamma$, and every component of the boundary of $\Sigma$ includes some ring of $G$. 
\end{definition}

Now let us state the formal definition of a hyperbolic family. 

\begin{definition}[Definition 5.1 in~\cite{PostleThomas}]
Let $\F$ be a family of non-null embedded graphs with rings. We say that $\F$ is \emph{hyperbolic} if there exists a constant $c>0$ such that if $G\in\F$ is a graph with rings that is embedded in a surface $\Sigma$, then for every closed curve $\gamma: \mathbb{S}^1 \rightarrow \Sigma$ that bounds an open disk $\Delta$ and intersects $G$ only in vertices, if $\Delta$ includes a vertex of $G$, then the number of vertices of $G$ in $\Delta$ is at most $c(|\{x\in \mathbb{S}^1: \gamma(x)\in V(G)\}|-1)$. We say that $c$ is a \emph{Cheeger constant} of $\F$.
\end{definition}

Let us also state the definition of strongly hyperbolic for those who are interested though we do not need it.

\begin{definition}[Definition 7.1 in~\cite{PostleThomas}]
Let $\F$ be a hyperbolic family of embedded graphs with rings, let $c$ be a Cheeger constant for $\F$, and let $d := \lceil 3(2c + 1) \log_2(8c + 4)\rceil$. We say that $\F$ is \emph{strongly
hyperbolic} if there exists a constant $c_2$ such that for every $G \in \F$ embedded in a surface $\Sigma$ with rings and for every two disjoint cycles $C_1,C_2$ of length at most $2d$ in $G$, if there
exists a cylinder $\Lambda \subseteq \Sigma$ with boundary components $C_1$ and $C_2$, then $\Lambda$ includes at most $c_2$ vertices of $G$. We say that $c_2$ is a \emph{strong hyperbolic constant} for $\F$.
\end{definition}

Thomassen~\cite{ThomExp} proved the following in~\cite{ThomExp}.

\begin{theorem}\label{ThomExp}
If $G$ is planar graph of girth at least five and $L$ is a $3$-list-assignment of $V(G)$, then $G$ has at least $2^{|V(G)|/10000}$ distinct $L$-colorings.
\end{theorem}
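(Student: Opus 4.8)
The plan is to prove a quantitative refinement of Thomassen's inductive statement Theorem~\ref{Thom0}. Concretely, I would establish the following: there is a constant $c$ (with $c=10000$ comfortably sufficing) such that if $G$ is a connected plane graph of girth at least five with outer cycle $C$, $P$ is a subpath of $C$ of length at most five, and $L$ is a list assignment with $|L(v)|=3$ for $v\in V(G)\setminus V(C)$, $|L(v)|\ge 2$ for $v\in V(C)\setminus V(P)$, $|L(v)|=1$ on $V(P)$, and no vertex of list size two is adjacent to a vertex of list size at most two, then every $L$-coloring of $G[V(P)]$ extends to at least $2^{\mu/c}$ distinct $L$-colorings of $G$, where $\mu$ is a carefully chosen non-negative linear potential in the counts of vertices, edges and components of $G$ and $C$ --- in the spirit of the deficiency bookkeeping of Section~\ref{Linear} (cf.\ Lemma~\ref{defsum}) --- designed so that it is additive over the splitting reductions below, drops by at most a constant in the non-splitting reduction, is $\ge 0$ always, and is $\le 0$ only for instances small enough that the bare existence statement of Theorem~\ref{Thom0} already gives the required single coloring $=2^{0}$. (Morally, $\mu$ counts the ``free'' vertices: those not in $P$ and not of list size exactly two, with a bounded correction.)

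For the induction I would follow Thomassen's case analysis for Theorem~\ref{Thom0}, now tracking counts. If $C$ has a chord $uw$, split $G$ along $uw$ into plane graphs $G_1,G_2$ whose outer cycles run through $uw$, extending the precolored path appropriately on each side and choosing the split so that the induction hypotheses apply to both parts; since the potential is additive, $\mu_1+\mu_2\ge\mu$, and multiplying the two counts gives $2^{\mu_1/c}\cdot 2^{\mu_2/c}\ge 2^{\mu/c}$. The same multiplicative accounting handles a vertex off $C$ with at least two neighbors on $C$. In the remaining case, Thomassen's structure theory for Theorem~\ref{Thom0} produces a bounded-size set $W$ of vertices ($|W|\le c$) near the boundary together with at least two colorings of $G[V(W)\cup V(P)]$ extending the precoloring of $P$, each reducing the problem to a strictly smaller instance of potential at least $\mu-|W|\ge\mu-c$ (after moving the colored vertices onto the new boundary and shrinking lists); by induction each of the two colorings extends in at least $2^{(\mu-c)/c}=2^{\mu/c}/2$ ways, so $G$ has at least $2^{\mu/c}$ colorings. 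Finally, to deduce Theorem~\ref{ThomExp}, apply the strengthened statement to a $2$-connected planar graph of girth at least five with a single-edge precolored path on the outer cycle and all lists of size three, summing over the colorings of that edge; here one must separately ensure that $\mu=\Omega(|V(G)|)$, which requires a direct argument for graphs almost all of whose vertices lie on the chosen outer cycle (using that a cycle with lists of size three already has exponentially many colorings), and the general case follows by multiplying the counts over the blocks of $G$, with the large constant absorbing the $O(1)$ corrections at cut vertices and at the precolored path.

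The main obstacle is the design of the potential $\mu$ together with the bookkeeping of the non-splitting case. One must reconcile four competing demands: $\mu$ must be \emph{exactly} additive over the chord and two-neighbor splits (otherwise a per-split loss of even a constant, accumulated over the possibly linearly many splits in the recursion, would destroy the bound); it must decrease by at most $c$ in the reducible-configuration case; it must be $\Omega(n)$ in the final application, including for graphs that are ``almost all boundary''; and it must vanish only where Theorem~\ref{Thom0} alone suffices. In addition, along every reduction one must verify that the list conditions --- in particular the ``no two adjacent deficient vertices'' condition --- are preserved, exactly as in Thomassen's original proof, so that the induction hypothesis can be reapplied, and that the configurations supplying the crucial factor of two occur often enough, i.e.\ can be charged to at most $c$ vertices; this last point is where the edge-density of girth-five planar graphs (via Euler's formula) enters and where the value $c=10000$ provides the needed slack. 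Once $\mu$ and $c$ are pinned down correctly, each individual case is a routine verification of an inequality of the form $\mu_1+\cdots+\mu_k\ge\mu$ or $\mu'\ge\mu-c$.
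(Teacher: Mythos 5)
The paper does not prove Theorem~\ref{ThomExp}: it is stated as a result of Thomassen and cited from~\cite{ThomExp}, so there is no in-paper proof against which to compare yours. Your strategy does, in broad outline, match what the cited paper actually does: strengthen Theorem~\ref{Thom0} so that the conclusion counts extensions rather than merely asserting one, track an additive linear potential $\mu$ through Thomassen's chord-split and two-neighbor-split reductions, and arrange that the remaining (non-splitting) reductions force a binary choice whose factor of $2$ offsets a bounded drop in $\mu$. The paper itself notes that the short Euler-formula argument giving exponentially many ordinary $3$-colorings does not carry over to list-coloring, so the heavier inductive machinery you describe is the right route, and your final block-decomposition step and the use of the ``no two adjacent deficient vertices'' invariant are both sound.

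As written, however, the proposal has a gap at its load-bearing step. You assert that in the non-splitting case ``Thomassen's structure theory for Theorem~\ref{Thom0} produces a bounded-size set $W$ together with \emph{at least two} colorings of $G[V(W)\cup V(P)]$.'' That is exactly what has to be proved, and it is not automatic: the existence proof behind Theorem~\ref{Thom0} only needs to exhibit a single color choice that can be propagated while preserving the small-list adjacency condition, and there are configurations where only one such choice exists. The entire substance of~\cite{ThomExp} is to design $\mu$ and the reduction order so that forced binary branches provably occur on a constant fraction of vertices, and to charge the single-choice steps to nearby branching steps or to the boundary. You correctly flag this as ``the main obstacle,'' but you leave $\mu$ unspecified, and the claim that the factor-of-two configurations are dense enough is stipulated rather than established; without it the inequality $\mu'\ge\mu-c$ together with a factor of $2$ simply restates the goal. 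So the architecture is right and faithful to Thomassen's actual proof, but the decisive construction --- pinning down $\mu$ and exhibiting the forced branches --- is still missing, and that is where all the difficulty lives.
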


In fact, Thomassen proved a stronger result as follows.

\begin{theorem}\label{ThomExp2}[cf. Theorem 4.3 in~\cite{ThomExp}]
Let $T=(G,S,L)$ be a canvas such that $S$ is path on at most two vertices. If $\phi$ is an $L$-coloring of $S$, then $\phi$ extends to at least $2^{|V(G)|/10000}$ distinct $L$-colorings of $G$.
\end{theorem}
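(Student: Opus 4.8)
The plan is to prove Theorem~\ref{ThomExp2} by strengthening the inductive hypothesis, mimicking Thomassen's original inductive framework for $3$-list-coloring graphs of girth at least five but tracking a count of colorings rather than mere existence. Specifically, I would set up an induction on $|V(G)|$ proving a statement in the spirit of Theorem~\ref{Thom0}: let $G$ be a plane graph of girth at least five with outer cycle $C$, let $P$ be a path of length at most some small bound with $V(P) \subseteq V(C)$, and let $L$ be a list assignment with $|L(v)| = 3$ for $v \notin V(C)$, $|L(v)| \ge 2$ for $v \in V(C) \setminus V(P)$, $|L(v)| = 1$ on $V(P)$, and no two adjacent vertices both having lists of size $\le 2$; then any $L$-coloring of the subgraph induced by $V(P)$ extends to at least $2^{(|V(G)| - |V(C)|)/K}$ distinct $L$-colorings of $G$ for an appropriate constant $K$ (here Thomassen uses $K = 10000$, with the $-|V(C)|$ correction absorbing the vertices on the outer walk whose colorings we cannot lower-bound). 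The base case is when $V(G) = V(C)$, where the bound reads $\ge 2^0 = 1$, which holds since the coloring of $P$ extends by Theorem~\ref{Thom0}.

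The heart of the argument is to replay each reduction in Thomassen's proof of Theorem~\ref{Thom0} and check that it multiplies the number of extensions by a factor at least $2^{(\text{number of newly colored interior vertices})/K}$. The easiest case: if there is an interior vertex $v$ of degree at most two after earlier reductions, or more generally if we can identify a vertex $v \notin V(C)$ all of whose already-colored neighbors leave $v$ at least two free colors while the rest of $G - v$ can be handled inductively, then for each of the (inductively counted many) colorings of $G - v$ there are at least two choices for $v$; since $v$ contributes $1$ to the interior count and $2 = 2^{1}$ while we only need a factor $2^{1/K}$, we gain a large surplus. When Thomassen instead splits $G$ along a chord or a short separating path into $G_1$ and $G_2$ sharing a small subgraph $J$, one argues that the number of extensions to $G$ is at least the number of extensions to $G_1$ times the minimum, over colorings of $J$, of the number of extensions to $G_2$ — or one fixes a coloring of $J$ and multiplies — and then the interior-vertex counts add, so $2^{n_1/K} \cdot 2^{n_2/K} = 2^{(n_1+n_2)/K}$, which is exactly what is needed. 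The cases where a short cycle or a precolored configuration forces the reduction (the analogues of conditions (iii)--(viii) of Theorem~\ref{CylinderStructure}, though here for one boundary path) require only that each such local configuration involves a bounded number of vertices, so the ``loss'' incurred is a bounded additive constant in the exponent's numerator, absorbed by choosing $K$ large; this is precisely why Thomassen's constant is $10000$ rather than something small.

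I would then deduce Theorem~\ref{ThomExp2} as the special case where $C$ is taken to be $S$ together with the rest of the outer walk: since $S$ is a path on at most two vertices and $G$ is planar, one can choose the embedding so that $S$ lies on the outer face, set $|L(v)| = 1$ on $V(S) = V(P)$, inflate the remaining outer-walk lists to size at least $2$ (they already have size $\ge 3$), and apply the strengthened statement; the correction term $|V(C)|$ is then bounded by the length of the outer walk, which in Thomassen's accounting is handled by a slightly more careful bookkeeping (or one simply states the bound $2^{|V(G)|/10000}$ directly, as Thomassen does, having arranged the induction so the outer-walk vertices are themselves eventually counted). Since the excerpt attributes Theorems~\ref{ThomExp} and~\ref{ThomExp2} to Thomassen and uses them as black boxes for the surface extension, I would in fact simply \emph{cite}~\cite{ThomExp} for the proof and only sketch the inductive skeleton above for the reader's orientation. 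The main obstacle, if one insists on reproducing the argument rather than citing it, is the sheer number of structural cases in Thomassen's proof of Theorem~\ref{Thom0}: each must be re-examined to confirm the multiplicative bound survives, and a few — notably the cases where a separating path of length up to five is colored, temporarily creating vertices with lists of size $1$ adjacent to vertices with lists of size $2$ — need the ``no two adjacent small lists'' invariant to be carefully restored, which is exactly where the large constant is spent.
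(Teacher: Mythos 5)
Your proposal correctly identifies that this theorem is not proved in the paper at all: it is Theorem 4.3 of Thomassen's ``Many 3-colorings of triangle-free planar graphs'' \cite{ThomExp}, and the paper simply cites it as a black box (it is used only once, to guarantee $|X|\ge 2$ in the proof of Theorem~\ref{ExpHyper}). Your decision to cite rather than reprove is exactly what the paper does, and your sketch of Thomassen's underlying induction (strengthening the Theorem~\ref{Thom0}-style hypothesis to count extensions, with the multiplicative bookkeeping across reductions) is a fair description of the method even though it is not needed here.
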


We prove the following generalization of Theorem~\ref{ThomExp} about the number of extensions of a coloring of a cycle, which we define as follows.

\begin{definition}
If $T=(G,S,L)$ is a canvas and $\phi$ is an $L$-coloring of $S$, then we let $E_T(\phi)$ denote the number of \emph{extensions} of $\phi$ to $G$, that is the number of distinct $L$-colorings of $G$ whose restriction on $S$ is equal to $\phi$.
\end{definition}

\begin{lemma}\label{ExpManyDisc}
If $T=(G,S,L)$ is a canvas such that $S$ is connected and $\phi$ is an $L$-coloring of $S$ that extends to an $L$-coloring of $G$, then 

$$\log E_T(\phi) \ge (v(T) + 265(3|E(S)|-5|V(S)|))/10000.$$
\end{lemma}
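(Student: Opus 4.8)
The plan is to prove Lemma~\ref{ExpManyDisc} by induction on $v(T)$, mimicking Thomassen's counting strategy from Theorem~\ref{ThomExp2} but carrying the deficiency term $3|E(S)|-5|V(S)|$ through the recursion. The base case is when $G=S$: then $v(T)=0$, and since $S$ is connected (a subgraph of a girth-$\ge 5$ planar graph) we have $|E(S)|\le |V(S)|-1$ if $S$ is a tree and in general $3|E(S)|-5|V(S)|\le -2|V(S)|<0$, so the right-hand side is nonpositive while $\log E_T(\phi)\ge 0$; one must also check the edge-case bookkeeping when $S$ is small. For the inductive step I would first dispose of the situations where $T$ fails to be critical with respect to $\phi$: if $\phi$ extends to $G$ but $G$ is not $S$-critical (relative to $\phi$), pass to a proper subgraph, or better, locate a vertex/edge we can delete cheaply. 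The cleanest setup: if $\phi$ extends to $G$, then by Proposition~\ref{CriticalSubgraph}-type reasoning either $G=S$ or we can find structure from the one-boundary-component structure lemmas.

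The core of the argument is to use Lemma~\ref{Structure3} (or rather its single-component predecessors Lemma~\ref{Structure2}/Lemma~\ref{StructureB}, since $S$ is connected here) together with Theorem~\ref{ThomExp2} as the ``engine.'' Concretely: if there is a large chunk of $G$ far from $S$, I would cut $G$ along a suitable separating structure into a piece $G_1$ containing $S$ and a piece $G_2$ that is a disc with a short boundary path, apply Theorem~\ref{ThomExp2} to $G_2$ to get $2^{|V(G_2)|/10000}$ extensions there for each fixed coloring of the cut, and apply induction to $G_1$; the product of the two counts gives the bound provided the deficiency arithmetic works out. Since $3|E(S)|-5|V(S)|$ is essentially the negative of Euler deficiency, cutting off a disc piece only improves (makes more negative, hence the bound weaker) the deficiency term for the remaining canvas $T_1$, so the inductive hypothesis applied to $T_1$ loses nothing. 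The constant $265$ is presumably chosen (analogously to how $177$ was chosen in Theorem~\ref{Cylinder}) so that whenever we peel off the structures guaranteed by Lemma~\ref{Structure2} — a chord, a vertex with two neighbors in $S$, a neighboring claw, or a semi-neighboring $3$- or $5$-path in a $k$-relaxation — the per-step gain in $v(T)$ dominates the per-step change in $265(3|E(S)|-5|V(S)|)$ plus the $\log 2$ we may owe. So the induction would be: find such a structure; for a chord or a vertex-with-two-neighbors, delete an edge or vertex and either get a factor $\ge 1$ (and a deficiency improvement) for free, or lose at most a bounded factor while gaining enough vertices; for the path/relaxation structures, count the colorings of the short path explicitly (each such vertex has at least one choice after the constraints, often at least two choices for interior vertices of the $5$-path giving an extra factor of $2$) and recurse on the rest.

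The main obstacle I anticipate is the amortization: making the weight $265(3|E(S)|-5|V(S)|)$ exactly compatible with each reduction case so that the inequality is preserved, especially in the relaxation cases, where the boundary $S'$ of the relaxed canvas $T'$ has more vertices and edges than $S$, changing the deficiency term in a direction that must be controlled by Corollary~\ref{Relax1}-style estimates and by the vertices absorbed into the relaxation. One has to verify that relaxing a neighboring $2$-path changes $3|E(S')|-5|V(S')|$ by a controlled amount (it adds $3$ vertices and $5$ edges to the boundary, so $3|E|-5|V|$ changes by $3\cdot 5 - 5\cdot 3 = 0$ — a convenient cancellation) while $v(T')=v(T)-3$, and then check the arithmetic $v - 3 + 265(\cdot) \le v + 265(\cdot)$ against the extensions lost. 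A secondary subtlety is handling the case where the structure we find does not separate $G$ nicely (e.g. a chord that does not create a genuine disc piece); there one argues directly that $\phi$ has at least as many extensions to $G$ as to $G$ minus the chord, and that $G$ minus the chord still satisfies the hypotheses with a deficiency term no larger.

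Finally, I would organize the write-up as: (1) set up the induction and dispose of the non-critical and $G=S$ cases; (2) invoke Lemma~\ref{Structure2} to obtain one of the five outcomes (in a bounded relaxation); (3) treat outcomes 1 and 2 (chord, two-neighbor vertex) by cheap edge/vertex deletion; (4) treat outcome 3 (neighboring claw) and outcomes 4 and 5 (semi-neighboring $3$- and $5$-paths) by peeling the path, counting its colorings with the available-list bounds $|A(v)| = 3-|N(v)\cap V(S)|$ from the machinery already developed, and recursing; (5) in each case verify the inequality $\log E_T(\phi) \ge (v(T)+265(3|E(S)|-5|V(S)|))/10000$ using the inductive bound on the reduced canvas, Theorem~\ref{ThomExp2} for any disc pieces, and the deficiency bookkeeping from Lemma~\ref{defsum}. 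The hard part, to reiterate, is step (5)'s constant-chasing in the relaxation subcases; everything else is a faithful adaptation of Thomassen's exponential-many-colorings induction with the extra deficiency weight threaded through.
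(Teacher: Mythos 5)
Your proposal has a genuine gap at its core: you plan to invoke Lemma~\ref{Structure2} (or~\ref{Structure3}) to find reducible structure, but those lemmas apply only to \emph{critical} canvases, i.e.\ where some coloring of $S$ fails to extend. In the present lemma you are \emph{given} that $\phi$ extends, so $T$ is not critical with respect to $\phi$, and passing to a proper subgraph to manufacture criticality does not recover the counting you need --- Proposition~\ref{CriticalSubgraph} only gives a critical subcanvas when a coloring \emph{fails} to extend, which is the opposite of your hypothesis. The structure lemmas describe obstructions to colorability, not the shape of graphs with many colorings, so there is no reason for any of the five outcomes to be present, and your step (2) has no foundation.

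The paper's actual argument is much simpler and does not use the structure lemmas at all. It peels a single vertex $v \in N(S)\setminus V(S)$. If $v$ has two neighbors in $S$, add $v$ to the boundary and recurse (the deficiency term improves by $265\cdot(3\cdot 2 - 5) = 265$, which absorbs the lost vertex). If $v$ has a unique neighbor in $S$, then $v$ has at least two available colors $c_1,c_2$, and the key dichotomy is: either $\phi$ extended by each of $c_1,c_2$ separately extends to $G$ --- in which case $E_T(\phi)\ge E_{T/H}(\phi_1)+E_{T/H}(\phi_2)$ gives a factor of $2$, which pays for the $529$ the deficiency term loses --- or one of them does not extend, in which case there is a critical subcanvas $T'$ of $T/H$, and now Theorem~\ref{StrongLinear} enters to bound $v(T')\le 88\,{\rm def}(T')$. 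That linear bound lets you absorb the entire critical piece into the boundary in a single step while keeping the deficiency arithmetic in balance; this is precisely where the constant $265$ is used. Your proposal never identifies this dichotomy nor the role of Theorem~\ref{StrongLinear}, and the ``cut off a disc and apply Theorem~\ref{ThomExp2}'' plan is not what makes the argument go through. The convenient cancellation you noticed for relaxations ($3\cdot 5-5\cdot 3=0$) is a nice observation but never arises, because relaxations are a tool for the criticality arguments, not for this counting lemma.
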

\begin{proof}
We proceed by induction on $v(T)+e(T)$. We may assume that $S$ is induced as otherwise the lemma follows by induction applied to $T/G[S]$.  We may assume that $G$ is connected, as otherwise we may apply induction to the component of $G$ containing $S$ and Theorem~\ref{ThomExp} to the components of $G$ not containing $S$.

We may also assume that $v(T) > 0$ as otherwise the lemma follows as $\phi$ extends to at least one $L$-coloring of $G$.

Let $v\in N(S)\setminus S$. Suppose that $|N(v)\cap V(S)|\ge 2$. Let $H=G[V(S) \cup v]$. Extend $\phi$ to $H$ such that the coloring also extends to $G$. Note that $v(T/H) < v(T)$. Hence by induction, 
$$10000 \log E_{T/H}(\phi) \ge v(T/H)+265(3|E(H)|-5|V(H)|).$$
Yet $E_T(\phi)\ge E_{T/H}(\phi)$. As $v(T/H)=v(T)-1$, $|E(H)|\ge|E(S)|+2$ and $|V(H)|=|V(S)|+1$, we find that 
\begin{align*}
v(T/H)+148(3|E(H)|-5|V(H)|)&\ge v(T)-1 + 265(3(|E(S)|+2)-5(|V(S)|+1))\\ 
&= v(T) + 265 (3|E(S)|-5|V(S)|) + 264,
\end{align*} 
and the result follows.

So we may suppose that $|N(v)\cap V(S)|=1$. Let $H=G[V(S)\cup v]$. Let $S(v)= L(v)\setminus \{\phi(u)| u\in N(v)\cap V(S)\}$. As $|N(v)\cap V(S)|=1$, $|S(v)|\ge 2$. Let $c_1,c_2\in S(v)$. For $i\in\{1,2\}$, let $\phi_i(v)=c_i$ and $\phi_i(u)=\phi(u)$ for all $u\in V(S)$. If both $\phi_1$ and $\phi_2$ extend to $L$-colorings of $G$ (and hence $H$), it follows by induction applied to $T/H$ that 
$$10000 \log E_{T/H}(\phi_i)\ge (v(T/H) + 265(3|E(H)|-5|V(H)|).$$ 
As $v(T/H)=v(T)-1$, $|E(H)|=|E(S)|+1$ and $|V(H)|=|V(S)|+1$ we find that 
\begin{align*}
10000 \log E_{T/H} (\phi_i)&\ge v(T)-1 + 265 (3(|E(S)|+1) - 5 (|V(S)|+1)) \\
&=  v(T)+265(3|E(S)|-5|V(S)|) - 529.
\end{align*}
Yet $E_T(\phi)\ge E_{T/H}(\phi_1)+E_{T/H}(\phi_2)$ and hence 
$$10000 \log E_T(\phi)\ge v(T)+265(3|E(S)|-5|V(S)|),$$ 
and the lemma follows.

So we may suppose without loss of generality that $\phi_1$ does not extend to an $L$-coloring of $G$. Hence there exists a critical subcanvas $T'=(G', H, L)$ of $T/H$. By Theorem~\ref{StrongLinear}, $v(T') \le 88{\rm def}(T')$. By Theorem~\ref{StrongLinear}, it also follows that ${\rm def}(T')\ge 3$. Thus $v(T|G') = v(T')+1$. Moreover, ${\rm def}(T')\le {\rm def}(T|G')+2$ and hence ${\rm def}(T|G')\ge 1$. Hence 
$$v(T|G')\le 1+88{\rm def}(T') \le 1+88({\rm def}(T|G')+2) \le 265{\rm deg}(T|G').$$ 
Also recall that ${\rm def}(T|G')= 3e(T|G')-5v(T|G')$. 

As $\phi$ extends to an $L$-coloring of $G$, $\phi$ extends to an $L$-coloring of $H$. By induction, 
$$10000 \log E_{T/G'}(\phi) \ge v(T/G')+265(3|V(G')|-5|V(G')|).$$ Note that $|V(G')|=v(T|G')+|V(S)|$ and $|E(G')|=e(T|G')+|E(S)|$. Thus 
\begin{align*}
10000 \log E_T(\phi) &\ge v(T)-v(T|G') + 265(3(e(T|G')+|E(S)|)-5(v(T|G')+|V(S)|)) \\
&= v(T)+265(3|E(S)|-5|V(S)|) + 265{\rm deg}(T|G') - v(T|G').
\end{align*} 
Since from above $265{\rm def}(T|G')\ge v(T|G')$, the lemma follows.
\end{proof}

Let us now define a notion of criticality for having exponentially many extensions as follows.

\begin{definition}[Definition 5.18 in~\cite{PostleThomas}]
Let $\epsilon,\alpha>0$. Let $G$ be a graph with rings $\R$ embedded in a surface $\Sigma$ of Euler genus $g$, let $R$ be the total number of ring vertices and let $L$ be a list-assignment of $G$. We say that $G$ is \emph{$(\epsilon,\alpha)$-exponentially-critical with respect to $L$} if $G\ne \bigcup \R$ and for every proper subgraph $G'\subseteq G$ that includes all the rings there exists an $L$-coloring $\phi$ of $\bigcup \R$ such that exist $2^{\epsilon (|V(G')\setminus V(H)|-\alpha(g+R))}$ distinct $L$-colorings of $G'$ extending $\phi$ but there does not exist $2^{\epsilon(|V(G)\setminus V(H)|-\alpha(g+R))}$ distinct $L$-colorings of $G$ extending $\phi$.
\end{definition}

We are now ready to prove that the family of graphs of girth at least five with rings which are $(\epsilon,\alpha)$-exponentially-critical with respect to some $3$-list assignment are hyperbolic as long $\epsilon \in (0, 1/20000)$ and $\alpha\ge 0$. In fact, we can prove the stronger result where we relax the girth condition to the condition that every cycle of four or less is equal to a ring.

\begin{theorem}
\label{ExpHyper}
Let $\epsilon>0$ and $\alpha\ge0$, and 
let $\F$ be the family of embedded graphs $G$ with rings such that 
every cycle of length four or less is equal to a ring and $G$ is $(\epsilon,\alpha)$-exponentially-critical with respect to some $3$-list assignment. If $\epsilon<1/20000$, then the family $\F$ is hyperbolic with Cheeger constant $6908$ (independent of $\epsilon$ and $\alpha$). 
\end{theorem}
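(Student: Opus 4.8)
The plan is to mimic the structure of the proof that a linear isoperimetric bound for $C$-critical graphs (via Theorem~\ref{StrongLinear}) implies hyperbolicity of the $4$-list-critical family, but now carried out for the exponential-criticality family using Lemma~\ref{ExpManyDisc} in place of Theorem~\ref{StrongLinear2}. Let $G\in\F$ be embedded in $\Sigma$ with rings $\R$, and let $\gamma$ be a closed curve bounding an open disk $\Delta$ that meets $G$ only in vertices, say in the vertex set $X=\{x\in\mathbb S^1:\gamma(x)\in V(G)\}$ with $|X|=\ell$; we must show the number of vertices of $G$ strictly inside $\Delta$ is at most $6908(\ell-1)$. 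First I would dispose of trivial cases: if $\ell=0$ then $\Delta$ is an open disk disjoint from $G$ (or $G$ lies entirely in it, but then $G$ is a plane graph and we bound directly), and if $\ell=1$ the curve passes through a single vertex so again $\Delta$ carries a disk-like piece attached at one vertex. In the generic case $\ell\ge 2$, the part of $G$ drawn in $\bar\Delta$ together with the cyclic sequence of the $\ell$ boundary vertices and arcs of $\gamma$ gives a plane graph $G_\Delta$ with an outer walk; after a standard tidying (deleting chords/short cycles using that every cycle of length $\le 4$ is a ring, and that rings are vertex-disjoint, so no ring can be split by $\gamma$) one obtains a plane graph whose outer boundary $S$ is a closed walk through the $\le\ell$ boundary vertices, with $|V(S)|\le \ell$ and crucially $3|E(S)|-5|V(S)|$ bounded below by a linear function of $\ell$ — roughly $3|E(S)|-5|V(S)|\ge -2\ell$ or so, using $|E(S)|\ge |V(S)|$ when $S$ contains a cycle.

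The heart of the argument is to show that the subgraph $G_\Delta$ inside the disk, with $S$ as its boundary, is itself a \emph{canvas} to which Lemma~\ref{ExpManyDisc} applies, i.e.\ that $S$ has an $L$-coloring extending into $G_\Delta$. This is where $(\epsilon,\alpha)$-exponential-criticality of $G$ enters: were the number of interior vertices of $\Delta$ too large relative to $\ell$, one could use Lemma~\ref{ExpManyDisc} to produce exponentially many extensions of a boundary coloring across $\Delta$, and then splice these with a fixed coloring of $G$ outside $\Delta$ (which exists and has the requisite number of extensions by minimality/criticality of $G$ restricted to the exterior) to contradict the failure of $2^{\epsilon(|V(G)|-\alpha(g+R))}$ distinct $L$-colorings of $G$. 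Concretely: let $n_\Delta$ be the number of vertices strictly inside $\Delta$. Lemma~\ref{ExpManyDisc} gives $\log_2 E(\phi)\ge (n_\Delta + 265(3|E(S)|-5|V(S)|))/10000$ for a suitable boundary coloring $\phi$; pairing with exterior extensions and comparing against the failed exponential bound $\epsilon(|V(G)|-\alpha(g+R))$, and using that the exterior piece is not itself able to compensate (since $G$ is exponentially-critical, deleting an interior edge of $\Delta$ already yields enough extensions), forces $n_\Delta/10000 \lesssim \epsilon\, n_\Delta + O(\ell)$. Since $\epsilon<1/20000$, the coefficient $1/10000-\epsilon$ of $n_\Delta$ is at least $1/20000>0$, so $n_\Delta \le C\cdot \ell$ for an explicit $C$; tracking the constants ($265$ from Lemma~\ref{ExpManyDisc}, the $3|E(S)|-5|V(S)|\ge -2\ell$ slack, and the factor $10000/(1-10000\epsilon)\le 20000$) yields $n_\Delta\le 6908(\ell-1)$ after absorbing the $-1$.

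I expect the main obstacle to be the bookkeeping of the \emph{exterior} side of the splicing argument: one needs that, having fixed a boundary coloring $\phi$ of $S$, the rest of $G$ (outside $\Delta$, keeping the rings) admits at least one $L$-coloring agreeing with $\phi$, and moreover that the product of interior and exterior extension counts genuinely exceeds $2^{\epsilon(|V(G)|-\alpha(g+R))}$. This requires choosing the witness subgraph $G'$ in the definition of exponential-criticality to be $G$ minus a single interior edge of $\Delta$ (so that $G'$ does have $\ge 2^{\epsilon(\cdots)}$ extensions of some $\phi_0$), then arguing that the interior extension count dominates enough of this that even restricting to the disk one still gets exponentially many — i.e.\ one localizes the criticality witness inside $\Delta$. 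A secondary technical point is ensuring $S$ is a legitimate boundary of a canvas (girth $\ge 5$ after removing the ring-cycles of length $\le 4$, no chords, $S$ connected or handled componentwise); this is routine given the hypothesis that every short cycle is a ring and rings are vertex-disjoint, but must be stated carefully so that Lemma~\ref{ExpManyDisc}'s hypothesis "$S$ connected, $\phi$ extends" is met, possibly after summing the bound over the (at most $\ell$) components of $S\cap\bar\Delta$ and using Lemma~\ref{ExpManyDisc} with Theorem~\ref{ThomExp} on the pieces.
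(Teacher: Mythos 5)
Your proposal has the right skeleton — pass to the plane graph $G_\Delta$ inside the disk, attach an artificial outer cycle, invoke Lemma~\ref{ExpManyDisc} to get exponentially many interior extensions, and compare with the failure of $2^{\epsilon(|V(G)|-\alpha(g+R))}$ extensions promised by $(\epsilon,\alpha)$-exponential-criticality. That is exactly the paper's strategy, and your intuition that ``the bookkeeping of the exterior side of the splicing argument'' is the crux is correct. But the specific mechanism you propose for the splicing has a genuine gap.

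You propose to instantiate the definition of exponential-criticality with $G'=G\setminus e$ for a single interior edge $e$. This does not give traction: since $|V(G')|=|V(G)|$, the two exponents $\epsilon(|V(G')\setminus V(H)|-\alpha(g+R))$ and $\epsilon(|V(G)\setminus V(H)|-\alpha(g+R))$ coincide, so the definition only yields a $\phi$ with $\ge 2^N$ extensions on $G'$ but $<2^N$ on $G$ for the \emph{same} $N$. There is no contradiction available, because a coloring of $G\setminus e$ need not be a coloring of $G$, and you have no lower bound at all on how many of those $2^N$ colorings survive re-adding $e$. Your inequality ``$n_\Delta/10000\lesssim\epsilon n_\Delta + O(\ell)$'' requires $n_\Delta$ to enter the right-hand side through the \emph{size gap} $|V(G)|-|V(G')|$, and that gap is zero for your choice of $G'$. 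The paper's choice is different and essential: it first extracts the $C_1$-critical subgraph $G_2$ of the disk-canvas $G_1$ (small by Corollary~\ref{StrongLinear3}, namely $|V(G_2)\setminus V(C_1)|\le 88|V(C_1)|\le 274|X|$) and sets $G_0'=G\setminus(V(G_1)\setminus V(G_2))$. This deletes $\ge 3180|X|$ interior vertices, so the gap $|V(G)|-|V(G_0')|$ is genuinely large; and crucially, because $G_2$ is $C_1$-critical, \emph{every} $L_1$-coloring of $G_2$ extends to $G_1$, so every $L$-coloring of $G_0'$ extending the ring coloring $\phi$ does extend across $\Delta$, and Lemma~\ref{ExpManyDisc} applied to $T/G_2$ gives at least $2^{(|V(G)|-|V(G_0')|)/20000}$ such extensions per coloring of $G_0'$. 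Multiplying by the $2^{\epsilon(|V(G_0')|-\alpha(g+R))}$ colorings of $G_0'$ guaranteed by exponential-criticality and using $\epsilon\le 1/20000$ then beats $2^{\epsilon(|V(G)|-\alpha(g+R))}$, contradiction. So the missing idea is precisely the extraction of the $C_1$-critical subgraph $G_2$ to define $G_0'$; without it the size gap that drives the counting is not present.
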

\begin{proof}
Let $G$ be a  graph with rings $\R$ embedded in a surface $\Sigma$ of Euler genus $g$ such that every cycle of length four or less is not null-homotopic
and such that $G$ is $(\epsilon,\alpha)$-exponentially-critical with respect to  
a $3$-list assignment $L$, let $R$ be the total number of ring vertices, and
let $\gamma:{\mathbb S}^1\to\Sigma$ be a closed curve that bounds
an open disk $\Delta$ and intersects $G$ only in vertices.
To avoid notational complications we will assume that $\gamma$ is a simple
curve; otherwise we split vertices that $\gamma$ visits more than once to
reduce to this case.
We may assume that $\Delta$ includes at least one vertex of $G$, for 
otherwise there is nothing to show.
Let $X$ be the set of vertices of $G$ intersected by $\gamma$.
Then $|X|\ge 2$ by Theorem~\ref{ThomExp2} and further if $|X|=2$, then $X$ is an independent set.

Let $G_0$ be the subgraph of $G$ consisting of all vertices and edges drawn 
in the closure  of $\Delta$.
Let $G_1$ be obtained from $G_0$ as follows. For every pair of vertices $u,v\in X$
that are consecutive on the boundary of $\Delta$ we do the following. We delete the edge $uv$ if it exists and then we introduce a path of two new degree two vertices joining $u$ and $v$, embedding the new edges and vertices in a segment of $\gamma$.

Thus $G_1$ has a cycle $C_1$ embedded in the image of $\gamma$, and hence
$G_1$ may be regarded as a plane graph with outer cycle $C_1$.
For $v\in V(G_0)$ let $L_1(v):=L(v)$, and for $v\in V(G_1)-V(G_0)$ let $L_1(v)$
be an arbitrary set of size three. 

Let $T=(G_1,C_1,L_1)$. Note that $G_1$ has girth at least five and so $T$ is a canvas. Also note that $|C_1|\le 3|X|$.

We may assume for a contradiction that 
$$|V(G_0)-X|> 6908(|X|-1)\ge 3454|X|,$$
where the last inequality follows since $|X|\ge 2$.

Let $G_2$ be the smallest subgraph of  $G_1$ such that
$G_2$ includes $C_1$  as a subgraph and every $L_1$-coloring of $C_1$ that extends to an $L_1$-coloring
of $G_2$ also extends to an $L_1$-coloring of $G_1$.
Then $G_2$ is $C_1$-critical with respect to $L_1$. Hence $T|G_2$ is a critical canvas or $T|G_2=T|C_1$.  By Corollary~\ref{StrongLinear3}, 
$$|V(G_2)-V(C_1)|\le 88 |V(C_1)|\le 274|X|.$$ 

Let $G_0'=G\setminus (V(G_1)- V(G_2))$. Thus $G_0'$ is a proper subgraph of $G$, and, in fact,
\begin{align*}
|V(G)-V(G_0')|&= |V(G_0)-V(G_0')|\\
&=|V(G_0)-X|-|V(G_2)-V(C_1)|\\
&\ge 3454|X|-274|X| = 3180|X|.
\end{align*}

As $G$ is $(\epsilon,\alpha)$-exponentially-critical with respect to $L$, there exists an
$L$-coloring $\phi$ of $\bigcup\R$ such that $\phi$ extends to at least
 $2^{\epsilon (|V(G_0')|-\alpha(g+R))}$
distinct $L$-colorings of $G_0'$,
but does not extend to at least $2^{\epsilon (|V(G)|-\alpha(g+R))}$ distinct $L$-colorings of $G$.

\begin{claim}\label{Extend}
If $\phi'$ is an $L$-coloring of $G_0'$ that extends $\phi$, then $\phi'$ extends to at least 
$2^{( |V(G)-V(G_0')|)/20000}$ distinct $L$-colorings of $G$.
\end{claim}
\begin{proof}
Note that $\phi'$ extends to an $L_1$-coloring $\phi''$ of $G_2$ by colorings the paths of degree two vertices in $G_1-V(G_0)$. By the definition of $G_2$, every $L_1$-coloring of $G_2$ extends to an $L_1$-coloring of $G_1$. Hence $\phi''$ extends to an $L_1$-coloring of $G_1$. Thus by Lemma~\ref{ExpManyDisc} 

$$10000 \log E_{T/G_2}(\phi'') \ge v(T/G_2) + 265(3|E(G_2)|-5|V(G_2)|).$$

However, every such extension induces a different extension of $\phi'$ to $G$. Note that $v(T/G_2)=|V(G_0)|-|V(G_2)|=|V(G)|-|V(G_0')|$. Moreover, $3|E(G_2)|-5|V(G_2)| = {\rm def}(T|G_2) + (3|E(C_1)|-5|V(C_1)|)$. As $T|G_2$ is either critical or equal to $T|C_1$, it follows from Theorem~\ref{StrongLinear} that ${\rm def}(T|G_2)\ge 0$. Since $|E(C_1)|=|V(C_1)|$ and $|V(C_1)|\le 3|X|$, we find that  
$$3|E(G_2)|-5|V(G_2)|\ge - 6|X|.$$
Hence,
$$10000 \log E_{T/G_2}(\phi'') \ge |V(G)|-|V(G_0')| - 1590|X| \ge (|V(G)|-|V(G_0')|)/2,$$

where the last inequality follows since $|V(G)|-|V(G_0')|\ge 3180|X|$.
\end{proof}

The coloring $\phi$ extends to at least $2^{\epsilon (|V(G_0')|-\alpha(g+R))}$
distinct $L$-colorings of $G_0'$. By Claim~\ref{Extend}, each such extension $\phi'$ extends to at least 
$2^{|V(G)-V(G_0')|/20000}$ distinct $L$-colorings of $G$. But then as $\epsilon\le 1/20000$, there exist at least $2^{\epsilon (|V(G)|-\alpha(g+R))}$ 
distinct $L$-colorings of $G$ extending $\phi$, a contradiction.
\end{proof}


\begin{thebibliography}{99}
\bibitem{Dvo} Z.~Dvo\v{r}\'ak, $3$-choosability of planar graphs with ($\le 4$)-cycles far apart. arXiv:1101.4275 [math.CO]
\bibitem{DvoKaw} Z.~Dvo\v{r}\'ak and K. Kawarabayashi, Choosability of planar graphs of girth $5$. arXiv:1109.2976 [math.CO]
\bibitem{DvoKaw2} Z.~Dvo\v{r}\'ak and K. Kawarabayashi, List-coloring embedded graphs. arXiv:1210.7605 [cs.DS]
\bibitem{DvoKraTho} Z.~Dvo\v{r}\'ak, D.~Kr\'al' and R.~Thomas, Three-coloring triangle-free graphs on surfaces, {\em Proceedings of the twentieth Annual ACM-SIAM Symposium on Discrete Algorithms (SODA)}, New York, NY (2009), 120--129. See arXiv:1010.2472 [cs.DM]
\bibitem{DvoKraTho2} Z.~Dvo\v{r}\'ak, D.~Kr\'al' and R.~Thomas, Three-coloring triangle-free graphs on surfaces II. 4-critical graphs in a disk, arXiv:1302.2158. 
\bibitem{DvoKraTho3} Z.~Dvo\v{r}\'ak, D.~Kr\'al' and R.~Thomas, Three-coloring triangle-free graphs on surfaces III. Graphs of girth five, arXiv:1402.4710. 
\bibitem{DvoKraTho5} Z.~Dvo\v{r}\'ak, D.~Kr\'al' and R.~Thomas, Three-coloring triangle-free graphs on surfaces V. Coloring planar graphs with distant anomalies, arXiv:0911.0885. 
\bibitem{Eppstein} D.~Eppstein, Subgraph isomorphism in planar graphs and related problems, {\it J.~Algor.\ and Appl.} {\bf3} (1999), 1--27.
\bibitem{Erdos} P.~Erd\H{o}s, A.~Rubin, H.~Taylor, Choosability in graphs, Proc. West Coast Conference on Combinatorics, Graph Theory and Computing, Arcata, Congressus Numerantium, 26 (1979), 125--157.
\bibitem{Grotzsch} H. Gr\"otzsch, Ein Dreifarbensatz fur dreikreisfreie Netze auf der Kugel, Wiss. Z. Martin-Luther-Univ. Halle-Wittenberg Math.-Natur. Reihe 8 (1959), 109--120.
\bibitem{Hav1} I.~Havel, On a conjecture of B. Grunbaum, {\textit J. Combinatorial Theory} {\bf 7} (1969), 184--186.

\bibitem{Hav2} I.~Havel, O zbarvitelnosti rovinnych grafu tremi barvami, Mathematics (Geometry and Graph Theory), Univ. Karlova, Prague (1970), 89--91.
\bibitem{MoharThom} B. Mohar, C. Thomassen. Graphs on Surfaces. John Hopkins University Press, 2001.
\bibitem{PostleThomas2} L. Postle, R. Thomas, $5$-List-Coloring Graphs on Surfaces II. A Linear Bound For Critical Graphs in a Disk, J. Comb. Theory Ser. B 119 (2016), 42--65.
\bibitem{PostleThomas} L. Postle, R. Thomas, Hyperbolic families and coloring graphs on surfaces, manuscript.
\bibitem{ThomList} C. Thomassen, 3-list-coloring planar graphs of girth 5, J. Comb. Theory Ser. B 64 (1995), 101--107.
\bibitem{ThomRegular} C. Thomassen, The chromatic number of a graph of girth 5 on a fixed surface, J. Combin. Theory Ser. B 87 (2003), 38--71.
\bibitem{ThomShort} C. Thomassen, A short list color proof of Grotzsch's theorem, J. Combinatorial Theory B 88 (2003) 189--192. 
\bibitem{ThomExp} C. Thomassen, Many 3-colorings of triangle-free planar graphs, J. Combin. Theory Ser. B 97 (3) (2007) 334--349.
\bibitem{Vizing} V. G. Vizing, Vertex colorings with given colors (in Russian), Metody Diskret. Analiz, Novosibirsk 29 (1976), 3--10.
\bibitem{Voigt} M. Voigt, A not 3-choosable planar graph without 3-cycles, Discrete Math. 146 (1995), 325--328.
\end{thebibliography}
\end{document}